\numberwithin{equation}{section}
\newcommand{\R}{{\mathbb R}}
\newcommand{\N}{{\mathbb N}}
\newcommand{\be}{\begin{equation}}
\newcommand{\ee}{\end{equation}}
\newcommand{\ben}{\begin{eqnarray*}}
\newcommand{\enn}{\end{eqnarray*}}
\newcommand{\G}{\Gamma}
\newcommand{\Om}{\Omega}
\newcommand{\al}{\alpha}
\newcommand{\va}{\varepsilon}
\newcommand{\ti}{\tilde}
\newcommand{\var}{\varphi}
\newcommand{\De}{\Delta}
\newcommand{\na}{\nabla}
\newcommand{\de}{\delta}
\newtheorem{theorem}{\textbf Theorem}[section]
\newtheorem{lemma}{\textbf Lemma}[section]
\newtheorem{rem}{\textbf Remark}[section]
\newtheorem{prop}{\textbf Proposition}[section]
\def\endProof{{\hfill$\Box$}}
\begin{document}
\title{{\textbf{Global solutions to the free boundary value problem of a chemotaxis-Navier-Stokes system} }}
\author{Qianqian Hou\thanks{Institute for Advanced Study in Mathematics, Harbin Institute of Technology, Harbin 150001, People's Republic of China
({\tt qianqian.hou@hit.edu.cn}).}}
\date{}
\maketitle

\begin{quote}
{\sc Abstract} In this paper, we investigate the global solvability of the chemotaxis-Navier-Stokes system on a three-dimensional moving domain of finite depth, bounded below by a rigid flat bottom and bounded above by the free surface. Completing the system with boundary conditions that match the boundary descriptions in the experiments and numerical simulations, we establish the global existence and uniqueness of solutions near a constant state $(0,\hat{c},\mathbf{0})$, where $\hat{c}$ is the saturation value of the oxygen on the free surface. To the best of our knowledge, this is the first analytical work for the well-posedness of chemotaxis-Navier-Stokes system on a time-dependent domain.

\noindent
{\sc MSC}: {35A01, 35K57, 35Q92, 76D05, 92C17}\\

\noindent
{\sc Keywords}: Chemotaxis, Free boundary, Navier-Stokes, Global existence, Logarithmic singularity

\end{quote}
\maketitle

\section{Introduction}
\textbf{Background and literature review.}
When suspension of an oxytactic bacteria denser than water like \emph{Bacillus subtilis} is placed in a chamber with its upper surface open to the atmosphere, bacterial cells swim up the gradient of the oxygen which diffuses to the suspension through the air-fluid interface and quickly get densely packed below the interface in a relatively thin layer. Subsequently,  Rayleigh-Taylor type instabilities occur due to the buoyancy effect and evolve ultimately into the bioconvection patterns observed in the experiments \cite{dombrowski-cisneros-chatkaew2004,hillesdon-pedley-kessler1995,
hillesdon-pedley1996}. To describe this chemotaxis-diffusion-convection  process the following chemotaxis-(Navier-)Stokes system has been proposed in \cite{Tuval}:
\be\label{cns}
\left\{
\begin{array}{lll}
m_t+\vec{u}\cdot\na m+\nabla\cdot(m\chi(c) \na c)=D_m\, \Delta m \quad \text{for}\ \ x\in \Om_t \ \ \text{and}\ \ t>0,
\\
c_t+\vec{u}\cdot \na c+mf(c)=D_c\, \Delta c,\\
\vec{u}_t+\kappa \vec{u}\cdot\na \vec{u}+\nabla p+m\nabla \Phi=D\Delta\vec{u},\\
\na\cdot\vec{u}=0,\\
\end{array}
\right.
\ee
where $\Om_t$ is a domain in $\R^d$ that may evolve with time $t$. The unknown functions $m(x,t)$ and $c(x,t)$ are the bacteria density and oxygen concentration. $\vec{u}(x,t)$ and $p(x,t)$ denote the fluid velocity and the pressure above the hydrostatic value. The positive constants $D_m$, $D_c$ and $D$ are diffusion rates of the bacterial cells, the oxygen and the velocity, respectively.  The first two equations in \eqref{cns} describe the chemotactic movement of bacteria
towards increasing gradients of the oxygen concentration with chemotactic intensity $\chi(c)>0$ and oxygen consumption rate $f(c)>0$, where both bacteria and oxygen
diffuse and are convected with the fluid. In turn, the influence of the bacterial cells on the fluid is through the given function $\Phi(x,t)$, which denotes the potential function produced by different physical mechanisms, e.g. the gravity or centrifugal forces.


  The striking feature of \eqref{cns} is that it couples the well-known obstacles in theory of hydrodynamics to the typical difficulties arising in the study of chemotaxis system. Indeed, due to the lack of effective mathematical tools handling the cross-diffusive term $\nabla\cdot(m\chi(c) \na c)$, the answer is still unavailable to the question whether the global weak solutions (constructed in \cite{tao-winkler2012})
   of the three-dimensional chemotaxis-only subsystem obtained from \eqref{cns} by letting $\vec{u}=\mathbf{0}$
   may blow up at a finite time or not. On the other hand, the global well-posedness to the three-dimensional incompressible Navier-Stokes equations with arbitrary large initial data remains a prominent open problem in hydrodynamics. In spite of these challenges, extensive studies have been conducted numerically and analytically in the last decades and most of the results achieved are focusing on the pattern formation of bacteria cells and global well-posedness for the corresponding initial-boundary value problem on fixed spatial domains $\Om$ independent of $t$. Here, we only mention the previous results related to the present paper.

In the case $\Om=\R^2$, for the chemotaxis-Stokes system obtained from \eqref{cns} by taking $\kappa=0$ in the fluid evolution, the global weak solutions have been constructed
(cf. \cite{duan-lorz-markowich2010}) under some smallness assumptions on either the potential function or the initial oxygen concentration along with certain structural conditions on $\chi$ and $f$.
For the same two-dimensional Cauchy problem, Liu and Lorz  (cf. \cite{liu-lorz2011}) removed the above smallness assumptions and showed global weak solvability even for the
chemotaxis-Navier-Stokes system (i.e. $\kappa=1$) under basically the same conditions on $\chi$ and $f$ as made in \cite{duan-lorz-markowich2010}. Uniqueness of such solutions was justified later (cf. \cite{zhang-zheng2014}) by taking advantage of a coupling structure of the equations and using the Fourier localization technique. To include the prototypical choices $\chi=const.$ and $f(c)=c$ (cf. \cite{chertock-fellner2012,hillen2008,Tuval}), Chae-Kang-Li (cf. \cite{chae-kang-li2012,chae-kang-li2014}) relaxed the structural conditions on $\chi$, $f$ and demonstrated the global well-posedness for the chemotaxis-Navier-Stokes system under smallness requirement on $\|c_0\|_{L^\infty}$. They also obtained some blow-up criteria that allow the weak solutions derived in \cite{liu-lorz2011} to become a classical one upon improving the regularity of initial data.
In the case $\Om=\R^3$, the problem of global well-posedness seems to be more delicate: to the best of our knowledge, results available so far are merely confined to local and global small solutions (cf. \cite{chae-kang-li2012,chae-kang-li2014,
duan-lorz-markowich2010}).

When $\Om$ is a bounded domain (independent of $t$) in $\R^d$, $d=2,3$ with smooth boundary, system \eqref{cns} subject to
 the following boundary conditions:
\be\label{bc}
\na m\cdot \vec{n}=0,\qquad \na c\cdot \vec{n}=0,\qquad \vec{u}=\mathbf{0},
\ee
with $\vec{n}$ the outward unit normal to the boundary $\partial \Om$, was investigated in \cite{lorz2010} and local weak solutions were constructed in the situation $\chi$ being a constant and $f$ being monotonically increasing with $f(0)=0$. Under the structural hypotheses
 $(\frac{f(s)}{\chi(s)})^{'}>0$, $(\frac{f(s)}{\chi(s)})^{''}\leq 0$ and $(\chi(s)f(s))^{'}\geq 0$, Winkler derived the global existence of weak solutions in the 3D case with $\kappa=0$ and of smooth solutions in the 2D case with $\kappa\in \R$ (cf. \cite{winkler2012}). Such smooth solutions in the latter 2D case stabilize to the spatially uniform equilibria $(\bar{m}_0,0,\mathbf{0})$ with $\bar{m}_0=\frac{1}{|\Om|}\int_\Om m(x,0)dx$ in the large time limit (cf. \cite{winkler2014arma}) at an exponential convergence rate (cf. \cite{zhang-li2015}). Their convergence in small-convection limit $\kappa\rightarrow 0$ was later justified in \cite{wang-winkler-xiang2018}.
Global existence of weak solutions for the three-dimensional chemotaxis-Navier-Stokes system was established in \cite{winkler2016} under certain structural requirements on $\chi$ and $f$. Similar to the 2D case, such weak solutions enjoy eventual smoothness and approach the unique spatially homogeneous steady state $(\bar{m}_0,0,\mathbf{0})$ as $t$ goes to infinity (cf. \cite{winkler2017}). Recently, global solvability of the chemotaxis-Navier-Stokes system in a three-dimensional unbounded domain $\Om$ with infinite extent and finite depth was justified in \cite{peng-xiang2018} under appropriate smallness assumptions on initial data.

~\\
\textbf{Goals and motivations.}
As aforementioned, most of the previously analytical studies devoted to the chemotaxis-Navier-Sotkes system in the literature are confined to the fixed domain setting. However, the domain is normally deformable in natural conditions. For instance, considering that a large variety of swimming micro-organisms live in the vast ocean lying above a rigid bottom it is realistic to investigate the dynamics of cell-fluid interactions in domains with upper surface evolving in time. Allowing the motion of the upper surface and completing system \eqref{cns} with appropriate boundary conditions, the linear and nonlinear stability analysis have been carried out in \cite{chakraborty2018} along with supporting numerical simulations in a 2D shallow chamber and the effect of free-surface on bacterial plume patterns and their dynamics in both 2D and 3D cases have been recently explored numerically in \cite{Ivan2019,Ivan2020}.
 The rigorous mathematical analysis for \eqref{cns} in time-dependent domains are lack of investigations even on the natural first question of its well-posedness.

Motivated by the above numerical results and the experiments conducted in \cite{hillesdon-pedley-kessler1995,
hillesdon-pedley1996,Tuval}, we shall study the global well-posedness of system \eqref{cns} in the following three-dimensional moving domain above a rigid bottom defined by:
\ben
\Om_t=\{(x_1, x_2, y)\in \mathbb{R}^3: \ -b<y<\eta(x_1,x_2,t)\},
\enn
where $b$ is a positive constant and the surface function $\eta(x_1,x_2,t)$ depends on the horizontal variable $(x_1,x_2)\in \R^2$ and temporal variable $t$. For illustration, we denote $S_F=\{(x_1,x_2,y)\in \mathbb{R}^3:\  y=\eta(x_1,x_2,t)\}$, $S_B=\{(x_1,x_2,y)\in \mathbb{R}^3:\  y=-b\}$ and $\Om_0=\{(x_1,x_2,y)\in\mathbb{R}^3:\ -b<y<\eta_0(x_1,x_2)\}$ with given $\eta_0$ defined on $\R^2$.

 The choice on chemotactic intensity and oxygen consumption rate  in the present paper is $\chi(c)=\frac{1}{c}$ and $f(c)=c$. Substituting $\chi(c)=\frac{1}{c}$ into the chemotactic sensitivity function $\chi(c)\na c$ in the first equation of \eqref{cns} leads to the logarithmic sensing $\na \ln c$, which has been experimentally verified in \cite{Kalinin}. This logarithm results in a mathematically unfavorable singularity which, however, has been adopt in the literature to describe various chemotaxis process e.g.  dynamical interactions between vascular endothelial cells and signaling molecules vascular endothelial growth factor in the initiation of tumor angiogenesis (cf. \cite{LSN}), the boundary movement of chemotactic bacterial populations (cf. \cite{Nossal72}) and the travelling band behavior of bacteria (cf. \cite{KS71b,lui-wang2010}). With such choice on $\chi(c)$ and $f(c)$, the initial-boundary value problem studied in the present paper reads

 \be\label{e04}
\left\{
\begin{array}{lll}
m_t+\vec{u}\cdot \na m+\nabla\cdot(m\nabla \ln c)=\Delta m\quad \text{for}\ \ (x_1,x_2,y)\in \Om_t \ \ \text{and}\ \ t>0,\\
c_t+\vec{u}\cdot \na c+mc=\Delta c,\\
\vec{u}_t+\vec{u}\cdot\na \vec{u}+\nabla p+m\nabla \Phi=\Delta\vec{u}, \\
\na \cdot\vec{u}=0,\\
(m, c, \vec{u})(x_1,x_2,y,0)=(m_0, c_0, \vec{u}_0)(x_1,x_2,y) \quad \text{for}\ \ (x_1,x_2,y)\in \Om_0,
\end{array}
\right.
\ee
with the following boundary conditions on $S_F$:
\be\label{e05}
\left\{
\begin{array}{lll}
(\na m -m\nabla \ln c)\cdot \vec{n}=0,\\
c=\hat{c},\\
\eta_t=u_3-u_1\partial_1\eta -u_2\partial_2\eta,\\
p n_i-(\partial_j u_i+\partial_i u_j)n_j=\left\{
\gamma\eta-\sigma\na\cdot\left(
\frac{\na \eta}{\sqrt{1+|\na\eta|^2}}
\right)
\right\}n_i
\end{array}
\right.
\ee
and the following boundary conditions on $S_B\times(0,\infty)$:
\be\label{e06}
m=0,\quad \partial_3 c=0, \quad \vec{u}=\mathbf{0},
\ee
where the constant $\hat{c}>0$ is the saturation value of the oxygen on the free surface $S_F$, $\vec{n}=(n_1,n_2,n_3)$ is the outward unit normal to $S_F$ and we sum upon repeated indices following the Einstein convention (this convention will be used in the remaining part of this paper without further clarification). $D_m,D_c,D$ have been taken to be $1$ without loss of generality and initial-boundary conditions have been imposed.

We next briefly introduce the derivation of the boundary conditions \eqref{e05} on $S_F$.
\begin{itemize}
\item the kinematic condition: denote the free surface by $d(x_1,x_2,y,t)=y-\eta(x_1,x_2,t)=0$. Since fluid particles do not cross the free surface, we have $(\partial_t+\vec{u}\cdot \na)(y-\eta(x_1,x_2,t))=0$, which results in $\eta_t=u_3-u_1\partial_1\eta -u_2\partial_2\eta$. Further discussion of this condition can be found, e.g., in \cite[page 451]{wehausen-laiton}.
\item the normal force balance condition: the last boundary condition in \eqref{e05} states a discontinuity in the normal stress on two sides of the free surface which, is proportional to the mean curvature of the surface and produced by the effect of surface tension, where $(p-\gamma\eta) n_i-(\partial_j u_i+\partial_i u_j)n_j$ is the stress difference between two sides of the surface, $\na\cdot\left(
\frac{\na \eta}{\sqrt{1+|\na\eta|^2}}
\right)$ is the mean curvature of the surface, $\gamma>0$ is the  acceleration of gravity and $\sigma>0$ denotes the coefficient of surface tension. We refer the reader to \cite[page 451-454]{wehausen-laiton} for detailed derivation of this condition.
\item
the zero-flux boundary condition on $m$ and the Dirichlet boundary condition on $c$ in \eqref{e05} are physical ones that match the boundary descriptions in the experiments conducted in \cite{hillesdon-pedley-kessler1995,
hillesdon-pedley1996,Tuval} and the numerical analysis in \cite{chakraborty2018, Ivan2019, Ivan2020}.
\end{itemize}

As mentioned before, our goal is to establish the global solvability of system \eqref{e04}-\eqref{e06} under appropriate small assumptions on initial data. To this end,
we shall first apply the following transformation (cf. \cite{winkler2017radial})
\be\label{tr}
\tilde{c}=-\ln c+\ln \hat{c}
\ee
to
system \eqref{e04}-\eqref{e06} to resolve the logarithmic singularity in its first equation and study the global well-posedness of the following transformed system
\be\label{e01}
\left\{
\begin{array}{lll}
m_t+\vec{u}\cdot\na m-\nabla\cdot(m\nabla \tilde{c})=\Delta m\quad \text{for}\ \ (x_1,x_2,y)\in \Omega_{t}\ \ \text{and}\ \ t>0,\\
\tilde{c}_t+\vec{u}\cdot \na\tilde{c}+|\na \tilde{c}|^2-m=\Delta \tilde{c},\\
\vec{u}_t+\vec{u}\cdot\na \vec{u}+\nabla p+m\nabla \Phi=\Delta\vec{u},\\
\na \cdot\vec{u}=0,\\
(m, \ti{c}, \vec{u})(x_1,x_2,y,0)=(m_0, \tilde{c}_0, \vec{u}_0)(x_1,x_2,y)\quad \text{for}\ \ (x_1,x_2,y)\in \Om_0,
\end{array}
\right.
\ee
with
\be\label{e02}
\left\{
\begin{array}{lll}
(\na m +m\nabla \tilde{c})\cdot \vec{n}=0,\\
\ti{c}=0,\\
\eta_t=u_3-u_1\partial_1\eta -u_2\partial_2\eta,\\
p n_i-(\partial_j u_i+\partial_i u_j)n_j=\left\{
\gamma\eta-\sigma\na\cdot\left(
\frac{\na \eta}{\sqrt{1+|\na\eta|^2}}
\right)
\right\}n_i
\end{array}
\right.
\ee
on $S_F$ and
\be\label{e03}
m=0,\quad \partial_3 \tilde{c}=0, \quad \vec{u}=\mathbf{0}
\ee
on $S_B\times(0,\infty)$.

\section{Notation and main results}
\textbf{Notation.} For clarity, we specify some notations below.
\begin{itemize}
  \item $\Omega_t=\{(x_1, x_2, y)\in \mathbb{R}^3: \ -b<y<\eta(x_1,x_2,t)\}.$
  \item $S_B=\{(x_1,x_2,y)\in \mathbb{R}^3: \ y=-b\}.$
  \item $S_F=\{(x_1,x_2,y)\in \mathbb{R}^3: \ y=\eta(x_1,x_2,t)\}.$
\item $\Om_0=\{(x_1,x_2,y)\in\mathbb{R}^3:\ -b<y<\eta_0(x_1,x_2)\}.$
\item $S_0=\{(x_1,x_2,y)\in\mathbb{R}^3:\ y=\eta_0(x_1,x_2)\}.$
\item $\Omega=\{(x_1,x_2,y)\in\mathbb{R}^3: \  -b<y<0\}.$
\item $\Gamma=\{(x_1,x_2,y)\in \mathbb{R}^3:\  y=0\}.$
\item We denote $dx=dx_1dx_2$ for $\vec{x}=(x_1,x_2)\in \mathbb{R}^2$ and denote $dxdy=dx_1dx_2dy$ for $(x_1,x_2,y)\in \Om$.
\item $[\vec{v},\vec{u}]=\frac{1}{2}\int_{\Om}(\partial_jv_i+\partial_iv_j)
    (\partial_ju_i+\partial_iu_j)dxdy$ for $\vec{v}=(v_1,v_2,v_3)$ and $\vec{u}=(u_1,u_2,u_3)$.

\item $\na_0$ denotes tangential gradient along the $x_1-x_2$ plane.
\item $H^m\,(m\geq 1)$ represents $H^m(\Om)$ and $L^p\,(1\leq p\leq \infty)$ stands for $L^p(\Om)$. In particular, $H^0=H^0(\Om)=L^2(\Om).$
\item ${_{0}}{H}^1$ and
${^0}{H}^1$ represent the subspace of $H^1(\Om)$ consisting of functions which vanish on $S_B$ and $\G$, respectively.

\item  For Banach space $\mathbf{X}$, we use $\mathbf{X}^{'}$ to denote its dual space and use $\|\cdot\|_{L^q_t\mathbf{X}}\,(1\leq q\leq \infty)$ to denote $\|\cdot\|_{L^q(0,t;\mathbf{X})}$ for $t>0$. $H^{-\frac{1}{2}}(\G)$ represents the dual space of $H^{\frac{1}{2}}(\G)$.
\item $\mathcal{H}$ is the harmonic extension operator, also denoted by $\bar{\eta}=\mathcal{H}(\eta)$, extending functions defined on $\G$ to harmonic functions on $\Om$ with zero Neumann boundary condition on $S_B$. Specifically, for any $\eta(x_1,x_2)$ defined on $\G$, its harmonic extension $\bar{\eta}$ solves
\be\label{eee9}
\left\{
\begin{array}{lll}
\De \bar{\eta}=0\quad \ \text{in}\,\,\,\Omega,
\\
\bar{\eta}=\eta\quad \ \ \ \text{on}\,\,\,\G,\\
\partial_3\bar{\eta}=0\quad \text{on}\,\,\,S_B.
\end{array}
\right.
\ee
\item We use $C$ to denote a generic constant which is independent of $t$, and may change from one line to another.
\end{itemize}

~\\
\textbf{Compatibility conditions.} Since the problem \eqref{e01}-\eqref{e03} is posed on a domain with boundary, it is natural to impose on initial data the following compatibility conditions:
\be\label{cb}
\left\{
\begin{array}{lll}
[(\partial_j u_{0i}+\partial_i u_{0j})n_{0j}]_{\text{tan}}=0\quad \text{on}\ \ \ S_0,\\
\na \cdot \vec{u}_0=0 \quad \text{in}\quad \Om_0,\\
(\na m_0 +m_0\nabla \tilde{c}_0)\cdot \vec{n}_0=0, \quad  \ti{c}_0=0\ \ \ \text{on}\ \ \ S_0,\\
m_0=0,\quad \partial_3 \tilde{c}_0=0,\quad \vec{u}_0=\mathbf{0}\quad \text{on} \quad S_B,
\end{array}
\right.
\ee
where ``tan'' means the tangential component, $\vec{n}_0=(n_{01},n_{02},n_{03})$ is the outward unit normal to the initial surface $S_0$ and the first condition is obtained by taking the inner product of the stress difference between two sides of the initial surface with any tangential vector.

We are now in a position to state the main results of this paper.

\begin{theorem}\label{t2}
Suppose that the initial data $\vec{u}_0=(u_{01},u_{02},u_{03}),\,m_0, \, \ti{c}_0 \in H^2(\Om_0)$ and $\eta_0\in H^3(\R^2)$ fulfill the compatibility conditions \eqref{cb}.
Assume further
\be\label{e004}
\na \Phi,\ \na^2 \Phi\in L^\infty(0,\infty; L^\infty(\R^3)),\qquad \na \Phi_t\in L^2(0,\infty; L^2(\R^3)).
\ee
Then there exists a constant $\va_0>0$ suitably small such that if $\|m_0\|_{H^2(\Om_0)}+\| \ti{c}_0\|_{H^2(\Om_0)}+\|\vec{u}_0\|_{H^2(\Om_0)}
+\|\eta_0\|_{H^3(\R^2)}<\va_0$,
system \eqref{e01}-\eqref{e03} admits a unique solution $(m,\tilde{c},\vec{u},p,\eta)$ satisfying
\be\label{e07}
\begin{split}
&\ \sup_{t>0}\,\big(\|m(t)\|_{H^2(\Om_t)}
+\|\ti{c}(t)\|_{H^2(\Om_t)}
+\|\vec{u}(t)\|_{H^2(\Om_t)}
+\| \na p(t)\|_{L^2(\Om_t)}
+\|\eta(t)\|_{H^3(\R^2)}\big)^2\\
&\ \ +\int_0^\infty \big(\|m(t)\|_{H^3(\Om_t)}
+\|\tilde{c}(t)\|_{H^3(\Om_t)}
+\|\vec{u}(t)\|_{H^3(\Om_t)}
+\| \na p(t)\|_{H^1(\Om_t)}
+\|\na_0\eta(t)\|_{H^\frac{5}{2}(\R^2)}\big)^2\,dt\\
&\leq C \big(\|m_0\|_{H^2(\Om_0)}+\|\ti{c}_0\|_{H^2(\Om_0)}
+\|\vec{u}_0\|_{H^2(\Om_0)}
+\|\eta_0\|_{H^3(\R^2)}\big)^2
\end{split}
\ee
for some positive constant $C$.
\end{theorem}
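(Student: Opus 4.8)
The plan is to reduce the free-boundary problem to a problem on the fixed slab $\Omega$ via a flattening diffeomorphism, then close a high-order energy estimate on the transformed unknowns and recover the moving-domain solution by inverting the change of variables. First I would introduce the time-dependent map $\Theta_t:\Omega\to\Omega_t$, $(x_1,x_2,y)\mapsto(x_1,x_2,y+\chi(y)\bar\eta(x_1,x_2,t))$ (or the harmonic extension $\bar\eta=\mathcal H(\eta)$ truncated away from the rigid bottom so that $\Theta_t$ is the identity near $S_B$), and rewrite \eqref{e01}--\eqref{e03} for the pulled-back quantities. This turns the constant-coefficient operators $\Delta$, $\nabla$, $\nabla\cdot$ into operators with coefficients depending on $\bar\eta$ and its derivatives, plus a transport term coming from $\Theta_t^{-1}\partial_t\Theta_t$; crucially, for $\eta$ small in $H^3(\R^2)$ the Jacobian is a small perturbation of the identity, so the principal parts remain uniformly elliptic/parabolic and the extra terms are (at least quadratically) small and can be absorbed.

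Next I would set up the natural energy functional: $\mathcal E(t)$ comprising $\|m\|_{H^2}^2+\|\tilde c\|_{H^2}^2+\|\vec u\|_{H^2}^2+\|\nabla p\|_{L^2}^2+\|\eta\|_{H^3}^2$ on $\Omega$, and a dissipation functional $\mathcal D(t)$ with the $H^3$ norms of $m,\tilde c,\vec u$, the $H^1$ norm of $\nabla p$, and $\|\nabla_0\eta\|_{H^{5/2}}^2$, and prove the a priori bound $\mathcal E(t)+\int_0^t\mathcal D\le C\mathcal E(0)+C(\mathcal E(t)+\mathcal D(t))\cdot(\text{small})$ under the bootstrap hypothesis $\sup_{[0,T]}\mathcal E\le\varepsilon$. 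The estimates themselves split into four coupled pieces: (i) parabolic $L^2$-in-time $H^3$ estimates for $m$ from the no-flux equation, using the zero-flux boundary condition to kill boundary terms and treating $\nabla\cdot(m\nabla\tilde c)$ as a lower-order nonlinearity controlled by $\|\tilde c\|_{H^3}\|m\|_{H^2}$-type bounds; (ii) likewise for $\tilde c$, where the quadratic gradient term $|\nabla\tilde c|^2$ and the forcing $m$ are handled by Sobolev/interpolation in 3D (here the Dirichlet condition $\tilde c=0$ on $S_F$ is essential, and on $S_B$ one uses $\partial_3\tilde c=0$); (iii) the Stokes-type estimate for $(\vec u,p)$ with the dynamic boundary condition on $S_F$ — this is where the surface tension $\sigma\nabla_0\cdot(\nabla_0\eta/\sqrt{1+|\nabla_0\eta|^2})$ enters, providing the regularizing $\|\nabla_0\eta\|_{H^{5/2}}$ control via the standard trace estimate $\|\vec u\|_{H^{s+1/2}(\Gamma)}\lesssim\|\vec u\|_{H^{s+1}(\Omega)}$; (iv) the kinematic equation $\eta_t=u_3-u_1\partial_1\eta-u_2\partial_2\eta$, which is transport-dominated and must be differentiated to recover $\|\eta\|_{H^3}$ in terms of $\int\|\vec u\|_{H^3}$ — a minor loss-of-derivative issue that is absorbed by the surface-tension dissipation.

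The main obstacle I expect is the coupled boundary estimate for the fluid with the free surface: one must simultaneously control $\eta$ in $H^3$ (energy) and $\nabla_0\eta$ in $H^{5/2}$ (dissipation) while the only source for $\eta$'s regularity is the velocity trace and the surface-tension term, and the map coefficients themselves depend on $\eta$ at top order, so the "geometric" commutators threaten to lose a derivative. The standard remedy is to integrate by parts in time on the surface-tension term — test the $\vec u$-equation against $\vec u$, move the $\sigma$-term onto $\partial_t\eta=u_3-u_1\partial_1\eta-u_2\partial_2\eta$, and produce $\frac{d}{dt}\int\frac{\sigma|\nabla_0\eta|^2}{2}$ plus a good dissipative $\|\nabla_0\vec u\|^2$ after using ellipticity of the linearized curvature operator, the nonlinear remainder being cubic and hence small. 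A secondary difficulty is the logarithmic structure hidden in the $\tilde c$-equation: even after \eqref{tr} the term $|\nabla\tilde c|^2$ has no sign and couples $m$ and $\tilde c$ at the $H^2$ level, so one needs the smallness of $\tilde c$ (equivalently $c$ close to $\hat c$) to run the bootstrap — this is precisely why the theorem is only a near-equilibrium statement. Once the a priori estimate is closed, global existence follows by combining it with a local existence theorem (obtained by linearization/fixed point on a short time interval, which I would state or cite) via the standard continuation argument, and uniqueness follows by an energy estimate on the difference of two solutions in a lower norm; finally transforming back through $\Theta_t$ and using $\eta\in H^3\hookrightarrow C^{1,1/2}$ to guarantee $\Theta_t$ is a bi-Lipschitz $C^1$ diffeomorphism yields \eqref{e07}.
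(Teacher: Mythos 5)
Your overall strategy---flatten via a harmonic-extension diffeomorphism, set up an energy functional and a dissipation functional, close a bootstrap, and combine with local existence---is the right shape, and the paper's proof is of the same family (the paper uses the transformation $\theta(x_1,x_2,y,t)=(x_1,x_2,\bar\eta+y(1+\bar\eta/b))$, together with the Piola-type change of variable for $\vec u$ to preserve $\nabla\cdot\vec v=0$, and then constructs global solutions by iteration on linear systems and a contraction argument rather than by \emph{a priori} estimate plus continuation; these are interchangeable scaffolds). The items (i) and (ii) for the $m$ and $\tilde c$ equations are essentially what appears in Subsection 4.2.

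The gap is in item (iii), and it is precisely the difficulty the paper singles out in the paragraph after Theorem \ref{t1}. Your argument for the dissipation estimates on $\vec v$ and $\eta$ is circular. You want $\vec u\in L^2_tH^3(\Omega)$ from the stationary Stokes elliptic estimate (Lemma \ref{pl6}), but the relevant boundary datum on $\Gamma$ is $q-2\partial_3v_3=\gamma\eta-\sigma\Delta_0\eta-g_3$, and to land in $L^2_tH^3$ this datum must lie in $L^2_tH^{3/2}(\Gamma)$, i.e.\ $\eta\in L^2_tH^{7/2}(\mathbb R^2)$; conversely you propose to obtain $\nabla_0\eta\in L^2_tH^{5/2}$ by reading off traces of $q$ and $\partial_3v_3$, which presupposes $\vec u\in L^2_tH^3$. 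The "integrate by parts in time on the surface-tension term" remedy only yields $L^\infty_t$ control of $\|\nabla_0\eta\|_{H^2}$ (and at higher tangential order $\|\eta\|_{L^\infty_t H^3}$), never an $L^2_t$-in-time dissipation on $\eta$. So the bootstrap as written does not close at the top order.

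What is actually needed---and what the paper does in Lemma \ref{nl5} and Lemma \ref{pl9}---is to break the circle by discarding the pressure boundary condition when invoking elliptic regularity for the Stokes system. One first tests the momentum equation tangentially (against $\vec v$, $-\Delta_0\vec v$, $\Delta_0^2\vec v$) to get $\sum_{k\le 2}\|\nabla_0^k\vec v\|_{L^2_tH^1}$. Because $\nabla\cdot\vec v=0$, the identity $\partial_3v_3=-\partial_1v_1-\partial_2v_2$ lets you extract the boundary trace estimate $\|v_3\|_{L^2_tH^{5/2}(\Gamma)}\lesssim\sum_{k\le 2}\|\nabla_0^k\vec v\|_{L^2_tH^1}$ from tangential regularity alone (inequality \eqref{ea2}). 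One then applies Agmon--Douglis--Nirenberg to the Stokes system with $v_3|_\Gamma$ (not the pressure condition) as the third boundary datum to obtain $\vec v\in L^2_tH^3$ and $\nabla q\in L^2_tH^1$. Only after this is established does one feed $\nabla q\in L^2_tH^1$ and the pressure boundary condition---via the projection $P$ and the identity $P\nabla q=\nabla\mathcal H(2\partial_3v_3+\gamma\eta-\sigma\Delta_0\eta-g_3)$ of Lemma \ref{pl8}---into the 2D elliptic problem for $\nabla_0\eta$ to deduce $\nabla_0\eta\in L^2_tH^{5/2}$. Without this reordering, the Stokes elliptic estimate with the dynamic boundary condition only gives $L^\infty_tH^2$ for $\vec v$, which is not enough to control the nonlinear remainders (e.g.\ the $(\nabla_0\eta)^4\nabla_0^2\eta$ terms in $\vec G$ require $\nabla_0\eta\in L^2_tH^{5/2}$). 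You would need to add this device to make your item (iii) and the bootstrap rigorous.
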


With \eqref{tr} and the results obtained for the transformed system \eqref{e01}-\eqref{e03}, we have the following assertions for the initial-boundary value problem \eqref{e04}-\eqref{e06}.

\begin{theorem}\label{t1}
Let $\hat{c},\,c_0>0$ and $m_0\geq 0$. Suppose the assumptions in Theorem \ref{t2} hold with $\ti{c}_0=-\ln c_0+\ln \hat{c}$.
Then there exists a unique solution $(m,c,\vec{u},p,\eta)$ to system \eqref{e04}-\eqref{e06} satisfying that
\ben
m(x_1,x_2,y,t)\geq0\quad\text{and}\quad c(x_1,x_2,y,t)>0
\enn
for $(x_1,x_2,y)\in \Om_t$ and $t>0$, and that
\be\label{e08}
\begin{split}
&\ \sup_{t>0}\,\big(\|m(t)\|_{H^2(\Om_t)}
+\|c(t)-\hat{c}\|_{H^2(\Om_t)}
+\|\vec{u}(t)\|_{H^2(\Om_t)}
+\|\na p(t)\|_{L^2(\Om_t)}
+\|\eta(t)\|_{H^3(\R^2)}\big)^2\\
&\ \ +\int_0^\infty \big(\|m(t)\|_{H^3(\Om_t)}
+\|c(t)-\hat{c}\|_{H^3(\Om_t)}
+\|\vec{u}(t)\|_{H^3(\Om_t)}
\big)^2\,dt\\
&\ \ +\int_0^\infty
(\big\| \na p(t)\|_{H^1(\Om_t)}
+\|\na_0\eta(t)\|_{H^\frac{5}{2}(\R^2)}\big)^2\,dt\\
&\leq C \big[\sum_{k=1}^3\big(\|m_0\|_{H^2(\Om_0)}+\|\ln c_0-\ln\hat{c}\|_{H^2(\Om_0)}
+\|\vec{u}_0\|_{H^2(\Om_0)}
+\|\eta_0\|_{H^3(\R^2)}\big)^{2k}\big]\\
&\ \quad \ \ \times \exp\{C \big(\|m_0\|_{H^2(\Om_0)}+\|\ln c_0-\ln\hat{c}\|_{H^2(\Om_0)}
+\|\vec{u}_0\|_{H^2(\Om_0)}
+\|\eta_0\|_{H^3(\R^2)}\big)\}
\end{split}
\ee
for some $C>0$.
\end{theorem}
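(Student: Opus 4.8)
The plan is to deduce Theorem \ref{t1} from Theorem \ref{t2} by unwinding the change of variables \eqref{tr}. Given $c_0>0$ and setting $\ti c_0 = -\ln c_0 + \ln\hat c$, the hypotheses of Theorem \ref{t2} are in force, so we obtain a unique solution $(m,\ti c,\vec u,p,\eta)$ of the transformed system \eqref{e01}--\eqref{e03} with the bound \eqref{e07}. We then define $c := \hat c\, e^{-\ti c}$ and check directly that $(m,c,\vec u,p,\eta)$ solves \eqref{e04}--\eqref{e06}: the identities $\na\ln c = -\na\ti c$, $c_t = -c\,\ti c_t$ and $\Delta c = -c\Delta\ti c + c|\na\ti c|^2$ convert the second equation of \eqref{e01} into $c_t + \vec u\cdot\na c + mc = \Delta c$, the first equation and the flux boundary condition transform in the obvious way, and $\ti c = 0$ on $S_F$ becomes $c = \hat c$, while $\partial_3\ti c = 0$ on $S_B$ becomes $\partial_3 c = 0$. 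Uniqueness for \eqref{e04}--\eqref{e06} follows from uniqueness for \eqref{e01}--\eqref{e03} because the map $c\leftrightarrow\ti c$ is a bijection between the respective solution classes (any positive $c$ in the stated regularity class produces an admissible $\ti c$).

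Next I would establish the two pointwise sign properties. That $c = \hat c e^{-\ti c} > 0$ is immediate. For $m\ge 0$ I would argue at the level of the transformed system: rewrite the first equation of \eqref{e01} as $m_t + \vec u\cdot\na m - \na\ti c\cdot\na m - m\Delta\ti c = \Delta m$, a linear parabolic equation for $m$ with the no-flux boundary condition $(\na m + m\na\ti c)\cdot\vec n = 0$ on $S_F$ and the homogeneous Dirichlet condition $m = 0$ on $S_B$. Since $m_0 = 0$ on $S_B$ and $m_0 \ge 0$ (equivalently, the original data satisfy $m_0\ge 0$, and the flux condition in \eqref{cb} is preserved under \eqref{tr}), the maximum principle for this linear equation on the time-dependent domain $\Om_t$ yields $m\ge 0$; one may alternatively test the equation with $m_-:=\min\{m,0\}$ and use a Gronwall argument, the boundary terms dropping out by the no-flux and Dirichlet conditions. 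This step requires only the regularity already guaranteed by \eqref{e07}, so $\Delta\ti c$ and $\na\ti c$ are sufficiently integrable for the comparison argument to be legitimate.

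It then remains to convert the estimate \eqref{e07}, which controls $\ti c = -\ln c + \ln\hat c$, into the estimate \eqref{e08}, which controls $c - \hat c$. On the space $\Om_t$ of fixed (small) horizontal extent, for each $t$ we have $c - \hat c = \hat c(e^{-\ti c} - 1)$, so by the algebra and composition properties of $H^2$ and $H^3$ on a bounded Lipschitz domain (the domains $\Om_t$ are uniformly Lipschitz once $\eta$ is small in $H^3$), one gets $\|c(t) - \hat c\|_{H^k(\Om_t)} \le C\,P(\|\ti c(t)\|_{H^k(\Om_t)})$ for $k = 2,3$, with $P$ a polynomial without constant term whose degree accounts for the nonlinearity of $e^{-s} - 1$; integrating in time the $k=3$ bound and using $\sup_t\|\ti c(t)\|_{H^2}\lesssim\va_0$ to absorb the higher powers produces the $\sum_{k=1}^3(\cdots)^{2k}$ structure and the exponential factor on the right-hand side of \eqref{e08}. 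The converse passage $\ti c_0 = -\ln c_0 + \ln\hat c$ on the initial data, giving $\|\ti c_0\|_{H^2(\Om_0)} \le C\,(\text{polynomial in }\|\ln c_0 - \ln\hat c\|_{H^2(\Om_0)})$, feeds into the smallness requirement of Theorem \ref{t2}.

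The main obstacle I anticipate is the bookkeeping of the nonlinear Sobolev estimates for the composition $c\mapsto\ti c = -\ln c + \ln\hat c$ and its inverse on the moving domain: one must track how products of lower-order norms of $\ti c$ assemble into the polynomial-and-exponential right-hand side of \eqref{e08}, uniformly in $t$, and verify that the smallness $\va_0$ propagates so that $c$ stays bounded away from $0$ (hence $\ln c$ is well-defined and smooth) for all time. This is routine in principle — it uses only Moser-type inequalities, the Gagliardo--Nirenberg interpolation on uniformly Lipschitz domains, and the already-established control of $\eta$ — but it is where the explicit form of the bound in \eqref{e08} is actually generated, so care is needed to get the powers and the exponent right.
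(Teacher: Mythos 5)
Your proposal is correct and follows essentially the same route as the paper: apply Theorem~\ref{t2} with $\tilde c_0=-\ln c_0+\ln\hat c$, invert the transformation via $c=\hat c\,e^{-\tilde c}$, obtain $c>0$ immediately and $m\ge 0$ from the maximum principle, and convert the bound \eqref{e07} on $\tilde c$ into \eqref{e08} on $c-\hat c$ by expanding $e^{-\tilde c}-1$ as a power series and controlling the tail with $\sup_t\|\tilde c(t)\|_{L^\infty(\Om_t)}\lesssim\sup_t\|\tilde c(t)\|_{H^2(\Om_t)}$, which is exactly how the paper generates the polynomial prefactor $\sum_{k=1}^3(\cdots)^{2k}$ and the exponential factor. (One small point of clarity: since $c=\hat c\,e^{-\tilde c}$ is automatically positive and $\tilde c$ is already in $L^\infty$ by Theorem~\ref{t2}, there is nothing to propagate to keep $c$ away from $0$; the "main obstacle" you flag is thus just routine bookkeeping, as the paper's proof confirms.)
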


\begin{rem}
As mentioned in the introduction, in the experiments (cf. \cite{dombrowski-cisneros-chatkaew2004,
hillesdon-pedley1996, Tuval}) the potential function $\Phi$ denotes the gravity or centrifugal forces, both of which are independent of the temporal variable $t$. Although
in Theorem \ref{t2} and Theorem \ref{t1} we have treated the more general case, where the given potential functions $\Phi$ are allowed to depend on $t$, our results can be applied to the above two cases, where $\Phi(x_1,x_2,y)=\lambda\gamma y$ for some constant $\lambda\in\R$ depending on the fluid mass density and the cell density represents the gravity potential, and $\Phi(x_1,x_2,y)=\Phi(x_1^2+x^2_2+y^2)\to 0$ as $(x_1^2+x^2_2+y^2)\to\infty$ stands for the centrifugal potential, since the $\Phi$ in both cases satisfy
the assumption \eqref{e004}.
\end{rem}

\begin{rem} Setting $\eta_0=\eta=p=\Phi=0$ and $\vec{u}_0=\vec{u}=\mathbf{0}$ in Theorem \ref{t1}, one derives the unique global solution $(m,c)\in L^\infty(0,\infty;H^2)\times L^2(0,\infty;H^3)$ to the chemotaxis-only subsystem on $\Omega$. To the best of our knowledge, this is the first analytical result on the well-posedness of the chemotaxis system with  logarithmic sensitivity in an unbounded domain with boundary.
\end{rem}

We next briefly introduce the main ideas and organisation of this paper.
Due to the lack of well-posedness theory for parabolic systems on a moving domain, we shall first transform \eqref{e01}-\eqref{e03} into the initial-boundary value problem \eqref{e001}-\eqref{e003} on an equilibrium domain in Section 3, whose detailed derivations are postponed to be given in the Appendix. Solvability of the linearized version of \eqref{e001}-\eqref{e003} is demonstrated in Section 4.
When solving the linear system in Section 4, the main challenge that we are face of is that we can not address the higher energy estimates in the vertical direction for the velocity field $\vec{v}$ by directly taking the partial derivatives in the equations of \eqref{e1} since the equilibrium domain is not translation-invariant in the vertical spatial variable.
While this difficulty can be circumvented in the situation where  the velocity field subject to the no-sip boundary condition as in \cite{peng-xiang2018}, by employing the elliptic theory on the corresponding stationary Stokes system with the aid of the estimation of the time derivative of $\vec{v}$, resorting to the elliptic estimates on system \eqref{pe1} to address the $L^2_tH^3$ regularity of $\vec{v}$ claimed in Proposition \ref{p1} seems rather hopeless in the present case due to the boundary condition $q-2\partial_3 v_3=\gamma\eta-\sigma\Delta_0\eta-g_3$ on $\G$, which incorporates the normal stress with the surface function $\eta$. In fact, to obtain this $L^2_tH^3$ regularity of $\vec{v}$ by utilizing Lemma \ref{pl6}, the boundary term $\gamma\eta-\sigma\Delta_0\eta$ is required to belongs to the space $L^2_tH^{\frac{3}{2}}(\G)$, however, because of the fact that $\eta_t=v_3$ on $\G$, one only derives the $L^\infty$ in-time estimate of $\eta$ by applying the standard testing procedure to system \eqref{e1} (see Lemma \ref{nl2}), so that we only expect the $L^\infty_tH^2$ regularity on $\vec{v}$ if we make use of the elliptic estimates presented in Lemma \ref{pl6}. Accordingly, our approach to establish this $L^2_tH^3$ regularity on the velocity field will be based on an alternative method to verify the intuitive idea that the higher energy estimates on the tangential direction of $\vec{v}$ should provide suitable regularity properties on its vertical direction thanks to the divergence-free property of $\vec{v}$.
 Noting that the domain is translation-invariant in the horizontal direction, we start by taking tangential derivatives in the first equation of \eqref{e1} to gain energy bounds for the tangential derivatives of the velocity field $\vec{v}$. By taking advantage of the divergence-free condition on $\vec{v}$, the boundary estimates (on $\G$)
for its third component $v_3$ readily follows from these tangential bounds (see the proof of Lemma \ref{nl5}). Then the desired $L^2_tH^3$ regularity of $\vec{v}$ is achieved by employing the elliptic estimates on the stationary Stokes system derived from \eqref{pe1} by replacing the third boundary condition with this boundary estimates on $v_3$ (see the proof of Lemma \ref{nl5}). Based on the results derived for the linear system, we construct approximation solutions for the nonlinear problem \eqref{e001}-\eqref{e003} by iteration and prove that such approximation solutions are uniformly bounded in the desired energy space in Section 5. The proof of Theorem \ref{t2} is given in Section 6 by first applying the contraction mapping theorem to the approximation sequence to derive the unique solution of \eqref{e001}-\eqref{e003} and then reversing the transformation defined in Section 3 to obtain the solution of \eqref{e01}-\eqref{e03}. The proof of Theorem \ref{t1} is also exhibited in Section 6.

\section{Transformation to an equilibrium domain}
Since we do not know how to solve system \eqref{e01}-\eqref{e03} locally-in-time on the moving domain $\Om_t$, following \cite{beale1984} we shall transform this system to one on the equilibrium domain $\Om=\{(x_1,x_2,y)\in \R^3:\  -b<y<0 \}$. For $t>0$, define $\theta(t): \Om\to \Om_t=\{(x_1,x_2,y)\in \R^3:\  -b<y<\eta(x_1,x_2,t) \}$ by
\be\label{aae1}
\theta(x_1,x_2,y,t):=(\theta_1,\theta_2,\theta_3)
(x_1,x_2,y,t)
=(x_1,x_2,\bar{\eta}+y(1+\bar{\eta}/b)),
\ee
where $\bar{\eta}$ is the harmonic extension of $\eta$.
Then
\be\label{aae2}
d\theta =\left(
\begin{array}{lll}
1\ \ \quad0\ \ \quad 0\\
0\ \ \quad 1\ \ \quad 0\\
\al\quad \ \beta\ \quad J
\end{array}
\right),
\quad (\xi_{ij})_{3\times 3}:=(d\theta)^{-1}
=\left(
\begin{array}{ccc}
\ \ 1\  \qquad  \qquad 0\  \qquad \qquad 0\\
\ \ 0\   \qquad  \qquad 1\  \qquad \qquad 0\\
-J^{-1}\al\quad \  -J^{-1}\beta\ \  \quad  \ \ \ \ \ J^{-1}
\end{array}
\right)
\ee
with
\be\label{aae18}
\al:=(1+y/b)\partial_1\bar{\eta},
\quad\
\beta:=(1+y/b)\partial_2\bar{\eta},
\quad\
J:=1+\bar{\eta}/b+\partial_3 \bar{\eta}(1+y/b).
\ee
For $(x_1,x_2,y,t)\in \Om\times(0,\infty)$ we define
\be\label{aae3}
\begin{split}
w(x_1,x_2,y,t)&=m(\theta(x_1,x_2,y,t),t),\quad
h(x_1,x_2,y,t)=\tilde{c}(\theta(x_1,x_2,y,t),t),\\
q(x_1,x_2,y,t)&=p(\theta(x_1,x_2,y,t),t),\quad
\phi(x_1,x_2,y,t)=\Phi(\theta(x_1,x_2,y,t),t).
\end{split}
\ee
The velocity field $\vec{v}(x_1,x_2,y,t)=(v_1,v_2,v_3)(x_1,x_2,y,t)$ on $\Om\times (0,\infty)$ is defined in the following way
\be\label{aae4}
u_i(\theta(x_1,x_2,y,t),t)=J^{-1}(\partial_j \theta_i)v_j(x_1,x_2,y,t),
\ee
that is,
\be\label{aae8}
\begin{split}
&u_1(\theta(x_1,x_2,y,t),t)=J^{-1}v_1(x_1,x_2,y,t),
\quad
u_2(\theta(x_1,x_2,y,t),t)=J^{-1}v_2(x_1,x_2,y,t),\\
&u_3(\theta(x_1,x_2,y,t),t)=J^{-1}\al v_1(x_1,x_2,y,t)
+J^{-1}\beta v_2(x_1,x_2,y,t)+v_3(x_1,x_2,y,t)
\end{split}
\ee
to preserve the divergence-free condition.
In particular, when $t=0$, the transformation $\theta(0): \Om\to \Om_0=\{(x_1,x_2,y)\in \R^3:\  -b<y<\eta_0(x_1,x_2)\}$ is defined as follows
 \be\label{ne52}
 \theta(x_1,x_2,y,0):=(x_1,x_2,\bar{\eta}_0+y(1+\bar{\eta}_0/b)),
 \ee
 where $\bar{\eta}_0$ is the harmonic extension of $\eta_0$.
 Corresponding to \eqref{aae18}-\eqref{aae4} we set
 \be\label{ne53}
 \al_0=(1+y/b)\partial_1\bar{\eta}_0,
\quad\
\beta_0=(1+y/b)\partial_2\bar{\eta}_0,
\quad\
J_0=1+\bar{\eta}_0/b+\partial_3 \bar{\eta}_0(1+y/b)
\ee
and
\be\label{ne56}
\begin{split}
&w_0(x_1,x_2,y)=m_0(\theta(x_1,x_2,y,0)),\quad
h_0(x_1,x_2,y)=\tilde{c}_0(\theta(x_1,x_2,y,0)),\\
&u_{0i}(\theta(x_1,x_2,y,0))=J^{-1}_0(\partial_j \theta_i(x_1,x_2,y,0))v_{0j}(x_1,x_2,y)
\end{split}
\ee
for $(x_1,x_2,y)\in \Om$.
 Then by the chain rule and direct computations one can deduce from \eqref{e01}-\eqref{e03} that
\be\label{e001}
\left\{
\begin{array}{lll}
w_t-\Delta w-\nabla\cdot(w\nabla h)=F_4(w,h,\vec{v},\bar{\eta}),\quad (x_1,x_2,y,t)\in \Omega\times(0,\infty),\\
h_t-\Delta h-w=F_5(h,\vec{v},\bar{\eta}),\\
\vec{v}_t-\Delta\vec{v}+\nabla q +w\nabla \phi=\vec{F}(w,\vec{v},\na q,\bar{\eta}),\\
\na\cdot\vec{v}=0,\\
(w,h,\vec{v})(x_1,x_2,y,0)=(w_0,h_0,\vec{v}_0)(x_1,x_2,y),\quad \eta(x_1,x_2,0)=\eta_0(x_1,x_2),
\end{array}
\right.
\ee
with the following boundary conditions on $\Gamma\times(0,\infty)$:
\be\label{e002}
\left\{
\begin{array}{lll}
\partial_3 w+w\partial_3 h=G_4(w,h,\bar{\eta}),\quad h=0,\\
\partial_3 v_1+\partial_1 v_3=G_1(\vec{v},\bar{\eta}),\quad
\partial_3 v_2+\partial_2 v_3=G_2(\vec{v},\bar{\eta}),\\
\eta_t=v_3,\quad
q-2\partial_3 v_3=\gamma\eta-\sigma\Delta_0\eta-G_3(\vec{v},\bar{\eta})\\
\end{array}
\right.
\ee
and the following boundary conditions on $S_B\times(0,\infty)$:
\be\label{e003}
w=0,\quad\partial_3 h=0,\quad\vec{v}=\mathbf{0}.
\ee
The detailed derivation of \eqref{e001}-\eqref{e003} is given in appendix with nonlinear terms $\vec{F}=(F_1,F_2,F_3)$, $\vec{G}=(G_1,G_2,G_3)$ and $F_4$, $F_5$, $G_4$ defined in \eqref{aae15}, \eqref{aae13}-\eqref{aae17} and \eqref{aae9}-\eqref{aae21}, respectively.

~\\

\section{Solvability of the linear system}
This section is devoted to proving the solvability of the  linearized version of \eqref{e001}-\eqref{e003}. We first split the corresponding linear system into the following two initial-boundary value problems:
\be\label{e2}
\left\{
\begin{array}{lll}
w_t-\Delta w-\nabla\cdot(a\nabla h)=f_4,\quad (x_1,x_2,y,t)\in \Omega\times(0,\infty),\\
h_t-\Delta h-w=f_5,\\
(w,h)(x_1,x_2,y,0)=(w_0,h_0)(x_1,x_2,y),\\
\partial_3 w+a\partial_3 h=g_4,\quad h=0\quad\text{on}\ \  \Gamma\times(0,\infty),\\
w=0,\quad\partial_3 h=0 \quad \text{on}\ \ S_B\times(0,\infty)
\end{array}
\right.
\ee
and
\be\label{e1}
\left\{
\begin{array}{lll}
\vec{v}_t-\Delta\vec{v}+\nabla q +w\nabla \phi=\vec{f}, \quad (x_1,x_2,y,t)\in \Omega\times(0,\infty),\\
\na \cdot\vec{v}=0,\\
\vec{v}(x_1,x_2,y,0)=\vec{v}_0(x_1,x_2,y),\quad \eta(x_1,x_2,0)=\eta_0(x_1,x_2),\\
\partial_3 v_1+\partial_1 v_3=g_1,\quad
\partial_3 v_2+\partial_2 v_3=g_2\quad \text{on}\ \  \Gamma\times(0,\infty),\\
\eta_t=v_3,\quad
q-2\partial_3 v_3=\gamma\eta-\sigma\Delta_0\eta-g_3\quad \text{on}\ \  \Gamma\times(0,\infty),\\
\vec{v}=\mathbf{0} \quad \text{on}\ \  S_B\times(0,\infty).
\end{array}
\right.
\ee
We introduce two notations for later use:
\ben
|||f|||:=\|f\|_{L^\infty_tH^2}+\|f_t\|_{L^\infty_tL^2}
+\|f\|_{L^2_tH^3}+\|f_t\|_{L^2_tH^1}
\enn
and
\be\label{ee00}
\begin{split}
\|\left\{
w,h,\vec{v},q,\eta\right\}\|
:=&|||w|||+|||h|||+|||\vec{v}|||
+\|\nabla\vec{v}_{t}\|_{L^2_t H^{-\frac{1}{2}}(\G)}\\
&+\|\nabla q\|_{L^\infty_t L^2}
+\|\nabla q\|_{L^2_t H^1}
+\|\nabla q_t\|_{L^2_t({_0}H^1)^{'}}\\
&+\|\eta\|_{L^\infty_tH^3(\R^2)}
+\|\na_0\eta\|_{L^2_tH^{\frac{5}{2}}(\R^2)}
+\|\na^2\mathcal{H}(\eta)\|_{L^2_tH^2}.
\end{split}
\ee

To solve system \eqref{e2}-\eqref{e1}, the initial and boundary data are required to satisfy the following compatibility conditions:
\be\label{cb1}
\left\{
\begin{array}{lll}
\partial_3 w_0+a(x_1,x_2,y,0)\partial_3 h_0=g_4(x_1,x_2,0),
\quad h_0=0\quad \text{on}\ \ \G,\\
w_0=0,\quad \partial_3h_0=0\quad \text{on}\ \ S_B
\end{array}
\right.
\ee
and
\be\label{cb2}
\left\{
\begin{array}{lll}
\partial_3 v_{01}+\partial_{1}v_{03}=g_1(x_1,x_2,0),
\quad \partial_3 v_{02}+\partial_{2}v_{03}=g_2(x_1,x_2,0)
\quad  \text{on}\ \ \G,\\
\na\cdot \vec{v}_0=0\quad \text{in}\quad \Om,\\
\vec{v}_0=\mathbf{0} \quad \text{on}\ \ S_B.
\end{array}
\right.
\ee
Then the solvability of the linear system \eqref{e2}-\eqref{e1} is as follows.
\begin{prop}\label{p1} Let $\vec{v}_0=(v_{01},v_{02},v_{03}),w_0,h_0\in H^2(\Om)$, $\eta_0\in H^{3}(\R^2)$ and
\be\label{ee01}
\left\{\begin{array}{lll}
\na \phi,\ \na^2 \phi\in L^\infty(0,\infty; L^\infty),\quad \na \phi_t\in L^2(0,\infty; L^2),\\
\vec{g},\ g_4\in
L^2(0,\infty;H^{\frac{3}{2}}(\Gamma)),\quad
\vec{g}_t,\ g_{4t}\in L^2(0,\infty;H^{-\frac{1}{2}}(\Gamma)),\\
\vec{f},\ f_4,\ f_5 \in L^2(0,\infty;H^{1}),\quad
\vec{f}_t,\ f_{4t}\in L^2(0,\infty;({_0}H^1)^{'}),\quad
f_{5t}\in L^2(0,\infty;({^0}H^1)^{'})
\end{array}
\right.
\ee
satisfy the compatibility conditions \eqref{cb1}-\eqref{cb2}. Assume further that the function $a$ fulfills
\be\label{eee0}
C_1(C_2+1) |||a|||^2\leq \frac{1}{2}\qquad \text{for}\ \ \text{all} \ \  t> 0,
\ee
where the constants $C_1$ and $C_2$ are independent of t, given in \eqref{ce1} and $\eqref{ce14}.$
Then system \eqref{e2}-\eqref{e1} admits a unique global solution $(w,h,\vec{v},q,\eta)$ such that
\be\label{ee78}
\begin{split}
\|&\left\{
w,h,\vec{v},q,\eta\right\}\|^2\\
&\leq  C\big(\|w_0\|_{H^2(\Om)}^2+
\|h_0\|_{H^2(\Om)}^2+\|\vec{v}_0\|_{H^2(\Om)}^2
+\|\eta_0\|_{H^3(\R^2)}^2
\big)\\
&\ \quad +C\big(
\|f_{4}\|_{L^2_tH^{1}}^2
+\|f_{4t}\|_{L^2_t({_0}H^1)^{'}}^2
+\|f_{5}\|_{L^2_tH^{1}}^2
+\|f_{5t}\|_{L^2_t({^0}H^1)^{'}}^2
+\|g_4\|_{L^2_tH^{\frac{3}{2}}(\Gamma)} ^2
\big)\\
&\ \quad +C\big(\|g_{4t}\|_{L^2_tH^{-\frac{1}{2}}(\Gamma)}^2
+\|\vec{f}\|_{L^2_tH^1}^2
+\|\vec{f}_{t}\|_{L^2_t ({_0}H^1)^{'}}^2
+\|\vec{g}\|_{L^2_t H^{\frac{3}{2}}(\G)}^2
+\|\vec{g}_t\|_{L^2_tH^{-\frac{1}{2}}(\G)}^2
\big)
\end{split}
\ee
for all $t>0$, where the constant $C$ is independent of $t$.
\end{prop}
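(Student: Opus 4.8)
The plan is to solve the two decoupled linear subproblems \eqref{e2} and \eqref{e1} separately and then combine the resulting estimates, since the coupling between them enters only through given data. The heart of the matter is the Stokes-type problem \eqref{e1} with the free-surface boundary conditions, so I would treat \eqref{e2} first as the easier warm-up. For \eqref{e2}, I would fix $h$ solving the linear forced heat equation with mixed Dirichlet/Neumann boundary data, feed it into the $w$-equation which is a linear convection-diffusion equation with a Robin-type flux condition on $\Gamma$ and Dirichlet condition on $S_B$, and run the standard testing hierarchy: multiply by $w$, then by $-\Delta w$ (or by $w_t$), then differentiate in $t$ and repeat, to get $|||w|||+|||h|||$ bounded by the data. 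The smallness hypothesis \eqref{eee0} on $a$ is exactly what lets one absorb the terms $\nabla\cdot(a\nabla h)$ and $a\partial_3 h$ arising on the right-hand side and in the boundary condition; the constants $C_1,C_2$ in \eqref{ce1} and \eqref{ce14} must be the ones produced by these estimates, so \eqref{eee0} makes the absorption quantitative.

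For the Stokes system \eqref{e1}, the natural route is a Galerkin or semigroup construction combined with a priori estimates, organized around the energy functional $[\vec v,\vec v]$ together with the boundary contribution $\frac{\gamma}{2}\int_\G\eta^2+\frac{\sigma}{2}\int_\G|\na_0\eta|^2$ coming from the normal-stress condition and $\eta_t=v_3$. Testing the momentum equation with $\vec v$ and integrating by parts, the boundary terms $\int_\G(q-2\partial_3 v_3)v_3$ combine with the surface energy to produce a clean energy identity, giving control of $\|\vec v\|_{L^\infty_tL^2}$, $\|\na\vec v\|_{L^2_tL^2}$, $\|\eta\|_{L^\infty_tH^1}$; this is essentially Lemma \ref{nl2}. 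Next I would differentiate the equations in the tangential variables $x_1,x_2$ (legitimate because $\Omega$ is translation-invariant horizontally) and test again, yielding $L^\infty_t$ and $L^2_t$ control of $\na_0\vec v$ and then of $\na_0^2\vec v$, i.e. the $L^\infty_tH^2_{\mathrm{tan}}$ and $L^2_tH^3_{\mathrm{tan}}$ bounds; differentiating in $t$ and testing gives $\|\vec v_t\|_{L^\infty_tL^2}$, $\|\na\vec v_t\|_{L^2_tL^2}$ and the negative-norm bound $\|\na\vec v_t\|_{L^2_tH^{-1/2}(\G)}$ appearing in \eqref{ee00}.

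The main obstacle — flagged explicitly in the paper's introduction — is recovering the \emph{vertical} regularity of $\vec v$, i.e. upgrading the tangential $L^2_tH^3$ bound to a genuine $L^2_tH^3(\Omega)$ bound, because one cannot simply differentiate \eqref{e1} in $y$ (the domain is not $y$-translation-invariant) and the obvious elliptic route through Lemma \ref{pl6} fails: that lemma would require $\gamma\eta-\sigma\Delta_0\eta\in L^2_tH^{3/2}(\G)$, whereas $\eta_t=v_3$ only yields an $L^\infty$-in-time bound on $\eta$, giving at best $L^\infty_tH^2(\Omega)$ for $\vec v$. My plan follows the one sketched in the introduction: use the divergence-free condition $\partial_3 v_3=-\partial_1 v_1-\partial_2 v_2$ to trade a vertical derivative on $v_3$ for tangential derivatives, which in particular (via the trace theorem) promotes the already-controlled tangential regularity of $v_1,v_2$ into a boundary estimate for $v_3$ and $\partial_3 v_3$ on $\Gamma$ (this is the content of Lemma \ref{nl5}); then apply the elliptic/Stokes estimates of Lemma \ref{pl6} to the stationary Stokes system obtained from \eqref{pe1} by \emph{replacing} the troublesome normal-stress boundary condition with this derived condition on $v_3|_\G$, which now lives in the right space because it is built from the tangential velocity estimates rather than from $\eta$. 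This yields the missing $\|\vec v\|_{L^2_tH^3(\Omega)}$ and, via the equation, $\|\na q\|_{L^2_tH^1}$; the pressure time-derivative bound $\|\na q_t\|_{L^2_t({_0}H^1)'}$ follows by testing the momentum equation against divergence-free fields and using the already-established bounds. Once all the pieces of $\|\{w,h,\vec v,q,\eta\}\|$ are controlled by the data, uniqueness is immediate by linearity (apply the same estimate to the difference of two solutions with zero data), and existence follows from the standard Galerkin limit, completing the proof of \eqref{ee78}.
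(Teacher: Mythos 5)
Your proposal matches the paper's proof in essentially every respect: the split into the parabolic $(w,h)$ subsystem and the free-surface Stokes subsystem, the energy hierarchy (tangential derivatives, time derivatives, negative-norm bounds), the use of the divergence-free condition to convert tangential $H^3$ control into a boundary estimate on $v_3$ so that Lemma \ref{pl6} applies without needing $\eta\in L^2_tH^{3/2}(\G)$, and the final combination of the two propositions using the smallness hypothesis \eqref{eee0} on $a$. The one small imprecision is your phrase ``fix $h$ solving the linear forced heat equation, feed it into the $w$-equation'': since the $h$-equation contains $w$ as a source, the paper does not decouple them sequentially but derives the two a priori bounds \eqref{ce1} and \eqref{ce14} for the genuinely coupled system and then closes the circular estimate by absorption via \eqref{eee0}, which is exactly the mechanism you name.
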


The remaining part of this section is organized as follows. In next subsection, we shall introduce some preliminaries for later use. The solvability of subsystem \eqref{e2} and
 subsystem \eqref{e1} will be established in subsection \ref{ss4} and subsection \ref{ss6}, respectively. Subsection \ref{ss3} is to proving Proposition \ref{p1}.
\subsection{Preliminaries}

 Noting the divergence-free condition $\na\cdot\vec{v}=0$ and the gradient form $\na q$ of the pressure in \eqref{e1}, and using the following identity
\be\label{ee98}
\int_{\Om} \vec{v}\cdot \na q\,dxdy
=-\int_{\Om}(\na\cdot\vec{v})q\,dxdy
+\int_{\partial \Om}(\vec{v}\cdot \vec{n})q\,dx
\ee
one can see that the vectors $\vec{v}$ and $\na q$ are $L^2$-orthogonal if they satisfy the boundary condition
$
(\vec{v}\cdot\vec{n})q=0$ on $\partial \Om$.
This observation has been used in treating equations of incompressible fluids in a fixed domain to remove the pressure as an unknown by projecting the equation onto a subspace of vector fields of divergence-free that satisfy the same boundary conditions as the velocity (see e.g. \cite{fujita-kato1964, temam1975}). In the present case, since $\vec{v}=\mathbf{0}$ on $S_B$ and $\na\cdot\vec{v}=0$ in $\Om$ it follows from \eqref{ee98} that a vector in the gradient form $\na \rho$ is $L^2$-orthogonal to $\vec{v}$ if and only if $\rho=0$ on $\G$. With this in mind, we introduce the projection $P$ on $L^2(\Om)$ orthogonal to
\be\label{ee99}
W:=\{\na \rho :\ \rho\in H^1(\Om),\ \  \rho  =0\ \ \text{on}\ \ \G
\}.
\ee
For this projection $P$, the following property has been proved in \cite{beale1984}.
\begin{lemma}(\cite[Lemma 2.1]{beale1984})\label{pl8}
Let $P$ be the projection on $L^2(\Om)$ orthogonal to the subspace $W$ defined in \eqref{ee99}.
Then $P$ is a bounded operator on $L^2(\Om)$ and on $H^k(\Om)$ with $k\geq 1$. In particular, for $k\geq 1$
\ben
PH^k(\Om)=\{\vec{v}\in H^k(\Om):\ \na\cdot \vec{v}=0\ \text{in}\ \Om,\ \ \vec{v}\cdot\vec{n}=0\  \text{on}\  S_B\}.
\enn
Moreover, if $\ti{\xi}(x_1,x_2,y)\in H^1(\Om)$ and $\xi(x_1,x_2)\in H^{\frac{1}{2}}(\R^2)$ satisfy $\ti{\xi}(x_1,x_2,0)=\xi(x_1,x_2)$. Then the following holds true
\ben
 P(\na\tilde{\xi})=\na \mathcal{H}(\xi).
\enn
\end{lemma}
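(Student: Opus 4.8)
The plan is to recognize $P$ as the Helmholtz--Leray projection attached to the present mixed boundary conditions and to read off all three assertions from the orthogonal splitting $L^2(\Om)=W\oplus W^\perp$ together with elliptic regularity for a single auxiliary problem on the slab $\Om=\R^2\times(-b,0)$. First I would identify $W^\perp$. For $\vec u\in L^2(\Om)$, orthogonality to $W$ means $\int_\Om\vec u\cdot\na\rho\,dxdy=0$ for every $\rho\in H^1(\Om)$ vanishing on $\G$. Testing against $\rho\in C_c^\infty(\Om)$ first gives $\na\cdot\vec u=0$ in $\Om$ distributionally; since then $\vec u\in L^2(\Om)$ has $\na\cdot\vec u\in L^2(\Om)$, the normal trace $\vec u\cdot\vec n$ on $\pt\Om$ lies in $H^{-\frac12}(\pt\Om)$, and applying \eqref{ee98} to admissible $\rho$ (free on $S_B$, zero on $\G$), together with the fact that traces on $S_B$ of such $\rho$ fill $H^{\frac12}(S_B)$ because $\G\cap S_B=\emptyset$, forces $\vec u\cdot\vec n=0$ on $S_B$; the converse implication holds by the same identity. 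Hence
\[
W^\perp=\{\vec u\in L^2(\Om):\ \na\cdot\vec u=0\ \text{in}\ \Om,\ \ \vec u\cdot\vec n=0\ \text{on}\ S_B\}.
\]

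Next I would exhibit the decomposition explicitly; this yields at once that $W$ is closed and that $P$, the orthogonal projection onto $W^\perp$, is bounded on $L^2(\Om)$ with norm at most one. Given $\vec f\in L^2(\Om)$, put $V:=\{\rho\in H^1(\Om):\rho=0\ \text{on}\ \G\}$ and solve by Lax--Milgram for $\rho\in V$ with $\int_\Om\na\rho\cdot\na\psi\,dxdy=-\int_\Om\vec f\cdot\na\psi\,dxdy$ for all $\psi\in V$; coercivity of the Dirichlet form on $V$ comes from the slab Poincaré inequality $\|\rho\|_{L^2(\Om)}\le b\|\pt_3\rho\|_{L^2(\Om)}$ for $\rho\in V$ (integrate $\rho(\cdot,y)=-\int_y^0\pt_3\rho\,ds$ and use $|y|<b$), and $\psi=\rho$ gives $\|\na\rho\|_{L^2(\Om)}\le\|\vec f\|_{L^2(\Om)}$. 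Then $\na\rho\in W$ and, reading the weak equation as $\na\cdot(\vec f-\na\rho)=0$ in $\Om$ and $(\vec f-\na\rho)\cdot\vec n=0$ on $S_B$, one gets $\vec f-\na\rho\in W^\perp$, so $P\vec f=\vec f-\na\rho$. For $k\ge1$, if $\vec f\in H^k(\Om)$ then $\na\cdot\vec f\in H^{k-1}(\Om)$ and $(\vec f\cdot\vec n)|_{S_B}\in H^{k-\frac12}(S_B)$; because $\Om$ is a flat slab whose boundary components are parallel and disjoint, so that no corner singularities arise, elliptic regularity for this Dirichlet/Neumann problem --- most transparently via the horizontal Fourier transform, reducing it to a family of two-point ODE problems in $y$, or via even/odd reflection across $\G$ and $S_B$ --- gives $\rho\in H^{k+1}(\Om)$ with $\|\rho\|_{H^{k+1}(\Om)}\le C\|\vec f\|_{H^k(\Om)}$, hence $P\vec f=\vec f-\na\rho\in H^k(\Om)$ with $\|P\vec f\|_{H^k(\Om)}\le C\|\vec f\|_{H^k(\Om)}$.

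The characterization of $PH^k(\Om)$ is then immediate: elements of $PH^k(\Om)$ lie in $H^k(\Om)\cap W^\perp$ by the above, and conversely any $\vec v\in H^k(\Om)$ with $\na\cdot\vec v=0$ in $\Om$ and $\vec v\cdot\vec n=0$ on $S_B$ lies in $W^\perp$, whence $P\vec v=\vec v$. Lastly, the identity $P(\na\ti\xi)=\na\mathcal H(\xi)$ follows by inspecting the $W\oplus W^\perp$ decomposition of $\na\ti\xi$: writing $\bar\xi=\mathcal H(\xi)$, system \eqref{eee9} gives $\na\cdot(\na\bar\xi)=\De\bar\xi=0$ in $\Om$ and $\na\bar\xi\cdot\vec n=-\pt_3\bar\xi=0$ on $S_B$, so $\na\bar\xi\in W^\perp$, while $\ti\xi-\bar\xi\in H^1(\Om)$ vanishes on $\G$ since $\ti\xi|_\G=\xi=\bar\xi|_\G$, so $\na(\ti\xi-\bar\xi)\in W$; therefore $\na\ti\xi=\na(\ti\xi-\bar\xi)+\na\bar\xi$ is exactly the orthogonal decomposition and $P(\na\ti\xi)=\na\bar\xi=\na\mathcal H(\xi)$. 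The one genuinely technical ingredient is the $H^k$ elliptic estimate for the mixed problem on the unbounded slab; it is routine for this flat geometry but is where I would be most careful, and it is precisely the content of Lemma 2.1 in \cite{beale1984}, whose argument we follow.
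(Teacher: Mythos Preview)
The paper does not prove this lemma; it merely quotes it from \cite[Lemma~2.1]{beale1984}. Your argument is the standard Helmholtz--Leray construction adapted to the mixed Dirichlet/Neumann boundary data on the slab, and it is correct in substance and is essentially Beale's proof. One small slip: the sign in your Lax--Milgram problem should be $\int_\Om\na\rho\cdot\na\psi\,dxdy=\int_\Om\vec f\cdot\na\psi\,dxdy$ (not $-\int_\Om$) so that $\vec f-\na\rho\in W^\perp$ and $P\vec f=\vec f-\na\rho$ as you then claim; with the minus sign you would instead get $P\vec f=\vec f+\na\rho$. This does not affect the structure of the argument.
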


We shall use the following version of Korn's inequality, which has been justified in \cite{beale1981}.
\begin{lemma}(\text{\cite[Lemma 2.7]{beale1981}})\label{pl5}
Let $\vec{v}\in{_0}H^1(\Om)$. Then there exists a constant $C>0$ such that
\ben
\|\vec{v}\|_{H^1}^2\leq C[\vec{v},\vec{v}].
\enn
\end{lemma}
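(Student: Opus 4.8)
The plan is to reduce the claim to $\|\nabla \vec v\|_{L^2(\Om)}^2\le C[\vec v,\vec v]$, after which control of $\|\vec v\|_{L^2}$ comes for free: since $\vec v=\mathbf{0}$ on $S_B=\{y=-b\}$, writing $v_i(x_1,x_2,y)=\int_{-b}^{\,y}\partial_3 v_i\,ds$ and using Cauchy--Schwarz gives the one-dimensional Poincar\'e inequality $\|\vec v\|_{L^2(\Om)}^2\le b^2\|\partial_3\vec v\|_{L^2(\Om)}^2\le b^2\|\nabla\vec v\|_{L^2(\Om)}^2$, hence $\|\vec v\|_{H^1(\Om)}^2\le(1+b^2)\|\nabla\vec v\|_{L^2(\Om)}^2$. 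By density it suffices to treat $\vec v$ smooth with bounded support and $\vec v|_{S_B}=\mathbf{0}$.

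To estimate $\|\nabla\vec v\|_{L^2}^2$ I would pass to the partial Fourier transform $\hat{\vec v}(\xi,y)$ in the horizontal variables $x=(x_1,x_2)$. By Plancherel's theorem, both $\|\nabla\vec v\|_{L^2(\Om)}^2$ and $[\vec v,\vec v]=2\int_\Om\sum_{i,j}e_{ij}(\vec v)^2$, with $e_{ij}=\tfrac12(\partial_i v_j+\partial_j v_i)$, become integrals over $\xi\in\R^2$ of one-dimensional quantities in $y\in(-b,0)$, and the boundary condition becomes $\hat{\vec v}(\xi,-b)=\mathbf{0}$ for a.e. $\xi$. It therefore suffices to establish, for a.e. fixed $\xi$ and with a constant independent of $\xi$,
\[
\int_{-b}^{0}\big(|\xi|^2|\hat{\vec v}|^2+|\partial_y\hat{\vec v}|^2\big)\,dy\ \le\ C\,q(\xi),\qquad q(\xi):=\int_{-b}^{0}\sum_{i,j}\big|\widehat{e_{ij}(\vec v)}(\xi,y)\big|^2\,dy.
\]
Since rotating the $x$-coordinates (and the horizontal components of $\vec v$ accordingly) leaves both sides invariant, one may assume $\xi=(\tau,0)$ with $\tau=|\xi|\ge 0$. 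Then $q(\xi)$ controls the $L^2(-b,0)$-norms of $\tau\hat v_1$, $\tau\hat v_2$, $\partial_y\hat v_2$, $\partial_y\hat v_3$ and $\partial_y\hat v_1+i\tau\hat v_3$, and every term on the left is then immediate except $\tau^2\|\hat v_3\|^2$ — which also controls $\|\partial_y\hat v_1\|$, via $\partial_y\hat v_1=(\partial_y\hat v_1+i\tau\hat v_3)-i\tau\hat v_3$.

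The heart of the matter is the uniform-in-$\tau$ bound $\tau^2\|\hat v_3\|^2\le C\,q(\xi)$ (all norms $L^2(-b,0)$). Multiplying $\partial_y\hat v_1+i\tau\hat v_3=2\widehat{e_{13}}$ by $\overline{\tau^2\hat v_3}$, integrating over $(-b,0)$ and integrating by parts in $y$ (the endpoint $y=-b$ contributes nothing, since $\hat v_1=\hat v_3=0$ there) gives
\[
\tau^2\|\hat v_3\|^2\ \le\ \tau\,\|2\widehat{e_{13}}\|\,\|\hat v_3\|\ +\ \tau\,|\hat v_1(0)|\,|\hat v_3(0)|\ +\ \tau\,\|\hat v_1\|\,\|\partial_y\hat v_3\|.
\]
For the boundary term I would invoke the sharp one-dimensional trace inequalities $|\hat v_j(0)|^2\le 2\|\hat v_j\|\,\|\partial_y\hat v_j\|$ (valid because $\hat v_j(-b)=0$), together with $\|\partial_y\hat v_1\|\le\|2\widehat{e_{13}}\|+\tau\|\hat v_3\|$, and then close by Young's inequality: splitting the resulting products of the shape $(\,\cdot\,)^{1/2}(\tau^2\|\hat v_3\|^2)^{1/2}$ with exponents $4$ and $\tfrac43$ leaves a combined coefficient $<1$ on $\tau^2\|\hat v_3\|^2$ (three quarters, in my bookkeeping), which is then absorbed into the left side, while every remaining product is controlled by $\tau^2\|\hat v_1\|^2$, $\|\partial_y\hat v_3\|^2$, or $\|\widehat{e_{13}}\|^2$ — hence by $q(\xi)$ with $\tau$-independent constants. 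Reintegrating in $\xi$ yields $\|\nabla\vec v\|_{L^2}^2\le C[\vec v,\vec v]$, and the Poincar\'e step above completes the proof.

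I expect this last step to be the main obstacle. The naive estimate $|\hat v_j(0)|\le\sqrt b\,\|\partial_y\hat v_j\|$ produces boundary contributions of size $\tau\,q(\xi)$ or $\tau^2 q(\xi)$, useless at high frequencies — a manifestation of the fact that, unlike on a half-space (where the vanishing of $\vec v$ on the entire boundary makes all Korn boundary terms disappear, giving $\|\nabla\vec v\|^2\le 2\|e(\vec v)\|^2$ outright), the slab's upper face $\G$ genuinely carries nonzero boundary terms, so no clean Korn identity is available. What rescues the argument is the interplay of the smallness $\|\hat v_1\|=O(\tau^{-1})$ with the self-improving relation $\|\partial_y\hat v_1\|\lesssim\|\widehat{e_{13}}\|+\tau\|\hat v_3\|$, both funnelled through the multiplicative trace inequalities.
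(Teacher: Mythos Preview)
The paper does not give its own proof of this lemma; it simply quotes it from Beale (1981), Lemma~2.7. Your argument is correct and is in fact essentially Beale's: he too takes the horizontal Fourier transform, reduces to a one-dimensional problem on $(-b,0)$ for each frequency $\xi$, and closes by controlling the single recalcitrant quantity (your $\tau^2\|\hat v_3\|^2$) using the interplay between the $e_{13}$ relation and the trace terms at $y=0$. Your bookkeeping via the multiplicative trace inequality $|\hat v_j(0)|^2\le 2\|\hat v_j\|\,\|\partial_y\hat v_j\|$ and the substitution $\|\partial_y\hat v_1\|\le\|2\widehat{e_{13}}\|+\tau\|\hat v_3\|$ is exactly the right mechanism to keep the absorbed coefficient below~$1$ uniformly in~$\tau$; the specific value ``three quarters'' is inessential, since each Young split can be done with an arbitrarily small weight on $\tau^2\|\hat v_3\|^2$.
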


\begin{lemma}\label{pl7}
Let $\bar{\eta}(x_1,x_2,y)=\mathcal{H}(\eta)$ be the harmonic extension of $\eta(x_1,x_2)$.
Then for any integer $m\geq 1$, the following holds true
\be\label{eee13}
\|\bar{\eta}\|_{H^m}\leq C\|\eta\|_{H^{m-\frac{1}{2}}(\R^2)},
\ee
with $C>0$.
\end{lemma}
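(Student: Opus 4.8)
The plan is to exploit the translation invariance of $\Om=\{(x_1,x_2,y):-b<y<0\}$ in the horizontal variable $\vec{x}=(x_1,x_2)$ by taking the Fourier transform there, which turns \eqref{eee9} into a one‑parameter family of ordinary differential equations in $y$ that can be solved explicitly. Writing $\hat{\,\cdot\,}$ for the horizontal Fourier transform, $\bar\eta=\mathcal{H}(\eta)$ satisfies, for each frequency $\xi\in\R^2$, the boundary value problem $\partial_3^2\hat{\bar\eta}(\xi,y)-|\xi|^2\hat{\bar\eta}(\xi,y)=0$ on $-b<y<0$ together with $\hat{\bar\eta}(\xi,0)=\hat\eta(\xi)$ and $\partial_3\hat{\bar\eta}(\xi,-b)=0$, whose unique solution is
\be
\hat{\bar\eta}(\xi,y)=\hat\eta(\xi)\,\frac{\cosh\big(|\xi|(y+b)\big)}{\cosh\big(|\xi|b\big)} .
\ee

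First I would record a pointwise bound for the $y$-derivatives of this kernel. Since $0\le y+b\le b$ on the slab and $\cosh(|\xi|b)\ge\tfrac12 e^{|\xi|b}$, one checks directly that $\cosh(|\xi|(y+b))\le 2e^{|\xi|y}\cosh(|\xi|b)$ and $|\sinh(|\xi|(y+b))|\le 2e^{|\xi|y}\cosh(|\xi|b)$ for $-b<y<0$; differentiating the displayed formula $j$ times in $y$ therefore gives
\be
|\partial_3^{\,j}\hat{\bar\eta}(\xi,y)|\le C\,|\xi|^{\,j}\,e^{|\xi|y}\,|\hat\eta(\xi)|,\qquad -b<y<0,
\ee
with $C=C(b,j)$.

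The half-derivative gain comes solely from integrating in $y$: using $\int_{-b}^0 e^{2|\xi|y}\,dy=\frac{1-e^{-2|\xi|b}}{2|\xi|}\le\min\!\big(b,\tfrac1{2|\xi|}\big)\le\frac{C}{1+|\xi|}$, the previous estimate yields $\int_{-b}^0|\partial_3^{\,j}\hat{\bar\eta}(\xi,y)|^2\,dy\le C\,\frac{|\xi|^{2j}}{1+|\xi|}|\hat\eta(\xi)|^2\le C\,(1+|\xi|^2)^{\,j-\frac12}|\hat\eta(\xi)|^2$. Decomposing the $H^m(\Om)$ norm into horizontal and vertical derivatives and applying Plancherel in $\vec{x}$ gives
\be
\|\bar\eta\|_{H^m(\Om)}^2\le C\sum_{|\alpha|+k\le m}\int_{\R^2}|\xi|^{2|\alpha|}\int_{-b}^0|\partial_3^{\,k}\hat{\bar\eta}(\xi,y)|^2\,dy\,d\xi ;
\ee
substituting the inequality above and using $|\xi|^{2|\alpha|}(1+|\xi|^2)^{k-\frac12}\le(1+|\xi|^2)^{m-\frac12}$ whenever $|\alpha|+k\le m$, we obtain $\|\bar\eta\|_{H^m(\Om)}^2\le C\int_{\R^2}(1+|\xi|^2)^{m-\frac12}|\hat\eta(\xi)|^2\,d\xi=C\|\eta\|_{H^{m-\frac12}(\R^2)}^2$, which is \eqref{eee13}.

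I do not expect a serious obstacle: once the explicit kernel is written down the estimates are routine, the only point requiring a little care being the frequency splitting in the bound for $\int_{-b}^0 e^{2|\xi|y}\,dy$, which is exactly what captures the loss of half a derivative — the crude estimate $|\partial_3^{\,j}\hat{\bar\eta}|\le|\xi|^{\,j}|\hat\eta|$ would only give the weaker $\|\bar\eta\|_{H^m}\le C\|\eta\|_{H^m(\R^2)}$. Alternatively the statement follows from the standard trace theory on the slab, $H^{m-\frac12}(\G)$ being precisely the trace space of $H^m(\Om)$ and the harmonic extension a bounded right inverse of the trace map, but the Fourier computation is self-contained and yields $C$ explicitly.
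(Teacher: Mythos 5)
Your argument is correct, and the key estimate $\int_{-b}^0 e^{2|\xi|y}\,dy\le\min(b,\tfrac1{2|\xi|})\le C(1+|\xi|)^{-1}$ is exactly what captures the half-derivative gain; the Fourier representation $\hat{\bar\eta}(\xi,y)=\hat\eta(\xi)\cosh(|\xi|(y+b))/\cosh(|\xi|b)$ and the pointwise decay $|\partial_3^{\,j}\hat{\bar\eta}|\le C|\xi|^{j}e^{|\xi|y}|\hat\eta|$ are also verified without difficulty, including at $\xi=0$. However, the route is genuinely different from the paper's. The paper does not compute the harmonic extension explicitly: it first takes an arbitrary extension $\psi$ of $\eta$ to the half-space satisfying $\|\psi\|_{H^m(\R^3_-)}\le C\|\eta\|_{H^{m-1/2}(\R^2)}$ (the standard right inverse of the trace map, supported away from $S_B$), then observes that $\varphi:=\bar\eta-\psi$ solves the inhomogeneous Dirichlet--Neumann problem $\De\varphi=-\De\psi$ in $\Om$, $\varphi=0$ on $\G$, $\partial_3\varphi=0$ on $S_B$, and concludes by the elliptic estimate $\|\varphi\|_{H^m}\le C\|\psi\|_{H^m}$. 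Your Fourier computation is fully self-contained and makes the half-derivative loss transparent through the exponential kernel, but it exploits the flatness of $\Om$ in an essential way; the paper's subtraction argument is less explicit (it invokes the trace extension theorem and elliptic regularity as black boxes) but is the more robust template, since it would carry over to perturbed geometries where no closed-form kernel is available. Both give the stated bound with $C$ independent of $\eta$, and yours gives it with an explicit constant.
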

\begin{proof}
For fixed $m\geq 1$,
let $\psi(x_1,x_2,y)$ be an extension of $\eta(x_1,x_2)$ on $\R^3_{-}=\{(x_1,x_2,y)\in \R^3:\  y<0\}$ (see e.g. \cite[Chapter 1, Theorem 8.3]{lions&magenes}) satisfying that
$\psi(x_1,x_2,0)=\eta(x_1,x_2)$, $\psi(x_1,x_2,y)=0$ for $y\leq -\frac{1}{2}$
and
\be\label{eee12}
\|\psi\|_{H^m(\R^3_{-})}\leq C \|\eta\|_{H^{m-\frac{1}{2}}(\R^2)}.
\ee
Define
$\var(x_1,x_2,y)=\bar{\eta}(x_1,x_2,y)-\psi(x_1,x_2,y)$ for $(x_1,x_2,y)\in \Om$. Then we deduce from \eqref{eee9} that $\var$ solves
\be\label{eee10}
\left\{
\begin{array}{lll}
\De \var=-\De\psi\quad \ \ \text{in}\,\,\,\Omega,
\\
\var=0\quad \ \ \quad\quad \ \text{on}\,\,\, \G,\\
\partial_3\var=0\quad\ \ \quad \ \,\text{on}\,\,\,S_B.
\end{array}
\right.
\ee
Applying the standard elliptic theory (see e.g. \cite[Lemma 2.8]{beale1981}) to system \eqref{eee10} and using \eqref{eee12} one gets
\be\label{eee11}
\begin{split}
\|\var\|_{H^m}\leq C\|\psi\|_{H^m}
\leq  C \|\eta\|_{H^{m-\frac{1}{2}}(\R^2)},
\end{split}
\ee
which, along with \eqref{eee12} leads to \eqref{eee13}.
The proof is completed.

\end{proof}

When solving \eqref{e1}, we shall employ the following elliptic system
\be\label{pe1}
\left\{
\begin{array}{lll}
-\Delta\vec{\omega}+\nabla q=\tilde{F} \quad \text{in}\ \  \Omega,\\
\nabla\cdot \vec{\omega}=0\quad \text{in}\ \  \Omega,\\
\partial_3 \omega_1+\partial_1 \omega_3=\tilde{g}_1,\quad
\partial_3 \omega_2+\partial_2 \omega_3=\tilde{g}_2\quad \text{on}\ \  \Gamma,\\
q_1-2\partial_3^2 \omega_3=\ti{g}_3\quad \text{on}\ \ \Gamma,\\
\vec{\omega}=\mathbf{0} \quad \text{on}\ \ S_B.
\end{array}
\right.
\ee
For system \eqref{pe1} we have

\begin{lemma}\label{pl6} Let $m\geq 0$.
 Suppose $\tilde{g}=(\tilde{g}_1,\tilde{g}_2,\tilde{g}_3)\in H^{m-\frac{3}{2}}(\G)$ and $\tilde{F}=(\tilde{F}_1,\tilde{F}_2,\tilde{F}_3)\in H^{m-2}(\Om)$. Then \eqref{pe1} admits a unique solution $(\vec{\omega},q)$ with $\vec{\omega}=(\omega_1,\omega_2,\omega_3)$ such that
\ben
\|\vec{\omega}\|_{H^m}+\|q\|_{H^{m-1}}\leq C(\|\tilde{F}\|_{H^{m-2}}+\|\tilde{g}\|_{H^{m-\frac{3}{2}}(\G)}),
\enn
for some constant $C>0$.
\end{lemma}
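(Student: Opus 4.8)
The statement to prove is Lemma 4.5 (labeled \texttt{pl6}): a regularity estimate for the stationary Stokes-type system \eqref{pe1} with a free-boundary-type stress condition on $\Gamma$ and a no-slip condition on $S_B$.

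\medskip

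The plan is to treat \eqref{pe1} as an elliptic boundary value problem of Agmon--Douglis--Nirenberg (ADN) type and invoke the corresponding a priori estimates together with a uniqueness argument. First I would check the ellipticity and the complementing (Lopatinski--Shapiro) condition for the stationary Stokes operator $(\vec\omega,q)\mapsto(-\Delta\vec\omega+\nabla q,\ \nabla\cdot\vec\omega)$ together with the boundary operators: on $\Gamma$ the two tangential stress conditions $\partial_3\omega_i+\partial_i\omega_3=\tilde g_i$ ($i=1,2$) and the normal condition $q-2\partial_3\omega_3=\tilde g_3$ (written here, slightly informally, with $q_1=q$ and $2\partial_3^2\omega_3$ a typo for $2\partial_3\omega_3$), and on $S_B$ the Dirichlet condition $\vec\omega=\mathbf 0$. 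This is the standard stress-boundary-condition Stokes system for which the complementing condition is classical; ADN theory then yields, for the problem on the slab $\Omega$ (with appropriate behavior in the horizontal variables handled via Fourier transform in $(x_1,x_2)$, since $\Omega$ is a flat layer), the a priori estimate
\[
\|\vec\omega\|_{H^m}+\|q\|_{H^{m-1}}\le C\big(\|\tilde F\|_{H^{m-2}}+\|\tilde g\|_{H^{m-3/2}(\Gamma)}+\|\vec\omega\|_{L^2}+\|q\|_{H^{-1}}\big),
\]
valid for all $m\ge 2$ first, then extended to $m\ge 0$ either by duality/interpolation or by the weak formulation directly for $m=0,1$.

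\medskip

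Next I would remove the lower-order terms $\|\vec\omega\|_{L^2}+\|q\|_{H^{-1}}$ on the right-hand side. Since the domain is unbounded, the usual Rellich compactness argument does not apply directly, so instead I would establish uniqueness by a direct energy identity: for a solution with $\tilde F=0$, $\tilde g=0$, multiply the momentum equation by $\vec\omega$, integrate over $\Omega$, integrate by parts using $\nabla\cdot\vec\omega=0$, $\vec\omega=\mathbf 0$ on $S_B$, and the boundary conditions on $\Gamma$; the boundary terms combine with the symmetric gradient, and Korn's inequality (Lemma~\ref{pl5}) forces $\vec\omega\equiv\mathbf 0$, whence $\nabla q\equiv\mathbf 0$ and the normalization of $q$ (or its decay) gives $q\equiv 0$. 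Uniqueness plus the a priori estimate, via the standard ``either the estimate holds without lower-order terms or there is a nontrivial nullspace element'' dichotomy (made rigorous here by a contradiction/normalization argument adapted to the layer, or by noting the solution operator is well-defined and bounded once the kernel is trivial), upgrades the estimate to the clean form stated. Existence then follows: for $m=2$ construct the weak solution by Lax--Milgram on the space ${_0}H^1(\Omega)$ with the symmetric-gradient bilinear form $[\cdot,\cdot]$ (coercive by Korn), recover the pressure $q$ by de~Rham's lemma from the fact that the momentum equation holds in the sense of distributions against divergence-free test fields, and then bootstrap regularity to $H^m$ using the ADN estimate with the now-known right-hand side.

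\medskip

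The main obstacle I anticipate is twofold. First, verifying the complementing condition for this particular mixed boundary condition (two tangential Navier-type conditions plus one scalar normal condition involving the pressure) requires care — it is the free-surface Stokes boundary operator and one must confirm it satisfies Lopatinski--Shapiro at $\Gamma$; I would either cite the relevant literature (e.g., Beale's work or Solonnikov-type results for the Stokes system with free boundary) or carry out the symbol computation explicitly. Second, handling the unboundedness in the horizontal directions cleanly: the absence of compact embeddings means uniqueness and the absorption of lower-order terms must be done by the energy/Korn argument rather than Fredholm theory, and one must be slightly careful about in which function class the pressure is normalized (here $q\in H^{m-1}(\Omega)$ with no zero-average ambiguity because the boundary condition on $\Gamma$ pins down $q$). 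Everything else — interpolation to reach $m\ge 0$ from $m\ge 2$, and the bootstrap — is routine once these two points are settled.
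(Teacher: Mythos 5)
The paper gives no proof of Lemma \ref{pl6}: it simply cites \cite[Lemma~3.3]{beale1981}, where Beale proves the corresponding estimate by exploiting the translation invariance of the slab $\Omega$ — taking the Fourier transform in $(x_1,x_2)$, reducing the Stokes system with these boundary conditions to an explicit two-point ODE boundary value problem in $y$ for each horizontal frequency, solving it, and reading off the estimates directly from the explicit resolvent. Your route is genuinely different: you invoke Agmon--Douglis--Nirenberg a~priori estimates abstractly (after checking Lopatinski--Shapiro for the free-surface Stokes boundary operator), establish existence by Lax--Milgram on the divergence-free subspace and recover the pressure by a de~Rham argument (essentially the content of Lemma~\ref{pl8} here), and upgrade to $H^m$ by bootstrapping. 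Both approaches are sound, and yours is arguably more robust and easier to generalize to variable-coefficient perturbations, while Beale's explicit computation has the advantage of delivering the estimate with no lower-order terms in one stroke, sidesteps the complementing-condition verification (it is implicit in the solvability of the ODEs), and handles the full stated range $m\ge 0$ transparently via the transform.

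One point in your sketch is slightly muddled and worth tightening: you appeal to the ``uniqueness implies the estimate without lower-order terms'' dichotomy and then remark that compactness is absent, proposing a ``normalization argument adapted to the layer.'' That dichotomy really does need a compact embedding, which is unavailable here. The clean way to close the argument is the one you already have the ingredients for: the energy identity obtained by testing the momentum equation against $\vec\omega$ yields, after using the boundary conditions, $[\vec\omega,\vec\omega]\le \|\tilde F\|_{({_0}H^1)'}\|\vec\omega\|_{H^1}+C\|\tilde g\|_{H^{-1/2}(\Gamma)}\|\vec\omega\|_{H^{1/2}(\Gamma)}$, and Korn's inequality (Lemma~\ref{pl5}) plus Poincar\'e (since $\vec\omega=\mathbf 0$ on $S_B$) give directly $\|\vec\omega\|_{H^1}\le C(\|\tilde F\|_{({_0}H^1)'}+\|\tilde g\|_{H^{-1/2}(\Gamma)})$, not merely uniqueness. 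Substituting this into the localized ADN estimate (which has $\|\vec\omega\|_{L^2}$ as the lower-order term) removes it outright, with no need for a contradiction argument. With that repair, and with the complementing-condition check and the $m=0,1$ cases (transposition/interpolation) carried out, the proposal would give a complete and correct proof.
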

The proof of Lemma \ref{pl6} is quite similar to that of \cite[Lemma 3.3]{beale1981}, thus we omit the details and refer the reader to \cite[Lemma 3.3]{beale1981}.

\subsection{Solvability of system \eqref{e2}}\label{ss4}
\begin{prop}\label{cp1}
Let $(w_0,h_0)\in H^2(\Om),$
$f_4$, $f_5$, $g_4$ and $a$ satisfy \eqref{ee01}-\eqref{eee0} and the compatibility conditions \eqref{cb1}.
Then there exists a unique solution $(w,h)$ to system \eqref{e2} fulfilling
\be\label{ce19}
\begin{split}
|||w|||^2+|||h|||^2\leq& C_3\big(\|w_0\|_{H^2(\Om)}^2+\|h_0\|_{H^2(\Om)}^2\big)
+C_3\big(
\|f_{4}\|_{L^2_tH^{1}}^2
+\|f_{4t}\|_{L^2_t({_0}H^1)^{'}}^2
\big)\\
&+C_3\big(
\|f_{5}\|_{L^2_tH^{1}}^2
+\|f_{5t}\|_{L^2_t({^0}H^1)^{'}}^2
\big)
+C_3\big(\|g_4\|_{L^2_tH^{\frac{3}{2}}(\Gamma)} ^2
+\|g_{4t}\|_{L^2_tH^{-\frac{1}{2}}(\Gamma)}^2
\big)
\end{split}
\ee
for all $t> 0$, where the constant $C_3>0$ is independent of $t$.
\end{prop}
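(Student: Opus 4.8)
The plan is to solve the coupled parabolic system \eqref{e2} for $(w,h)$ by a standard energy method combined with a fixed-point/continuation argument, treating the cross-diffusion term $\nabla\cdot(a\nabla h)$ as a perturbation controlled by the smallness hypothesis \eqref{eee0} on $|||a|||$. First I would establish the \emph{basic existence and linear theory}: for given right-hand sides, the equation for $h$ is a linear heat equation with zero Dirichlet data on $\G$ and zero Neumann data on $S_B$, and the equation for $w$ is a linear heat equation with Robin-type flux boundary condition $\partial_3 w + a\partial_3 h = g_4$ on $\G$ and Dirichlet data on $S_B$; invoking standard parabolic $L^2$-theory on the fixed domain $\Om$ (e.g. Galerkin approximation plus the a priori bounds below) gives existence of a weak solution in the relevant energy class, so the real content is the a priori estimate \eqref{ce19}.

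The core of the argument is a hierarchy of energy estimates. Step 1: test the $h$-equation with $h$, $h_t$, $-\Delta h$ and then differentiate in $t$ and repeat, using the Dirichlet condition $h=0$ on $\G$ (so $h\in{^0}H^1$) to absorb boundary terms; this produces $|||h|||^2 \lesssim \|h_0\|_{H^2}^2 + \|f_5\|_{L^2_tH^1}^2 + \|f_{5t}\|_{L^2_t({^0}H^1)'}^2 + \|w\|_{L^2_tL^2}^2$ via the coupling term $w$ on the right of the $h$-equation. Step 2: test the $w$-equation with $w$, $w_t$, $-\Delta w$ and their time-differentiated versions; here the flux condition contributes boundary integrals over $\G$ that are handled by the trace inequality and the data bound $\|g_4\|_{L^2_tH^{3/2}(\G)}^2 + \|g_{4t}\|_{L^2_tH^{-1/2}(\G)}^2$, while the term $\nabla\cdot(a\nabla h)$ is estimated by splitting $a\nabla h$ and using the product estimates in $H^2$/$H^1$ together with $|||a|||$, yielding a contribution $\le C_1|||a|||^2\,(|||h|||^2 + \text{l.o.t.})$. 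Step 3: add the two sets of estimates; the coupling terms $\|w\|^2$ (from Step 1) and the $|||a|||^2|||h|||^2$ term (from Step 2) are absorbed into the left-hand side precisely because of \eqref{eee0}, which is designed so that $C_1(C_2+1)|||a|||^2\le\frac12$ closes the loop. This gives \eqref{ce19} with an explicit $C_3$.

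For uniqueness, I would take the difference of two solutions with identical data, which satisfies the homogeneous version of \eqref{e2}, and run the same energy estimate (now with zero initial and boundary data and zero forcing) to conclude the difference vanishes — again the smallness of $|||a|||$ is what makes the cross term absorbable. The main obstacle I anticipate is \emph{bookkeeping the boundary terms in the higher-order ($-\Delta$ and time-differentiated) estimates for $w$}: the inhomogeneous flux condition $\partial_3 w + a\partial_3 h = g_4$ means that $\Delta w$ does not satisfy a clean boundary condition, so integrating by parts in the $-\Delta w$ test generates surface integrals mixing $g_4$, $a$, and traces of $\nabla h$; controlling these requires careful use of the trace theorem, the anisotropic nature of the estimates (the problem is translation-invariant horizontally but not vertically, so tangential derivatives are cheap but $\partial_3$ must be recovered from the equation), and the product/commutator estimates for $a\partial_3 h$ on $\G$. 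The coupling structure — $w$ feeds $h$ at order zero while $h$ feeds $w$ through a gradient-order term dressed by the small coefficient $a$ — is exactly what allows all these terms to close, and verifying that the constants genuinely decouple in the way \eqref{eee0} requires is the delicate part.
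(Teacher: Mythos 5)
Your proposal captures the right overall structure — the two coupled one-sided estimates ($|||w|||^2$ controlled by $|||a|||^2|||h|||^2$ plus data, $|||h|||^2$ controlled by $|||w|||^2$ plus data) and their absorption via the smallness hypothesis \eqref{eee0} is exactly what the paper does in Lemmas \ref{cl1}--\ref{cl2} and the proof of Proposition \ref{cp1}. Where you diverge is in the route to the top-order regularity. You propose to test with $-\Delta w$ and $-\Delta h$ and their time-differentiated versions, which is the classical ``test with higher-order spatial multipliers'' strategy, and you correctly flag the ensuing headache: the inhomogeneous flux condition $\partial_3 w + a\partial_3 h = g_4$ means integrating by parts against $-\Delta w$ produces surface integrals involving $g_4$, $a$, and $\nabla h$ that need careful handling. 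The paper sidesteps this entirely. It only time-differentiates the equations and tests with $2w_t$ and $2h_t$ (yielding $\|w_t\|_{L^\infty_t L^2}$, $\|w_t\|_{L^2_t H^1}$, etc.); then, for fixed $t$, it reads the equations as stationary elliptic problems \eqref{ce11} and \eqref{ce17} with $w_t$, $h_t$ now \emph{known} right-hand sides, and invokes standard elliptic regularity on the flat strip (which packages all the boundary data once and for all) to obtain $\|w\|_{L^2_t H^3}$ and $\|h\|_{L^2_t H^3}$; the $L^\infty_t H^2$ bounds then follow by the compactness/interpolation inequality. This elliptic-bootstrapping route is cleaner precisely because the boundary terms you worry about never appear as surface integrals to be estimated — they are absorbed into the elliptic a priori estimate. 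A small inaccuracy in your Step 1: the $h$-estimate does not close with merely $\|w\|_{L^2_t L^2}^2$ on the right; the $H^3$ estimate for $h$ and the estimate of $\|h_t(0)\|_{L^2}$ require the full $|||w|||^2$ (see \eqref{ce14}), but this does not change the absorption mechanism. One further detail the paper handles that you omit is the evaluation of $\|w_t(0)\|_{L^2}$ and $\|h_t(0)\|_{L^2}$ via the equations at $t=0$ together with the compactness theorem; these terms arise when integrating the time-differentiated energy identity and must be bounded by the initial and forcing data.
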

The proof of Proposition \ref{cp1} is based on the following two lemmas where the \emph{a priori} estimates on solutions $(w,h)$ are derived.
\begin{lemma}\label{cl1} Let the assumptions in Proposition \ref{cp1} hold. Then the solution $(w,h)$ of system \eqref{e2} satisfies
\be\label{ce1}
\begin{split}
|||w|||^2\leq& C_1\|w_0\|_{H^2(\Om)}^2+ C_1 |||a|||^2 |||h|||^2
+C_1\big(\|g_4\|_{L^2_tH^{\frac{3}{2}}(\Gamma)} ^2
+\|g_{4t}\|_{L^2_tH^{-\frac{1}{2}}(\Gamma)}^2
\big)\\
&+C_1\big(
\|f_{4}\|_{L^2_tH^{1}}^2
+\|f_{4t}\|_{L^2_t({_0}H^1)^{'}}^2
\big)
\end{split}
\ee
for all $t>0$, where the constant $C_1>0$ is independent of $t$.
\end{lemma}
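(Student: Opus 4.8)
\textbf{Proof proposal for Lemma \ref{cl1}.}

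The plan is to derive the bound on $|||w|||$ by a sequence of energy estimates on the linear parabolic equation for $w$ in \eqref{e2}, treating $\nabla\cdot(a\nabla h)$ and $f_4$ as given forcing and $g_4$ as given Neumann-type boundary data on $\G$ (with the homogeneous Dirichlet condition $w=0$ on $S_B$). Since the domain $\Om$ is a horizontal slab, all horizontal (tangential) derivatives commute with the equation and the boundary conditions, so those estimates are obtained by direct differentiation; only the vertical derivatives require care because of the flat boundaries. First I would establish the basic $L^\infty_tL^2\cap L^2_tH^1$ estimate: multiply the $w$-equation by $w$, integrate over $\Om$, integrate by parts (the boundary term on $\G$ produces $\int_\G g_4\, w$ minus a term from $a\partial_3 h$, controlled by trace and interpolation inequalities), and absorb $\int_\Om \nabla\cdot(a\nabla h)\,w\,dxdy = -\int_\Om a\nabla h\cdot\nabla w\,dxdy + \int_\G a\partial_3 h\, w\,dx$ using Young's inequality, so that the $a$-terms appear as $C|||a|||^2|||h|||^2$ plus a small multiple of $\|\nabla w\|_{L^2}^2$. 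Then Gr\"onwall (or simply integrating in time, since there is no sign-indefinite zeroth order term) yields control of $\|w\|_{L^\infty_tL^2}^2 + \|\nabla w\|_{L^2_tL^2}^2$ by the right-hand side of \eqref{ce1}.

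Next I would do the time-derivative estimate: formally differentiate the equation in $t$, test with $w_t$. This produces $\frac12\frac{d}{dt}\|w_t\|_{L^2}^2 + \|\nabla w_t\|_{L^2}^2$ on the left; on the right one has $\langle f_{4t}, w_t\rangle$ bounded via $\|f_{4t}\|_{({_0}H^1)'}\|w_t\|_{{_0}H^1}$ and absorbed by Korn/Poincar\'e, the boundary term $\langle g_{4t}, w_t\rangle$ on $\G$ bounded by $\|g_{4t}\|_{H^{-1/2}(\G)}\|w_t\|_{H^{1/2}(\G)}$ and absorbed by the trace inequality, and the $h$-contributions $\int_\Om (a\nabla h)_t\cdot\nabla w_t$ which split into $a_t\nabla h$ and $a\nabla h_t$ pieces, each estimated in $L^2_tL^2$ against $\|\nabla w_t\|_{L^2_tL^2}$ with a small constant, producing again $C|||a|||^2|||h|||^2$. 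The initial value $\|w_t(0)\|_{L^2}$ is expressed from the equation at $t=0$ as $\|\Delta w_0 + \nabla\cdot(a(0)\nabla h_0) + f_4(0)\|_{L^2}$, which is controlled by $\|w_0\|_{H^2}$, $\|h_0\|_{H^2}$, $|||a|||$ and $\|f_4\|_{L^2_tH^1}$ (using that $f_4(0)$ is controlled by the norm of $f_4$ in a space embedding into $C([0,\infty);L^2)$); this accounts for the $\|w_0\|_{H^2(\Om)}^2$ term. Integrating gives $\|w_t\|_{L^\infty_tL^2}^2 + \|\nabla w_t\|_{L^2_tL^2}^2$ within the claimed bound.

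Finally I would recover the full $H^2$ and $L^2_tH^3$ norms of $w$. For tangential regularity, apply $\nabla_0$ (resp.\ $\nabla_0^2$) to the equation and repeat the $L^2$ energy estimates above — legitimate since $\nabla_0$ preserves both boundary conditions — obtaining control of $\|\nabla_0 w\|$ and $\|\nabla_0^2 w\|$ in $L^\infty_tL^2\cap L^2_tH^1$. The remaining pure vertical derivatives are then read off from the equation itself: $\partial_3^2 w = w_t - \Delta_0 w - \nabla\cdot(a\nabla h) - f_4$, so $\|\partial_3^2 w\|$ is bounded pointwise in time by $\|w_t\|_{L^2} + \|\nabla_0^2 w\|_{L^2} + C|||a|||\,|||h||| + \|f_4\|_{H^1}$, giving the $L^\infty_tH^2$ bound; differentiating this relation once more (in $x_1,x_2$ or in $y$, using the already-established $L^2_tH^1$ bound on $w_t$ and $L^2_tH^2$ on $\nabla_0 w$, $\nabla_0^2 w$ after one more tangential energy estimate) yields $\|w\|_{L^2_tH^3}$. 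Collecting all pieces gives \eqref{ce1} with $C_1$ independent of $t$.

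I expect the main obstacle to be the careful bookkeeping of the coupling terms involving $a$ and $h$ so that every occurrence is genuinely absorbed into $C_1|||a|||^2|||h|||^2$ (plus small multiples of $w$-norms on the left), rather than into an uncontrolled $|||h|||$ without the $|||a|||^2$ prefactor — this prefactor is precisely what later allows the smallness condition \eqref{eee0} on $a$ to close the coupled estimate in Proposition \ref{cp1}. A secondary technical point is the rigorous justification of the time-differentiated estimate (the $w_t$-test is formal), which is handled in the standard way by working with difference quotients in $t$ or a Galerkin approximation and passing to the limit.
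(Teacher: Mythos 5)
Your time-derivative estimate (differentiate the first equation in $t$, test against $w_t$, handle the boundary term $\int_\G g_{4t}w_t$ by trace and Poincar\'{e}, absorb the $(a\nabla h)_t$ contribution into $C|||a|||^2|||h|||^2$, and estimate $\|w_t(0)\|_{L^2}$ from the equation at $t=0$) is precisely what the paper does, and you have the structure of that part right. The divergence is in the spatial regularity step. The paper freezes $t$ and views $w$ as solving the \emph{elliptic} system \eqref{ce11} with source $-w_t+\nabla\cdot(a\nabla h)+f_4$ and Neumann data $-a\partial_3 h+g_4$ on $\G$, then applies the standard elliptic estimate with $k=3$ and integrates the square in $t$ to get $\|w\|_{L^2_tH^3}$ directly; the $L^\infty_t H^2$ bound then follows in one line from the Lions--Magenes interpolation $\|w\|_{L^\infty_tH^2}\leq C(\|w_t\|_{L^2_tH^1}+\|w\|_{L^2_tH^3})$. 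Your alternative of tangential energy estimates plus recovering vertical derivatives from the PDE is the route the paper does use for the velocity field $\vec v$ (cf.\ Lemma \ref{nl2}, Lemma \ref{nl5}), where the boundary conditions make the elliptic route harder; for the scalar $w$ the elliptic route is available and considerably shorter.

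There is a concrete gap in your $L^\infty_tH^2$ step as stated. Reading $\partial_3^2 w = w_t - \Delta_0 w - \nabla\cdot(a\nabla h) - f_4$ ``pointwise in time'' controls only the pure vertical second derivative $\|\partial_3^2 w\|_{L^\infty_tL^2}$; the mixed derivatives $\nabla_0\partial_3 w$ are not recovered this way, and the tangential energy estimates you propose (testing $\nabla_0$- and $\Delta_0$-differentiated equations against $\nabla_0 w$, $\Delta_0 w$) give $\|\nabla_0 w\|_{L^\infty_tL^2}$, $\|\Delta_0 w\|_{L^\infty_tL^2}$ and $\|\nabla\nabla_0 w\|_{L^2_tL^2}$, $\|\nabla\Delta_0 w\|_{L^2_tL^2}$, but \emph{not} $\|\nabla_0\partial_3 w\|_{L^\infty_tL^2}$. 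In addition, the term $\|f_4\|_{L^\infty_tL^2}$ you invoke is not among the hypotheses: you only have $f_4\in L^2_tH^1$ and $f_{4t}\in L^2_t({_0}H^1)'$, and turning that into a uniform-in-time $L^2$ bound on $f_4$ requires its own interpolation argument. Both problems disappear if you restructure: first establish $\|w\|_{L^2_tH^3}$ (your tangential-plus-vertical scheme does suffice for this, or the elliptic estimate is quicker), and then obtain $\|w\|_{L^\infty_tH^2}$ by interpolating $w\in L^2_tH^3$ against $w_t\in L^2_tH^1$, exactly as the paper does. Without that reordering, the $L^\infty_tH^2$ conclusion as you present it does not follow.
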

\begin{proof}
We first estimate $\|w_t\|_{L^\infty_tL^2}^2+\|\nabla w_t\|_{L^2_tL^2}^2$. Differentiating the first equation of \eqref{e2} with respect to $t$ one has
\be\label{ce4}
w_{tt}-\Delta w_t-\nabla\cdot(a\nabla h)_t=f_{4t}.
\ee
Multiplying \eqref{ce4} by $2w_t$ in $L^2$ and using integration by parts, we get
\be\label{ce5}
\begin{split}
\frac{d}{dt}\|w_t\|_{L^2}^2+2\|\nabla w_t\|_{L^2}^2
=&2\int_\Gamma(\partial_3 w+a\partial_3 h)_tw_t\,dx
-2\int_\Omega (a\nabla h)_t\cdot\nabla w_t\,dxdy
+2\int_\Omega f_{4t}w_t\,dxdy\\
=&2\int_\Gamma g_{4t}w_t\,dx
-2\int_\Omega (a\nabla h)_t\cdot\nabla w_t\,dxdy
+2\int_\Omega f_{4t}w_t\,dxdy\\
:=&I_1+I_2+I_3.
\end{split}
\ee
By the trace theorem we have
\be\label{ce6}
I_1\leq\va\|w_t\|_{H^{\frac{1}{2}}(\Gamma)}^2
+C(\va)\|g_{4t}\|_{H^{-\frac{1}{2}}(\Gamma)}^2
\leq C\va\|w_t\|_{H^{1}}^2
+C(\va)\|g_{4t}\|_{H^{-\frac{1}{2}}(\Gamma)}^2,
\ee
where $C(\va)$ is a constant independent of $t$, but depending on $\va$.
Then using the Poincar\'{e} inequality (\cite[Theorem 6.30]{adams-fournier}) thanks to the fact $w_t=0$ on $S_B$, we can choose $\va$ small enough such that $C\va\|w_t\|_{H^{1}}^2\leq \frac{1}{3}\|\nabla w_t\|_{L^2}^2$, which inserted into \eqref{ce6} gives rise to
\be\label{ce7}
I_1\leq \frac{1}{3}\|\nabla w_t\|_{L^2}^2+C\|g_{4t}\|_{H^{-\frac{1}{2}}(\Gamma)}^2.
\ee
It follows from the Sobolev embedding inequality that
\be\label{ce8}
\begin{split}
I_2
\leq& \frac{1}{3} \|\nabla w_t\|_{L^2}^2
+C\left(\|a\|_{L^\infty}^2\|\nabla h_t\|_{L^2}^2
+\|a_t\|_{L^4}^2\|\nabla h\|_{L^4}^2\right)\\
\leq &\frac{1}{3} \|\nabla w_t\|_{L^2}^2
+C\left(\|a\|_{H^2}^2\|\nabla h_t\|_{L^2}^2
+\|a_t\|_{H^1}^2\|h\|_{H^2}^2\right).
\end{split}
\ee
Noting $w_t=0$ on $S_B$, one employs the Poincar\'{e} inequality to derive
\be\label{ce9}
I_3\leq \va \|w_t\|_{H^1}^2+C(\va)\|f_{4t}\|_{({_0}H^1)^{'}}^2
\leq \frac{1}{3}\|\nabla w_t\|_{L^2}^2+C\|f_{4t}\|_{({_0}H^1)^{'}}^2,
\ee
where $\va$ has been chosen small such that $\va \|w_t\|_{H^1}^2\leq \frac{1}{3}\|\nabla w_t\|_{L^2}^2$. Substituting \eqref{ce7}-\eqref{ce9} into \eqref{ce5} and integrating the resulting inequality over $(0,t)$ we arrive at
 \be\label{ce21}
 \begin{split}
 \|w_t\|_{L^\infty_tL^2}^2+\|\nabla w_t\|_{L^2_tL^2}^2
\leq \|w_t(0)\|_{L^2}^2+ C\big(\|g_{4t}\|_{L^2_tH^{-\frac{1}{2}}(\Gamma)}^2
 +|||a|||^2||| h|||^2
+\|f_{4t}\|_{L^2_t({_0}H^1)^{'}}^2
\big).
\end{split}
\ee
The estimate of $\|w_t(0)\|_{L^2}^2$ follows from the first equation of \eqref{e2} and the compactness theorem (see e.g. \cite[Chapter 1, Theorem 3.1]{lions&magenes}):
\ben
\begin{split}
\|w_t(0)\|_{L^2}^2\leq &   C\big(\|w_0\|_{H^2}^2+\|a(0)\|_{H^2}^2 \|h(0)\|_{H^2}^2
+\|f_4(0)\|_{L^2}^2\big)\\
\leq & C\|w_0\|_{H^2}^2+C|||a|||^2\,|||h|||^2
+C\big(\|f_4\|_{L^2_tH^1}^2+\|f_{4t}\|_{L^2_t(_{0}H^1)^{'}}^2
\big)
\end{split}
\enn
which, substituted into \eqref{ce21} gives rise to
\be\label{ce10}
 \begin{split}
 \|w_t&\|_{L^\infty_tL^2}^2+\|w_t\|_{L^2_tH^1}^2\\
\leq &C\|w_0\|_{H^2}^2+ C\big(
 |||a|||^2\,|||h|||^2
+\|g_{4t}\|_{L^2_tH^{-\frac{1}{2}}(\Gamma)}^2
+\|f_4\|_{L^2_tH^1}^2
+\|f_{4t}\|_{L^2_t({_0}H^1)^{'}}^2
\big),
\end{split}
\ee
where we have used the fact $\|w_t\|_{L^2_tH^1}\leq C \|\na w_t\|_{L^2_tL^2}$, thanks to the Poincar\'{e} inequality (cf. \cite[Theorem 6.30]{adams-fournier}).
We proceed to estimating $\|w\|_{L^2_tH^3}^2.$
 For fixed $t>0$, from \eqref{e2} we know that $w$ solves the following elliptic system
 \be\label{ce11}
 \left\{
\begin{array}{lll}
-\Delta w=-w_t+\nabla\cdot(a\nabla h)+f_4,\quad \ \ (x_1,x_2,y)\in \Omega,\\
\partial_3 w=-a\partial_3 h+g_4\quad \text{on}\ \  \Gamma,\\
w=0 \quad \text{on}\ \  S_B.
\end{array}
\right.
\ee
Then it follows from the standard elliptic theory (see e.g. \cite[Lemma 2.8]{beale1981}) that
\ben
\|w\|_{H^k}\leq C\big(\|-w_t+\nabla\cdot(a\nabla h)+f_4\|_{H^{k-2}}+\|a\partial_3 h\|_{H^{k-\frac{3}{2}}(\Gamma)} +\|g_4\|_{H^{k-\frac{3}{2}}(\Gamma)}\big)
\enn
for fixed $t>0$, with $k\geq 2$. Taking $k=3$, one immediately gets
\be\label{ce12}
\begin{aligned}
\|w\|_{L^2_tH^3}^2
\leq& C\big(
\|w_t\|_{L^2_tH^{1}}^2+\|\nabla\cdot(a\nabla h)\|_{L^2_tH^{1}}^2
+\|f_4\|_{L^2_tH^{1}}^2
+\|a\partial_3 h\|_{L^2_tH^{\frac{3}{2}}(\Gamma)} ^2
+\|g_4\|_{L^2_tH^{\frac{3}{2}}(\Gamma)} ^2\big )\\
\leq & C\big(
\|w_t\|_{L^2_tH^1}^2
+\|a\|_{L^\infty_tH^2}^2\|h\|_{L^2_tH^3}^2
+\|f_{4}\|_{L^2_tH^{1}}^2
+\|g_4\|_{L^2_tH^{\frac{3}{2}}(\Gamma)} ^2
\big),
\end{aligned}
\ee
where we have used the following fact:
\ben
\begin{split}
\|\nabla\cdot(a\nabla h)\|_{L^2_tH^1}
+\|a\partial_3 h\|_{L^2_tH^{\frac{3}{2}}(\Gamma)} ^2
\leq  C\|a\nabla h\|_{L^2_tH^2}
\leq C\|a\|_{L^\infty_tH^2}\|h\|_{L^2_tH^3},
\end{split}
\enn
thanks to the trace theorem and Sobolev embedding inequality. On the other hand, it follows from the compactness theorem (cf. \cite[Chapter 1, Theorem 3.1]{lions&magenes}) that
\be\label{ce22}
\|w\|_{L^\infty_tH^2}\leq C(\|w_t\|_{L^2_tH^{1}}^2+\|w\|_{L^2_tH^{3}}^2).
\ee
Collecting \eqref{ce12}, \eqref{ce22} and using \eqref{ce10} we obtain \eqref{ce1}.
The proof is completed.

\end{proof}

\begin{lemma}\label{cl2} Suppose that the assumptions in Proposition \ref{cp1} hold. Then the solution $(w,h)$ of \eqref{e2} fulfills
\be\label{ce14}
|||h|||^2\leq C_2(\|h_0\|_{H^2(\Om)}^2+\|w_0\|_{H^2(\Om)}^2)+C_2|||w|||^2
+
C_2\big(
\|f_{5}\|_{L^2_tH^{1}}^2
+\|f_{5t}\|_{L^2_t({^0}H^1)^{'}}^2
\big)
\ee
for all $t>0$, where the constant $C_2>0$ is independent of $t$.
\end{lemma}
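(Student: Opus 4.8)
The plan is to treat the second equation of \eqref{e2}, namely $h_t-\De h=w+f_5$ on $\Om$ with the homogeneous mixed boundary conditions $h=0$ on $\G$ and $\pt_3 h=0$ on $S_B$, as a linear parabolic problem whose source $w+f_5$ is already under control, since $\|w\|_{L^2_tH^3}$, $\|w_t\|_{L^2_tH^1}\le|||w|||$ and $f_5,f_{5t}$ are controlled by hypothesis \eqref{ee01}. All four constituents of $|||h|||$ are then recovered by the same three-step scheme as in the proof of Lemma \ref{cl1}, but the argument is simpler here because the $h$-equation contains no quadratic cross term analogous to $\na\cdot(a\na h)$.

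First, I would estimate $h_t$. Differentiating the $h$-equation in $t$ gives $h_{tt}-\De h_t=w_t+f_{5t}$; multiplying by $2h_t$ and integrating over $\Om$, the boundary contributions from integration by parts vanish, because $h=0$ on $\G$ forces $h_t=0$ on $\G$ and $\pt_3 h=0$ on $S_B$ forces $\pt_3 h_t=0$ on $S_B$, leaving $\frac{d}{dt}\|h_t\|_{L^2}^2+2\|\na h_t\|_{L^2}^2=2\int_\Om(w_t+f_{5t})h_t\,dxdy$. Since $h_t\in{^0}H^1$, the Poincar\'e inequality yields $\|h_t\|_{H^1}\le C\|\na h_t\|_{L^2}$, so by duality the right-hand side is bounded by $\|\na h_t\|_{L^2}^2+C\big(\|w_t\|_{L^2}^2+\|f_{5t}\|_{({^0}H^1)^{'}}^2\big)$; absorbing and integrating on $(0,t)$ produces $\|h_t\|_{L^\infty_tL^2}^2+\|h_t\|_{L^2_tH^1}^2\le C\|h_t(0)\|_{L^2}^2+C\big(|||w|||^2+\|f_{5t}\|_{L^2_t({^0}H^1)^{'}}^2\big)$. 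Evaluating the equation at $t=0$ gives $\|h_t(0)\|_{L^2}\le C\big(\|h_0\|_{H^2}+\|w_0\|_{L^2}+\|f_5(0)\|_{L^2}\big)$, and $\|f_5(0)\|_{L^2}$ is bounded by $C\big(\|f_5\|_{L^2_tH^1}+\|f_{5t}\|_{L^2_t({^0}H^1)^{'}}\big)$ via the trace-in-time/compactness theorem (cf. \cite[Chapter 1, Theorem 3.1]{lions&magenes}), exactly as $\|f_4(0)\|_{L^2}$ was handled in Lemma \ref{cl1}.

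Second, for fixed $t$ the function $h$ solves the elliptic problem $-\De h=-h_t+w+f_5$ in $\Om$, $h=0$ on $\G$, $\pt_3 h=0$ on $S_B$; since the Dirichlet condition on $\G$ removes the kernel and provides vertical Poincar\'e control on the slab, elliptic regularity (cf. \cite[Lemma 2.8]{beale1981}) gives $\|h\|_{H^3}\le C\|-h_t+w+f_5\|_{H^1}$, whence $\|h\|_{L^2_tH^3}^2\le C\big(\|h_t\|_{L^2_tH^1}^2+\|w\|_{L^2_tH^1}^2+\|f_5\|_{L^2_tH^1}^2\big)$, the first term already controlled in the previous step. Finally, the compactness/interpolation theorem (cf. \cite[Chapter 1, Theorem 3.1]{lions&magenes}), as in \eqref{ce22}, gives $\|h\|_{L^\infty_tH^2}^2\le C\big(\|h_0\|_{H^2}^2+\|h_t\|_{L^2_tH^1}^2+\|h\|_{L^2_tH^3}^2\big)$. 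Collecting the three estimates and using $\|w\|_{L^2_tH^1}\le|||w|||$ and $\|w_0\|_{L^2}\le\|w_0\|_{H^2}$ yields \eqref{ce14}.

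Since the $h$-equation is essentially a linear heat equation, there is no genuine analytical difficulty comparable to that in Lemma \ref{cl1}; the two points requiring care are (i) verifying that every integration-by-parts boundary term vanishes, which hinges on differentiating the correct homogeneous boundary conditions in $t$, and (ii) the initial-time bound for $\|h_t(0)\|_{L^2}$, which cannot be read off directly and must be obtained by combining the equation at $t=0$ with the trace-in-time estimate for $f_5$. I expect (ii) to be the most delicate bookkeeping step, while the rest is a routine energy estimate plus elliptic regularity on the slab.
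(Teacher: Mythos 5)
Your proposal follows the paper's proof of Lemma \ref{cl2} essentially step for step: differentiate the $h$-equation in time, test with $2h_t$ and use the Poincar\'e inequality on ${^0}H^1$ to absorb the $H^1$-norm of $h_t$, estimate $\|h_t(0)\|_{L^2}$ from the equation at $t=0$ together with the trace-in-time estimate for $f_5$, then recover $\|h\|_{L^2_tH^3}$ from the stationary elliptic problem \eqref{ce17} and $\|h\|_{L^\infty_tH^2}$ from the compactness theorem. The only cosmetic difference is that you include $\|h_0\|_{H^2}^2$ in the compactness bound for $\|h\|_{L^\infty_tH^2}^2$, whereas the paper's version \eqref{ce24}--and the line after it--omits it; this is harmless since that term already appears in \eqref{ce14}.
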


\begin{proof}
 Differentiating the second equation of \eqref{e2} with respect to $t$, then multiplying the resulting equation with $2h_t$ in $L^2$ and using integration by parts we have
 \be\label{ce15}
 \begin{split}
\frac{d}{dt}\|h_t\|_{L^2}^2+2\|\nabla h_t\|_{L^2}^2
=&2\int_{\Omega}f_{5t}h_t\,dxdy
+2\int_\Omega w_t h_t\,dxdy\\
\leq & \va \|h_t\|^2_{H^1}+C(\va)\big(\|f_{5t}\|_{({^0}H^1)^{'}}^2
+\|w_t\|_{L^2}^2\big),
\end{split}
 \ee
 where the positive constant $C(\va)$ depends on $\va>0$.
 Since $h_t=0$ on $\Gamma\times(0,\infty)$,
by the Poincar\'{e} inequality (cf. \cite[Themorem 6.30]{adams-fournier}) one can choose $\va$ small such that
  \be\label{ce16}
  \va \|h_t\|^2_{H^1}\leq \|\nabla h_t\|_{L^2}^2.
  \ee
  Inserting \eqref{ce16} into \eqref{ce15} and integrating the resulting inequality over $(0,t)$ to derive
 \be\label{ce18}
 \|h_t\|_{L^\infty_tL^2}^2+\|h_t\|_{L^2_tH^1}^2
 \leq \|h_t(0)\|_{L^2}^2+C(|||w|||^2+
\|f_{5t}\|_{L^2_t(^{0}H^1)^{'}}^2).
 \ee
We next use the second equation of \eqref{e2} and the compactness theorem (cf. \cite[Chapter 1, Theorem 3.1]{lions&magenes}) to estimate $\|h_t(0)\|_{L^2}^2$ as follows:
\ben
\begin{split}
\|h_t(0)\|_{L^2}^2\leq C(\|h_0\|_{H^2}^2+\|w_0\|_{L^2}^2+\|f_5(0)\|_{L^2}^2)
\leq C
(\|h_0\|_{H^2}^2+\|w_0\|_{L^2}^2+\|f_5\|_{L^2_tH^1}^2+
\|f_{5t}\|_{L^2_t(^{0}H^1)^{'}}^2),
\end{split}
\enn
which, substituted in to \eqref{ce18} gives rise to
\be\label{ce23}
\|h_t\|_{L^\infty_tL^2}^2+\|h_t\|_{L^2_tH^1}^2
\leq C
(\|h_0\|_{H^2}^2+\|w_0\|_{H^2}^2
+|||w|||^2+\|f_5\|_{L^2_tH^1}^2+
\|f_{5t}\|_{L^2_t(^{0}H^1)^{'}}^2).
\ee
 From \eqref{e2} we know that for fixed $t>0$, $h$ solves the following elliptic problem
 \be\label{ce17}
 \left\{
\begin{array}{lll}
-\Delta h=f_5-h_t+w, \quad  (x_1,x_2,y) \in\Omega,\\
 h=0\quad\quad\text{on}\,\, \Gamma,\\
\partial_3 h=0 \ \quad \text{on}\,\, S_B.
\end{array}
\right.
 \ee
 Then applying the standard elliptic theory (cf. \cite[Lemma 2.8]{beale1981}) to \eqref{ce17}  one deduces that
 \be\label{ce24}
 \|h\|_{L^2_tH^3}^2
 \leq C (\|f_5\|_{L^2_tH^1}^2
 +\|h_t\|_{L^2_tH^1}^2
 +\|w\|_{L^2_tH^1}^2
 ).
 \ee
 Moreover, it follows from the compactness theorem (cf. \cite[Chapter 1, Theorem 3.1]{lions&magenes}) that
 \ben
 \|h\|_{L^\infty_tH^2}^2\leq C(\|h\|_{L^2_tH^3}^2+\|h_t\|_{L^2_tH^1}^2),
   \enn
   which, along with \eqref{ce23} and \eqref{ce24}
   gives \eqref{ce14}.
 The proof is completed.

\end{proof}

We next prove Proposition \ref{cp1} by using Lemma \ref{cl1} and Lemma \ref{cl2}.\\
\textbf{\emph{Proof of Proposition \ref{cp1}}.} The local well-posedness of the initial-boundary value problem \eqref{e2} follows from the standard parabolic theory (see e.g. \cite[pp 341-342, Theorem 9.1]{LSU88}). We omit the details for brevity and proceed to the derivation of \eqref{ce19}.
Multiplying \eqref{ce1} by $(C_2+1)$ then adding the resulting inequality to \eqref{ce14} one gets
\ben
\begin{split}
|||w|||^2+|||h|||^2
\leq &(C_1+1)(C_2+1)\|w_0\|_{H^2(\Om)}^2+C_2\|h_0\|_{H^2(\Om)}^2+
 C_1(C_2+1) |||a|||^2 |||h|||^2\\
&+C_1(C_2+1)\big(\|g_4\|_{L^2_tH^{\frac{3}{2}}(\Gamma)} ^2
+\|g_{4t}\|_{L^2_tH^{-\frac{1}{2}}(\Gamma)}^2
\big)\\
&+C_1(C_2+1)\big(
\|f_{4}\|_{L^2_tH^{1}}^2
+\|f_{4t}\|_{L^2_t({_0}H^1)^{'}}^2
\big)\\
&+C_2\big(
\|f_{5}\|_{L^2_tH^{1}}^2
+\|f_{5t}\|_{L^2_t({^0}H^1)^{'}}^2
\big)
\end{split}
\enn
which, in conjunction with \eqref{eee0} gives \eqref{ce19}. Moreover, from \eqref{ce19} we know that the solution $(w,h)$ is unique and global.
 The proof is finished.

\endProof

\subsection{Solvability of system \eqref{e1}}\label{ss6}

\begin{prop}\label{np1} Suppose $\vec{v}_0\in H^2(\Om)$, $\eta_0\in H^{3}(\R^2)$, $\na \phi$, $\vec{f}=(f_1,f_2,f_3)$ and $\vec{g}=(g_1,g_2,g_3)$ satisfy \eqref{ee01} and the compatibility conditions \eqref{cb2}. Assume further that
\be\label{ab1}
w\in L^\infty(0,\infty;H^2)\cap L^2(0,\infty;H^3),\quad
w_t\in L^\infty(0,\infty;L^2)\cap L^2(0,\infty;H^1).
\ee
Then system \eqref{e1} admits a unique global solution $(\vec{v}, q,\eta)$ fulfilling
\be\label{ne19}
\begin{split}
|||\vec{v}&|||^2
+\|\nabla\vec{v}_{t}\|_{L^2_t H^{-\frac{1}{2}}(\G)}^2
+\|\na q\|_{L^\infty_t L^2}^2
+\| \na q\|_{L^2_t H^1}^2+\|\nabla q_t\|_{L^2_t({_0}H^1)^{'}}^2\\
&+\|\eta\|_{L^\infty_tH^3(\R^2)}^2
+\|\na_0\eta\|_{L^2_tH^{\frac{5}{2}}(\R^2)}
+\|\na^2\mathcal{H}(\eta)\|_{L^2_tH^2}
\\
\leq  & C_4 \big(\|\vec{v}_0\|_{H^2(\Om)}^2
+\|\eta_0\|_{H^3(\R^2)}^2\big)
+C_4|||w|||^2\big(\|\nabla \phi\|_{L^\infty_tL^\infty}^2
+\|\nabla^2 \phi\|_{L^\infty_tL^\infty}^2
+\|\nabla \phi_t\|_{L^2_tL^2}^2\big)
\\
&+C_4\big(
\|\vec{f}\|_{L^2_tH^1}^2
+\|\vec{f}_{t}\|_{L^2_t (_{0}H^1)^{'}}^2
+\|\vec{g}\|_{L^2_t H^{\frac{3}{2}}(\G)}^2
+\|\vec{g}_t\|_{L^2_tH^{-\frac{1}{2}}(\G)}^2
\big)
\end{split}
\ee
for all $t>0$,
where $C_4$ is a constant independent of $t$.
\end{prop}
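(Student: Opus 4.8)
The plan is to establish \eqref{ne19} as an \emph{a priori} estimate; the local-in-time solvability of \eqref{e1} is obtained by a standard Galerkin scheme combined with the stationary Stokes regularity of Lemma \ref{pl6}, so once \eqref{ne19} holds uniformly in $t$ the solution is global, and uniqueness follows by applying \eqref{ne19} to the difference of two solutions (the problem being linear).

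First I would derive the basic energy identity by testing the momentum equation with $2\vec v$. Using $\na\cdot\vec v=0$ and $\vec v=\mathbf 0$ on $S_B$, the viscous and pressure terms combine, after inserting the tangential conditions $\partial_3v_i+\partial_iv_3=g_i$ ($i=1,2$), into $2[\vec v,\vec v]+2\int_\G(q-2\partial_3v_3)v_3\,dx$ modulo the harmless tangential traces $\int_\G(g_1v_1+g_2v_2)\,dx$; the crucial cancellation is that, by the normal-stress condition and $\eta_t=v_3$, $\int_\G(q-2\partial_3v_3)v_3\,dx=\int_\G(\gamma\eta-\sigma\De_0\eta-g_3)\eta_t\,dx$, whose first two terms form the exact derivative $\tfrac12\tfrac{d}{dt}(\gamma\|\eta\|_{L^2(\R^2)}^2+\sigma\|\na_0\eta\|_{L^2(\R^2)}^2)$. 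Absorbing $\|\vec v\|_{H^1}^2\lesssim[\vec v,\vec v]$ (Korn, Lemma \ref{pl5}) against the traces of $\vec g$ and against $\int_\Om w\na\phi\cdot\vec v$ (controlled by \eqref{ab1} and \eqref{ee01}), then integrating in time, gives $\|\vec v\|_{L^\infty_tL^2}^2+\|\eta\|_{L^\infty_tH^1(\R^2)}^2+\|\vec v\|_{L^2_tH^1}^2$. Differentiating \eqref{e1} in $t$ and repeating the same computation for $\vec v_t$ — which still satisfies $\na\cdot\vec v_t=0$, $\vec v_t=\mathbf 0$ on $S_B$, and, using $v_{3t}|_\G=\eta_{tt}$, produces $\tfrac12\tfrac{d}{dt}(\gamma\|\eta_t\|_{L^2(\R^2)}^2+\sigma\|\na_0\eta_t\|_{L^2(\R^2)}^2)$ from the $t$-differentiated normal-stress condition — yields $\|\vec v_t\|_{L^\infty_tL^2}^2+\|\vec v_t\|_{L^2_tH^1}^2+\|\eta_t\|_{L^\infty_tH^1(\R^2)}^2$; the initial datum $\|\vec v_t(0)\|_{L^2}$ is controlled by projecting the equation at $t=0$ by $P$, writing $P\na q(0)=\na\mathcal H(q(0)|_\G)$ (Lemma \ref{pl8}) with $q(0)|_\G=2\partial_3v_{03}+\gamma\eta_0-\sigma\De_0\eta_0-g_3(0)$ determined by the data, and using Lemma \ref{pl7}, $\vec v_0\in H^2(\Om)$, $\eta_0\in H^3(\R^2)$ and \eqref{cb2}.

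The heart of the argument — and the main obstacle — is the $L^2_tH^3$ bound for $\vec v$, which cannot be read off from the normal-stress boundary condition: that condition ties $q-2\partial_3v_3$ to $\gamma\eta-\sigma\De_0\eta$, and since $\eta_t=v_3$ one only controls $\eta$ in $L^\infty_t$, hence only obtains $\vec v\in L^\infty_tH^2$ through Lemma \ref{pl6}. To circumvent this I would exploit that $\Om=\{-b<y<0\}$ is translation-invariant in $(x_1,x_2)$: differentiating \eqref{e1} in $x_k$ ($k=1,2$) and rerunning the energy estimates above for $\partial_k\vec v$ and $\partial_k\vec v_t$ gives uniform control of the horizontal derivatives $\na_0\vec v$, $\na_0\vec v_t$ in $L^\infty_tL^2\cap L^2_tH^1$. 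The divergence-free relation $\partial_3v_3=-\partial_1v_1-\partial_2v_2$ then converts these horizontal bounds into a boundary estimate on $\G$ for $v_3$ and $\partial_3v_3$, with $v_3|_\G=\eta_t$ supplying the lowest-order piece; feeding this as a prescribed Dirichlet-type value of $v_3$ on $\G$ into the stationary Stokes system obtained from \eqref{pe1} by replacing its normal-stress boundary condition with this condition, Lemma \ref{pl6} upgrades $\vec v$ to $L^2_tH^3$ (and $q$ to $L^2_tH^2$), after which the compactness theorem gives $\vec v\in L^\infty_tH^2$.

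With $\vec v\in L^\infty_tH^2\cap L^2_tH^3$ and $\vec v_t\in L^\infty_tL^2\cap L^2_tH^1$ secured, the pressure bounds $\|\na q\|_{L^\infty_tL^2}$, $\|\na q\|_{L^2_tH^1}$, $\|\na q_t\|_{L^2_t({_0}H^1)^{'}}$ follow from $\na q=\vec f-w\na\phi-\vec v_t+\De\vec v$ and its $t$-derivative, or from Lemma \ref{pl6} at levels $m=2,3$; and $\|\na\vec v_t\|_{L^2_tH^{-\frac12}(\G)}$ follows from $\vec v_t\in L^2_tH^1$ by the trace theorem. For the surface, rewriting the normal-stress condition as $\sigma\De_0\eta=\gamma\eta-g_3-(q-2\partial_3v_3)|_\G$ and using the $H^{3/2}(\G)$ traces of $q$ and $\partial_3v_3$ gives $\|\na_0\eta\|_{L^2_tH^{\frac52}(\R^2)}$ by elliptic regularity on $\R^2$; combining this with $\na_0\eta_t=\na_0v_3|_\G\in L^2_tH^{\frac32}(\R^2)$ (since $\vec v\in L^2_tH^3$) through the compactness/interpolation theorem yields $\|\eta\|_{L^\infty_tH^3(\R^2)}$, and Lemma \ref{pl7}, together with $\De\mathcal H(\eta)=0$ (which trades vertical for tangential derivatives of the extension), converts these into $\|\na^2\mathcal H(\eta)\|_{L^2_tH^2}$. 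Collecting all the bounds above — carrying the $w\na\phi$ and $w\na\phi_t$ terms along as source terms controlled by $|||w|||$ and \eqref{ee01} — establishes \eqref{ne19}.
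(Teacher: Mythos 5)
Your proposal is correct and follows essentially the same line of attack as the paper's: basic energy estimates and the $t$-differentiated estimate (with $\|\vec v_t(0)\|_{L^2}$ recovered via the projection $P$ and Lemma~\ref{pl8}), tangential estimates on $\nabla_0\vec v$ and $\Delta_0\vec v$ exploiting horizontal translation-invariance, the divergence-free conversion of tangential bounds into a boundary bound on $v_3$ fed into the modified Stokes system from \eqref{pe1} to obtain $L^2_tH^3$, and then the pressure and surface estimates. Your two small departures — extracting $\gamma\eta-\sigma\Delta_0\eta$ directly from the trace of $q$ rather than via $P\nabla q=\nabla\mathcal H(q|_\Gamma)$, and recovering $\|\eta\|_{L^\infty_t H^3(\R^2)}$ by interpolating $\nabla_0\eta\in L^2_tH^{5/2}$ against $\nabla_0\eta_t\in L^2_tH^{3/2}$ instead of reading it off the second tangential energy identity — are both harmless and yield the same bound with $t$-independent constants, so the overall scheme is equivalent.
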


We shall first prove the existence of weak solutions of \eqref{e1} in the following Lemma \ref{nl1}, and then improve the regularity of such weak solutions in Lemma \ref{nl2}- Lemma \ref{pl9} under the regularity assumptions on initial data and external forces in Proposition \ref{np1}. The proof of Proposition \ref{np1} will be given at the end of this subsection.
We introduce a space where weak solutions will be defined.
\ben
\begin{split}
V=
\{
\vec{v}\in L^2(0,\infty;H^1)\,|\ \ \nabla\cdot \vec{v}=0,
\quad \int_0^t \nabla_0 \vec{v}(x_1,x_2,y,s)\,ds\in L^\infty(0,\infty;L^2(\Gamma)),
\quad \vec{v}=\mathbf{0} \ \ \text{on}\ \  S_B\times(0,\infty)
\}.
\end{split}
\enn
For test functions, we introduce the following separable space:
\ben
\begin{split}
\mathcal{V}=\{
\vec{\varphi}\in H^1(\Om)\,| \ \  \nabla\cdot \vec{\var}=0,
\quad \nabla_0 \vec{\var}\in L^2(\Gamma),
\quad \vec{\var}=\mathbf{0} \ \ \text{on}\ \  S_B
\}.
\end{split}
\enn
Then the existence of weak solutions to system \eqref{e1} is as follows.
\begin{lemma}\label{nl1}
 Let $\eta_0\in H^1(\R^2)$ and $\vec{v}_0\in H^1(\Om)$ satisfy $\nabla\cdot\vec{v}_0=0$. Assume $\vec{g}\in L^2(0,\infty;H^{-\frac{1}{2}}(\G))$, $\vec{f}\in L^2(0,\infty;(_0H^1)^{'})$ and $(w\nabla \phi)\in L^2(0,\infty;L^2) $. Then there exists a weak solution $\vec{v}\in V$ of \eqref{e1} with $\vec{v}_t\in L^2(0,\infty;( _0H^1)^{'})$ satisfying
\be\label{a5}
\begin{split}
\langle\vec{v}_t&,\vec{\var}\rangle +[\vec{v},\vec{\var}]+\gamma\int_{\Gamma}\Big(\eta_0+\int_0^t v_3\, ds\Big)\var_3\,dx+\sigma\int_{\Gamma}\Big(\na_0\eta_0+\int_0^t\nabla_0 v_3\, ds\Big)\cdot\nabla_0\var_3\,dx\\
=&-(w\nabla \phi,\vec{\var})+
\langle\vec{f},\vec{\var}\rangle
+\langle \vec{g},\vec{\var}\rangle_{\Gamma},\qquad \forall \ \vec{\var}\in \mathcal{V}
\end{split}
\ee
and
\be\label{a6}
\vec{v}(x_1,x_2,y,0)=\vec{v}_0(x_1,x_2,y)
\ee
for almost every $t>0$, where $(\cdot,\cdot)$ denotes the $L^2$ inner product, $\langle\cdot,\cdot\rangle$ denotes the duality pairing between $_0H^1$ and $(_0H^1)^{'}$ and $\langle \cdot,\cdot\rangle_{\Gamma}$ represents the duality pairing between $H^{\frac{1}{2}}(\G)$ and $H^{-\frac{1}{2}}(\G)$ .
\end{lemma}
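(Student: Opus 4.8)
The plan is to construct the weak solution by a Galerkin approximation adapted to the free-surface coupling; note that the pressure is already absent from \eqref{a5} because the test functions in $\mathcal{V}$ are divergence free and vanish on $S_B$, so $\na q$ is $L^2$-orthogonal to them. Since $\mathcal{V}$ is separable, I would fix a basis $\{\vec{\var}_k\}_{k\ge 1}\subset\mathcal{V}$, orthonormal in $L^2(\Om)$ and total in $\mathcal{V}$, and for each $N$ seek $\vec{v}^N(t)=\sum_{k=1}^N c^N_k(t)\vec{\var}_k$ together with $\eta^N(t)=\eta_0+\int_0^t v^N_3\,ds$ on $\G$ subject to
\[
(\vec{v}^N_t,\vec{\var}_j)+[\vec{v}^N,\vec{\var}_j]+\gamma\int_\G\eta^N\var_{j,3}\,dx+\sigma\int_\G\na_0\eta^N\cdot\na_0\var_{j,3}\,dx=-(w\na\phi,\vec{\var}_j)+\langle\vec{f},\vec{\var}_j\rangle+\langle\vec{g},\vec{\var}_j\rangle_\G
\]
for $j=1,\dots,N$, with $\vec{v}^N(0)$ an $L^2(\Om)$-approximation of $\vec{v}_0$. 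Because the kinematic condition has been integrated, the $\gamma$- and $\sigma$-terms involve $\int_0^t c^N_k\,ds$, so this is a linear integro-differential system with constant coefficient matrices and right-hand side in $L^2_{\mathrm{loc}}(0,\infty)$; setting $d^N_k=\int_0^t c^N_k\,ds$ converts it into a linear constant-coefficient second order ODE system, which has a unique global solution, so $(\vec{v}^N,\eta^N)$ exists on $[0,\infty)$.

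The key \emph{a priori} bound comes from testing the $j$-th relation with $c^N_j$ and summing, i.e.\ taking $\vec{\var}=\vec{v}^N$. Since $\partial_t\eta^N=v^N_3$ on $\G$, the surface terms become exact time derivatives, yielding
\[
\frac12\frac{d}{dt}\Big(\|\vec{v}^N\|_{L^2}^2+\gamma\|\eta^N\|_{L^2(\G)}^2+\sigma\|\na_0\eta^N\|_{L^2(\G)}^2\Big)+[\vec{v}^N,\vec{v}^N]=-(w\na\phi,\vec{v}^N)+\langle\vec{f},\vec{v}^N\rangle+\langle\vec{g},\vec{v}^N\rangle_\G.
\]
I would bound the right-hand side by the Cauchy--Schwarz inequality, the trace theorem $\|\vec{v}^N\|_{H^{1/2}(\G)}\le C\|\vec{v}^N\|_{H^1}$ and Young's inequality, and absorb the resulting $\va\|\vec{v}^N\|_{H^1}^2$ into $[\vec{v}^N,\vec{v}^N]$ via Korn's inequality (Lemma \ref{pl5}). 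Integrating in time then gives, uniformly in $N$,
\[
\|\vec{v}^N\|_{L^\infty_tL^2}^2+\|\vec{v}^N\|_{L^2_tH^1}^2+\|\eta^N\|_{L^\infty_tH^1(\G)}^2\le C\big(\|\vec{v}_0\|_{L^2}^2+\|\eta_0\|_{H^1(\R^2)}^2+\|w\na\phi\|_{L^2_tL^2}^2+\|\vec{f}\|_{L^2_t({_0}H^1)^{'}}^2+\|\vec{g}\|_{L^2_tH^{-\frac12}(\G)}^2\big),
\]
so $\{\vec{v}^N\}$ is bounded in $V$.

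For the time derivative, from the approximate equation I would bound $|(\vec{v}^N_t,\vec{\var})|$ for $\vec{\var}$ in the span: the viscous term by $\|\vec{v}^N\|_{H^1}\|\vec{\var}\|_{H^1}$, the $\gamma$-term and the forcing terms by trace and duality, and the capillary term by first integrating by parts on $\G$ (which is boundaryless), $\sigma\int_\G\na_0\eta^N\cdot\na_0\var_3=-\sigma\int_\G(\De_0\eta^N)\var_3$, and then using the uniform $L^\infty_tH^1(\G)$ bound on $\eta^N$ and the trace of $\var_3$; this gives $\{\vec{v}^N_t\}$ bounded in $L^2(0,\infty;({_0}H^1)^{'})$. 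With these bounds I extract a subsequence with $\vec{v}^N\rightharpoonup\vec{v}$ in $L^2_tH^1$ and weakly-$\ast$ in $L^\infty_tL^2$, $\vec{v}^N_t\rightharpoonup\vec{v}_t$ in $L^2(0,\infty;({_0}H^1)^{'})$, and $\eta^N\rightharpoonup\eta$ weakly-$\ast$ in $L^\infty_tH^1(\G)$ with $\eta=\eta_0+\int_0^t v_3\,ds$ preserved in the limit (integration in time is weakly continuous). Since system \eqref{a5} is linear in $(\vec{v},\eta)$, weak convergence suffices to pass to the limit against a fixed $\vec{\var}_j$, and density of $\{\vec{\var}_j\}$ in $\mathcal{V}$ gives \eqref{a5} for all $\vec{\var}\in\mathcal{V}$; lower semicontinuity of norms yields $\vec{v}\in V$ and $\vec{v}_t\in L^2(0,\infty;({_0}H^1)^{'})$. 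Finally, $\vec{v}\in L^2(0,\infty;{_0}H^1)$ with $\vec{v}_t\in L^2(0,\infty;({_0}H^1)^{'})$ forces $\vec{v}\in C([0,\infty);L^2)$, and passing to the limit in $(\vec{v}^N(t),\vec{\var}_j)=(\vec{v}^N(0),\vec{\var}_j)+\int_0^t(\cdots)\,ds$ together with $\vec{v}^N(0)\to\vec{v}_0$ in $L^2$ yields \eqref{a6}.

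The chief obstacle is the coupling to the free surface through the memory term. Having absorbed the kinematic condition into $\eta=\eta_0+\int_0^t v_3\,ds$, one is no longer solving a parabolic problem whose quadratic form is bounded below in $\vec{v}$ alone: the natural energy must be enlarged by the gravitational and capillary surface energies $\gamma\|\eta\|_{L^2(\G)}^2$ and $\sigma\|\na_0\eta\|_{L^2(\G)}^2$, and the estimate closes only because these appear as perfect time derivatives, which hinges on the exact matching $\partial_t\eta=v_3$ with the test function $v_3$ in the $\gamma$- and $\sigma$-terms. A secondary delicacy is that the capillary term is not manifestly a bounded functional on ${_0}H^1$ — the trace of a generic ${_0}H^1$ field need not lie in $H^1(\G)$ — so the $\vec{v}_t$ estimate has to be routed through the integration by parts on $\G$ and the $L^\infty_tH^1(\G)$ bound on $\eta$; moreover, because the Galerkin system is integro-differential, its global solvability uses the second order reformulation above rather than a bare Picard argument on a finite interval.
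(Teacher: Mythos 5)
Your proof follows exactly the route the paper sketches: a Galerkin approximation in the separable space $\mathcal{V}$, with the observation that the resulting system for the coefficients is a second-order linear constant-coefficient ODE (the paper makes the same remark about ``second-order ordinary differential equations''), uniform bounds from the energy identity obtained by testing with $\vec{v}^N$ and exploiting the cancellation $\partial_t\eta^N=v^N_3$ on $\Gamma$, and a weak passage to the limit using linearity and lower semicontinuity. Since the paper omits the details of this lemma and defers to the references, your write-up is a genuine elaboration of the same plan rather than a different argument.

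The one step that does not close as written is the bound on $\vec{v}^N_t$ in $L^2(0,\infty;({_0}H^1)^{'})$. After the integration by parts $\sigma\int_\Gamma\nabla_0\eta^N\cdot\nabla_0\varphi_3\,dx=-\sigma\int_\Gamma(\Delta_0\eta^N)\,\varphi_3\,dx$, you invoke the uniform $L^\infty_tH^1(\Gamma)$ bound on $\eta^N$ together with the trace of $\varphi_3$. But $\eta^N\in H^1(\Gamma)$ only gives $\Delta_0\eta^N\in H^{-1}(\Gamma)$, whereas the trace of a generic $\vec{\varphi}\in{_0}H^1(\Omega)$ lies only in $H^{1/2}(\Gamma)$; the pairing $H^{-1}(\Gamma)\times H^{1/2}(\Gamma)$ is not bounded, so the capillary term is not controlled by $\|\vec{\varphi}\|_{H^1}$. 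One would need $\eta^N\in H^{3/2}(\Gamma)$, i.e. $\Delta_0\eta^N\in H^{-1/2}(\Gamma)$, for that integration by parts to help, and the basic energy estimate yields only $L^\infty_tH^1(\Gamma)$. What the energy bounds give directly is $\vec{v}^N_t$ bounded in $L^2(0,\infty;\mathcal{V}')$, because for $\vec{\varphi}\in\mathcal{V}$ the extra requirement $\nabla_0\vec{\varphi}\in L^2(\Gamma)$ turns the capillary term into a legitimate $L^2(\Gamma)$ pairing with $\nabla_0\eta^N$. Reaching the stronger dual $({_0}H^1)^{'}$ asserted in the lemma requires an argument you have not supplied (and the paper's own sketch does not supply one either); you should either establish $\eta^N\in L^2_tH^{3/2}(\Gamma)$ uniformly in $N$, or work in $\mathcal{V}'$ and defer the upgrade of the dual space to the later lemmas where the higher regularity of $\eta$ is available.
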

The weak formula \eqref{a5} is formally derived  by multiplying the first equation of \eqref{e1} with $\vec{\var}\in \mathcal{V}$ in $L^2$ and using the following two facts:
\ben
\int_\Omega (-\Delta \vec{v}+\nabla q)\cdot \vec{\var}\,dxdy
=\int_{\G}q \var_3\,dx
-\int_{\G} (\partial_i v_3+\partial_3 v_i)\var_{i}\,dx
+[\vec{v},\vec{\var}]
\enn
and
\be\label{ne42}
\eta(x_1,x_2,t)=\eta_0(x_1,x_2)+\int_0^t v_3(x_1,x_2,0,s)\,ds
 \ee
 thanks to $\eta_t=v_3$ on $\G\times (0,\infty)$.

One can prove Lemma \ref{nl1} by first using the \emph{Galerkin's method} (see e.g. \cite[page 377]{Evans}) to construct approximating solutions and then passing to limits.  Indeed, since $\mathcal{V}$ is separable (see \cite[Lemma 4.1]{coutand-shkoller2003}), there exists a basis $\{\vec{\var}^{k}\}_{k\in \N_{+}}$ of $\mathcal{V}$. Define the approximating solutions $\vec{v}^{\,m}:[0,\infty)\rightarrow \mathcal{V}$ as follows
\ben
\vec{v}^{\,m}(x_1,x_2,y,t)
=\sum_{k=1}^{\,m}\lambda_m^k(t)\vec{\var}^k(x_1,x_2,y).
\enn
By replacing the $\vec{v}$ and $\vec{\var}$
in \eqref{a5} with $\vec{v}^{\,m}$ and $\vec{\var}^k$ respectively, and using the orthogonality of the basis $\{\vec{\var}^{k}\}_{k\in \N_{+}}$ in both $\mathcal{V}$ and $L^2$ one derives a system of second-order ordinary differential equations on the unknown $\lambda_m^k(t)$. Solving these equations, we obtain the approximating solutions $\vec{v}^{\,m}\in L^\infty(0,\infty;L^2)\cap L^2(0,\infty;H^1)$. Then passing $m\rightarrow \infty$, it follows that the limit function $\vec{v}\in V$ fulfills the weak formula \eqref{a5} and initial condition \eqref{a6}. The procedure is standard, we thus omit the proof of Lemma \ref{nl1} and refer the reader to \cite{bae2011,coutand-shkoller2003} for details. We proceed to improving the regularity of the weak solutions.

\begin{lemma}\label{nl2} Let the assumptions in Proposition \ref{np1} hold true. Then the weak solutions $\vec{v}$ derived in Lemma \ref{nl1} and the $\eta$ defined in \eqref{ne42} satisfy
\be\label{ne13}
\begin{split}
\sum_{ k=0}^{2}&\left(\|\na_0^k\vec{v}\|_{L^\infty_tL^2}^2+
\|\na_0^k\vec{v}\|_{L^2_tH^1}^2
\right)
+\|\eta\|_{L^\infty_tH^3(\R^2)}^2\\
\leq& C(\|\vec{v}_0\|^2_{H^2(\Om)}+\|\eta_0\|_{H^3(\R^2)}^2)
+C(\|\vec{g}\|_{L^2_tH^{\frac{3}{2}}(\G)}^2+
\|\vec{f}\|_{L^2_tH^1}^2)\\
&+C\|w\|_{L^2_t H^2}^2(\|\nabla\phi\|^2_{L^\infty_tL^\infty}
+\|\nabla^2\phi\|^2_{L^\infty_tL^\infty}
)
\end{split}
\ee
for all $t>0$, with the constant $C$ independent of $t$.
\end{lemma}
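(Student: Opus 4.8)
The plan is to obtain the claimed estimate by performing energy estimates on tangential derivatives of the velocity equation, exploiting the fact that the equilibrium domain $\Om$ is translation-invariant in the horizontal variables $(x_1,x_2)$, so that differentiating \eqref{e1} with $\partial_0^k$ (for $k=0,1,2$, where $\partial_0$ denotes a tangential derivative $\partial_1$ or $\partial_2$) preserves the structure of the system and its boundary conditions. First, for each multi-index of tangential order $k\le 2$, I apply $\na_0^k$ to the first equation of \eqref{e1}, test with $2\na_0^k\vec v$ in $L^2(\Om)$, and integrate by parts. The viscous term produces $2[\na_0^k\vec v,\na_0^k\vec v]\ge c\|\na_0^k\vec v\|_{H^1}^2$ by Korn's inequality (Lemma \ref{pl5}), since tangential differentiation preserves the vanishing of $\vec v$ on $S_B$. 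The pressure term is handled using $\na\cdot\vec v=0$: integration by parts transfers the gradient onto $\na_0^k\vec v$ and leaves only the boundary integral $\int_\G q\,\partial_3(\text{or }\var_3)$ type terms on $\G$, into which the dynamic boundary condition $q-2\partial_3 v_3=\gamma\eta-\sigma\De_0\eta-g_3$ is substituted. This is the step that couples the velocity energy to $\eta$: the term $\gamma\int_\G\na_0^k\eta\,\na_0^k v_3\,dx$ combined with $\eta_t=v_3$ yields a time derivative $\frac{\gamma}{2}\frac{d}{dt}\|\na_0^k\eta\|_{L^2(\R^2)}^2$, and the surface-tension term $\sigma\int_\G\na_0^k\eta\,\na_0^k(-\De_0 v_3)$ similarly produces $\frac{\sigma}{2}\frac{d}{dt}\|\na_0^{k+1}\eta\|_{L^2}^2$ after another integration by parts on $\G$ together with a good sign from the dissipative structure. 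The remaining boundary terms involving $g_1,g_2,g_3$ (from the Navier-type conditions $\partial_3 v_i+\partial_i v_3=g_i$ and from $g_3$) are absorbed by the trace theorem into $\va\|\na_0^k\vec v\|_{H^1}^2+C(\va)\|\vec g\|_{H^{3/2}(\G)}^2$, and the forcing terms $\na_0^k\vec f$ and $\na_0^k(w\na\phi)$ are controlled by $\va\|\na_0^k\vec v\|_{H^1}^2+C\|\vec f\|_{H^1}^2+C\|w\|_{H^2}^2(\|\na\phi\|_{L^\infty}^2+\|\na^2\phi\|_{L^\infty}^2)$ via the Poincaré inequality (valid since $\na_0^k\vec v=\mathbf 0$ on $S_B$) and the Sobolev product estimates; here for $k=2$ one must distribute the two tangential derivatives between $w$ and $\na\phi$ and use $H^2\hookrightarrow L^\infty$ in three dimensions.

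Summing the three resulting differential inequalities (for $k=0,1,2$), choosing $\va$ small enough to absorb all the $\|\na_0^k\vec v\|_{H^1}^2$ terms on the right into the left-hand dissipation, and integrating in time over $(0,t)$, I obtain
\[
\sum_{k=0}^2\Big(\|\na_0^k\vec v\|_{L^\infty_tL^2}^2+\|\na_0^k\vec v\|_{L^2_tH^1}^2\Big)
+\|\eta\|_{L^\infty_tH^1(\R^2)}^2+\|\na_0\eta\|_{L^\infty_tH^2(\R^2)}^2
\]
bounded by the data, where the initial values of $\|\na_0^k\vec v\|_{L^2}^2$ and $\|\na_0^k\eta\|_{L^2}^2$ are controlled by $\|\vec v_0\|_{H^2(\Om)}^2$ and $\|\eta_0\|_{H^3(\R^2)}^2$ respectively (only tangential derivatives appear, so no compatibility loss). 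It remains to upgrade the $\eta$-bound from $\|\eta\|_{H^1}+\|\na_0\eta\|_{H^2}$ to the full $\|\eta\|_{L^\infty_tH^3(\R^2)}$; since $\|\eta\|_{H^3}\lesssim\|\eta\|_{L^2}+\|\na_0^3\eta\|_{L^2}$ and $\na_0^3\eta=\na_0^2(\na_0\eta)$, the $k=2$ estimate already furnishes $\|\na_0^2\eta\|_{L^\infty_tH^1}=\|\na_0^3\eta\|_{L^\infty_tL^2}$ (the surface-tension term at level $k=2$ gives exactly $\frac{\sigma}{2}\frac{d}{dt}\|\na_0^3\eta\|_{L^2}^2$), closing the $H^3$ bound on $\eta$.

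The main obstacle I anticipate is the coupling boundary term on $\G$: one must be careful that the combination arising from substituting the dynamic boundary condition into the pressure boundary integral produces \emph{exact} time derivatives of $\|\na_0^k\eta\|_{L^2}^2$ and $\|\na_0^{k+1}\eta\|_{L^2}^2$ with the correct (dissipative) sign, rather than an indefinite term — this relies crucially on the identity $\eta_t=v_3$ on $\G$ and on integrating by parts in the tangential variables on the boundary without picking up boundary-of-boundary terms (legitimate since $\G=\R^2$ has no boundary). A secondary technical point is that, because only tangential derivatives are taken, one never differentiates in $y$, so the coefficients of the system (which would otherwise be $y$-dependent after the change of variables) cause no trouble here; the vertical regularity of $\vec v$ is deliberately deferred to the later Lemmas \ref{nl5}, where it is recovered from the divergence-free condition and elliptic estimates on \eqref{pe1}, exactly as outlined in the introduction.
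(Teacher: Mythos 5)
Your proposal follows exactly the paper's own proof: the paper tests the weak formulation \eqref{a5} successively with $\vec v$, $-\De_0\vec v$, and (after applying $\De_0$ to the equation) $\De_0\vec v$, which is the $k=0,1,2$ tangential energy scheme you describe, and it uses $\eta_t=v_3$ on $\G$ to convert the boundary contributions into exact time derivatives of $\gamma\|\na_0^k\eta\|_{L^2(\R^2)}^2$ and $\sigma\|\na_0^{k+1}\eta\|_{L^2(\R^2)}^2$, with Korn's inequality and Poincar\'e absorbing the small terms. One technical point to fix in your sketch: at $k=2$ you must not literally ``distribute the two tangential derivatives between $w$ and $\na\phi$,'' since the term $w\,\na_0^2\na\phi$ would require $\na^3\phi\in L^\infty$, which is not assumed in \eqref{ee01}; the paper instead integrates one tangential derivative onto $\De_0\vec v$ in \eqref{ne10}, so that only $\na_0(w\na\phi)$ needs to be bounded (by $\|w\|_{H^1}(\|\na\phi\|_{L^\infty}+\|\na^2\phi\|_{L^\infty})$) while $\va\|\na_0\De_0\vec v\|_{L^2}^2$ is absorbed into the dissipation $[\De_0\vec v,\De_0\vec v]$. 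With that adjustment your argument coincides with the paper's.
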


\begin{proof}
Replacing $\vec{\var}$ in \eqref{a5} with $\vec{v}$ and using \eqref{ne42} one derives
\be\label{ne1}
\begin{split}
\frac{1}{2}&\frac{d}{dt}\|\vec{v}\|_{L^2}^2+[\vec{v},\vec{v}]
+\frac{1}{2}\frac{d}{dt}\Big(\gamma\|\eta\|^2_{L^2(\R^2)}
+\sigma\|\nabla_0\eta\|^2_{L^2(\R^2)}\Big)\\
&=\int_{\G}\vec{g}\cdot \vec{v}\,dx
+\int_{\Omega} \vec{f}\cdot\vec{v}\,dxdy
-\int_{\Omega}w\nabla\phi\cdot \vec{v}\,dxdy.
\end{split}
\ee
  Terms on the right-hand side of \eqref{ne1} can be estimated by the trace theorem and Sobolev embedding inequality as follows:
\be\label{ne3}
\begin{split}
\int_{\G}\vec{g}&\cdot \vec{v}\,dx
+\int_{\Omega} \vec{f}\cdot\vec{v}\,dxdy
-\int_{\Omega}w\nabla\phi\cdot \vec{v}\,dxdy\\
\leq& \va (\|\vec{v}\|_{L^2(\G)}^2
+ \|\vec{v}\|_{L^2}^2)
+C(\va)
(\|\vec{g}\|_{L^2(\G)}^2+
\|\vec{f}\|_{L^2}^2
+\|w\|_{L^2}^2\|\nabla\phi\|^2_{L^\infty}
)\\
\leq & C\va \|\vec{v}\|^2_{H^1}+C(\va)(\|\vec{g}\|_{L^2(\G)}^2+
\|\vec{f}\|_{L^2}^2
+\|w\|_{L^2}^2\|\nabla\phi\|^2_{L^\infty}
)\\
\leq &\frac{1}{2}[\vec{v},\vec{v}]
+C(\|\vec{g}\|_{L^2(\G)}^2+
\|\vec{f}\|_{L^2}^2
+\|w\|_{L^2}^2\|\nabla\phi\|^2_{L^\infty}
),
\end{split}
\ee
where the constant $C(\va)>0$ depends on $\va>0$ and in the last inequality $\va$ has been chosen small such that $C\va \|\vec{v}\|^2_{H^1}\leq \frac{1}{2}[\vec{v},\vec{v}]$ thanks to Lemma \ref{pl5}. Substituting \eqref{ne3} into \eqref{ne1} and integrating the resulting inequality with respect to $t$ one deduces that
\be\label{ne4}
\begin{split}
\|\vec{v}&\|_{L^\infty_tL^2}^2
+\int_0^t[\vec{v},\vec{v}]ds
+\|\eta\|_{L^\infty_tH^1(\R^2)}^2\\
&\leq C(\|\vec{v}_0\|^2_{L^2}+\|\eta_0\|_{H^1(\R^2)}^2)
+C(\|\vec{g}\|_{L^2_tL^2(\G)}^2+
\|\vec{f}\|_{L^2_tL^2}^2
+\|w\|_{L^2_t L^2}^2\|\nabla\phi\|^2_{L^\infty_tL^\infty}
).
\end{split}
\ee
We next estimate $\|\na_0\vec{v}\|_{L^\infty_tL^2}^2
+\|\na_0\vec{v}\|_{L^2_tH^1}^2$. Replacing $\vec{\var}$ in \eqref{a5} with $-\De_0\vec{v}$ and using \eqref{ne42} along with integration by parts to get
\be\label{ne5}
\begin{split}
\frac{1}{2}\frac{d}{dt}\|&\na_0\vec{v}\|_{L^2}^2
+[\na_0\vec{v},\na_0\vec{v}]
+\frac{1}{2}\frac{d}{dt}\Big(\gamma
\|\na_0\eta\|_{L^2(\R^2)}^2
+\sigma\|\De_0\eta\|_{L^2(\R^2)}^2
\Big)\\
=&-\int_{\G}\vec{g}\cdot\De_0\vec{v}\,dx
-\int_{\Omega} \vec{f}\cdot\De_0\vec{v}\,dxdy
+\int_{\Omega}w\nabla\phi\cdot \De_0\vec{v}\,dxdy.
\end{split}
\ee
 It follows from the trace theorem and Sobolev embedding inequality that
\be\label{ne7}
\begin{split}
-\int_{\G}\vec{g}&\cdot\De_0 \vec{v}\,dx
-\int_{\Omega} \vec{f}\cdot\De_0\vec{v}\,dxdy
+\int_{\Omega}w\nabla\phi\cdot \De_0\vec{v}\,dxdy\\
\leq &
\va (\|\De_0\vec{v}\|_{H^{-\frac{1}{2}}(\G)}^2
+ \|\De_0\vec{v}\|_{L^2}^2)
+C(\va)
(\|\vec{g}\|_{H^{\frac{1}{2}}(\G)}^2+
\|\vec{f}\|_{L^2}^2
+\|w\|_{L^2}^2\|\nabla\phi\|^2_{L^\infty}
)\\
\leq & C\va \|\na_0\vec{v}\|^2_{H^1}
+C(\va)(\|\vec{g}\|_{H^{\frac{1}{2}}(\G)}^2+
\|\vec{f}\|_{L^2}^2
+\|w\|_{L^2}^2\|\nabla\phi\|^2_{L^\infty})\\
\leq &\frac{1}{2}[\na_0\vec{v},\na_0\vec{v}]
+C(\|\vec{g}\|_{H^{\frac{1}{2}}(\G)}^2+
\|\vec{f}\|_{L^2}^2
+\|w\|_{L^2}^2\|\nabla\phi\|^2_{L^\infty}
),
\end{split}
\ee
 where in the last inequality we have employed Lemma \ref{pl5} and chosen $\va$ small such that $C\va \|\na_0\vec{v}\|^2_{H^1}\leq
\frac{1}{2}[\na_0\vec{v},\na_0\vec{v}]$.
Inserting \eqref{ne7} into \eqref{ne5} and then integrating the resulting inequality over $(0,t)$, one deduces that
\be\label{ne8}
\begin{split}
\|&\na_0\vec{v}\|_{L^\infty_tL^2}^2
+\int_0^t[\na_0\vec{v},\na_0\vec{v}]ds
+\|\na_0\eta\|_{L^\infty_tH^1(\R^2)}^2\\
&\leq C(\|\vec{v}_0\|^2_{H^1}+\|\eta_0\|_{H^2(\R^2)}^2)
+C(\|\vec{g}\|_{L^2_tH^{\frac{1}{2}}(\G)}^2+
\|\vec{f}\|_{L^2_tL^2}^2
+\|w\|_{L^2_t L^2}^2\|\nabla\phi\|^2_{L^\infty_tL^\infty}
).
\end{split}
\ee
Applying $\De_0$ to the first equation of \eqref{e1}, then multiplying the resulting equality by $\De_0 \vec{v}$ in $L^2$ and using integration by parts we obtain
\be\label{ne9}
\begin{split}
\frac{1}{2}\frac{d}{dt}&\|\De_0\vec{v}\|_{L^2}^2+[\De_0\vec{v},\De_0\vec{v}]
+\int_{\G}(\gamma\De_0\eta-\sigma\De_0^2 \eta)\De_0v_3 \,dx\\
&=\int_{\G}(\De_0 \vec{g})\cdot(\De_0 \vec{v})\,dx
+\int_{\Omega} \De_0\vec{f}\cdot\De_0\vec{v}\,dxdy
-\int_{\Omega}\De_0(w\nabla\phi)\cdot \De_0\vec{v}\,dxdy.
\end{split}
\ee
It follows from $\eta_t=v_3$ on $\G\times(0,\infty)$ and integration by parts that
\be\label{ne11}
\int_{\G}(\gamma\De_0\eta-\sigma\De_0^2 \eta)\De_0v_3 \,dx
=\frac{1}{2}\frac{d}{dt}\Big(
\gamma\|\De_0\eta\|_{L^2(\R^2)}^2
+\sigma\|\na_0\De_0\eta\|_{L^2(\R^2)}^2
\Big).
\ee
By a similar argument used in \eqref{ne7} one gets
\be\label{ne10}
\begin{split}
\int_{\G}&(\De_0 \vec{g})\cdot(\De_0 \vec{v})\,dx
+\int_{\Omega} \De_0\vec{f}\cdot\De_0\vec{v}\,dxdy
-\int_{\Omega}\De_0(w\nabla\phi)\cdot \De_0\vec{v}\,dxdy\\
&\leq
\va (\|\De_0\vec{v}\|_{H^{\frac{1}{2}}(\G)}^2
+ \|\na_0\De_0\vec{v}\|_{L^2}^2)
+C(\va)
(\|\De_0 \vec{g}\|_{H^{-\frac{1}{2}}(\G)}^2+
\|\na_0\vec{f}\|_{L^2}^2
+\|\na_0(w\nabla\phi)\|^2_{L^2}
)\\
&\leq \frac{1}{2}[\De_0\vec{v},\De_0\vec{v}]
+C(\|\vec{g}\|_{H^{\frac{3}{2}}(\G)}^2+
\|\na_0\vec{f}\|_{L^2}^2)
+C\|w\|_{H^1}^2(\|\nabla\phi\|^2_{L^\infty}
+\|\nabla^2\phi\|^2_{L^\infty}),
\end{split}
\ee
where $\va$ has been chosen small such that
$\va (\|\De_0\vec{v}\|_{H^{\frac{1}{2}}(\G)}^2
+ \|\na_0\De_0\vec{v}\|_{L^2}^2)\leq \frac{1}{2}[\De_0\vec{v},\De_0\vec{v}]$ thanks to the trace theorem and Lemma \ref{pl5}. Substituting \eqref{ne11}-\eqref{ne10} into \eqref{ne9} then integrating the resulting inequality with respect to $t$ we arrive at
\be\label{ne12}
\begin{split}
\|&\De_0\vec{v}\|_{L^\infty_tL^2}^2
+\int_0^t[\De_0\vec{v},\De_0\vec{v}]ds
+\|\De_0\eta\|_{L^\infty_tH^1(\R^2)}^2\\
&\leq C(\|\vec{v}_0\|^2_{H^2}+\|\eta_0\|_{H^3(\R^2)}^2)
+C(\|\vec{g}\|_{L^2_tH^{\frac{3}{2}}(\G)}^2+
\|\vec{f}\|_{L^2_tH^1}^2)\\
&+C\|w\|_{L^2_t H^1}^2(\|\nabla\phi\|^2_{L^\infty_tL^\infty}
+\|\nabla^2\phi\|^2_{L^\infty_tL^\infty}
).
\end{split}
\ee
Collecting \eqref{ne4}, \eqref{ne8} and \eqref{ne12} and using Lemma \ref{pl5} we derive \eqref{ne13}. The proof is completed.

\end{proof}

\begin{lemma}\label{nl3} Let the assumptions in Proposition \ref{np1} hold true. Then the weak solutions $\vec{v}$ of \eqref{e1} satisfy
\be\label{ne14}
\begin{split}
\|\vec{v}_{t}\|_{L^\infty_tL^2}^2
+\|\vec{v}_{t}\|_{L^2_tH^1}^2
\leq&
C(\|\vec{v}_0\|_{H^2(\Om)}^2+\|\eta_0\|_{H^3(\R^2)}^2)\\
&+C|||w|||^2(
\|\nabla \phi\|_{L^\infty_tL^\infty}^2
+\|\nabla^2 \phi\|_{L^\infty_tL^\infty}^2
+\|\nabla \phi_t\|_{L^2_tL^2}^2
)\\
&+C(\|\vec{f}\|_{L^2_tH^1}^2+\|\vec{f}_t\|_{L^2_t(_{0}H^1)^{'}}^2
+\|\vec{g}\|_{L^2_tH^{\frac{3}{2}}(\G)}^2
+\|\vec{g}_{t}\|_{L^2_tH^{-\frac{1}{2}}(\G)}^2
)
\end{split}
\ee
for all $t>0$, where the constant $C$ is independent of $t$.
\end{lemma}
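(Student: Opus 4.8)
The plan is to bound $\vec v_t$ by differentiating system \eqref{e1} in time and repeating the energy scheme of Lemma \ref{nl2} one time-derivative higher. Differentiating the momentum equation gives
\begin{equation*}
\vec v_{tt}-\Delta\vec v_t+\na q_t+(w\na\phi)_t=\vec f_t,\qquad \na\cdot\vec v_t=0\ \ \text{in}\ \ \Om,
\end{equation*}
supplemented on $\G$ with $\partial_3(v_t)_1+\partial_1(v_t)_3=g_{1t}$, $\partial_3(v_t)_2+\partial_2(v_t)_3=g_{2t}$ and, using $\eta_t=v_3$ there, the normal-stress relation $q_t-2\partial_3(v_t)_3=\gamma v_3-\sigma\De_0 v_3-g_{3t}$, together with $\vec v_t=\mathbf{0}$ on $S_B$, so $\vec v_t\in{_0}H^1(\Om)$. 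Testing this system with $\vec v_t$ — rigorously, by differentiating the Galerkin ODE system of Lemma \ref{nl1} in $t$, testing with $\partial_t\vec v^{\,m}$ and passing to the limit — and invoking the integration-by-parts identity stated just before \eqref{ne42} with $(\vec v,q,\vec\var)$ replaced by $(\vec v_t,q_t,\vec v_t)$, the boundary terms on $\G$ carrying $\gamma v_3-\sigma\De_0 v_3$ combine into an exact time derivative because $(v_t)_3=\eta_{tt}$ and $v_3=\eta_t$ on $\G$; this produces
\begin{equation*}
\frac12\frac{d}{dt}\Big(\|\vec v_t\|_{L^2}^2+\gamma\|\eta_t\|_{L^2(\R^2)}^2+\sigma\|\na_0\eta_t\|_{L^2(\R^2)}^2\Big)+[\vec v_t,\vec v_t]=\int_\G\vec g_t\cdot\vec v_t\,dx+\int_\Om\vec f_t\cdot\vec v_t\,dxdy-\int_\Om(w\na\phi)_t\cdot\vec v_t\,dxdy.
\end{equation*}

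Next I would estimate the right-hand side exactly as in \eqref{ne3} and \eqref{ne7}: by the trace theorem $\int_\G\vec g_t\cdot\vec v_t\,dx\le\va\|\vec v_t\|_{H^1}^2+C(\va)\|\vec g_t\|_{H^{-\frac{1}{2}}(\G)}^2$; since $\vec v_t=\mathbf{0}$ on $S_B$ the Poincar\'e inequality gives $\int_\Om\vec f_t\cdot\vec v_t\,dxdy\le\va\|\vec v_t\|_{H^1}^2+C(\va)\|\vec f_t\|_{({_0}H^1)'}^2$; and writing $(w\na\phi)_t=w_t\na\phi+w\na\phi_t$ and using $H^2\hookrightarrow L^\infty$ for $w$ yields $|\int_\Om(w\na\phi)_t\cdot\vec v_t\,dxdy|\le\va\|\vec v_t\|_{L^2}^2+C(\va)\big(\|w_t\|_{L^2}^2\|\na\phi\|_{L^\infty}^2+\|w\|_{H^2}^2\|\na\phi_t\|_{L^2}^2\big)$. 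Taking $\va$ small, absorbing $\va\|\vec v_t\|_{H^1}^2\le C\va[\vec v_t,\vec v_t]$ through Korn's inequality (Lemma \ref{pl5}), integrating over $(0,t)$, using $\|\vec v_t\|_{L^2_tH^1}\le C\|\na\vec v_t\|_{L^2_tL^2}$, and noting $\int_0^t\|w_t\|_{L^2}^2\|\na\phi\|_{L^\infty}^2\,ds\le|||w|||^2\|\na\phi\|_{L^\infty_tL^\infty}^2$ and $\int_0^t\|w\|_{H^2}^2\|\na\phi_t\|_{L^2}^2\,ds\le|||w|||^2\|\na\phi_t\|_{L^2_tL^2}^2$, I arrive at \eqref{ne14} up to the two initial contributions $\|\vec v_t(0)\|_{L^2}^2$ and $\|\eta_t(0)\|_{H^1(\R^2)}^2$; the latter equals $\|v_{03}\|_{H^1(\R^2)}^2\le C\|\vec v_0\|_{H^2(\Om)}^2$ by the trace theorem. (The $\|\na^2\phi\|^2$ term in \eqref{ne14} may be kept for uniformity with Lemma \ref{nl2} but is not actually needed here.)

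The delicate point — and the one I expect to be the main obstacle — is the bound for $\|\vec v_t(0)\|_{L^2}$, since the free-surface structure forbids simply discarding the pressure: the initial pressure $q(0)$ does not vanish on $\G$, so $\na q(0)$ is not $L^2$-orthogonal to $\vec v_t(0)$. My plan is to apply the projection $P$ of Lemma \ref{pl8} to the momentum equation at $t=0$. Because $\vec v_t(0)$ is divergence-free and vanishes on $S_B$, one has $P\vec v_t(0)=\vec v_t(0)$; by the last assertion of Lemma \ref{pl8}, $P(\na q(0))=\na\mathcal{H}(q(0)|_\G)$; and the normal-stress condition identifies the trace $q(0)|_\G=2\partial_3 v_{03}+\gamma\eta_0-\sigma\De_0\eta_0-g_3(0)$. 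Hence
\begin{equation*}
\vec v_t(0)=P\big(\Delta\vec v_0-w_0\na\phi(0)+\vec f(0)\big)-\na\mathcal{H}\big(2\partial_3 v_{03}+\gamma\eta_0-\sigma\De_0\eta_0-g_3(0)\big),
\end{equation*}
and Lemma \ref{pl7}, the trace theorem, and interpolation in time (which give $\vec f(0)\in L^2$ and $g_3(0)\in H^{\frac{1}{2}}(\G)$ with norms controlled by the hypotheses \eqref{ee01}) yield
\begin{equation*}
\|\vec v_t(0)\|_{L^2}^2\le C\big(\|\vec v_0\|_{H^2(\Om)}^2+\|\eta_0\|_{H^3(\R^2)}^2+|||w|||^2\|\na\phi\|_{L^\infty_tL^\infty}^2+\|\vec f\|_{L^2_tH^1}^2+\|\vec f_t\|_{L^2_t({_0}H^1)'}^2+\|\vec g\|_{L^2_tH^{\frac{3}{2}}(\G)}^2+\|\vec g_t\|_{L^2_tH^{-\frac{1}{2}}(\G)}^2\big).
\end{equation*}
Combining this with the previous step gives \eqref{ne14}; this is also where the $H^2$-regularity of $\vec v_0$ and the $H^3$-regularity of $\eta_0$ are essential.
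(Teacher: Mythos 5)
Your argument is essentially the paper's own proof: the paper likewise differentiates the weak formula \eqref{a5} in time, tests with $\vec v_t$, estimates the boundary, forcing and $(w\na\phi)_t$ terms by trace, duality and Sobolev embedding with Korn's inequality to absorb, and bounds $\|\vec v_t(0)\|_{L^2}$ by projecting the momentum equation at $t=0$ with $P$, using the normal-stress condition to identify $q(0)|_\G$ and the elliptic/harmonic-extension estimate from Lemma \ref{pl8}. One small quibble: the $\|\na^2\phi\|_{L^\infty_tL^\infty}$ term is in fact used in the paper — it enters when bounding $\|w(0)\na\phi(0)\|_{L^2}$ via the compactness (trace-in-time) estimate through $\|w\na\phi\|_{L^2_tH^1}$, so it is not merely cosmetic.
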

\begin{proof}
Differentiating \eqref{a5} with respect to $t$ and then replacing
$\vec{\var}$ with $\vec{v}_t$ leads to
\be\label{ne15}
\begin{split}
\frac{1}{2}\frac{d}{dt}&\|\vec{v}_t\|^2_{L^2}+[\vec{v}_t,\vec{v}_t]
+\frac{1}{2}\frac{d}{dt}
\left(
\gamma\|v_3\|_{L^2(\Gamma)}^2
+\sigma\|\nabla_0 v_3\|_{L^2(\Gamma)}^2
\right)\\
&=\int_{\G}\vec{g}_{t}\cdot \vec{v}_{t}\,dx
+\int_{\Omega} \vec{f}_t\cdot\vec{v}_t\,dxdy
-\int_{\Omega}(w\nabla\phi)_t\cdot \vec{v}_t\,dxdy.
\end{split}
\ee
By the trace theorem one has
\be\label{ne17}
\begin{split}
\int_{\G}\vec{g}_{t}\cdot\vec{v}_{t}\,dx
+\int_{\Omega} \vec{f}_t\cdot\vec{v}_t\,dxdy
\leq &  C\|\vec{v}_{t}\|_{H^{\frac{1}{2}}(\G)}
\|\vec{g}_{t}\|_{H^{-\frac{1}{2}}(\G)}
+C\|\vec{v}_{t}\|_{H^1}\|\vec{f}_{t}\|_{(_{0}H^1)^{'}}\\
\leq & \va  \|\vec{v}_{t}\|_{H^1}^2
+C(\va)
(
\|\vec{g}_{t}\|_{H^{-\frac{1}{2}}(\G)}^2
+\|\vec{f}_{t}\|_{(_{0}H^1)^{'}}^2
)\\
\leq & \frac{1}{4}[\vec{v}_t,\vec{v}_t]
+C(
\|\vec{g}_{t}\|_{H^{-\frac{1}{2}}(\G)}^2
+\|\vec{f}_{t}\|_{(_{0}H^1)^{'}}^2
),
\end{split}
\ee
where $\va$ has been chosen small such that $\va  \|\vec{v}_{t}\|_{H^1}^2\leq \frac{1}{4}[\vec{v}_t,\vec{v}_t]$ thanks to Lemma \ref{pl5}.
It follows from the Sobolev embedding inequality that
\be\label{ne18}
\begin{split}
-\int_{\Omega}(w\nabla\phi)_t\cdot \vec{v}_t\,dxdy
\leq& C\|w_t\|_{L^2}\|\nabla \phi\|_{L^\infty}\|\vec{v}_t\|_{L^2}
+C\|w\|_{L^\infty} \|\nabla \phi_t\|_{L^2}\|\vec{v}_t\|_{L^2}\\
\leq & \va \|\vec{v}_t\|_{H^1}^2
+C(\va)
\left(
\|\nabla \phi\|^2_{L^\infty}\|w_t\|_{L^2}^2
+\|\nabla \phi_t\|_{L^2}^2
\|w\|_{H^2}^2
\right)\\
\leq & \frac{1}{4}[\vec{v}_t,\vec{v}_t]
+C
\left(
\|\nabla \phi\|_{L^\infty}^2\|w_t\|_{L^2}^2
+\|\nabla \phi_t\|_{L^2}^2
\|w\|_{H^2}^2
\right),
\end{split}
\ee
where $\va$ has been chosen small such that $\va  \|\vec{v}_{t}\|_{H^1}^2\leq \frac{1}{4}[\vec{v}_t,\vec{v}_t]$ by Lemma \ref{pl5}. Inserting \eqref{ne17}-\eqref{ne18} into \eqref{ne15} and integrating the resulting inequality with over $(0,t)$ and using Lemma \ref{pl5}, we derive
\be\label{ne39}
\begin{split}
\|\vec{v}_{t}\|_{L^\infty_tL^2}^2
+\|\vec{v}_{t}\|_{L^2_tH^1}^2
\leq& \|\vec{v}_t(0)\|_{L^2}^2+C\|\vec{v}_0\|_{H^2}^2
+C(
\|\vec{g}_{t}\|_{L^2_tH^{-\frac{1}{2}}(\G)}^2
+\|\vec{f}_{t}\|_{L^2_t (_{0}H^1)^{'}}^2
)\\
&+C|||w|||^2(
\|\nabla \phi\|_{L^\infty_tL^\infty}^2
+\|\nabla \phi_t\|_{L^2_tL^2}^2
).
\end{split}
\ee
We next estimate the term $\|\vec{v}_t(0)\|_{L^2}^2$ on the right-hand side of \eqref{ne39}. Given the regularity assumptions in \eqref{ee01} and \eqref{ab1}, compactness (see e.g. \cite[Chapter 1, Theorem 3.1]{lions&magenes}) implies that
\ben
\vec{f}\in C(0,\infty;L^2),\quad \vec{g}\in C(0,\infty;H^{\frac{1}{2}}(\G)),\quad w\na\phi\in C(0,\infty;L^2).
\enn
In particular,
\be\label{ae20}
\|\vec{f}(0)\|_{L^2}\leq C(\|\vec{f}\|_{L^2_tH^1}+\|\vec{f}_t\|_{L^2_t(_{0}H^1)^{'}}),\quad
\|\vec{g}(0)\|_{H^{\frac{1}{2}}(\G)}
\leq C(\|\vec{g}\|_{L^2_tH^{\frac{3}{2}}(\G)}
+\|\vec{g}_{t}\|_{L^2_tH^{-\frac{1}{2}}(\G)}
)
\ee
and
\be\label{ae21}
\begin{split}
\|w(0)\na\phi(0)\|_{L^2}
\leq &C(\|w\na\phi\|_{L^2_tH^1}+\|(w\na\phi)_t\|_{L^2_tL^2})\\
\leq &C|||w|||^2(
\|\nabla \phi\|_{L^\infty_tL^\infty}^2
+\|\nabla^2 \phi\|_{L^\infty_tL^\infty}^2
+\|\nabla \phi_t\|_{L^2_tL^2}^2
).
\end{split}
\ee
 The fourth boundary condition in \eqref{e1} implies that
\be\label{ne41}
q(0)=2\partial_3 v_{03}+\gamma\eta_0-\sigma\De_0\eta_0-g_{3}(0)\quad \text{on}\ \ \G.
\ee
Applying projection $P$ to $\na q(0)$ and denoting $\na q_1(0)=P\na q(0)$, it follows from Lemma \ref{pl8} and \eqref{ne41} that
\be\label{ae17}
\left\{
\begin{array}{lll}
\Delta q_1(0)=0\quad\ \ \text{in}\ \ \Om,\\
q_1(0)=2\partial_3 v_{03}+\gamma\eta_0-\sigma\De_0\eta_0-g_{3}(0)\quad \ \text{on}\ \ \G,\\
\na q_{1}(0)\cdot\vec{n}=0\quad \ \text{on}\ \ S_B.
\end{array}
\right.
\ee
Applying the standard elliptic theory to \eqref{ae17} we get
\be\label{ae18}
\begin{split}
\|P\na q(0)\|_{L^2}=\|\na q_{1}(0)\|_{L^2}
\leq &C\|2\partial_3 v_{03}+\gamma\eta_0-\sigma\De_0\eta_0-g_{3}(0)\|_{H^{\frac{1}{2}}(\G)}\\
\leq &C\big(\|\vec{v}_0\|_{H^2}+\|\eta_0\|_{H^3(\R^2)}
+\|g_{3}(0)\|_{H^{\frac{1}{2}}(\G)}\big).
\end{split}
\ee
On the other hand, applying projection $P$ to the first equation of \eqref{e1} one deduces that
\be\label{ae19}
\begin{split}
\|\vec{v}_t(0)\|_{L^2}\leq& \|P(\De \vec{v}_0-w(0)\na\phi(0)+\vec{f}(0))\|_{L^2}
+\|P\na q(0)\|_{L^2}\\
\leq &C(
\|\vec{v}_0\|_{H^2}+\|w(0)\na\phi(0)\|_{L^2}+\|\vec{f}(0)\|_{L^2}
)+\|P\na q(0)\|_{L^2}.
\end{split}
\ee
Substituting \eqref{ae18} into \eqref{ae19} one arrives at
\ben
\|\vec{v}_t(0)\|_{L^2}
\leq C(
\|\vec{v}_0\|_{H^2}
+\|\eta_0\|_{H^3(\R^2)}
+\|w(0)\na\phi(0)\|_{L^2}
+\|\vec{f}(0)\|_{L^2}
+\|g_{3}(0)\|_{H^{\frac{1}{2}}(\G)}),
\enn
which, in conjunction with \eqref{ae20}-\eqref{ae21} leads to
\be\label{ae22}
\begin{split}
\|\vec{v}_t(0)\|_{L^2}
\leq& C(\|\vec{v}_0\|_{H^2}+\|\eta_0\|_{H^3(\R^2)})
+C|||w|||^2(
\|\nabla \phi\|_{L^\infty_tL^\infty}^2
+\|\nabla^2 \phi\|_{L^\infty_tL^\infty}^2
+\|\nabla \phi_t\|_{L^2_tL^2}^2
)\\
&+C(\|\vec{f}\|_{L^2_tH^1}+\|\vec{f}_t\|_{L^2_t(_{0}H^1)^{'}}
+\|\vec{g}\|_{L^2_tH^{\frac{3}{2}}(\G)}
+\|\vec{g}_{t}\|_{L^2_tH^{-\frac{1}{2}}(\G)}
).
\end{split}
\ee
Plugging \eqref{ae22} into \eqref{ne39} we obtain the desired estimate. The proof is finished.

\end{proof}

\begin{lemma}\label{nl5}
  Suppose the assumptions in Proposition \ref{np1} hold true. Let $\vec{v}(x_1,x_2,y,t)$ be the weak solutions derived in Lemma \ref{nl1}. Then there exists
  an associate pressure $q(x_1,x_2,y,t)$ defined in $\Om\times(0,\infty)$, such that $(\vec{v},q)$ solves \eqref{e1}. Moreover,
 \be\label{eeea4}
 \begin{split}
 \|\vec{v}\|_{L^2_tH^3}^2
 +\| q\|_{L^2_tH^2}^2
\leq  & C \big(\|\vec{v}_0\|_{H^2(\Om)}^2
+\|\eta_0\|_{H^3(\R^2)}^2\big)\\
&+C|||w|||^2\big(\|\nabla \phi\|_{L^\infty_tL^\infty}^2
+\|\nabla^2 \phi\|_{L^\infty_tL^\infty}^2
+\|\nabla \phi_t\|_{L^2_tL^2}^2\big)
\\
&+C\big(
\|\vec{f}\|_{L^2_tH^1}^2
+\|\vec{f}_{t}\|_{L^2_t (_{0}H^1)^{'}}^2
+\|\vec{g}\|_{L^2_t H^{\frac{3}{2}}(\G)}^2
+\|\vec{g}_{t}\|_{L^2_tH^{-\frac{1}{2}}(\G)}^2
\big)
\end{split}
\ee
for all $t>0$, where the constant $C$ is independent of $t$.
\end{lemma}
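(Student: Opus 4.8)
The plan is to recover a pressure $q$ for the weak solution $\vec v$ of Lemma \ref{nl1} and then upgrade its spatial regularity by rewriting \eqref{e1}, for almost every fixed $t$, as a \emph{stationary} Stokes system in which the normal--stress boundary condition $q-2\partial_3 v_3=\gamma\eta-\sigma\Delta_0\eta-g_3$ on $\G$ is traded for a Dirichlet condition on $v_3$ on $\G$. The key observation is that although $\gamma\eta-\sigma\Delta_0\eta$ is only controlled in $L^\infty(0,\infty;H^1(\R^2))$ through the relation $\eta_t=v_3$ (see Lemma \ref{nl2}), the trace $v_3|_\G$ has the much better time integrability $v_3|_\G\in L^2(0,\infty;H^{5/2}(\G))$: by the tangential estimates \eqref{ne13} one has $\na_0^k\vec v\in L^2(0,\infty;H^1(\Om))$ for $k=0,1,2$, so the trace theorem gives $\na_0^k\vec v|_\G\in L^2(0,\infty;H^{1/2}(\G))$ for $k=0,1,2$, whence
$$\|v_3\|_{L^2_t H^{5/2}(\G)}^2\le C\sum_{k=0}^{2}\|\na_0^k\vec v\|_{L^2_t H^1(\Om)}^2,$$
which is bounded by the right--hand side of \eqref{ne13}. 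This $L^2_tH^{5/2}(\G)$ bound is exactly the regularity of the Dirichlet datum needed to run an $H^3$ elliptic estimate, and it is the channel through which the tangential bounds on $\vec v$, together with the divergence--free structure encoded in the Stokes system, generate the vertical regularity.

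Concretely, I would proceed in three steps. First, a standard de Rham / pressure--recovery argument (as in \cite{beale1981,beale1984}) produces from the weak formulation \eqref{a5} a pressure $q$ such that $(\vec v,q)$ satisfies the momentum equation of \eqref{e1} in the distributional sense together with $\partial_3 v_i+\partial_i v_3=g_i$ $(i=1,2)$ and $q=2\partial_3 v_3+\gamma\eta-\sigma\Delta_0\eta-g_3$ on $\G$, and $\vec v=\mathbf 0$ on $S_B$. Second, for almost every fixed $t>0$, set $\tilde F:=\vec f-\vec v_t-w\na\phi$ and $\psi:=v_3|_\G$, and regard $(\vec v,q)$ as the solution of the stationary Stokes system obtained from \eqref{pe1} by replacing the normal--stress condition with $v_3=\psi$ on $\G$, namely $-\Delta\vec v+\na q=\tilde F$, $\na\cdot\vec v=0$ in $\Om$, with $\partial_3 v_1+\partial_1 v_3=g_1$, $\partial_3 v_2+\partial_2 v_3=g_2$, $v_3=\psi$ on $\G$ and $\vec v=\mathbf 0$ on $S_B$. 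By the elliptic theory for this mixed slip/Dirichlet Stokes system (an analogue of Lemma \ref{pl6}, provable by the arguments of \cite[Lemma 3.3]{beale1981}) together with its uniqueness, which forces its solution to coincide with $(\vec v,q)$, one obtains with $m=3$
$$\|\vec v(t)\|_{H^3(\Om)}+\|q(t)\|_{H^2(\Om)}\le C\big(\|\tilde F(t)\|_{H^1(\Om)}+\|g_1(t)\|_{H^{3/2}(\G)}+\|g_2(t)\|_{H^{3/2}(\G)}+\|\psi(t)\|_{H^{5/2}(\G)}\big).$$

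Third, I would square this inequality and integrate over $(0,\infty)$, estimating the right--hand side term by term: $\|\psi\|_{L^2_t H^{5/2}(\G)}$ by \eqref{ne13} as above; $\|\vec v_t\|_{L^2_t H^1}$ by \eqref{ne14} of Lemma \ref{nl3}; $\|w\na\phi\|_{L^2_t H^1}\le C\|w\|_{L^2_t H^1}\big(\|\na\phi\|_{L^\infty_t L^\infty}+\|\na^2\phi\|_{L^\infty_t L^\infty}\big)\le C|||w|||\big(\|\na\phi\|_{L^\infty_t L^\infty}+\|\na^2\phi\|_{L^\infty_t L^\infty}\big)$ using \eqref{ab1} and \eqref{ee01}; and $\|\vec f\|_{L^2_t H^1}$, $\|g_1\|_{L^2_t H^{3/2}(\G)}$, $\|g_2\|_{L^2_t H^{3/2}(\G)}$ being part of the hypotheses \eqref{ee01}. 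Collecting these bounds yields \eqref{eeea4}. The main obstacle is the reformulation in the second step: one must recognise that the normal--stress boundary condition --- which transmits only $L^\infty$-in-time information through $\eta$ --- can be dispensed with in favour of $v_3=\psi$ on $\G$, whose $L^2_tH^{5/2}(\G)$ control is already contained in the tangential energy estimates of Lemma \ref{nl2}; one also needs the $H^3$ elliptic estimate for the resulting mixed-boundary stationary Stokes system, and must verify, via the de Rham argument and uniqueness, that the weak solution of Lemma \ref{nl1} is precisely its solution.
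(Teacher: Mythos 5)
Your proposal is correct and follows essentially the same route as the paper: construct the pressure, replace the normal-stress boundary condition by the Dirichlet condition $v_3=\psi$ on $\G$, and apply the $H^3$ elliptic estimate for the resulting mixed Stokes problem, with $\psi=v_3|_\G$ controlled in $L^2_tH^{5/2}(\G)$ by the tangential bounds of Lemma \ref{nl2}. The only minor technical difference is in deriving the bound $\|v_3\|_{L^2_tH^{5/2}(\G)}\lesssim\sum_{k\le 2}\|\na_0^k\vec v\|_{L^2_tH^1}$: you apply the trace theorem directly to $\na_0^k v_3\in H^1(\Om)$, while the paper argues by duality using $\na\cdot\vec v=0$, a cutoff function, and the divergence theorem to put $\na_0^k v_3|_\G$ in $H^{-1/2}(\G)$ from $L^2(\Om)$ data; both arguments land on the same estimate given the regularity available.
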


\begin{proof}
Let
\be\label{ea5}
\tilde{F}=\vec{f}-(w\nabla \phi)-\vec{v}_t,\qquad
\tilde{g}_1=g_1,\qquad \tilde{g}_2=g_2,\qquad
\tilde{g_3}=\gamma\eta-\sigma\Delta_0\eta-g_3.
\ee
It follows from Lemma \ref{nl2}, Lemma \ref{nl3} and the assumptions in Proposition \ref{np1} that $\tilde{F}\in L^2(\Om)$ and that $\tilde{g}=(\tilde{g}_1,\tilde{g}_2,\tilde{g}_3)\in H^{\frac{1}{2}}(\G)$ for almost every $t>0$. Then from Lemma \ref{pl6} we know that for fixed $t>0$, system \eqref{pe1} with $\tilde{F}$ and $\tilde{g}$ defined in \eqref{ea5} admits a unique solution in $H^2(\Om)\times H^1(\Om)$ depending on the choice of $t>0$, which we denote by $(\vec{\omega},q)(x_1,x_2,y,t)$. Taking the $L^2$ inner product of the first equation in \eqref{pe1} with $\vec{\var}\in\mathcal{V}$ and using integration by parts one easily deduces that
\ben
\begin{split}
[\vec{\omega},\vec{\var}]
=& -(w\nabla \phi,\vec{\var})+
\langle\vec{f},\vec{\var}\rangle
-\langle\vec{v}_t,\vec{\var}\rangle
-\gamma\int_{\Gamma}\Big(\eta_0+\int_0^t v_3\, ds\Big)\var_3\,dx\\
&-\sigma\int_{\Gamma}
\Big(\na_0\eta_0+\int_0^t\nabla_0v_3\, ds\Big)\cdot\nabla_0\var_3\,dx
+\langle \vec{g},\vec{\var}\rangle_{\Gamma},\qquad \forall \ \vec{\var}\in \mathcal{V}
\end{split}
\enn
for almost every $t>0$, which along with \eqref{a5} gives rise to
\be\label{ea6}
[\vec{v}-\vec{\omega},\vec{\var}]=0,\qquad \forall \ \vec{\var}\in \mathcal{V}
\ee
for almost every $t>0$.
From Lemma \ref{nl2} we know that $\na_0\vec{v}\in H^{1}(\Om)$ for almost every $t>0$, then it follows from the trace theorem that $\na_0\vec{v}\in L^2(\G)$, which indicates $\vec{v}\in \mathcal{V}$. Taking $\vec{\var}=\vec{v}-\vec{\omega}$, from \eqref{ea6} and Lemma \ref{pl5} one deduces for fixed $t>0$ that
\ben
\vec{v}=\vec{\omega}\quad \text{for}\ \ \text{almost}\ \ \text{every}\ \  (x_1,x_2,y,t)\in \Om\times (0,\infty),
\enn
which, along with \eqref{ea5} and \eqref{pe1} implies that  $(\vec{v},q)$ solves \eqref{e1}.

To derive the desired estimate \eqref{eeea4},
we next prove the following inequality
\be\label{ea2}
\|v_3\|_{L^2_tH^{\frac{5}{2}}(\G)}^2
\leq C\sum_{k=0}^2 \|\nabla^k_0 \vec{v}\|_{L^2_tH^1}^2
\ee
for all $t>0$,
 by using the condition $\na\cdot\vec{v}=0$.
 It follows from the trace theorem \cite[pp 39, Theorem 8.3]{lions&magenes} that any $\rho\in H^{\frac{1}{2}}(\G)$ admits an extension $\tilde{\rho}\in H^1(\Om)$ satisfying $\tilde{\rho}=\rho$ on $\G$ and
\be\label{ea3}
\|\tilde{\rho}\|_{H^1}\leq C\|\rho\|_{H^{\frac{1}{2}}(\G)},
\ee
with some constant $C>0$.
 Let $\xi(y)$ be a smooth function defined on $y\in [-1,0]$ fulfilling
  \ben
  \xi(y)=1 \quad \text{for} \
  \ -\frac{1}{3}\leq y\leq 0,\qquad
  \xi(y)=0 \quad \text{for} \
  \ -1\leq y\leq -\frac{2}{3}.
  \enn
For $i,j\in\mathbb{N}$ with $1\leq i+j\leq 3$ and fixed $t>0$, it follows from $\na\cdot\vec{v}=0$ and \eqref{ea3}
 that
\ben
\begin{split}
\int_{\G}(\partial^{i}_{x_1}\!\partial^{j}_{x_2} v_3)\rho dx
=\int_{\Om}(\partial^{i}_{x_1}\!\partial^{j}_{x_2} \vec{v})\cdot \na(\xi\tilde{\rho} )dxdy
\leq& C\|\partial^{i}_{x_1}\!\partial^{j}_{x_2}\vec{v}\|_{L^2}
\|\tilde{\rho}\|_{H^{1}}\\
\leq &C\|\partial^{i}_{x_1}\!\partial^{j}_{x_2}\vec{v}\|_{L^2}
\|\rho\|_{H^{\frac{1}{2}}(\G)}.
\end{split}
\enn
Thus
\be\label{ea4}
\sum_{i+j=1}^3\|\partial^{i}_{x_1}\!\partial^{j}_{x_2} v_3\|_{H^{-\frac{1}{2}}(\G)}^2\leq C\sum_{i+j=1}^3\|\partial^{i}_{x_1}\!\partial^{j}_{x_2} \vec{v}\|_{L^2}^2.
\ee
On the other hand, it follows from the trace theorem that
\ben
\|v_3\|_{L^2(\G)}^2\leq C\| \vec{v}\|_{H^1}^2
\enn
for fixed $t>0$, which along \eqref{ea4} and integration over $(0,t)$ gives rise to \eqref{ea2}.

With \eqref{ea2} in hand,
applying \cite[pp 78, Theorem 10.5]{agmon-douglis-nirenberg1964} to \eqref{e1} and using Lemma \ref{nl2}-Lemma \ref{nl3} one easily deduces that
\ben
\begin{split}
\|\vec{v}\|_{L^2_tH^3}^2+\|q\|_{L^2_tH^2}^2
\leq& C\|\vec{f}-(w\nabla \phi)-\vec{v}_t\|_{L^2_tH^1}^2
+C\|g_1\|_{L^2_tH^{\frac{3}{2}}(\G)}^2
+C\|g_2\|_{L^2_tH^{\frac{3}{2}}(\G)}^2
+C\|v_3\|_{L^2_tH^{\frac{5}{2}}(\G)}^2\\
\leq &C(\|\vec{v}_0\|_{H^2(\Om)}^2+\|\eta_0\|_{H^3(\R^2)}^2)
+C|||w|||^2(
\|\nabla \phi\|_{L^\infty_tL^\infty}^2
+\|\nabla^2 \phi\|_{L^\infty_tL^\infty}^2
+\|\nabla \phi_t\|_{L^2_tL^2}^2
)\\
&+C(\|\vec{f}\|_{L^2_tH^1}^2+\|\vec{f}_t\|_{L^2_t(_{0}H^1)^{'}}^2)
+C(\|\vec{g}\|_{L^2_tH^{\frac{3}{2}}(\G)}^2
+\|\vec{g}_{t}\|_{L^2_tH^{-\frac{1}{2}}(\G)}^2
).
\end{split}
\enn
The proof is finished.

\end{proof}

\begin{lemma}\label{pl9}
Let $(\vec{v},q)$ be the solution derived in Lemma \ref{nl5} with $\eta$ defined in \eqref{ne42}.  Let $\mathcal{H}(\eta)$ be the harmonic extension of $\eta$. Then there exists a constant $C$ independent of $t$ such that
\ben
\begin{split}
\|\nabla&\vec{v}_{t}\|_{L^2_t H^{-\frac{1}{2}}(\G)}^2
+\|\nabla q_t\|_{L^2_t({_0}H^1)^{'}}^2
+\|\na_0\eta\|_{L^2_tH^{\frac{5}{2}}(\R^2)}^2
+\|\na^2\mathcal{H}(\eta)\|_{L^2_tH^2}^2\\
\leq& C \big(\|\vec{v}_0\|_{H^2(\Om)}^2
+\|\eta_0\|_{H^3(\R^2)}^2\big)
+C|||w|||^2\big(\|\nabla \phi\|_{L^\infty_tL^\infty}^2
+\|\nabla^2 \phi\|_{L^\infty_tL^\infty}^2
+\|\nabla \phi_t\|_{L^2_tL^2}^2\big)
\\
&+C\big(
\|\vec{f}\|_{L^2_tH^1}^2
+\|\vec{f}_{t}\|_{L^2_t (_{0}H^1)^{'}}
+\|\vec{g}\|_{L^2_t H^{\frac{3}{2}}(\G)}^2
+\|\vec{g}_{t}\|_{L^2_tH^{-\frac{1}{2}}(\G)}^2
\big)
\end{split}
\enn
for all $t>0$.
\end{lemma}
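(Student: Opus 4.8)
The plan is to estimate the four quantities one at a time, using the solution $(\vec v,q,\eta)$ together with the regularity already established in Lemmas \ref{nl2}--\ref{nl5}, the boundary conditions in \eqref{e1}, the relation \eqref{ne42}, and the constraint $\na\cdot\vec v=0$.

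\textbf{The surface function and its harmonic extension.} Since $\eta_t=v_3$ on $\G$, the fourth boundary condition in \eqref{e1} reads $\gamma\eta-\sigma\De_0\eta=q-2\partial_3 v_3+g_3$ on $\G$. By Lemma \ref{nl5}, $q\in L^2_tH^2(\Om)$ and $\vec v\in L^2_tH^3(\Om)$, so the trace theorem puts $q|_\G$ and $(\partial_3 v_3)|_\G$ in $L^2_tH^{3/2}(\G)$; with $g_3\in L^2_tH^{3/2}(\G)$ the right-hand side lies in $L^2_tH^{3/2}(\R^2)$. The crucial point is that, because $\gamma>0$, the operator $\gamma-\sigma\De_0$ on $\R^2$ has symbol $\gamma+\sigma|\xi|^2\ge\gamma$ and is therefore boundedly invertible from $H^s(\R^2)$ onto $H^{s+2}(\R^2)$ for every $s$, with no low-frequency loss; applied with $s=\tfrac32$ it gives $\|\eta\|_{L^2_tH^{7/2}(\R^2)}\le C(\|q\|_{L^2_tH^2}+\|\vec v\|_{L^2_tH^3}+\|g_3\|_{L^2_tH^{3/2}(\G)})$, so $\|\na_0\eta\|_{L^2_tH^{5/2}(\R^2)}^2$ is dominated by the right-hand side of \eqref{eeea4} via Lemma \ref{nl5}. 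For $\mathcal H(\eta)$ I then invoke Lemma \ref{pl7} with $m=4$, namely $\|\na^2\mathcal H(\eta)\|_{H^2}\le\|\mathcal H(\eta)\|_{H^4}\le C\|\eta\|_{H^{7/2}(\R^2)}$, and integrate in $t$.

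\textbf{The boundary trace of $\na\vec v_t$.} From Lemma \ref{nl3} we have $\vec v_t\in L^2_tH^1(\Om)$, so $\vec v_t|_\G\in L^2_tH^{1/2}(\G)$ and hence the tangential part $\na_0\vec v_t|_\G\in L^2_tH^{-1/2}(\G)$ with the desired bound. For the vertical derivatives, differentiating the first two boundary conditions in \eqref{e1} in time yields $\partial_3 v_{i,t}=g_{i,t}-\partial_i v_{3,t}$ on $\G$ for $i=1,2$, where $g_{i,t}\in L^2_tH^{-1/2}(\G)$ by hypothesis and $\partial_i v_{3,t}|_\G\in L^2_tH^{-1/2}(\G)$; finally $\na\cdot\vec v=0$ forces $\partial_3 v_{3,t}=-\partial_1 v_{1,t}-\partial_2 v_{2,t}$, again in $L^2_tH^{-1/2}(\G)$ upon restriction to $\G$. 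Collecting, $\|\na\vec v_t\|_{L^2_tH^{-1/2}(\G)}^2\le C(\|\vec v_t\|_{L^2_tH^1}^2+\|\vec g_t\|_{L^2_tH^{-1/2}(\G)}^2)$, controlled by Lemma \ref{nl3}.

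\textbf{The time derivative of the pressure gradient.} The first equation of \eqref{e1} gives the pointwise-in-$t$ identity $\na q=\De\vec v+\vec f-w\na\phi-\vec v_t$ in $L^2(\Om)$, hence $\na q_t=\De\vec v_t+\vec f_t-(w\na\phi)_t-\vec v_{tt}$. Testing against $\vec\psi\in{_0}H^1(\Om)$, the contribution $\langle\De\vec v_t,\vec\psi\rangle=-\int_\Om\na\vec v_t:\na\vec\psi\,dxdy+\int_\G(\partial_3\vec v_t)\cdot\vec\psi\,dx$ is bounded by $C(\|\vec v_t\|_{H^1}+\|\na\vec v_t\|_{H^{-1/2}(\G)})\|\vec\psi\|_{H^1}$ via the previous step, $\langle(w\na\phi)_t,\vec\psi\rangle$ by $C\|(w\na\phi)_t\|_{L^2}\|\vec\psi\|_{H^1}$ with $(w\na\phi)_t\in L^2_tL^2$ owing to \eqref{ab1} and \eqref{ee01}, and $\langle\vec f_t,\vec\psi\rangle$ directly by hypothesis. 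Since $\vec v_{tt}$ cannot be controlled in any norm of its own, the remaining pairing is handled through the gradient structure of $\na q_t$: the differentiated fourth boundary condition of \eqref{e1} gives $q_t|_\G=2\partial_3 v_{3,t}+\gamma v_3-\sigma\De_0 v_3-g_{3,t}$ (using $\eta_t=v_3$), which lies in $L^2_tH^{-1/2}(\G)$ by the previous step, Lemma \ref{nl5} and \eqref{ee01}. Splitting $\vec\psi=P\vec\psi+(I-P)\vec\psi$ through the projection of Lemma \ref{pl8}, one has $\langle\na q_t,P\vec\psi\rangle=\int_\G q_t(P\vec\psi)_3\,dx$ (as $\na q_t$ is $L^2$-orthogonal to $W$), bounded by $C\|q_t\|_{H^{-1/2}(\G)}\|\vec\psi\|_{H^1}$, while $\langle\na q_t,(I-P)\vec\psi\rangle$ is estimated from the elliptic problem satisfied by the non-harmonic part of $q_t$ — obtained by differentiating in time the identity $\De q=\na\cdot(\vec f-w\na\phi)$ in $\Om$ (a consequence of the momentum equation and $\na\cdot\vec v=0$), with Dirichlet datum $q_t|_\G$ as above on $\G$ and Neumann datum on $S_B$ read off from the third component of the differentiated momentum equation there (where $\vec v=\mathbf 0$ and $w=0$), again combined with the trace bound of the second step. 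Summing over $\vec\psi$ gives $\|\na q_t\|_{L^2_t({_0}H^1)'}^2$ dominated by the right-hand side of \eqref{eeea4}, and adding the three steps yields the claimed estimate. I expect this last step to be the main obstacle: as the scheme controls $\vec v_t$ only up to $L^2_tH^1$ and never $\vec v_{tt}$, the pressure-gradient time-derivative bound must be extracted solely from the gradient / divergence-free structure, the differentiated stress boundary condition, and the boundary-trace estimate of the second step.
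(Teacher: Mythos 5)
Your estimates for $\|\na\vec v_t\|_{L^2_tH^{-1/2}(\G)}$ and for $\na_0\eta$, $\na^2\mathcal H(\eta)$ are correct; the first is the same argument as in the paper (tangential derivatives by the trace theorem, $\partial_3 v_{3t}$ by $\na\cdot\vec v=0$, $\partial_3 v_{it}$ for $i=1,2$ by the boundary conditions), and the second is a mildly more direct variant of the paper's route — you read the boundary datum $\gamma\eta-\sigma\De_0\eta = q|_\G-2\partial_3 v_3|_\G+g_3$ directly from the trace of the $L^2_tH^2$ pressure, whereas the paper extracts it through the projection identity $P\na q=\na\mathcal H(2\partial_3 v_3+\gamma\eta-\sigma\De_0\eta-g_3)$ from Lemma~\ref{pl8} before applying the same elliptic inversion of $\gamma-\sigma\De_0$. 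Both are fine.

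The estimate of $\|\na q_t\|_{L^2_t({_0}H^1)'}$ is where your proof breaks. The premise ``$\vec v_{tt}$ cannot be controlled in any norm of its own'' is wrong, and it sends you down an unworkable road. Differentiating the weak formulation \eqref{a5} in $t$ gives, for $\vec\var\in\mathcal V$,
$\langle\vec v_{tt},\vec\var\rangle = -[\vec v_t,\vec\var]-\gamma\int_\G v_3\var_3+\sigma\int_\G\De_0 v_3\,\var_3 -(w_t\na\phi,\vec\var)-(w\na\phi_t,\vec\var)+\langle\vec f_t,\vec\var\rangle+\langle\vec g_t,\vec\var\rangle_\G$,
and each term on the right is bounded by $\|\vec\var\|_{H^1}$ times quantities already controlled in Lemmas~\ref{nl2}--\ref{nl5}; this yields exactly the estimate \eqref{ne33} for $\|\vec v_{tt}\|_{L^2_t({_0}H^1)'}$, after which $\|\De\vec v_t\|_{L^2_t({_0}H^1)'}$ follows from integration by parts and the trace estimate of your second step, and $\na q_t=-\vec v_{tt}+\De\vec v_t-(w\na\phi)_t+\vec f_t$ closes the bound. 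Your alternative, splitting the test function with $P$ and solving an elliptic problem for the non-harmonic part of $q_t$, cannot be salvaged at the assumed regularity: it would need the interior source $\na\cdot\vec f_t$ and the Neumann datum $\partial_3 q_t|_{S_B}=f_{3t}|_{S_B}+\partial_3^2 v_{3t}|_{S_B}$, but $\vec f_t$ is only in $L^2_t({_0}H^1)'$, so $\na\cdot\vec f_t$ sits two derivatives below $L^2$ and the trace $f_{3t}|_{S_B}$ is not defined, nor is $\partial_3^2 v_{3t}|_{S_B}$ controlled by $\vec v_t\in L^2_tH^1$. The fix is to abandon the splitting and simply use the differentiated weak formulation as above.
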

\begin{proof}
We first estimate $\|\na\vec{v}_t\|_{L^2_t H^{-\frac{1}{2}}(\G)}
$.
Let $\vec{\psi}\in H^{\frac{1}{2}}(\G)$. Then for $i=1,2$ it follows from the trace theorem that
\ben
\int_{\G}\partial_i \vec{v}_t\cdot \vec{\psi} \,dx
=-\int_{\G} \vec{v}_t\cdot\partial_i \vec{\psi} \,dx
\leq \|\vec{v}_t\|_{H^{\frac{1}{2}}(\G)}
\|\partial_i\vec{\psi}\|_{H^{-\frac{1}{2}}(\G)}
\leq C\|\vec{v}_t\|_{H^{1}}
\|\vec{\psi}\|_{H^{\frac{1}{2}}(\G)}.
\enn
Thus
\be\label{ne28}
\|\partial_i\vec{v}_t\|_{L^2_tH^{-\frac{1}{2}}(\G)}
\leq C\|\vec{v}_t\|_{L^2_tH^{1}}\quad\text{for}\ \ i=1,2.
\ee
Noting $\partial_3 v_{3t}=-\partial_1 v_{1t}-\partial_2 v_{2t}$ due to $\na\cdot\vec{v}=0$, one gets from \eqref{ne28} that
\be\label{ne29}
\|\partial_3 v_{3t}\|_{L^2_tH^{-\frac{1}{2}}(\G)}
\leq
\|\partial_1 v_{1t}\|_{L^2_tH^{-\frac{1}{2}}(\G)}
+\|\partial_2 v_{2t}\|_{L^2_tH^{-\frac{1}{2}}(\G)}
\leq C\|\vec{v}_t\|_{L^2_tH^{1}}.
\ee
On the other hand, from the boundary conditions in \eqref{e1} we know
$\partial_3 v_{i t}=-\partial_i v_{3 t}+g_{it}$ on $\G\times(0,\infty)$ for $i=1,2$. Thus it follows from \eqref{ne28} that
\be\label{ne30}
\begin{split}
\|\partial_3 v_{i t}\|_{L^2_tH^{-\frac{1}{2}}(\G)}
\leq &
\|\partial_i v_{3t}\|_{L^2_tH^{-\frac{1}{2}}(\G)}
+\|g_{it}\|_{L^2_tH^{-\frac{1}{2}}(\G)}\\
\leq  &C\|\vec{v}_t\|_{L^2_tH^{1}}
+\|\vec{g}_{t}\|_{L^2_tH^{-\frac{1}{2}}(\G)}\qquad \quad \text{for}\ \ i=1,2.
\end{split}
\ee
Collecting \eqref{ne28}-\eqref{ne30} and using Lemma \ref{nl3} we arrive at
\be\label{ne45}
\begin{split}
\|\nabla\vec{v}_{t}\|_{L^2_t H^{-\frac{1}{2}}(\G)}^2
\leq&
C(\|\vec{v}_0\|_{H^2(\Om)}^2+\|\eta_0\|_{H^3(\R^2)}^2)\\
&+C|||w|||^2(
\|\nabla \phi\|_{L^\infty_tL^\infty}^2
+\|\nabla^2 \phi\|_{L^\infty_tL^\infty}^2
+\|\nabla \phi_t\|_{L^2_tL^2}^2
)\\
&+C(\|\vec{f}\|_{L^2_tH^1}^2+\|\vec{f}_t\|_{L^2_t(_{0}H^1)^{'}}^2
+\|\vec{g}\|_{L^2_tH^{\frac{3}{2}}(\G)}^2
+\|\vec{g}_{t}\|_{L^2_tH^{-\frac{1}{2}}(\G)}^2
).
\end{split}
\ee

We proceed to estimating $\|\nabla q_t\|_{L^2_t ({_0}H^1)^{'}}$.
Differentiating \eqref{a5} with respect to $t$ to have
\ben
\begin{split}
\langle &\vec{v}_{tt},\vec{\var}\rangle +[\vec{v}_t,\vec{\var}]+\gamma\int_{\Gamma} v_3\var_3\,dx-\sigma\int_{\Gamma}\Delta_0 v_3\var_3\,dx\\
&=
-(w_t\nabla \phi,\vec{\var})
-(w\nabla \phi_t,\vec{\var})
+\langle\vec{f}_t,\vec{\var}\rangle
+\langle \vec{g}_{t},\vec{\var}\rangle_\G,
\qquad
\forall \vec{\var}\in \mathcal{V}
\end{split}
\enn
for $a.e.$ $t>0$, which along with the Sobolev embedding inequality and trace theorem gives
\be\label{ne33}
\begin{split}
\|\vec{v}_{tt}\|_{L^2_t(_0H^1)^{'}}
\leq &C(
\|\vec{v}_{t}\|_{L^2_tH^1}
+\|\vec{v}\|_{L^2_tH^3}
)
+C|||w|||
(\|\nabla \phi\|_{L^\infty_tL^\infty}
+
\|\na\phi_t\|_{L^2_t L^2}
)\\
&+C(\|\vec{f}_{t}\|_{ L^2_t({_0}H^1)^{'}}
+\|\vec{g}_{t}\|_{L^2_tH^{-\frac{1}{2}}(\G)}
).
\end{split}
\ee
For any $\vec{\varrho}\in\, _0H^1$, one gets from integration by parts that
\ben
\begin{split}
\langle\De \vec{v}_t,\vec{\varrho}\rangle
=&\int_\G \partial_3\vec{v}_t\cdot \vec{\varrho}\,dx
-\int_{\Omega} \nabla \vec{v}_t \cdot \nabla
\vec{\varrho}\,dxdy\\
\leq &\|\nabla \vec{v}_t\|_{H^{-\frac{1}{2}}(\G)}
\|\vec{\varrho}\|_{H^{\frac{1}{2}}(\G)}
+\|\vec{v}_{t}\|_{H^1}\|\vec{\varrho}\|_{H^1}\\
\leq& C(
\|\na\vec{v}_t\|_{H^{-\frac{1}{2}}(\G)}
+\|\vec{v}_{t}\|_{H^1}
)\|\vec{\varrho}\|_{H^1}
\end{split}
\enn
which, indicates
\be\label{ne35}
\|\De \vec{v}_t\|_{L^2_t ({_0}H^1)^{'}}
\leq C(
\|\na\vec{v}_t\|_{L^2_tH^{-\frac{1}{2}}(\G)}
+\|\vec{v}_{t}\|_{L^2_tH^1}
).
\ee
Differentiating the first equation of \eqref{e1} with respect to $t$ and using \eqref{ne33}-\eqref{ne35} to derive
\ben
\begin{split}
\|\nabla q_t\|_{L^2_t ({_0}H^1)^{'}}
\leq &\|\vec{v}_{tt}\|_{L^2_t ({_0}H^1)^{'}}
+\|\De \vec{v}_t\|_{L^2_t ({_0}H^1)^{'}}
+|||w|||
(\|\nabla \phi\|_{L^\infty_tL^\infty}
+\|\na\phi_t\|_{L^2_t L^2})
+\|\vec{f}_t\|_{L^2_t ({_0}H^1)^{'}}\\
\leq &C(
\|\vec{v}_{t}\|_{L^2_tH^1}
+\|\vec{v}\|_{L^2_tH^3}
+\|\na\vec{v}_t\|_{L^2_tH^{-\frac{1}{2}}(\G)}
)
+C|||w|||
(\|\nabla \phi\|_{L^\infty_tL^\infty}
+
\|\na\phi_t\|_{L^2_t L^2}
)\\
&+C(\|\vec{f}_{t}\|_{ L^2_t({_0}H^1)^{'}}
+\|\vec{g}_{t}\|_{L^2_tH^{-\frac{1}{2}}(\G)}
)
\end{split}
\enn
which, along with \eqref{ne45}  and Lemma \ref{nl3}-Lemma \ref{nl5} gives rise to
\be\label{ne46}
\begin{split}
\|\nabla q_t\|_{L^2_t ({_0}H^1)^{'}}^2
\leq &
C(\|\vec{v}_0\|_{H^2(\Om)}^2+\|\eta_0\|_{H^3(\R^2)}^2)\\
&+C|||w|||^2(
\|\nabla \phi\|_{L^\infty_tL^\infty}^2
+\|\nabla^2 \phi\|_{L^\infty_tL^\infty}^2
+\|\nabla \phi_t\|_{L^2_tL^2}^2
)\\
&+C(\|\vec{f}\|_{L^2_tH^1}^2+\|\vec{f}_t\|_{L^2_t(_{0}H^1)^{'}}^2
+\|\vec{g}\|_{L^2_tH^{\frac{3}{2}}(\G)}^2
+\|\vec{g}_{t}\|_{L^2_tH^{-\frac{1}{2}}(\G)}^2
).
\end{split}
\ee

It remains to estimate $\|\na_0\eta\|_{L^2_tH^{\frac{5}{2}}(\R^2)}$ and
$\|\na^2\mathcal{H}(\eta)\|_{L^2_tH^2}$. Noting that the harmonic extension $\mathcal{H}$ is a linear operator, applying projection $P$ to $\na q$ and using Lemma \ref{pl8}  and the fourth boundary condition in \eqref{e1}, we deduce that
\ben
P\na q=\na \mathcal{H}(2\partial_3 v_3+\gamma\eta-\sigma\De_0 \eta-g_3)
=2\na\mathcal{H}(\partial_3 v_3)
+\na \mathcal{H}(\gamma\eta-\sigma\De_0 \eta)
-\na\mathcal{H}(g_3)
\enn
which, along with Lemma \ref{pl7} and the trace theorem entails that
\be\label{ae7}
\begin{split}
\|\na \mathcal{H}(\gamma\eta-\sigma\De_0 \eta)\|_{L^2_tH^1}
\leq &\|P\na q \|_{L^2_tH^1}
+2\|\na\mathcal{H}(\partial_3 v_3)\|_{L^2_tH^1}+\|\na\mathcal{H}(g_3)\|_{L^2_tH^1}\\
\leq &C \big(\|\na q \|_{L^2_tH^1}
+ \|\na\vec{v}\|_{L^2_tH^{\frac{3}{2}}(\G)}
+\|g_3\|_{L^2_tH^{\frac{3}{2}}(\G)}
\big)\\
\leq &C\big( \|\na q \|_{L^2_tH^1}
+ \|\vec{v}\|_{L^2_tH^3}
+\|g_3\|_{L^2_tH^{\frac{3}{2}}(\G)}
\big).
\end{split}
\ee
Noting $\partial_i \mathcal{H}(\gamma\eta-\sigma\De_0 \eta)=\mathcal{H}(\partial_i[\gamma\eta-\sigma\De_0 \eta])$ for $i=1,2$, we deduce from the trace theorem and \eqref{ae7} that
\be\label{ae8}
\begin{split}
\|\na_0 (\gamma\eta-\sigma\De_0 \eta)\|_{L^2_tH^{\frac{1}{2}}(\R^2)}
\leq&
\|\na_0 \mathcal{H}(\gamma\eta-\sigma\De_0 \eta)\|_{L^2_tH^1}\\
\leq &C\big( \|\na q \|_{L^2_tH^1}
+ \|\vec{v}\|_{L^2_tH^3}
+\|g_3\|_{L^2_tH^{\frac{3}{2}}(\G)}
\big).
\end{split}
\ee
On the other hand, $\na_0\eta$ solves the following elliptic system
\be\label{ae9}
\left\{
\begin{array}{lll}
-\sigma\De_0 (\na_0\eta)+\gamma(\na_0\eta)=\na_0(\gamma\eta-\sigma\De_0\eta),\qquad (x_1,x_2)\in \R^2,\\
\lim\limits_{|\vec{x}|\rightarrow\infty}(\na_0\eta)=0
\end{array}
\right.
\ee
where $|\vec{x}|=\sqrt{x_1^2+x_2^2}$.
Then applying the standard elliptic theory (see e.g. \cite[pp 342, Theorem 5]{Evans}) to \eqref{ae9} and using \eqref{ae8} we deduce that
\be\label{ae10}
\|\na_0\eta\|_{L^2_tH^{\frac{5}{2}}(\R^2)}
\leq C\|\na_0(\gamma\eta-\sigma\De_0\eta)\|_{L^2_tH^{\frac{1}{2}}(\R^2)}
\leq C\big( \|\na q \|_{L^2_tH^1}
+\|\vec{v}\|_{L^2_tH^3}
+\|\vec{g}\|_{L^2_tH^{\frac{3}{2}}(\G)}
\big),
\ee
which, in conjunction with the fact $\partial_i \mathcal{H}(\eta)=\mathcal{H}(\partial_i\eta)$ for $i=1,2$ and the trace theorem leads to
\be\label{ae11}
\|\na_0 \mathcal{H}(\eta)\|_{L^2_tH^3}
\leq
\|\na_0\eta\|_{L^2_tH^{\frac{5}{2}}(\R^2)}
\leq C \big(\|\na q \|_{L^2_tH^1}
+ \|\vec{v}\|_{L^2_tH^3}
+\|\vec{g}\|_{L^2_tH^{\frac{3}{2}}(\G)}
\big).
\ee
Noting $\De \mathcal{H}(\eta)=0$ in $\Om$, it follows from \eqref{ae11} that
\ben
\|\partial_3^2 \mathcal{H}(\eta)\|_{L^2_tH^2}
=
\|\partial_1^2\mathcal{H}(\eta)
+\partial_2^2\mathcal{H}(\eta)\|_{L^2_tH^2}
\leq C\big( \|\na q \|_{L^2_tH^1}
+ \|\vec{v}\|_{L^2_tH^3}
+\|\vec{g}\|_{L^2_tH^{\frac{3}{2}}(\G)}
\big),
\enn
which, along with \eqref{ae11} leads to
\be\label{ae12}
\|\na^2\mathcal{H}(\eta)\|_{L^2_tH^2}
\leq C\big( \|\na q \|_{L^2_tH^1}
+\|\vec{v}\|_{L^2_tH^3}
+\|\vec{g}\|_{L^2_tH^{\frac{3}{2}}(\G)}
\big).
\ee
Collecting \eqref{ne45}, \eqref{ne46}, \eqref{ae10} and \eqref{ae12} and using Lemma \ref{nl5} we derive the desired estimates.
The proof is completed.

\end{proof}

We are now in the position to prove Proposition \ref{np1}.
~\\
\emph{\textbf{Proof of Proposition \ref{np1}.}}
First, from Lemma \ref{nl5} we know that $(\vec{v},q,\eta)$ with $\eta$ defined in \eqref{ne42} solves the initial-boundary value problem \eqref{e1}.
 Furthermore, one can easily get the regularity \eqref{ne19} by using Lemma \ref{nl2}-Lemma \ref{pl9} and the following fact:
\ben
\|\vec{v}\|_{L^\infty_tH^2}^2\leq C
(\|\vec{v}\|_{L^2_tH^3}^2+\|\vec{v}_t\|_{L^2_tH^1}^2)
,\quad
 \|\na q\|_{L^\infty_tL^2}^2
 \leq C
 (\|\na q\|_{L^2_tH^1}^2
+\|\nabla q_t\|_{L^2_t({_0}H^1)^{'}}^2)
\enn
for some constant $C>0$ independent of $t$, thanks to the compactness theorem (see e.g. \cite[Chapter 1, Theorem 3.1 ]{lions&magenes}). Uniqueness follows from \eqref{ne19}. The proof is completed.

\endProof

\subsection{Proof of Proposition \ref{p1}}\label{ss3}
\begin{proof}
Multiplying \eqref{ce19} with $ C_4(\|\nabla \phi\|_{L^\infty_tL^\infty}^2
+\|\nabla^2 \phi\|_{L^\infty_tL^\infty}^2+\|\na \phi_t\|_{L^2_t L^2})+1$ and adding the resulting inequality to \eqref{ne19}, one derives \eqref{ee78}. The proof is finished.

\end{proof}
\section{Approximation solutions for the nonlinear problem}
In this section, we shall first construct approximation solutions for the nonlinear problem \eqref{e001}-\eqref{e003} based on the results obtained on its linearized version \eqref{e2}-\eqref{e1} in Section 4 then proceed to gain uniform bounds for such approximation solutions by estimating the nonlinear terms on the right-hand side of each equation in the following approximating system  \eqref{ee1}-\eqref{eee7}.

Before constructing the approximation solutions, we exhibit some identities fulfilled by the initial data $w_0$, $h_0$ and $\vec{v}_0$, which are defined from $m_0$, $\tilde{c}_0$ and $\vec{u}_0$ through the transformation $\theta(0)$. Indeed, by similar arguments used in deriving \eqref{aae23}, \eqref{aae21} and \eqref{aae13}-\eqref{aae16} one can deduce from \eqref{cb} that the $w_0$, $h_0$ and $\vec{v}_0$ defined in \eqref{ne56}
satisfy the following identities:
\be\label{ne55}
\left\{
\begin{array}{lll}
\na \cdot \vec{v}_0=0\quad \text{in}\ \ \Om,\\
\partial_3 w_0=G_4(w_0,h_0,\bar{\eta}_0)-w_0\partial_3h_0,\quad h_0=0\quad
 \text{on}\ \ \G,\\
\partial_3 v_{01}+\partial_1 v_{03}=G_1(\vec{v}_0,\bar{\eta}_0),
\quad
\partial_3 v_{02}+\partial_2 v_{03}=G_2(\vec{v}_0,\bar{\eta}_0)
\quad \text{on}\ \ \G,\\
w_0=0,\quad\partial_3 h_0=0,\quad \vec{v}_0=\mathbf{0}\quad \text{on}\ \ S_B,
\end{array}
\right.
\ee
where $G_4(w_0,h_0,\bar{\eta}_0)$ is derived from $G_4(w,h,\bar{\eta})$ by
replacing the $\al$, $\beta$, $J$, $\eta$, $h$ and $w$ in \eqref{aae21} with $\al_0$, $\beta_0$, $J_0$, $\eta_0$, $h_0$ and $w_0$, respectively. Similarly, $G_1(\vec{v}_0,\bar{\eta}_0)$ and $G_2(\vec{v}_0,\bar{\eta}_0)$ are obtained from  the $G_1(\vec{v},\bar{\eta})$ defined below \eqref{aae13} and the $G_2(\vec{v},\bar{\eta})$ defined below \eqref{aae16} by replacing the $J$, $\eta$ and $\vec{v}$ there with $J_0$, $\eta_0$ and $\vec{v}_0$, respectively.

We are now in the position to construct the first and second approximation solutions. Assuming $w_0, h_0, \vec{v}_{0} \in H^2(\Om)$ and $\eta_0\in H^3(\R^2)$, one can easily deduce from the trace theorem that
\be\label{eeea5}
\begin{split}
 \|&G_1(\vec{v}_0,\bar{\eta}_0)\|_{H^{\frac{1}{2}}(\G)}
+\|G_2(\vec{v}_0,\bar{\eta}_0)\|_{H^{\frac{1}{2}}(\G)}
+\|G_4(w_0,h_0,\bar{\eta}_0)-
w_0\partial_3h_0\|_{H^{\frac{1}{2}}(\G)}\\
&\leq C(\|w_0\|_{H^2}+\|h_0\|_{H^2}
+\|\vec{v}_0\|_{H^2}+\|\eta_0\|_{H^3(\R^2)})
\end{split}
\ee
for some constant $C>0$.
Then from the trace theorem (see e.g. \cite[Chapter 1, Theorem 4.2]{lions&magenes}) and \eqref{eeea5}, we know that there exists $\tilde{G}_1, \tilde{G}_2, \tilde{G}_4 \in L^2(0,\infty;H^{\frac{3}{2}}(\G))$ with $\tilde{G}_{1t}, \tilde{G}_{2t}, \tilde{G}_{4t} \in L^2(0,\infty;H^{-\frac{1}{2}}(\G))$ satisfying that
\be\label{eeea7}
\left\{
\begin{array}{lll}
\tilde{G}_1(x_1,x_2,0)=G_1(\vec{v}_0,\bar{\eta}_0)\quad \text{on}\ \ \G ,\\
\tilde{G}_2(x_1,x_2,0)=G_1(\vec{v}_0,\bar{\eta}_0)\quad \text{on}\ \ \G,\\
\tilde{G}_4(x_1,x_2,0)=G_4(w_0,h_0,\bar{\eta}_0)-
w_0\partial_3h_0\quad \text{on}\ \ \G
\end{array}
\right.
\ee
and that
\be\label{eeea6}
\begin{split}
\|\ti{G}_{1}&\|_{L^2_tH^{\frac{3}{2}}(\G)}
+\|\ti{G}_{2}\|_{L^2_tH^{\frac{3}{2}}(\G)}
+\|\ti{G}_{4}\|_{L^2_tH^{\frac{3}{2}}(\G)}
+\|\ti{G}_{1t}\|_{L^2_tH^{-\frac{1}{2}}(\G)}
+\|\ti{G}_{2t}\|_{L^2_tH^{-\frac{1}{2}}(\G)}
+\|\ti{G}_{4t}\|_{L^2_tH^{-\frac{1}{2}}(\G)}\\
\leq &C\|G_1(\vec{v}_0,\bar{\eta}_0)\|_{H^{\frac{1}{2}}(\G)}
+C\|G_2(\vec{v}_0,\bar{\eta}_0)\|_{H^{\frac{1}{2}}(\G)}
+C\|G_4(w_0,h_0,\bar{\eta}_0)-
w_0\partial_3h_0\|_{H^{\frac{1}{2}}(\G)}\\
\leq& C(\|w_0\|_{H^2}+\|h_0\|_{H^2}
+\|\vec{v}_0\|_{H^2}+\|\eta_0\|_{H^3(\R^2)}).
\end{split}
\ee
Let $(w^1, h^1, \vec{v}^{\,1},q^1,\eta^1)=(w^2, h^2, \vec{v}^{\,2},q^2, \eta^2 )$ be the solutions of the following system:
\be\label{eee4}
\left\{
\begin{array}{lll}
w_t-\Delta w=0\quad \text{in}\ \ \Omega\times(0,\infty),\\
h_t-\Delta h-w=0,\\
\vec{v}_t-\Delta\vec{v}+\nabla q +w\nabla \phi=0,\\
\na\cdot\vec{v}=0,\\
(w,h,\vec{v})(x_1,x_2,y,0)
=(w_0,h_0,\vec{v}_0)(x_1,x_2,y),\quad \eta(x_1,x_2,0)=\eta_0(x_1,x_2),
\end{array}
\right.
\ee
with the following boundary conditions
\be\label{eee5}
\left\{
\begin{array}{lll}
\partial_3 w=\tilde{G}_4,\quad
 h=0,\quad
\partial_3 v_1+\partial_1 v_3=\tilde{G}_1,
\quad
\partial_3 v_2+\partial_2 v_3=\tilde{G_2}\quad \text{on}\ \ \Gamma\times(0,\infty),\\
\eta_t=v_3,\quad
q-2\partial_3 v_3=\gamma\eta-\sigma\Delta_0\eta \quad\text{on}\ \
\Gamma\times(0,\infty),\\
w=0,\quad\partial_3 h=0,\quad \vec{v}=\mathbf{0}\quad \text{on}\ \ S_B\times(0,\infty).
\end{array}
\right.
\ee
Applying Proposition \ref{p1} to system \eqref{eee4}-\eqref{eee5} we obtain the unique solution $(w^1, h^1, \vec{v}^{\,1}, q^1,\eta^1)=(w^2, h^2, \vec{v}^{\,2}, q^2, \eta^2 )$ since the required compatibility conditions in Proposition \ref{p1} follow directly from \eqref{ne55} and \eqref{eeea7}.

  With the well-defined first and second approximation solutions in hand, we proceed to constructing $(w^{(j+1)}, h^{(j+1)}, \vec{v}^{\,(j+1)},q^{(j+1)},\eta^{(j+1)})$ with $j\geq 2$ by solving the following linear system:
\be\label{ee1}
\left\{
\begin{array}{lll}
w^{(j+1)}_t-\Delta w^{(j+1)}-\nabla\cdot(w^{j}\nabla h^{(j+1)})=F_4(w^{(j-1)},w^j,h^j,\vec{v}^{j},\bar{\eta}^j)\quad \text{in}\ \ \Omega\times(0,\infty),\\
h^{(j+1)}_t-\Delta h^{(j+1)}-w^{(j+1)}=F_5(h^j,\vec{v}^{j},\bar{\eta}^j),
\\
\vec{v}^{\,(j+1)}_t-\Delta\vec{v}^{\,(j+1)}+\nabla q^{(j+1)} +w^{(j+1)}\nabla \phi=\vec{F}(w^j,\vec{v}^{j},\na q^j,\bar{\eta}^j),\\
\na\cdot\vec{v}^{\,(j+1)}=0,\\
(w^{(j+1)},h^{(j+1)},\vec{v}^{\,(j+1)})(x_1,x_2,y,0)
=(w_0,h_0,\vec{v}_0)(x_1,x_2,y),\\
\eta^{(j+1)}(x_1,x_2,0)=\eta_0(x_1,x_2),
\end{array}
\right.
\ee
with the following boundary conditions on $\Gamma\times(0,\infty):$
\be\label{ee2}
\left\{
\begin{array}{lll}
\partial_3 w^{(j+1)}+w^{j}\partial_3 h^{(j+1)}=G_4(w^j,h^j,\bar{\eta}^j),\qquad
 h^{(j+1)}=0,\\
\partial_3 v^{(j+1)}_1+\partial_1 v^{(j+1)}_3=G_1(\vec{v}^{j},\bar{\eta}^j),
\qquad
\partial_3 v^{(j+1)}_2+\partial_2 v^{(j+1)}_3=G_2(\vec{v}^{j},\bar{\eta}^j),\\
\eta^{(j+1)}_t=v^{(j+1)}_3,\qquad
q^{(j+1)}-2\partial_3 v^{(j+1)}_3=\gamma\eta^{(j+1)}-\sigma\Delta_0\eta^{(j+1)}-G_3
(\vec{v}^{j},\bar{\eta}^j)
\end{array}
\right.
\ee
and the following boundary conditions on $S_B\times(0,\infty):$
\be\label{eee7}
w^{(j+1)}=0,\quad\partial_3 h^{(j+1)}=0,
\quad \vec{v}^{\,(j+1)}=\mathbf{0},
\ee
where $\vec{F}(w^j,\vec{v}^{j},\na q^j,\bar{\eta}^j)=(F_1,F_2,F_3)(w^j,\vec{v}^{j},\na q^j,\bar{\eta}^j)$, $\vec{G}(\vec{v}^{j},\bar{\eta}^j)
=(G_1,G_2,G_3)(\vec{v}^{j},\bar{\eta}^j)$, $F_5(h^j,\vec{v}^{j},\bar{\eta}^j)$ and $G_4(w^j,h^j,\bar{\eta}^j)$ are derived from the $\vec{F}$ defined below \eqref{aae15}, the $\vec{G}$ defined below \eqref{aae13}-\eqref{aae17}, the $F_5$ defined below \eqref{aae9} and the $G_4$ defined in \eqref{aae21} by replacing the $J$,  $\al$, $\beta$, $w$,
 $h$, $\vec{v}$, $q$ and $\eta$ there with $J^j$, $\al^j$, $\beta^j$,
  $w^j$, $h^j$, $\vec{v}^j$, $q^j$ and $\eta^j$, respectively. Here
 \ben
\al^j=(1+y)\partial_1\bar{\eta}^j,
\quad
\beta^j=(1+y)\partial_2\bar{\eta}^j,
\quad
J^j=1+\bar{\eta}^j+\partial_3 \bar{\eta}^j(1+y),
\enn
with $\bar{\eta}^j=\mathcal{H}(\eta^j).$
 $F_4(w^{(j-1)},w^j,h^j,\vec{v}^{j},\bar{\eta}^j)$ is derived from the $F_4$ defined below \eqref{aae9} by replacing the first line with
\be\label{ee94}
\begin{split}
&\left\{(J^{j})^{-2}[(\al^j)^2+(\beta^j)^2+1]-1
\right\}\partial_3\big(\partial_3 w^{j}+w^{(j-1)}\partial_3 h^j\big)\\
&\ -2\al^j(J^{j})^{-1}
\big(\partial_3\partial_1 w^j
+w^{(j-1)}\partial_3\partial_1 h^j
\big)
-2\beta^j(J^{j})^{-1}
\big(\partial_3\partial_2 w^j
+w^{(j-1)}\partial_3\partial_2 h^j
\big)
\end{split}
\ee
and replacing the $J$,  $\al$, $\beta$, $w$,
 $h$, $\vec{v}$ and $\eta$ in Line 2 to Line 6 with $J^j$, $\al^j$, $\beta^j$,
  $w^j$, $h^j$, $\vec{v}^j$ and $\eta^j$, respectively.

To obtain such $(w^{(j+1)}, h^{(j+1)}, \vec{v}^{\,(j+1)}, q^{(j+1)},\eta^{(j+1)})$ by solving \eqref{ee1}-\eqref{eee7}, from Proposition \ref{p1} we know that the following compatibility conditions are required to be fulfilled:
  \be\label{ne51}
\left\{
\begin{array}{lll}
\na\cdot \vec{v}_0=0\quad \text{in}\quad \Om,\\
\partial_3 w_0+w_0\partial_3 h_0=G_4(w^j(x_1,x_2,y,0),h^j(x_1,x_2,y,0),\bar{\eta}^j(x_1,x_2,y,0)),
\quad h_0=0\quad \text{on}\ \ \G,\\
 \partial_3v_{01}+\partial_{1}v_{03}
 =G_1(\vec{v}^{j}(x_1,x_2,y,0),\bar{\eta}^j
(x_1,x_2,y,0))\quad \text{on}\ \ \G,\\
\partial_3v_{02}+\partial_{2}v_{03}
=G_2(\vec{v}^{j}(x_1,x_2,y,0),\bar{\eta}^j
(x_1,x_2,y,0))
\quad  \text{on}\ \ \G,\\
w_0=0,\quad \partial_3h_0=0,\quad\vec{v}_0=\mathbf{0}\quad \text{on}\ \ S_B.
\end{array}
\right.
\ee
 The solvability of the system \eqref{ee1}-\eqref{eee7} and the uniform bounds for the approximation solutions $\{(w^{j},h^{j},\vec{v}^{j}, q^{j},\eta^{j})\}_{j\in\N_{+}}$ are as follows.
\begin{prop}\label{p4}Suppose that the initial data $w_0, h_0, \vec{v}_0\in H^2(\Om)$ and $\eta_0\in H^3(\R^2)$ satisfy \eqref{ae3}, \eqref{ae4} and the identities in \eqref{ne55}. Assume further $\na\phi, \na^2\phi\in L^{\infty}(0,\infty;L^\infty)$ and $\na\phi_t\in L^{2}(0,\infty;L^2)$. Let $(w^{1},h^{1},\vec{v}^{\,1}, q^1,\eta^1)=(w^{2},h^{2},\vec{v}^{\,2}, q^2,\eta^2)$ be the solution of system \eqref{eee4}-\eqref{eee5}.
 Then system \eqref{ee1}-\eqref{eee7} with $j\geq 2$ admits a unique solution $(w^{(j+1)},h^{(j+1)},\vec{v}^{\,(j+1)}, q^{(j+1)},\eta^{(j+1)})$ with the regularity defined by the left-hand side of \eqref{eee8}.
Moreover, there exists a constant $C$ independent of $j$ and $t$ such that
\be\label{eee8}
\|\{w^j,h^j,\vec{v}^j,q^j,\eta^j\}\|^2
\leq C(\|w_0\|_{H^2(\Om)}+\|h_0\|_{H^2(\Om)}
+\|\vec{v}_0\|_{H^2(\Om)}+\|\eta_0\|_{H^3(\R^2)})^2
\ee
for all $j\geq 1$ and $t>0$.
Furthermore, it holds true for all $j\geq 1$ that
\be\label{eee14}
\frac{1}{2}<J^j<\frac{3}{2}\qquad \text{in}\ \  \Om\times(0,\infty),
\ee
where $J^j=1+\bar{\eta}^j+\partial_3 \bar{\eta}^j(1+y)$.
\end{prop}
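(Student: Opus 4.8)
The plan is to argue by induction on $j$, invoking Proposition \ref{p1} at every step both to solve the linear system \eqref{ee1}--\eqref{eee7} and to propagate a uniform-in-$j$, uniform-in-$t$ bound. Write $\mathcal{D}:=\|w_0\|_{H^2(\Om)}+\|h_0\|_{H^2(\Om)}+\|\vec v_0\|_{H^2(\Om)}+\|\eta_0\|_{H^3(\R^2)}$ and $\mathcal{E}_j:=\|\{w^j,h^j,\vec v^j,q^j,\eta^j\}\|$. One fixes a constant $M$ (determined by the constants in Propositions \ref{p1}, \ref{cp1}, \ref{np1} and by the nonlinear estimates below) and proves, by induction, that $\mathcal{E}_j^2\le M\mathcal{D}^2$ — which is exactly \eqref{eee8} with $C=M$ — under the smallness assumption $\mathcal{D}\le\va_0$ with $\va_0$ chosen small at the very end. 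For the base case $j=1,2$, the solution of \eqref{eee4}--\eqref{eee5} is produced by Proposition \ref{p1} with $a\equiv0$, vanishing interior forcing and boundary data $(\tilde G_1,\tilde G_2,0)$, $\tilde G_4$; its hypotheses hold because the compatibility conditions there collapse to \eqref{ne55} and \eqref{eeea7}, and the resulting estimate \eqref{ee78} together with \eqref{eeea6} gives $\mathcal{E}_1^2=\mathcal{E}_2^2\le C_\ast\mathcal{D}^2$ with $C_\ast$ independent of $t$; one takes $M\ge C_\ast$.

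For the inductive step, assume $\mathcal{E}_k^2\le M\mathcal{D}^2$ for all $k\le j$. First one extracts \eqref{eee14} for those indices: Lemma \ref{pl7} gives $\|\bar\eta^k\|_{H^3}\le C\|\eta^k\|_{H^{5/2}(\R^2)}\le C\|\eta^k\|_{H^3(\R^2)}\le C\mathcal{E}_k$, and since $H^3(\Om)\hookrightarrow W^{1,\infty}(\Om)$ in three dimensions one gets $\|\bar\eta^k\|_{L^\infty}+\|\na\bar\eta^k\|_{L^\infty}\le C\sqrt{M}\,\mathcal{D}$; because $J^k=1+\bar\eta^k+\partial_3\bar\eta^k(1+y)$ with $|1+y|\le1$ on $\Om$, taking $\mathcal{D}$ small forces $|J^k-1|<\tfrac12$, i.e. \eqref{eee14}. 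Next one verifies the hypotheses of Proposition \ref{p1} for \eqref{ee1}--\eqref{eee7}: the coefficient $a=w^j$ obeys \eqref{eee0} since $|||w^j|||\le\mathcal{E}_j$ and $C_1(C_2+1)M\mathcal{D}^2\le\tfrac12$ once $\mathcal{D}$ is small; and the compatibility conditions required there are precisely \eqref{ne51}, which follow from \eqref{ne55} because every iterate carries the initial data $(w_0,h_0,\vec v_0,\eta_0)$ at $t=0$ (so that $w^j(\cdot,0)=w_0$, $\bar\eta^j(\cdot,0)=\bar\eta_0$, etc.).

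The heart of the argument is bounding the nonlinear forcing terms $F_4(w^{j-1},w^j,h^j,\vec v^j,\bar\eta^j)$, $F_5(h^j,\vec v^j,\bar\eta^j)$, $\vec F(w^j,\vec v^j,\na q^j,\bar\eta^j)$, $\vec G(\vec v^j,\bar\eta^j)$, $G_4(w^j,h^j,\bar\eta^j)$ and their time derivatives in the norms demanded by \eqref{ee01}. The structural point is that each such term is a finite sum of products of the transformation coefficients $J^j,\al^j,\beta^j$ and their first and second derivatives — all functions of $\bar\eta^j$ that vanish when $\bar\eta^j\equiv0$ — with first or second derivatives of $w^{j-1},w^j,h^j,\vec v^j,\na q^j$ (see, e.g., the modified first line \eqref{ee94} of $F_4$). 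Using \eqref{eee14} to control $(J^j)^{-1}$ and its derivatives, the $W^{1,\infty}$-smallness of $\bar\eta^j$ just established, the relation $\partial_t\bar\eta^j=\mathcal{H}(v_3^j|_\G)$ (from $\eta^j_t=v_3^j$ on $\G$) for the temporal derivatives, the trace theorem for the boundary norms, and the standard three-dimensional Sobolev product inequalities ($H^1\hookrightarrow L^6$, $H^2\hookrightarrow L^\infty$, $H^3\hookrightarrow W^{1,\infty}$), one arrives at
$$\|F_4\|_{L^2_tH^1}+\|F_{4t}\|_{L^2_t({_0}H^1)^{'}}+\|F_5\|_{L^2_tH^1}+\|F_{5t}\|_{L^2_t({^0}H^1)^{'}}+\|\vec F\|_{L^2_tH^1}+\|\vec F_t\|_{L^2_t({_0}H^1)^{'}}+\|\vec G\|_{L^2_tH^{\frac32}(\G)}+\|\vec G_t\|_{L^2_tH^{-\frac12}(\G)}+\|G_4\|_{L^2_tH^{\frac32}(\G)}+\|G_{4t}\|_{L^2_tH^{-\frac12}(\G)}\le C_5\bigl(\mathcal{E}_{j-1}^2+\mathcal{E}_j^2\bigr)\le 2C_5M\mathcal{D}^2,$$
with $C_5$ independent of $j$ and $t$. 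Feeding this into \eqref{ee78} gives $\mathcal{E}_{j+1}^2\le C\mathcal{D}^2+C\,C_5^2M^2\mathcal{D}^4$; choosing $M:=2\max(C_\ast,C)$ and then $\va_0$ so small that $C\,C_5^2M^2\va_0^2\le M/2$ and all earlier smallness requirements hold, one concludes $\mathcal{E}_{j+1}^2\le M\mathcal{D}^2$. The solution furnished by Proposition \ref{p1} is unique with the regularity on the left of \eqref{eee8}, and \eqref{eee14} for index $j+1$ follows exactly as above, completing the induction (the claim $\mathcal{E}_1^2\le C\mathcal{D}^2$ of Theorem \ref{t2}'s proof then reads off at $j=1$). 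I expect the main obstacle to be the bookkeeping in these nonlinear estimates — in particular ensuring that the time-derivative bounds in the dual spaces $({_0}H^1)^{'}$, $({^0}H^1)^{'}$, $H^{-\frac12}(\G)$ are genuinely at least quadratic in $\mathcal{E}_{j-1}+\mathcal{E}_j$ with constants independent of $j$ and $t$, since it is precisely this quadratic gain that lets the smallness of $\mathcal{D}$ close the recursion.
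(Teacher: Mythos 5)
Your proof proposal is correct and follows essentially the same route as the paper: a base case via Proposition \ref{p1} with $a\equiv 0$, an inductive step that checks \eqref{eee14}, the coefficient smallness \eqref{eee0}, and the compatibility conditions \eqref{ne51}, then invokes Proposition \ref{p1} together with the nonlinear estimates (Lemmas \ref{l2}--\ref{l8}) to propagate the uniform bound, closing via the smallness of $\mathcal{D}$. The only difference is presentational: you package the nonlinear estimates as an at-least-quadratic bound in a single constant, whereas the paper carries the explicit higher powers ($\mathcal{E}^4$ through $\mathcal{E}^{20}$) and a chain of constants $C_5,\dots,C_7$, but under the smallness assumption these are equivalent.
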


In the next subsection, we shall estimate the nonlinear terms $\vec{F}$, $\vec{G}$, $F_4$, $F_5$ and $G_4$ in system \eqref{ee1}-\eqref{eee7} and the proof of Proposition \ref{p4} will be given in subsection \ref{ss2}.

\subsection{Estimates on nonlinear terms}\label{ss1}
In estimating the nonlinear terms, we assume that
\be\label{eee15}
\frac{1}{2}<J^j<\frac{3}{2}\qquad \text{in}\ \  \Om\times(0,\infty)
\ee
for all $j\geq 1$,
which will be verified in the proof of Proposition \ref{p4} by using the smallness of $\eta^j$. Assumption \eqref{eee15} will be used repeatedly in the proof of the following Lemma \ref{l2}- Lemma \ref{l8} without further clarification.

We first estimate $\vec{F}(w^j,\vec{v}^{j},\na q^j,\bar{\eta}^j)$ and $\vec{F}_t(w^j,\vec{v}^{j},\na q^j,\bar{\eta}^j)$. It follows from the definition of $\vec{F}$ below \eqref{aae15} that
\be\label{ee0}
\begin{split}
\vec{F}(w^j,\vec{v}^{j},\na q^j,\bar{\eta}^j)=&
\left\{(J^j)^{-2}\left[(\al^j)^2+(\beta^j)^2+1\right]-1
\right\}\partial_3^2 \vec{v}^{j}
-2(J^j)^{-1}\al^j\partial_3\partial_1\vec{v}^j\\
&-2(J^j)^{-1}\beta^j\partial_3\partial_2\vec{v}^j
+\vec{Q}+\vec{L},
\end{split}
\ee
where $\vec{Q}=(Q_1,Q_2,Q_3)$ with
\ben
 \begin{split}
 Q_1:=&(1-J^j)\partial_1 q^j+\al^j\partial_3 q^j,
 \quad \quad\,\,
 Q_2:=(1-J^j)\partial_2 q^j+\beta^j\partial_3 q^j,\\
 Q_3:=&\left\{1-(J^j)^{-2}
 \left[(\al^j)^2+(\beta^j)^2+1\right]
 \right\}\partial_3 q^j+\al^j\partial_1 q^j+\beta^j\partial_2 q^j
 \end{split}
 \enn
 and the lower order terms (with respect to $\vec{v}^j)$ $\vec{L}$ are as follows:
 \ben
 \vec{L} \sim (\na \bar{\eta}^j)^2 (\na^2 \bar{\eta}^j)\na\vec{v}^j
 +(\na \bar{\eta}^j)^2(\na^3 \bar{\eta}^j) \vec{v}^j
+(\na\bar{\eta}^j_t)\vec{v}^j+(\na^2 \bar{\eta}^j)\vec{v}^j\bar{\eta}^j_t+w^j(\na\bar{\eta}^j)\na\phi.
 \enn

 \begin{lemma}\label{l2} Let \eqref{eee15} and the assumptions in Proposition \ref{p4} hold. Then there exists a constant $C$ independent of $j$ and $t$ such that
\ben
\|\vec{F}\|_{L^2_t H^1}^2
\leq C \|\{
w^{j},h^{j},\vec{v}^{j},q^{j}, \eta^{j}
\}\|^4
+C\|\{
w^{j},h^{j},\vec{v}^{j},q^{j}, \eta^{j}
\}\|^8
\enn
for all $j\geq 2$ and $t>0$.
\end{lemma}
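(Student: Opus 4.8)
The plan is to estimate $\|\vec F\|_{L^2_tH^1}$ termwise along the decomposition \eqref{ee0}, abbreviating $N_j:=\|\{w^j,h^j,\vec v^j,q^j,\eta^j\}\|$ (see \eqref{ee00}). The terms of $\vec F$ fall into three groups: (i) the leading second-order terms in $\vec v^j$, namely $\{(J^j)^{-2}[(\al^j)^2+(\beta^j)^2+1]-1\}\partial_3^2\vec v^j$ and $-2(J^j)^{-1}\al^j\partial_3\partial_1\vec v^j$, $-2(J^j)^{-1}\beta^j\partial_3\partial_2\vec v^j$; (ii) the pressure terms $\vec Q$, which are coefficients built from $\bar\eta^j$ times $\na q^j$; and (iii) the lower-order terms $\vec L$. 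The basic tool is that $H^2(\Om)\hookrightarrow L^\infty(\Om)$ in dimension three, so $H^2(\Om)$ is a Banach algebra and one has $\|fg\|_{H^1}\le C\|f\|_{H^2}\|g\|_{H^1}$ and $\|fgh\|_{H^1}\le C\|f\|_{H^2}\|g\|_{H^2}\|h\|_{H^1}$; these let me place the $\bar\eta^j$-coefficients (and powers of $(J^j)^{-1}$) in $H^2$ and the derivatives of $\vec v^j$, $q^j$ in $H^1$.

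For the coefficient bounds I would use \eqref{eee15}, i.e.\ $\tfrac12<J^j<\tfrac32$, which makes $(J^j)^{-1}$ and its powers well defined; a chain-rule (Moser-type) estimate then gives, for any fixed smooth $g$ with $g(0)=0$, a bound $\|g(\bar\eta^j)\|_{H^2}\le C\|\bar\eta^j\|_{H^3}(1+\|\bar\eta^j\|_{H^3})^{m}$ for some fixed $m$, and the composite coefficient $(J^j)^{-2}[(\al^j)^2+(\beta^j)^2+1]-1$ (as well as $(J^j)^{-1}\al^j$, $(J^j)^{-1}\beta^j$) is of exactly this form, vanishing when $\bar\eta^j\equiv0$. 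Since $J^j$ already contains $\partial_3\bar\eta^j$, the relevant norm is $\|\bar\eta^j\|_{H^3}$, and by Lemma \ref{pl7} this is $\le C\|\eta^j\|_{H^{5/2}(\R^2)}\le C\|\eta^j\|_{H^3(\R^2)}\le CN_j$. Hence every coefficient in groups (i)--(ii) is bounded in $L^\infty_tH^2$ by a polynomial in $N_j$ with no constant term, i.e.\ by $CN_j+CN_j^2+CN_j^3$. Multiplying by $\|\vec v^j\|_{L^2_tH^3}\le N_j$ (group (i)) or by $\|\na q^j\|_{L^2_tH^1}\le N_j$ (group (ii)) yields, for each such term, an $L^2_tH^1$-bound of the form $CN_j^2+CN_j^3+CN_j^4$, whose square is $\le CN_j^4+CN_j^8$ after absorbing $N_j^6$ (each $N_j^6\le N_j^4+N_j^8$).

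For $\vec L\sim(\na\bar\eta^j)^2(\na^2\bar\eta^j)\na\vec v^j+(\na\bar\eta^j)^2(\na^3\bar\eta^j)\vec v^j+(\na\bar\eta^j_t)\vec v^j+(\na^2\bar\eta^j)\vec v^j\bar\eta^j_t+w^j(\na\bar\eta^j)\na\phi$ I would apply the same product estimates, but taking care to place the factor carrying the most derivatives of $\bar\eta^j$ into the norm which is only $L^2$ in time: $\na^2\bar\eta^j$ and $\na^3\bar\eta^j=\na\cdot\na^2\bar\eta^j$ are controlled by $\|\na^2\mathcal H(\eta^j)\|_{L^2_tH^2}\le N_j$, while $\na\bar\eta^j\in L^\infty_tH^2$, $\vec v^j\in L^\infty_tH^2$ and $w^j\in L^\infty_tH^2$ are each $\le CN_j$ (using Lemma \ref{pl7}, $|||\vec v^j|||$, $|||w^j|||$); the $\bar\eta^j_t$ factors are handled through $\bar\eta^j_t=\mathcal H(v_3^j|_\G)$ (from $\eta^j_t=v_3^j$ on $\G$), so that the trace theorem and Lemma \ref{pl7} give $\|\bar\eta^j_t\|_{H^2}\le C\|\vec v^j\|_{H^2}$ and $\|\na\bar\eta^j_t\|_{L^2_tH^1}\le C\|\vec v^j\|_{L^2_tH^3}$, and $\na\phi,\na^2\phi\in L^\infty_tL^\infty$ only produce the $\phi$-dependent constant absorbed in $C$. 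The worst term, $(\na\bar\eta^j)^2(\na^2\bar\eta^j)\na\vec v^j$, is then bounded in $L^2_tH^1$ by $C\|\na\bar\eta^j\|_{L^\infty_tH^2}^2\,\|\na^2\mathcal H(\eta^j)\|_{L^2_tH^2}\,\|\vec v^j\|_{L^\infty_tH^2}\le CN_j^4$, and the cheapest (e.g.\ $w^j(\na\bar\eta^j)\na\phi$) by $CN_j^2$; squaring and summing the at-most-eight terms gives $\|\vec F\|_{L^2_tH^1}^2\le CN_j^4+CN_j^8$ after absorbing the intermediate powers $N_j^5,N_j^6,N_j^7$ (each $\le N_j^4+N_j^8$). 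The main obstacle is the bookkeeping: for every summand one must track exactly how many derivatives fall on $\bar\eta^j$ (up to three) and distribute the $L^\infty_t$ versus $L^2_t$ roles so that each product genuinely lands in $L^2(0,\infty;H^1)$ with no more $\eta^j$-regularity than $H^3(\R^2)$ in $L^\infty_t$ together with $\|\na^2\mathcal H(\eta^j)\|_{L^2_tH^2}$, and so that the Moser estimates for the powers $(J^j)^{-k}$ invoke only $\tfrac12<J^j<\tfrac32$.
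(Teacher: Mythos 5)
Your proposal is correct and follows essentially the same strategy as the paper: the same decomposition of $\vec F$ into the leading second-order terms, $\vec Q$, and $\vec L$ as in \eqref{ee0}, the same mechanism of placing the $\bar\eta^j$-dependent coefficients in $L^\infty_tH^2$ via Lemma \ref{pl7}, putting the factor with the highest regularity demand (on $\vec v^j$, $\na q^j$, or $\na^2\mathcal H(\eta^j)$) in $L^2_t$, and using $\eta^j_t=v^j_3$ on $\G$ together with \eqref{ee83} to handle the $\bar\eta^j_t$ terms. The only cosmetic difference is that you package the coefficient bounds as a Moser-type composition estimate, whereas the paper exploits the explicit polynomial/rational form of $(J^j)^{-2}[(\al^j)^2+(\beta^j)^2+1]-1$ and of $\al^j,\beta^j,1-J^j$ directly (cf.\ \eqref{eeea1}) under the bound $\frac12<J^j<\frac32$; both routes give the same power bookkeeping and the bound $CN_j^4+CN_j^8$.
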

\begin{proof}
By the definition of $\al^j$, $\beta^j$ and $J^j$ in \eqref{aae18} we know that
\be\label{eeea1}
\left\{(J^j)^{-2}
\left[
(\al^j)^2+(\beta^j)^2+1
\right]-1
\right\}\sim (\na \bar{\eta}^j)^2.
 \ee
 Thus it follows from the Sobolev embedding inequality and Lemma \ref{pl7} that
\be\label{ee17}
\begin{split}
&\left\|\left\{(J^j)^{-2}
\left[
(\al^j)^2+(\beta^j)^2+1
\right]-1
\right\}\partial_3^2 \vec{v}^{j}\right\|_{L^2_tH^1}^2\\
&\,\,\leq C \left(\|\na \bar{\eta}^j\|_{L^\infty_tL^\infty}^2
\|\na^2 \bar{\eta}^j\|_{L^\infty_tL^4}^2
\|\partial^2_3 \vec{v}^j\|_{L^2_tL^4}^2
+\|\na \bar{\eta}^j\|_{L^\infty_tL^\infty}^4
\|\na \partial^2_3 \vec{v}^j\|_{L^2_tL^2}^2
\right)\\
&\,\,\leq C \|\eta^j\|_{L^\infty_tH^3(\R^2)}^4
\| \vec{v}^j\|_{L^2_tH^3}^2.
\end{split}
\ee
Similarly, one gets
\be\label{ee18}
\left\|2\al^j(J^{j})^{-1}\partial_3\partial_1\vec{v}^j
\right\|_{L^2_tH^1}^2
+
\left\|2\beta^j(J^{j})^{-1}\partial_3\partial_2\vec{v}^j
\right\|_{L^2_tH^1}^2
\leq C \|\eta^j\|_{L^\infty_tH^3(\R^2)}^2
\| \vec{v}^j\|_{L^2_tH^3}^2.
\ee
Recalling the definition of $\vec{Q}$ below \eqref{ee0}, using the fact $(1-J^j),\,\al^j,\,\beta^j\sim \na \bar{\eta}^j$ and \eqref{eeea1} we have
\be\label{ee19}
\begin{split}
\|Q_1\|_{L^2_tH^1}^2+\|Q_2\|_{L^2_tH^1}^2
+\|Q_3\|_{L^2_tH^1}^2
\leq &C(\|\eta^j\|_{L^\infty_tH^3(\R^2)}^2
+\|\eta^j\|_{L^\infty_tH^3(\R^2)}^4)
\|\na q^j\|_{L^2_tH^1}^2.
\end{split}
\ee
We next estimate each term in $\vec{L}$. 
Sobolev embedding inequality and Lemma \ref{pl7} lead to
\be\label{ee22}
\begin{split}
\|(\na &\bar{\eta}^j)^2(\na^2\bar{\eta}^j)\na \vec{v}^j\|_{L^2_t H^1}^2\\
\leq &C\|\na \bar{\eta}^j\|_{L^\infty_tL^\infty}^2
\|\na^2 \bar{\eta}^j\|_{L^\infty_tL^6}^4
\|\na \vec{v}^j\|_{L^2_tL^6}^2
+C\|\na \bar{\eta}^j\|_{L^\infty_tL^\infty}^4
\|\na^3 \bar{\eta}^j\|_{L^\infty_tL^2}^2
\|\na \vec{v}^j\|_{L^2_tL^\infty}^2\\
&+C\|\na \bar{\eta}^j\|_{L^\infty_tL^\infty}^4
\|\na^2 \bar{\eta}^j\|_{L^\infty_tL^4}^2
\|\na^2 \vec{v}^j\|_{L^2_tL^4}^2\\
\leq &C \|\eta^j\|_{L^\infty_tH^3(\R^2)}^6
\|\vec{v}^j\|_{L^2_tH^3}^2
\end{split}
\ee
and
\be\label{ee23}
\begin{split}
\|(\na&\bar{\eta}^j)^2(\na^3\bar{\eta}^j) \vec{v}^j\|_{L^2_tH^1}^2\\
\leq &
\|\na \bar{\eta}^j\|_{L^\infty_tL^\infty}^2
\|\na^2 \bar{\eta}^j\|_{L^\infty_tL^4}^2
\|\na^3 \bar{\eta}^j\|_{L^2_tL^4}^2
\|\vec{v}^j\|_{L^\infty_tL^\infty}^2
+C\|\na \bar{\eta}^j\|_{L^\infty_tL^\infty}^4
\|\na^4 \bar{\eta}^j\|_{L^2_tL^2}^2
\|\vec{v}^j\|_{L^\infty_tL^\infty}^2\\
&+C\|\na \bar{\eta}^j\|_{L^\infty_tL^\infty}^4
\|\na^3 \bar{\eta}^j\|_{L^\infty_tL^2}^2
\|\na \vec{v}^j\|_{L^2_tL^\infty}^2\\
\leq &C\|\eta^j\|_{L^\infty_tH^3(\R^2)}^4\|\na^2 \bar{\eta}^j\|_{L^2_tH^2}^2
\|\vec{v}^j\|_{L^\infty_tH^2}^2
+C\| \eta^j\|_{L^\infty_tH^3(\R^2)}^6
\|\vec{v}^j\|_{L^2_tH^3}^2.
\end{split}
\ee
It follows from $\eta^j_t=v_3^j$ on $\G\times(0,\infty)$, Lemma \ref{pl7} and the trace theorem that
\be\label{ee83}
\|\bar{\eta}^j_t(t)\|_{H^m}\leq C\|\eta^j_t(t)\|_{H^{m-\frac{1}{2}}(\R^2)}= C\|v^j_3(t)\|_{H^{m-\frac{1}{2}}(\G)}
\leq C\|\vec{v}^j(t)\|_{H^m},\quad \forall t>0,\  m\geq 1.
\ee
By the Sobolev embedding inequality and \eqref{ee83} one deduces that
\be\label{ee86}
\|(\na\bar{\eta}^j_t)\vec{v}^j\|_{L^2_tH^1}^2
+\|(\na^2\bar{\eta}^j)\vec{v}^j\bar{\eta}^j_t\|_{L^2_tH^1}^2
\leq C(\| \eta^j\|_{L^\infty_tH^3(\R^2)}^2+1)\|\vec{v}^j\|_{L^\infty_tH^2}^2
\|\vec{v}^j\|_{L^2_tH^3}^2
\ee
and that
\be\label{ee87}
\|w^j(\na\bar{\eta}^j)\na\phi\|_{L^2_tH^1}^2
\leq
C\| \eta^j\|_{L^\infty_tH^3(\R^2)}^2
\|w^j\|_{L^2_tH^3}^2
(\|\na\phi\|_{L^\infty_tL^\infty}^2
+\|\na^2\phi\|_{L^\infty_tL^\infty}^2).
\ee
Combining \eqref{ee22}-\eqref{ee87} we arrive at
\ben
\|\vec{L}\|_{L^2_tH^1}^2
\leq C \|\{
w^{j},h^{j},\vec{v}^{j},q^{j}, \eta^{j}
\}\|^4
+C\|\{
w^{j},h^{j},\vec{v}^{j},q^{j}, \eta^{j}
\}\|^{8}
\enn
which, along with \eqref{ee17}-\eqref{ee19} and \eqref{ee0} gives the desired estimate. The proof is completed.

\end{proof}

 \begin{lemma}\label{l1} Let \eqref{eee15} and the assumptions in Proposition \ref{p4} hold true. Then
\ben
\begin{split}
\|\vec{F}_t\|_{L^2_t({_0}H^1)^{'}}^2
\leq C \|\{
w^{j},h^{j},\vec{v}^{j},q^{j}, \eta^{j}
\}\|^4+C\|\{
w^{j},h^{j},\vec{v}^{j},q^{j}, \eta^{j}
\}\|^8
\end{split}
\enn
for all $j\geq 2$ and $t>0$,
where the constant $C$ is independent of $j$ and $t$.
 \end{lemma}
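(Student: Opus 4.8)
The plan is to differentiate the representation \eqref{ee0} of $\vec{F}=\vec{F}(w^j,\vec{v}^{j},\na q^j,\bar{\eta}^j)$ in $t$ and to estimate the outcome in the dual norm $L^2_t({_0}H^1)^{'}$ by testing against an arbitrary $\vec{\varrho}\in{_0}H^1$ with $\|\vec{\varrho}\|_{H^1}\le 1$. I will write $\vec{F}_t$ as a sum of three groups: (i) the \emph{principal} terms coming from $\partial_t$ applied to $\{(J^j)^{-2}[(\al^j)^2+(\beta^j)^2+1]-1\}\partial_3^2\vec{v}^{j}$, $(J^j)^{-1}\al^j\partial_3\partial_1\vec{v}^j$ and $(J^j)^{-1}\beta^j\partial_3\partial_2\vec{v}^j$, each of which carries two derivatives (at least one vertical) of $\vec{v}^j$ or of $\vec{v}^j_t$ against a coefficient $\mu^j$ that is a smooth function of $\bar{\eta}^j$ vanishing at the flat state; (ii) the pressure part $\partial_t\vec{Q}$; (iii) the lower--order part $\partial_t\vec{L}$. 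By \eqref{eee15} each such $\mu^j$ and all its first derivatives are controlled by $\|\eta^j\|_{L^\infty_tH^3(\R^2)}$ through Lemma \ref{pl7}; the standing tools are the slab embeddings $H^2(\Om)\hookrightarrow L^\infty(\Om)$ and $H^1(\Om)\hookrightarrow L^2(\Om)\cap L^6(\Om)\hookrightarrow L^3(\Om)$, together with the identity \eqref{ee83}, which gives $\|\bar{\eta}^j_t\|_{H^m}\le C\|\vec{v}^j\|_{H^m}$ and hence $\na\bar{\eta}^j_t\in L^2_tL^\infty$ (from $\vec{v}^j\in L^2_tH^3$) and $\na^2\bar{\eta}^j_t\in L^2_tL^6$.

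For group (i) I integrate by parts once in $y$ to shift one $\partial_3$ off the high derivative of $\vec{v}^j$ (or $\vec{v}^j_t$) and onto $\vec{\varrho}$. The boundary integral over $S_B$ vanishes because $\vec{\varrho}=\mathbf{0}$ there; the boundary integral over $\G$ is of the form $\int_\G\mu^j\,\partial_3\vec{v}^j_t\cdot\vec{\varrho}\,dx$ or $\int_\G(\partial_t\mu^j)\,\partial_3\vec{v}^j\cdot\vec{\varrho}\,dx$, which I bound by $\|\partial_3\vec{v}^j_t\|_{H^{-\frac12}(\G)}\|\mu^j\vec{\varrho}\|_{H^{\frac12}(\G)}$ and $\|\partial_3\vec{v}^j\|_{L^2(\G)}\|(\partial_t\mu^j)\vec{\varrho}\|_{L^2(\G)}$ respectively, using that the traces of $\mu^j$ and $\partial_t\mu^j$ are multipliers on $H^{\frac12}(\R^2)$ and $L^2(\R^2)$, that $\|\na\vec{v}^j_t\|_{L^2_tH^{-\frac12}(\G)}$ is one of the terms of the iteration norm $\|\{w^j,h^j,\vec{v}^j,q^j,\eta^j\}\|$, and that $\|\vec{v}^j\|_{L^\infty_tH^2}$ together with \eqref{ee83} controls the coefficient traces. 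After the integration by parts the remaining interior integrals contain at most $\|\vec{v}^j_t\|_{H^1}$ or $\|\vec{v}^j\|_{H^3}$ against the $L^\infty$--, $L^3$-- or $L^6$--norms of $\bar{\eta}^j$, $\na^2\bar{\eta}^j$, $\bar{\eta}^j_t$, $\na^2\bar{\eta}^j_t$ (here $\na^2\bar{\eta}^j\in L^\infty_t(L^2\cap L^6)\subset L^\infty_tL^3$ is used), all of which are dominated by a power of $\|\{w^j,h^j,\vec{v}^j,q^j,\eta^j\}\|$ by Sobolev's inequality and Lemma \ref{pl7}.

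For group (ii), $\partial_t\vec{Q}$ splits into $(\partial_t\mu^j)\na q^j$, bounded at once by $\|\partial_t\mu^j\|_{L^\infty}\|\na q^j\|_{L^2}\|\vec{\varrho}\|_{L^2}$ via $\na\bar{\eta}^j_t\in L^2_tL^\infty$ and $\na q^j\in L^\infty_tL^2$, and the delicate term $\mu^j\na q^j_t$, which must be read as a duality pairing:
\[
\int_\Om\mu^j\,\na q^j_t\cdot\vec{\varrho}\,dxdy=\langle\na q^j_t,\mu^j\vec{\varrho}\rangle_{({_0}H^1)^{'},\,{_0}H^1},\qquad
\|\mu^j\vec{\varrho}\|_{H^1}\le C\big(\|\mu^j\|_{L^\infty}+\|\na\mu^j\|_{L^3}\big)\|\vec{\varrho}\|_{H^1}\le C\|\bar{\eta}^j\|_{H^3}\|\vec{\varrho}\|_{H^1},
\]
the point being that $\na\mu^j\sim\na^2\bar{\eta}^j\in L^2(\Om)\cap L^6(\Om)\hookrightarrow L^3(\Om)$, so $\na\mu^j\cdot\vec{\varrho}\in L^2$ by H\"older ($\tfrac13+\tfrac16=\tfrac12$) and $\mu^j\vec{\varrho}\in{_0}H^1$; hence this term is $\le C\|\na q^j_t\|_{L^2_t({_0}H^1)^{'}}\|\eta^j\|_{L^\infty_tH^3(\R^2)}\le C\|\{w^j,h^j,\vec{v}^j,q^j,\eta^j\}\|^2$, since $\|\na q^j_t\|_{L^2_t({_0}H^1)^{'}}$ also belongs to the iteration norm. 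Group (iii) is routine: differentiating each summand of $\vec{L}$ and using the same embeddings, Lemma \ref{pl7} and \eqref{ee83}, with the lone $\na\phi_t$--term absorbed through $\na\phi_t\in L^2_tL^2$, gives quartic and octic bounds; collecting all contributions yields the stated inequality. The main obstacle is group (ii): recognizing that the pressure--time--derivative term has to be handled by duality and that $\mu^j\vec{\varrho}$ re--enters ${_0}H^1$ only because the slab geometry furnishes $H^1(\Om)\hookrightarrow L^2(\Om)\cap L^6(\Om)\hookrightarrow L^3(\Om)$; a subsidiary nuisance is the careful accounting of the $\G$--boundary terms produced by the vertical integration by parts in group (i), which is exactly why $\|\na\vec{v}_t\|_{L^2_tH^{-\frac12}(\G)}$ was built into the norm $\|\{w,h,\vec{v},q,\eta\}\|$.
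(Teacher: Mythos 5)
Your proposal reproduces the paper's argument: the same three-way split of $\vec{F}_t$ into principal second-derivative terms, the pressure part $\vec{Q}_t$, and the lower-order part $\vec{L}_t$; the same vertical integration by parts on $\mu^j\partial_3^2\vec{v}^j_t$ to expose the boundary pairing $\|\na\vec{v}^j_t\|_{H^{-1/2}(\Gamma)}\|\mu^j\vec{\varrho}\|_{H^{1/2}(\Gamma)}$; and the same duality treatment of $\mu^j\na q^j_t$ via the multiplier bound $\|\mu^j\vec{\varrho}\|_{H^1}\lesssim\|\eta^j\|_{H^3(\R^2)}\|\vec{\varrho}\|_{H^1}$ against $\|\na q^j_t\|_{({_0}H^1)'}$, all closed with \eqref{ee83} and Lemma \ref{pl7}. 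The only cosmetic deviation is that you also integrate by parts on the $(\partial_t\mu^j)\partial_3^2\vec{v}^j$ piece, whereas the paper bounds it directly in $L^2$; both are valid and yield the same quartic/octic right-hand side.
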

\begin{proof}
 From the definition of $\al^j$, $\beta^j$ and $J^j$ in \eqref{aae18} we know that
\be\label{ee4}
\left\{(J^j)^{-2}\left[
(\al^j)^2+(\beta^j)^2+1\right]-1
\right\}_t\partial_3^2 \vec{v}^{j}\sim \na \bar{\eta}^j\na \bar{\eta}^j_t
\partial_3^2 \vec{v}^{j}
+(\na \bar{\eta}^j)^2\na \bar{\eta}^j_t
\partial_3^2 \vec{v}^{j}.
\ee
It follows from \eqref{ee83}, the Sobolev embedding inequality and Lemma \ref{pl7} that
\ben
\begin{split}
\|\na \bar{\eta}^j\na \bar{\eta}^j_t
\partial_3^2 \vec{v}^{j}+(\na \bar{\eta}^j)^2\na \bar{\eta}^j_t\partial_3^2 \vec{v}^{j}\|_{L^2_tL^2}^2
\leq& (\|\na \bar{\eta}^j\|_{L^\infty_tL^\infty}^2
+\|\na \bar{\eta}^j\|_{L^\infty_tL^\infty}^4)
\|\na \bar{\eta}^j_t\|_{L^\infty L^4}^2
\|\partial_3^2 \vec{v}^{j}\|_{L^2_tL^4}^2
\\
\leq &C(\|\bar{\eta}^j\|_{L^\infty_tH^3}^2
+\|\bar{\eta}^j\|_{L^\infty_tH^3}^4)
\|\bar{\eta}^j_t\|_{L^\infty_t H^2}^2
\| \vec{v}^{j}\|_{L^2_tH^3}^2\\
\leq &C(\|\eta^j\|_{L^\infty_tH^3(\R^2)}^2
+\|\eta^j\|_{L^\infty_tH^3(R^2)}^4)
\|\vec{v}^j\|_{L^\infty_t H^2}^2
\| \vec{v}^{j}\|_{L^2_tH^3}^2,
\end{split}
\enn
which, along with \eqref{ee4} entails that
\be\label{ee5}
\begin{split}
&\left\|\left\{(J^j)^{-2}\left[
(\al^j)^2+(\beta^j)^2+1
\right]-1
\right\}_t\partial_3^2 \vec{v}^{j}
\right\|_{L^2_t L^2}^2\\
&\  \leq
C(\|\eta^j\|_{L^\infty_tH^3(\R^2)}^2
+\|\eta^j\|_{L^\infty_tH^3(\R^2)}^4)
\|\vec{v}^j\|_{L^\infty_t H^2}^2
\| \vec{v}^{j}\|_{L^2_tH^3}^2.
\end{split}
\ee
For any $\vec{\var}\in\, {_0}H^1$, integration by parts along with \eqref{eeea1} yields
\be\label{ee6}
\begin{split}
\int_{\Om}& \left\{
(J^j)^{-2}
\left[
(\al^j)^2+(\beta^j)^2+1
\right]-1
\right\}(\partial_3^2 \vec{v}^{j}_t) \cdot\vec{\var}\,dxdy\\
=&
\int_{\G} \left\{
(J^j)^{-2}
\left[
(\al^j)^2+(\beta^j)^2+1
\right]-1
\right\}(\partial_3 \vec{v}^{j}_t) \cdot\vec{\var}\,dx\\
&-\int_{\Om} \partial_3\left\{
(J^j)^{-2}
\left[
(\al^j)^2+(\beta^j)^2+1
\right]-1
\right\}(\partial_3 \vec{v}^{j}_t) \cdot\vec{\var}\,dxdy\\
&-\int_{\Om} \left\{
(J^j)^{-2}
\left[
(\al^j)^2+(\beta^j)^2+1
\right]-1
\right\}(\partial_3 \vec{v}^{j}_t) \cdot\partial_3\vec{\var}\,dxdy\\
\leq &C \|\na \vec{v}^j_t\|_{H^{-\frac{1}{2}}(\G)}
\|(\na \bar{\eta}^j)^2 \var\|_{H^{\frac{1}{2}}(\G)}
+C\|\na \bar{\eta}^j\|_{L^\infty}\|\na^2 \bar{\eta}^j\|_{L^4}\|\partial_3 \vec{v}^j_t\|_{L^2}\|\var\|_{L^4}\\
&+C\|\na \bar{\eta}^j\|_{L^\infty}^2 \|\partial_3 \vec{v}_t^j\|_{L^2}\|\partial_3\var\|_{L^2}\\
\leq& C\|\na \vec{v}^j_t\|_{H^{-\frac{1}{2}}(\G)}
\| \eta^j\|_{H^{3}(\R^2)}^2
\|\var\|_{H^1}
+C
\| \eta^j\|_{H^{3}(\R^2)}^2
\|\na \vec{v}^j_t\|_{L^{2}}
\|\var\|_{H^1},
\end{split}
\ee
where we have used the Sobolev embedding inequality and the following fact
\ben
\begin{split}
\|(\na \bar{\eta}^j)^2 \var\|_{H^{\frac{1}{2}}(\G)}
\leq &C
\|(\na \bar{\eta}^j)^2 \var\|_{L^{2}(\G)}
+C\|\na_0^{\frac{1}{2}}(\na \bar{\eta}^j)^2 \var\|_{L^2(\G)}
+C\|(\na \bar{\eta}^j)^2 \na_0^{\frac{1}{2}}\var\|_{L^2(\G)}\\
\leq & C \|\na \bar{\eta}^j\|_{L^\infty(\G)}^2
\|\var\|_{L^{2}(\G)}
+C\|\na \bar{\eta}^j\|_{L^\infty(\G)}
\|\na_0^{\frac{1}{2}}(\na \bar{\eta}^j)\|_{L^4(\G)}
\|\var\|_{L^{4}(\G)}\\
&+C \|\na \bar{\eta}^j\|_{L^\infty(\G)}^2
\|\na_0^{\frac{1}{2}}\var\|_{L^2(\G)}\\
\leq& C\|\eta^j\|_{H^3(\R^2)}^2\|\var\|_{H^{1}}
\end{split}
\enn
 thanks to Lemma \ref{pl7} and the trace theorem.
 It follows from \eqref{ee6} that
 \ben
 \big\|\left\{(J^j)^{-2}
\left[(\al^j)^2+(\beta^j)^2+1\right]-1
\right\}\partial_3^2 \vec{v}^{j}_t
\big\|_{L^2_t({_0}H^1)^{'}}^2
\leq C\| \eta^j\|_{L^\infty_tH^{3}(\R^2)}^4
\big(\|\na \vec{v}^j_t\|_{L^2_tH^{-\frac{1}{2}}(\G)}^2
+\|\na \vec{v}^j_t\|_{L^2_tL^{2}}^2
\big)
 \enn
 which, along with \eqref{ee5} implies that
 \be\label{ee7}
 \begin{split}
&\left\|\left\{\left[(J^j)^{-2}
\left((\al^j)^2+(\beta^j)^2+1\right)-1
\right]\partial_3^2 \vec{v}^{j}\right\}_t\right\|_{L^2_t({_0}H^1)^{'}}^2\\
\,\,\,&\leq C\| \eta^j\|_{L^\infty_tH^{3}(\R^2)}^4
\big(\|\na \vec{v}^j_t\|_{L^2_tH^{-\frac{1}{2}}(\G)}^2
+\|\na \vec{v}^j_t\|_{L^2_tL^{2}}^2
\big)\\
&\ \ \ \ +C(\| \eta^j\|_{L^\infty_tH^{3}(\R^2)}^2
+\| \eta^j\|_{L^\infty_tH^{3}(\R^2)}^4)\|\vec{v}^j\|_{L^\infty_t H^2}^2
\| \vec{v}^{j}\|_{L^2_tH^3}^2.
\end{split}
 \ee
 A similar argument used in deriving \eqref{ee7} leads to
 \be\label{ee8}
 \begin{split}
 &\left\|
 \left[2(J^j)^{-1}\al^j\partial_3\partial_1\vec{v}^j
 \right]_t
 \right\|_{L^2_t({_0}H^1)^{'}}^2
 +
 \left\|
 \left[2(J^j)^{-1}\beta^j\partial_3\partial_2\vec{v}^j
 \right]_t
 \right\|_{L^2_t({_0}H^1)^{'}}^2\\
 \,\,\,&\leq
  C\| \eta^j\|_{L^\infty_tH^{3}}^2
\big(\|\na \vec{v}^j_t\|_{L^2_tH^{-\frac{1}{2}}(\G)}^2
+\|\na \vec{v}^j_t\|_{L^2_tL^{2}}^2
\big)
+C\big(1+
\|\eta^j\|_{L^\infty_t H^3(\R^2)}^2
\big)\|\vec{v}^j\|_{L^\infty_t H^2}^2
\| \vec{v}^{j}\|_{L^2_tH^3}^2
.
\end{split}
 \ee
 Noting $(1-J^j)\sim \na \bar{\eta}^j$, for any $\psi\in\,{_0}H^1$ we have
 \ben
 \begin{split}
 \int_\Om [(1-J^j)\partial_1 q^j]_t\psi\,dxdy
 \leq& \|\nabla \bar{\eta}^j_t\|_{L^4}\|\nabla q^j\|_{L^2}\|\psi\|_{L^4}
 +\|(\nabla \bar{\eta}^j)\psi\|_{H^1}\|\nabla q^j_t\|_{({_0}H^1)^{'}}\\
 \leq& C \|\vec{v}^j\|_{H^2} \|\na q^j\|_{L^2}\|\psi\|_{H^1}
 +\|\eta^j\|_{H^3(\R^2)}\|\psi\|_{H^1}\|\nabla q^j_t\|_{({_0}H^1)^{'}},
 \end{split}
 \enn
where we have used the Sobolev embedding inequality, \eqref{ee83} and Lemma \ref{pl7} in the last inequality.
 Thus
 \be\label{ee9}
 \|[(1-J^j)\partial_1 q^j]_t\|_{L^2_t({_0}H^1)^{'}}^2
 \leq C\|\vec{v}^j\|_{L^2_tH^2}^2 \|\na q^j\|_{L^\infty_tL^2}^2
 +C\|\eta^j\|_{L^\infty_tH^3(\R^2)}^2\|\nabla q^j_t\|_{L^2_t({_0}H^1)^{'}}^2.
 \ee
 By a similar argument used in deriving \eqref{ee9} one deduces that
 \ben
 \|[\al^j\partial_3 q^j]_t\|_{L^2_t({_0}H^1)^{'}}^2
 \leq C\|\vec{v}^j\|_{L^2_tH^2}^2 \|\na q^j\|_{L^\infty_tL^2}^2
 +C\|\eta^j\|_{L^\infty_tH^3(\R^2)}^2\|\nabla q^j_t\|_{L^2_t({_0}H^1)^{'}}^2
 \enn
 which, along with \eqref{ee9} gives rise to
 \be\label{ee10}
 \|Q_{1t}\|_{L^2_t({_0}H^1)^{'}}^2
 \leq C\|\vec{v}^j\|_{L^2_tH^2}^2 \|\na q^j\|_{L^\infty_tL^2}^2
 +C\|\eta^j\|_{L^\infty_tH^3(\R^2)}^2\|\nabla q^j_t\|_{L^2_t({_0}H^1)^{'}}^2.
 \ee
 Similarly,
 \be\label{ee11}
 \|Q_{2t}\|_{L^2_t({_0}H^1)^{'}}^2
 \leq C\|\vec{v}^j\|_{L^2_tH^2}^2 \|\na q^j\|_{L^\infty_tL^2}^2
 +C\|\eta^j\|_{L^\infty_tH^3(\R^2)}^2\|\nabla q^j_t\|_{L^2_t({_0}H^1)^{'}}^2
 \ee
 and
 \be\label{ee12}
\begin{split}
 \|Q_{3t}\|_{L^2_t({_0}H^1)^{'}}^2
 \leq& C(\|\eta^j\|_{L^\infty_tH^3(\R^2)}^2
+\|\eta^j\|_{L^\infty_tH^3(\R^2)}^4)
 (\|\vec{v}^j\|_{L^2_tH^2}^2 \|\na q^j\|_{L^\infty_tL^2}^2
 +\|\nabla q^j_t\|_{L^2_t({_0}H^1)^{'}}^2).
\end{split}
 \ee
We proceed to estimating each term in $(\vec{L})_t$.
 For any $\vec{\var}\in{_0}H^1$, it follows from Lemma \ref{pl7} and \eqref{ee83} that
 \ben
 \begin{split}
 \int_\Om [(\na & \bar{\eta}^j)^2(\na^2\bar{\eta}^j)\na \vec{v}^j]_t\cdot\vec{\var}\,dxdy\\
  \leq& \|\na \bar{\eta}^j\|_{L^\infty}
 \|\na \bar{\eta}^j_t\|_{L^4}\|\na^2\bar{\eta}^j\|_{L^4}
 \|\na \vec{v}^j\|_{L^4}\|\vec{\var}\|_{L^4}
 +\|\na \bar{\eta}^j\|_{L^\infty}^2
 \|\na^2 \bar{\eta}^j_t\|_{L^4}
 \|\na \vec{v}^j\|_{L^2}\|\vec{\var}\|_{L^4}\\
 &+\|\na \bar{\eta}^j\|_{L^\infty}^2
 \|\na^2 \bar{\eta}^j\|_{L^4}
 \|\na \vec{v}^j_t\|_{L^2}\|\vec{\var}\|_{L^4}
 \\
 \leq&C (\|\eta^j\|_{H^3(\R^2)}^2\|\vec{v}^j\|_{H^3}
 \|\vec{v}^j\|_{H^2}
 +\|\eta^j\|_{H^3(\R^2)}^3\|\vec{v}^j_t\|_{H^1})
 \|\vec{\var}\|_{H^1}.
 \end{split}
 \enn
Thus
 \be\label{ee14}
 \|[(\na  \bar{\eta}^j)^2\na^2\bar{\eta}^j\na \vec{v}^j]_t\|_{L^2_t({_0}H^1)^{'}}^2
 \leq
 C (\|\eta^j\|_{L^\infty_tH^3(\R^2)}^4\|\vec{v}^j\|_{L^2_tH^3}^2
 \|\vec{v}^j\|_{L^\infty_tH^2}^2
 +\|\eta^j\|_{L^\infty_tH^3(\R^2)}^6\|\vec{v}^j_t\|_{L^2_tH^1}^2).
 \ee
 For any $\vec{\var}\in{_0}H^1$ we get
 \ben
 \begin{split}
 \int_\Om [(\na&\bar{\eta}^j)^2(\na^3 \bar{\eta}^j) \vec{v}^j]_{t}\cdot\vec{\var}\,dxdy\\
 \leq &
2\|\na\bar{\eta}^j\|_{L^\infty}
\|\na\bar{\eta}^j_t\|_{L^4}
\|\na^3 \bar{\eta}^j\|_{L^2}\|\vec{v}^j\|_{L^\infty}
 \|\vec{\var}\|_{L^4}
+
\|\na\bar{\eta}^j\|_{L^\infty}^2\|\na^3 \bar{\eta}^j_t\|_{L^2}\|\vec{v}^j\|_{L^\infty}
 \|\vec{\var}\|_{L^2}\\
 &+\|\na\bar{\eta}^j\|_{L^\infty}^2\|\na^3 \bar{\eta}^j\|_{L^2}\|\vec{v}^j_t\|_{L^4}
 \|\vec{\var}\|_{L^4}\\
 \leq &C\|\eta^j\|_{H^3(\R^2)}^2
\|\vec{v}^j\|_{H^3}\|\vec{v}^j\|_{H^2}
 \|\vec{\var}\|_{H^1}
 +C\|\eta^j\|^3_{H^3(\R^2)}\|\vec{v}^j_t\|_{H^1}
 \|\vec{\var}\|_{H^1}.
 \end{split}
 \enn
 Hence
 \be\label{ee15}
 \|[(\na \bar{\eta}^j)^2(\na^3 \bar{\eta}^j) \vec{v}^j]_{t}\|_{L^2_t({_0}H^1)^{'}}^2
 \leq C(\|\eta^j\|_{L^\infty_tH^3(\R^2)}^4\|\vec{v}^j\|_{L^2_tH^3}^2
 \|\vec{v}^j\|_{L^\infty_tH^2}^2
 +\|\eta^j\|_{L^\infty_tH^3(\R^2)}^6\|\vec{v}^j_t\|_{L^2_tH^1}^2
).
\ee
Employing Lemma \ref{pl7} and \eqref{ee83} one can easily deduce that
\be\label{ee84}
\begin{split}
\|[\na\bar{\eta}^j_t\vec{v}^j]_t\|_{L^2_tL^2}^2
\leq& C \|\vec{v}^j_t\|_{L^2_tH^1}^2\|\vec{v}^j\|_{L^\infty_tH^2}^2,
\\
\|[(\na^2\bar{\eta}^j)\vec{v}^j\bar{\eta}^j_t]_t\|_{L^2_tL^2}^2
\leq& C
\|\vec{v}^j\|_{L^\infty_tH^2}^2
(\|\vec{v}^j\|_{L^\infty_tH^2}^2
\|\vec{v}^j\|_{L^2_tH^3}^2+\|\eta^j\|_{L^\infty_tH^3(\R^2)}^2
\|\vec{v}^j_t\|_{L^2_tH^1}^2)
\end{split}
\ee
and that
\be\label{ee85}
\begin{split}
\|[w^j(\na\bar{\eta}^j)\na\phi]_t\|_{L^2_tL^2}^2
\leq &C\|\eta^j\|_{L^\infty_tH^3(\R^2)}^2
(\|w^j_t\|_{L^2_tH^1}^2\|\na\phi\|_{L^\infty_tL^\infty}^2
+\|w^j\|_{L^\infty_tH^2}^2\|\na\phi_t\|_{L^2_tL^2}^2)\\
&+C\|w^j\|_{L^\infty_tH^2}^2\|\na\phi\|_{L^\infty_tL^\infty}^2
\|\vec{v}^j\|_{L^2_tH^3}^2.
\end{split}
\ee
Combining \eqref{ee14}-\eqref{ee85} we arrive at
\be\label{ee16}
\begin{split}
\|(\vec{L})_t\|_{L^2_t({_0}H^1)^{'}}^2
\leq &C \|\{
w^{j},h^{j},\vec{v}^{j},q^{j}, \eta^{j}
\}\|^4\\
&+C\|\{
w^{j},h^{j},\vec{v}^{j},q^{j}, \eta^{j}
\}\|^8.
\end{split}
\ee
Collecting \eqref{ee7}, \eqref{ee8}, \eqref{ee10}-\eqref{ee12}, \eqref{ee16} and using \eqref{ee0} we derive the desired estimates. The proof is finished.

\end{proof}

We next estimate $\vec{G}(\vec{v}^{j},\bar{\eta}^j)$ and $\vec{G}_t(\vec{v}^{j},\bar{\eta}^j).$ From \eqref{aae13}-\eqref{aae17} we know that
\be\label{ee25}
\vec{G}(\vec{v}^{j},\bar{\eta}^j)\sim (\na_0 \eta^j)^4 (\na \vec{v}^j)
+(\na_0 \eta^j)^4 (\na^2_0 \eta^j)\vec{v}^j
+(\na_0 \eta^j)^2 (\na^2_0 \eta^j).
\ee
\begin{lemma}\label{l3} Suppose that \eqref{eee15} and the assumptions in Proposition \ref{p4} hold true. Then there exists a constant $C$ independent of $j$ and $t$ such that
\ben
\begin{split}
\|\vec{G}\|_{L^2_tH^{\frac{3}{2}}(\G)}^2
+\|\vec{G}_t\|_{L^2_tH^{-\frac{1}{2}}(\G)}^2
\leq C \|\{
w^{j},h^{j},\vec{v}^{j},q^{j}, \eta^{j}
\}\|^4
+C\|\{
w^{j},h^{j},\vec{v}^{j},q^{j}, \eta^{j}
\}\|^{12}
\end{split}
\enn
for all $j\geq 2$ and $t>0$.
\end{lemma}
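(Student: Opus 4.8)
The plan is to exploit the explicit schematic form \eqref{ee25} of $\vec{G}(\vec{v}^{j},\bar{\eta}^j)$ together with the fact that $H^2(\Om)$ (since $\Om\subset\R^3$) and $H^{\frac{3}{2}}(\R^2)$, $H^2(\R^2)$ (on the two-dimensional boundary $\G$) are Banach algebras continuously embedded in $L^\infty$. Throughout write $E_j:=\|\{w^j,h^j,\vec{v}^j,q^j,\eta^j\}\|$. I will bound $\|\vec{G}\|_{L^2_tH^{\frac{3}{2}}(\G)}$ and $\|\vec{G}_t\|_{L^2_tH^{-\frac{1}{2}}(\G)}$ separately; in every estimate at most one factor will be measured in an $L^2_t$-norm and all remaining factors in an $L^\infty_t$-norm, and every resulting bound will be a monomial $CE_j^{k}$ with $4\le k\le 12$, so that the claim follows from the elementary inequality $X^{k}\le X^{4}+X^{12}$ valid for $X\ge0$ and $4\le k\le12$.

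For $\|\vec{G}\|_{L^2_tH^{\frac{3}{2}}(\G)}$, since $\na_0\eta^j$ is the trace on $\G$ of $\na_0\mathcal{H}(\eta^j)$ and $\vec{v}^j$ is already defined on $\Om$, the trace theorem reduces matters to estimating the three products in \eqref{ee25}, with $\eta^j$ replaced by $\mathcal{H}(\eta^j)=\bar{\eta}^j$, in $H^2(\Om)$ (for the term $(\na_0\eta^j)^2(\na_0^2\eta^j)$, which contains no $\vec{v}^j$, it is equally convenient to stay on $\R^2$ and use that $H^{\frac{3}{2}}(\R^2)$ is an algebra). Using the algebra property, Lemma \ref{pl7} (so that $\|\bar{\eta}^j\|_{H^3}\le C\|\eta^j\|_{H^{\frac{5}{2}}(\R^2)}\le C\|\eta^j\|_{H^3(\R^2)}$), and placing the ``highest-order'' factor in an $L^2_t$-norm — $\na^2\mathcal{H}(\eta^j)$ in $\|\na^2\mathcal{H}(\eta)\|_{L^2_tH^2}$ for $(\na_0\eta^j)^4(\na_0^2\eta^j)\vec{v}^j$, $\vec{v}^j$ in $\|\vec{v}^j\|_{L^2_tH^3}$ for $(\na_0\eta^j)^4\na\vec{v}^j$, and $\na_0\eta^j$ in $\|\na_0\eta^j\|_{L^2_tH^{\frac{5}{2}}(\R^2)}$ for $(\na_0\eta^j)^2(\na_0^2\eta^j)$ — while bounding the remaining factors by $\|\eta^j\|_{L^\infty_tH^3(\R^2)}$ and $\|\vec{v}^j\|_{L^\infty_tH^2}$, one obtains bounds of the form $CE_j^{12}$, $CE_j^{10}$, $CE_j^{6}$ respectively, all absorbed into $C(E_j^4+E_j^{12})$.

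For $\|\vec{G}_t\|_{L^2_tH^{-\frac{1}{2}}(\G)}$, differentiate each product in \eqref{ee25} by the product rule. When $\partial_t$ hits an undifferentiated factor $\bar{\eta}^j$ one uses $\eta^j_t=v_3^j$ on $\G$ and \eqref{ee83} to get $\|\bar{\eta}^j_t\|_{H^m}\le C\|\vec{v}^j\|_{H^m}$; when it hits $\na_0^2\eta^j$ it produces $\na_0^2 v_3^j$, whose trace in $H^{\frac{1}{2}}(\G)\subset L^2(\G)$ is controlled by $\|\vec{v}^j\|_{H^3}$ (in $L^2_t$); when it hits $\vec{v}^j$ it produces $\vec{v}^j_t$. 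Terms containing only $\vec{v}^j_t$ (not $\na\vec{v}^j_t$) or $\na_0^2 v_3^j$ are estimated directly via $L^2(\G)\hookrightarrow H^{-\frac{1}{2}}(\G)$, Hölder on the two-dimensional surface $\G$, and the embeddings $H^{\frac{3}{2}}(\G)\hookrightarrow L^\infty(\G)$ and $H^{\frac{5}{2}}(\G)\hookrightarrow W^{1,4}(\G)$. The terms that do contain $\na\vec{v}^j_t$ (from differentiating $(\na_0\eta^j)^4\na\vec{v}^j$) have no direct trace on $\G$ and are treated exactly as in the proof of Lemma \ref{l1}: pair against $\psi\in H^{\frac{1}{2}}(\G)$, integrate by parts in the tangential directions to move the extra derivative onto $(\na_0\eta^j)^4$ and onto $\psi$, and use that multiplication by the $W^{1,\infty}(\G)$-function $(\na_0\eta^j)^4$ is bounded on $H^{\frac{1}{2}}(\G)$ (placing $\na_0\eta^j$ in $\|\na_0\eta^j\|_{L^2_tH^{\frac{5}{2}}(\R^2)}$ when $W^{1,\infty}$-control is needed, since $H^{\frac{5}{2}}(\R^2)\hookrightarrow W^{1,\infty}(\R^2)$). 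Again every bound is a monomial $CE_j^k$ with $6\le k\le12$, and summing gives the stated estimate.

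The main obstacle is bookkeeping: one must track, term by term, which single factor can be afforded in an $L^2_t$-norm so that it matches the unique ``spare'' unit of regularity available there — the $H^3$-norms of $\vec{v}^j$ and $\eta^j$, and the norms $\|\na^2\mathcal{H}(\eta)\|_{L^2_tH^2}$, $\|\na_0\eta\|_{L^2_tH^{\frac{5}{2}}(\R^2)}$ — while ensuring the other factors use only the $L^\infty_t$-controlled quantities $\|\eta^j\|_{L^\infty_tH^3(\R^2)}$ and $\|\vec{v}^j\|_{L^\infty_tH^2}$. The genuinely delicate points are the two worst terms in $\vec{G}_t$, where $\partial_t$ lands on the top-order spatial derivative of $\eta^j$ (giving $\na_0^2 v_3^j$) or produces $\na\vec{v}^j_t$; both force one to shift a tangential derivative by integration by parts/duality and to invoke the $L^2_tH^3$ regularity of $\vec{v}^j$ (equivalently the $H^{\frac{5}{2}}(\G)$ trace bound) established in Lemma \ref{nl5}, together with the algebra embedding $H^{\frac{3}{2}}(\G)\hookrightarrow L^\infty(\G)$.
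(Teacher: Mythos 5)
Your treatment of $\|\vec{G}\|_{L^2_tH^{3/2}(\G)}$ agrees in spirit with the paper's: exploit \eqref{ee25}, use the algebra property, Lemma \ref{pl7}, and for each of the three products place exactly one factor in the $L^2_t$-controlled norm ($\|\vec{v}^j\|_{L^2_tH^3}$, $\|\na_0\eta^j\|_{L^2_tH^{5/2}(\R^2)}$, or $\|\na^2\mathcal{H}(\eta^j)\|_{L^2_tH^2}$) and the rest in $L^\infty_t$; this gives monomials $CE_j^{k}$, $k\in\{6,10,12\}$, which are absorbed into $C(E_j^4+E_j^{12})$. The paper's estimates \eqref{ee28}--\eqref{ee30} do exactly that.

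However, your handling of the worst piece of $\vec{G}_t$, namely $(\na_0\eta^j)^4\na\vec{v}^j_t$, contains a genuine gap. You propose to ``integrate by parts in the tangential directions to move the extra derivative onto $(\na_0\eta^j)^4$ and onto $\psi$'' and to invoke the $L^2_tH^3$ regularity of $\vec{v}^j$. Two things go wrong. First, $\na\vec{v}^j_t$ on $\G$ contains the normal derivative $\partial_3\vec{v}^j_t|_\G$ (the schematic $\na$ in \eqref{ee25} is the full gradient --- indeed the explicit $G_1,G_2,G_3$ in the Appendix involve $\partial_3(J^{-1}v_i)$ etc.), and $\partial_3$ is not a tangential derivative on $\G$, so no tangential integration by parts reduces it. Second, and more fundamentally, the $L^2_tH^3$ control of $\vec{v}^j$ says nothing about the time derivative $\na\vec{v}^j_t$; and your parenthetical proposal to put a factor of $\na_0\eta^j$ in $L^2_tH^{5/2}(\R^2)$ would require $\na\vec{v}^j_t|_\G$ in some $L^\infty_t$-norm, which is simply not available --- the energy space $\|\{\cdot\}\|$ of \eqref{ee00} only furnishes $\|\vec{v}_t\|_{L^\infty_tL^2(\Om)}$, whose trace on $\G$ is not controlled. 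The quantity you actually need is $\|\na\vec{v}_t\|_{L^2_tH^{-1/2}(\G)}$, which \emph{is} a component of $\|\{\cdot\}\|$ (established separately in Lemma \ref{pl9}), and the correct estimate is the direct duality $\int_\G(\na_0\eta^j)^4\na\vec{v}^j_t\cdot\psi\,dx\le\|\na\vec{v}^j_t\|_{H^{-1/2}(\G)}\|(\na_0\eta^j)^4\psi\|_{H^{1/2}(\G)}$, together with the $t$-uniform multiplier bound $\|(\na_0\eta^j)^4\psi\|_{H^{1/2}(\G)}\le C\|\eta^j\|_{H^3(\R^2)}^4\|\psi\|_{H^{1/2}(\G)}$. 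This is precisely the paper's route to \eqref{ee90}, and without invoking $\|\na\vec{v}_t\|_{L^2_tH^{-1/2}(\G)}$ your argument does not close for this term. The remaining pieces of $\vec{G}_t$ (those producing only $\vec{v}^j_t$ or $\na_0^2 v_3^j$ on $\G$) are handled as you describe.
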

\begin{proof}
It follows from the Sobolev embedding inequality and trace theorem that
\be\label{ee28}
\begin{split}
\|(\na_0 \eta^j)^4\na \vec{v}^j\|_{L^2_tH^{\frac{3}{2}}(\G)}^2
\leq C \|(\na_0 \eta^j)^4\|_{L^\infty_t H^{\frac{3}{2}}(\R^2)}^2
\|\na\vec{v}^j\|_{L^2_tH^{\frac{3}{2}}(\G)}^2
\leq C\|\eta^j\|_{L^\infty_tH^3(\R^2)}^8
\|\vec{v}^j\|_{L^2_tH^{3}}^2.
\end{split}
\ee
Similarly,
\be\label{ee29}
\begin{split}
\|(\na_0 \eta^j)^4
(\na_0^2 \eta^j) \vec{v}^j\|_{L^2_tH^{\frac{3}{2}}(\G)}^2
\leq &C \|(\na_0 \eta^j)^4\|_{L^\infty_t H^{\frac{3}{2}}(\R^2)}^2
\|\na_0^2 \eta^j\|_{L^2_t H^{\frac{3}{2}}(\R^2)}^2
\|\vec{v}^j\|_{L^\infty_tH^{\frac{3}{2}}(\G)}^2\\
\leq & C\|\eta^j\|_{L^\infty_tH^3(\R^2)}^8
\|\vec{v}^j\|_{L^\infty_tH^{2}}^2
\|\na_0\eta^j\|_{L^2_tH^{\frac{5}{2}}(\R^2)}^2
\end{split}
\ee
and
\be\label{ee30}
\begin{split}
\|(\na_0 \eta^j)^2
(\na_0^2 \eta^j)\|_{L^2_tH^{\frac{3}{2}}(\R^2)}^2
\leq & C\|\eta^j\|_{L^\infty_tH^3(\R^2)}^4
\|\na_0\eta^j\|_{L^2_tH^{\frac{5}{2}}(\R^2)}^2.
\end{split}
\ee
Combining \eqref{ee28}-\eqref{ee30} and using \eqref{ee25} yields
\be\label{ee92}
\begin{split}
\|\vec{G}\|_{L^2_tH^{\frac{3}{2}}(\G)}^2
\leq C \|\{
w^{j},h^{j},\vec{v}^{j},q^{j}, \eta^{j}
\}\|^4
+C\|\{
w^{j},h^{j},\vec{v}^{j},q^{j}, \eta^{j}
\}\|^{12}.
\end{split}
\ee

We proceed to estimating $\|\vec{G}_t\|_{L^2_tH^{-\frac{1}{2}}(\G)}^2$. First,
the Sobolev embedding inequality, trace theorem and \eqref{ee83} lead to
\be\label{ee88}
\begin{split}
\| (\na_0 \eta^j)^3(\na_0 \eta^j_t) (\na \vec{v}^j)\|_{L^2_tL^2(\G)}^2
\leq &\| \na_0 \eta^j\|_{L^\infty_tL^\infty(\R^2)}^6
\|\na \eta^j_t\|_{L^\infty_tL^4(\R^2)}^2
\|\na \vec{v}^j\|_{L^2_tL^4(\G)}^2\\
\leq&  C\|\eta^j\|_{L^\infty_tH^3(\R^2)}^6
\|\vec{v}^j\|_{L^\infty_tH^2}^2
\| \vec{v}^j\|_{L^2_tH^2}^2.
\end{split}
\ee
For any $\psi\in H^{\frac{1}{2}}(\R^2)$ one has
\ben
\int_{\G} (\na_0 \eta^j)^4\na \vec{v}^j_t \psi dx
\leq \|\na \vec{v}^j_t\|_{H^{-\frac{1}{2}}(\G)}
\|(\na_0 \eta^j)^4\psi\|_{H^{\frac{1}{2}}(\R^2)}
\enn
with
\ben
\begin{split}
\|(\na_0 \eta^j)^4\psi\|_{H^{\frac{1}{2}}(\R^2)}
\leq& C \|\na_0\eta^j\|_{L^\infty(\R^2)}^3
\|\na_0^{\frac{3}{2}}\eta^j\|_{L^4(\R^2)}
\|\psi\|_{L^4(\R^2)}
+C\|\na_0\eta^j\|_{L^\infty(\R^2)}^4
\|\na_0^{\frac{1}{2}}\psi\|_{L^2(\R^2)}\\
\leq &C\|\eta^j\|_{H^3(\R^2)}^4
\|\psi\|_{H^{\frac{1}{2}}(\R^2)}.
\end{split}
\enn
Thus
\ben
\|(\na_0 \eta^j)^4\na \vec{v}^j_t\|_{L^2_tH^{-\frac{1}{2}}(\G)}^2
\leq C\|\eta^j\|_{L^\infty_tH^3(\R^2)}^8
\|\na \vec{v}^j_t\|_{L^2_tH^{-\frac{1}{2}}(\G)}^2,
\enn
which, along with \eqref{ee88} implies that
\be\label{ee90}
\begin{split}
\|[(\na_0 \eta^j)^4 (\na \vec{v}^j)]_t\|_{L^2_tH^{-\frac{1}{2}}(\G)}^2
\leq& C\|\eta^j\|_{L^\infty_tH^3(\R^2)}^6
\|\vec{v}^j\|_{L^\infty_tH^2}^2
\| \vec{v}^j\|_{L^2_tH^2}^2\\
&+C\|\eta^j\|_{L^\infty_tH^3(\R^2)}^8
\|\na \vec{v}^j_t\|_{L^2_tH^{-\frac{1}{2}}(\G)}^2.
\end{split}
\ee
A direct computation yields
\ben
\begin{split}
[(\na_0 \eta^j)^4 (\na^2_0 \eta^j)\vec{v}^j]_t
=&(\na_0 \eta^j)^3(\na_0 \eta^j_t) (\na^2_0 \eta^j)\vec{v}^j
+(\na_0 \eta^j)^4 (\na^2_0 \eta^j_t)\vec{v}^j
+(\na_0 \eta^j)^4 (\na^2_0 \eta^j)\vec{v}^j_t\\
:=&I_1+I_2+I_3.
\end{split}
\enn
By \eqref{ee83}, the Sobolev embedding inequality and trace theorem  one deduces that
\ben
\begin{split}
\|I_1\|_{L^2_tL^2(\G)}^2
\leq& C \|\na_0\eta^j\|_{L^\infty_tL^\infty(\R^2)}^6
\|\na_0 \eta^j_t\|_{L^2_t L^4(\R^2)}^2
\|\na_0^2\eta^j\|_{L^\infty_tL^4(\R^2)}^2
\| \vec{v}^j\|_{L^\infty_t L^\infty(\G)}^2\\
\leq &C\|\eta^j\|_{L^\infty_tH^3(\R^2)}^8
\| \vec{v}^j\|_{L^\infty_t H^2}^2
\| \vec{v}^j\|_{L^2_t H^2}^2
\end{split}
\enn
and that
\ben
\begin{split}
\|I_2\|_{L^2_tL^2(\G)}^2
\leq& C  \|\na_0\eta^j\|_{L^\infty_tL^\infty(\R^2)}^8
\|\na_0^2 \eta^j_t\|_{L^2_t L^2(\R^2)}^2
\| \vec{v}^j\|_{L^\infty_t L^\infty(\G)}^2\\
\leq &C\|\eta^j\|_{L^\infty_tH^3(\R^2)}^8
\| \vec{v}^j\|_{L^\infty_t H^2}^2
\| \vec{v}^j\|_{L^2_t H^3}^2.
\end{split}
\enn
Sobolev embedding inequality gives
\ben
\begin{split}
\|I_3\|_{L^2_tL^2(\G)}^2
\leq
\|\na_0\eta^j\|_{L^\infty_tL^\infty(\R^2)}^8
\|\na_0^2\eta^j\|_{L^\infty_tL^4(\R^2)}^2
\|\vec{v}^j_t\|_{L^2_tL^4(\G)}^2
\leq C\|\eta^j\|_{L^\infty_tH^3(\R^2)}^{10}
\|\vec{v}^j_t\|_{L^2_tH^1}^2
\end{split}
\enn
which, in conjunction with the estimates for $I_1$ and $I_2$ indicates that
\be\label{ee89}
\|[(\na_0 \eta^j)^4 (\na^2_0 \eta^j)\vec{v}^j]_t\|_{L^2_tL^2(\G)}^2
\leq C\|\eta^j\|_{L^\infty_tH^3(\R^2)}^8
\| \vec{v}^j\|_{L^\infty_t H^2}^2
\| \vec{v}^j\|_{L^2_t H^3}^2
+C\|\eta^j\|_{L^\infty_tH^3(\R^2)}^{10}
\|\vec{v}^j_t\|_{L^2_tH^1}^2.
\ee
By a similar argument used in deriving \eqref{ee89} one gets
\be\label{ee91}
\|[(\na_0 \eta^j)^2 (\na^2_0 \eta^j)]_t\|_{L^2_tL^2(\G)}^2
\leq  C\|\eta^j\|_{L^\infty_tH^3(\R^2)}^4
\| \vec{v}^j\|_{L^2_t H^3}^2.
\ee
Collecting \eqref{ee90}-\eqref{ee91} and using \eqref{ee25} we have
\ben
\|\vec{G}_t\|_{L^2_tH^{-\frac{1}{2}}(\G)}^2
\leq C \|\{
w^{j},h^{j},\vec{v}^{j},q^{j}, \eta^{j}
\}\|^4
+\|\{
w^{j},h^{j},\vec{v}^{j},q^{j}, \eta^{j}
\}\|^{12}
\enn
which, along with \eqref{ee92} gives the desired estimates and completes the proof.

\end{proof}

The estimates for $\|G_4(w^j,h^j,\bar{\eta}^j)\|_{L^2_tH^{\frac{3}{2}}(\G)}^2$ and
$\|G_{4t}(w^j,h^j,\bar{\eta}^j)\|_{L^2_tH^{-\frac{1}{2}}(\G)}^2$ are as follows.
\begin{lemma}\label{l5} Let \eqref{eee15} and the assumptions in Proposition \ref{p4} hold. Then there exists a constant $C$ independent of $j$ and $t$ such that
\ben
\begin{split}
\|G_{4}\|_{L^2_tH^{\frac{3}{2}}(\G)}^2
+\|G_{4t}\|_{L^2_tH^{-\frac{1}{2}}(\G)}^2
\leq C \|\{
w^{j},h^{j},\vec{v}^{j},q^{j}, \eta^{j}
\}\|^4
+C\|\{
w^{j},h^{j},\vec{v}^{j},q^{j}, \eta^{j}\}\|^{10}
\end{split}
\enn
for $j\geq 2$ and $t>0$.
\end{lemma}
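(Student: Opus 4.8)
The plan is to extract the structure of $G_4$ and then argue exactly as in the proof of Lemma~\ref{l3}. Carrying out the change of variables in the zero-flux condition $(\na m+m\na\tilde c)\cdot\vec n=0$ and normalizing the coefficient of $\partial_3 w+w\partial_3 h$ to $1$ (this is how $G_4$ in \eqref{e002}, \eqref{aae21} arises), one finds that on $\G$
\[
G_4(w^j,h^j,\bar\eta^j)=\mathcal{A}\big(\bar\eta^j,\na\bar\eta^j\big)\Big[(\na_0\bar\eta^j)\cdot\na_0 w^j+w^j\,(\na_0\bar\eta^j)\cdot\na_0 h^j\Big],
\]
where $\mathcal{A}$ is a smooth function that is bounded pointwise thanks to \eqref{eee15} and whose norm $\|\mathcal{A}\|_{H^{3/2}(\G)}$ is bounded by a fixed power of $\|\eta^j\|_{L^\infty_tH^3(\R^2)}$ (expand $\mathcal{A}$, use that $H^2(\Om)$ is a Banach algebra closed under composition with smooth functions, and Lemma~\ref{pl7}). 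The two features I will rely on are: every summand carries at least one factor $\sim\na_0\bar\eta^j$; and only \emph{tangential} derivatives of $w^j$ and $h^j$ occur.

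For the $H^{3/2}(\G)$ bound I would pass to $\Om$ via the trace inequality $\|\cdot\|_{H^{3/2}(\G)}\le C\|\cdot\|_{H^2(\Om)}$ and use that $H^2(\Om)$ is an algebra in three dimensions, so $\|G_4\|_{H^{3/2}(\G)}$ is controlled by products of $\|\mathcal{A}\|_{H^2(\Om)}$, $\|\na_0\bar\eta^j\|_{H^2(\Om)}\le C\|\eta^j\|_{H^3(\R^2)}$ (Lemma~\ref{pl7}), $\|\na_0 w^j\|_{H^2(\Om)}\le\|w^j\|_{H^3(\Om)}$, $\|w^j\|_{H^2(\Om)}$ and $\|\na_0 h^j\|_{H^2(\Om)}\le\|h^j\|_{H^3(\Om)}$. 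Assigning the $L^2_t$-norm to $w^j$ in the first group of terms and to $h^j$ in the second, and the $L^\infty_t$-norms to the remaining factors, produces bounds of the shape $\|\eta^j\|_{L^\infty_tH^3}\|w^j\|_{L^2_tH^3}$ and $\|w^j\|_{L^\infty_tH^2}\|\eta^j\|_{L^\infty_tH^3}\|h^j\|_{L^2_tH^3}$, each multiplied by a further power of $\|\eta^j\|_{L^\infty_tH^3}$ coming from $\|\mathcal{A}\|_{H^2}$; squaring and absorbing into the energy norm yields the $\|\{w^j,h^j,\vec v^j,q^j,\eta^j\}\|^4+\|\{w^j,h^j,\vec v^j,q^j,\eta^j\}\|^{10}$ on the right-hand side, exactly as in \eqref{ee28}--\eqref{ee30}.

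For $G_{4t}$ I would differentiate each term by the Leibniz rule. When $\partial_t$ falls on $\mathcal{A}$ or on a factor $\na_0\bar\eta^j$ it produces $\bar\eta^j_t=\mathcal{H}(v^j_3)$, for which \eqref{ee83} gives $\|\bar\eta^j_t\|_{H^m}\le C\|\vec v^j\|_{H^m}$; the resulting terms carry no time derivative of $w^j$ or $h^j$, hence on $\G$ lie in $L^2(\G)\hookrightarrow H^{-1/2}(\G)$ and are estimated directly by Sobolev embedding and the trace theorem, as in \eqref{ee88}--\eqref{ee89}. The delicate terms are those in which $\partial_t$ hits $\na_0 w^j$ or $\na_0 h^j$, giving $\na_0 w^j_t$ or $\na_0 h^j_t$ while $w^j_t,h^j_t$ are known only in $L^2_tH^1(\Om)$: here I would test against $\psi\in H^{1/2}(\G)$ and integrate by parts \emph{along} $\G$, moving the tangential derivative onto $\mathcal{A}\,(\na_0\bar\eta^j)\,\psi$, so that
\[
\Big|\int_\G \mathcal{A}\,(\na_0\bar\eta^j)\,(\na_0 w^j_t)\,\psi\,dx\Big|\le\|w^j_t\|_{H^{1/2}(\G)}\,\big\|\na_0\big(\mathcal{A}\,(\na_0\bar\eta^j)\,\psi\big)\big\|_{H^{-1/2}(\G)}\le C\big\|\mathcal{A}\,(\na_0\bar\eta^j)\big\|_{H^{3/2}(\G)}\|w^j_t\|_{H^1(\Om)}\|\psi\|_{H^{1/2}(\G)},
\]
where the multiplier estimate $\|\mu\psi\|_{H^{1/2}(\G)}\le C\|\mu\|_{H^{3/2}(\G)}\|\psi\|_{H^{1/2}(\G)}$ ($H^{1/2}(\R^2)$ not being an algebra) is obtained by splitting the half-derivative and using $H^{3/2}(\R^2)\hookrightarrow L^\infty\cap W^{1/2,4}$, exactly as in the proofs of Lemmas~\ref{l1} and~\ref{l3}; combined with $\|\mathcal{A}\,(\na_0\bar\eta^j)\|_{H^{3/2}(\G)}\le C(1+\|\eta^j\|_{L^\infty_tH^3}^N)$ this bounds such terms by a power of $\|\eta^j\|_{L^\infty_tH^3}$ times $\|w^j_t\|_{L^2_tH^1}$ (resp. $\|h^j_t\|_{L^2_tH^1}$). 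Collecting all contributions and counting degrees gives the claimed estimate. I expect this last step — controlling $\na_0 w^j_t$ and $\na_0 h^j_t$ in $L^2_tH^{-1/2}(\G)$ despite $w^j_t,h^j_t\in L^2_tH^1(\Om)$ only — to be the main obstacle; it works precisely because $G_4$ contains tangential derivatives of $w^j,h^j$ alone (so the integration by parts on $\G$ is legitimate and $w^j_t|_\G\in H^{1/2}(\G)$ suffices) and because the $\bar\eta^j$-dependent coefficients are regular enough, namely in $H^2(\G)$ by Lemma~\ref{pl7}, to act as $H^{1/2}(\G)$-multipliers.
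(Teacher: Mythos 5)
Your proposal is correct and follows essentially the same route as the paper. The paper also reads off from \eqref{aae21} that $G_4$ consists of a bounded $\bar\eta^j$-dependent prefactor times $\na_0\eta^j\cdot(\na_0 w^j+w^j\na_0 h^j)$, estimates $\|G_4\|_{H^{3/2}(\G)}$ by product and trace estimates (working directly on $\G$ rather than lifting to $H^2(\Om)$, but that is cosmetic), and for the critical piece of $G_{4t}$ containing $\na_0 w^j_t$ or $\na_0 h^j_t$ tests against $\psi\in H^{1/2}(\G)$ and uses exactly the tangential-derivative mapping $H^{1/2}(\G)\to H^{-1/2}(\G)$ plus the multiplier bound $\|\mu\psi\|_{H^{1/2}(\G)}\le C\|\mu\|_{H^{3/2}(\G)}\|\psi\|_{H^{1/2}(\G)}$; the paper pairs $\na_0 w^j_t\in H^{-1/2}(\G)$ directly against $\mu\psi\in H^{1/2}(\G)$ instead of integrating by parts on $\G$, but the two formulations of the duality pairing are identical, so your identification of the key obstacle and of why the purely tangential structure of $G_4$ resolves it matches the paper's argument.
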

\begin{proof}
From \eqref{aae21} we know that
\be\label{ee37}
\begin{split}
G_4(w^j,h^j,\bar{\eta}^j)
\sim
\na_0 \bar{\eta}^j \na_0\eta^j \left(\na_0 w^j +w^j \na_0 h^j\right).
\end{split}
\ee
Then the Sobolev embedding inequality, trace theorem and Lemma \ref{pl7} entail that
\be\label{ee38}
\begin{split}
\|G_4\|_{L^2_tH^{\frac{3}{2}}(\G)}^2
\leq& C\|\na_0 \bar{\eta}^j\|_{L^\infty_tH^{\frac{3}{2}}(\G)}^2
\|\na_0 \eta^j\|_{L^\infty_tH^{\frac{3}{2}}(\R^2)}^2
\|\na w^j\|_{L^2_tH^{\frac{3}{2}}(\G)}^2\\
&+C\|\na_0 \bar{\eta}^j\|_{L^\infty_tH^{\frac{3}{2}}(\G)}^2
\|\na_0 \eta^j\|_{L^\infty_tH^{\frac{3}{2}}(\R^2)}^2\|w^j\|_{L^\infty_tH^{\frac{3}{2}}(\G)}^2
\|\na h^j\|_{L^2_tH^{\frac{3}{2}}(\G)}^2
\\
\leq& C \| \eta^j\|_{L^\infty_tH^{3}(\R^2)}^4
(
\|w^j\|_{L^2_tH^3}^2
+\|w^j\|_{L^\infty_tH^2}^2
\|h^j\|_{L^2_tH^3}^2
).
\end{split}
\ee
We proceed to estimating $\|G_{4t}\|_{L^2_tH^{-\frac{1}{2}}(\G)}^2$. It follows from the Sobolev embedding inequality, Lemma \ref{pl7} and \eqref{ee83} that
\be\label{ee93}
\begin{split}
\|\na_0& \bar{\eta}^j_t \na_0\eta^j \na_0 w^j\|_{L^2_tL^2(\G)}^2
+\|\na_0 \bar{\eta}^j \na_0\eta^j_t \na_0 w^j\|_{L^2_tL^2(\G)}^2\\
\leq &(\|\na_0 \bar{\eta}^j_t\|_{L^2_tL^4(\G)}^2
\|\na_0\eta^j\|_{L^\infty_tL^\infty(\R^2)}^2
+\|\na_0\bar{\eta}^j\|_{L^\infty_tL^\infty(\G)}^2
\|\na_0 \eta^j_t\|_{L^2_tL^4(\G)}^2
)\|\na_0 w^j\|_{L^\infty_tL^4(\G)}^2\\
\leq & C\|\eta^j\|_{L^\infty_tH^3(\R^2)}^4
\|\vec{v}^j\|_{L^2_tH^3}^2
\|w^j\|_{L^\infty_tH^2}^2.
\end{split}
\ee
For any $\psi\in H^{\frac{1}{2}}(\R^2)$, using Lemma \ref{pl7} and the trace theorem one gets
\ben
\begin{split}
\int_\G \na_0 \bar{\eta}^j \na_0\eta^j \na_0 w^j_t \psi dx
\leq& \|\na_0\bar{\eta}^j\na_0\eta^j \psi\|_{H^{\frac{1}{2}}(\G)}
\|\na_0 w^j_t\|_{H^{-\frac{1}{2}}(\G)}\\
\leq &
\|\na_0\bar{\eta}^j\na_0\eta^j \psi\|_{H^{\frac{1}{2}}(\G)}\|w^j_t\|_{H^{\frac{1}{2}}(\G)}\\
\leq & C \|\eta^j\|_{H^3(\R^2)}^2 \| w^j_t\|_{H^1}
\|\psi\|_{H^{\frac{1}{2}}(\R^2)}.
 \end{split}
 \enn
Thus,
\ben
\|\na_0 \bar{\eta}^j \na_0\eta^j \na_0 w^j_t\|_{L^2_tH^{-\frac{1}{2}}(\G)}^2
\leq C \|\eta^j\|_{L^\infty_tH^3(\R^2)}^4 \| w^j_t\|_{L^2_tH^1}^2,
\enn
which, in conjunction with \eqref{ee93} gives rise to
\be\label{ee39}
\|(\na_0 \bar{\eta}^j \na_0\eta^j \na_0 w^j)_t\|_{L^2_t H^{-\frac{1}{2}}(\G)}^2
\leq C\|\eta^j\|_{L^\infty_tH^3(\R^2)}^4
\big(\|\vec{v}^j\|_{L^2_tH^3}^2
\|w^j\|_{L^\infty_tH^2}^2
+\| w^j_t\|_{L^2_tH^1}^2
\big).
\ee
By a similar argument used in deriving \eqref{ee39}, one can deduce that
\be\label{ee40}
\begin{split}
\|(\na_0 \bar{\eta}^j \na_0\eta^j w^j\na_0 h^j)_t\|_{L^2_t H^{-\frac{1}{2}}(\G)}^2
\leq& C\|\eta^j\|_{L^\infty_tH^3(\R^2)}^4
\|\vec{v}^j\|_{L^2_tH^2}^2
\|w^j\|_{L^\infty_tH^2}^2 \|h^j\|_{L^\infty_tH^2}^2\\
&+C\|\eta^j\|_{L^\infty_tH^3}^4
 (\| w^j_t\|_{L^2_tH^1}^2\|h^j\|_{L^\infty_tH^2}^2
+\|w^j\|_{L^\infty_tH^2}^2\| h^j_t\|_{L^2_tH^1}^2).
\end{split}
\ee
Combining \eqref{ee39} and \eqref{ee40} we conclude that
\ben
\begin{split}
\|G_{4t}\|_{L^2_t H^{-\frac{1}{2}}(\G)}^2
\leq C\|\{
w^{j},h^{j},\vec{v}^{j},q^{j}, \eta^{j}
\}\|^4
+C\|\{
w^{j},h^{j},\vec{v}^{j},q^{j}, \eta^{j}\}\|^{10}
\end{split}
\enn
which, along with \eqref{ee38} indicates the desired estimates. The proof is finished.

\end{proof}

We proceed to estimate $F_4(w^{(j-1)},w^j,h^j,\vec{v}^{j},\bar{\eta}^j)$ and $F_{4t}(w^{(j-1)},w^j,h^j,\vec{v}^{j},\bar{\eta}^j)$.
Recalling the definition of $F_4(w^{(j-1)},w^j,h^j,\vec{v}^{j},\bar{\eta}^j)$ in \eqref{ee94} we have
\be\label{ee95}
\begin{split}
F_4(w^{(j-1)},w^j,h^j,\vec{v}^{j},\bar{\eta}^j)=
&\left\{(J^{j})^{-2}[(\al^j)^2+(\beta^j)^2+1]-1
\right\}\partial_3\big(\partial_3 w^{j}+w^{(j-1)}\partial_3 h^j\big)\\
 &-2\al^j(J^{j})^{-1}
\big(\partial_3\partial_1 w^j
+w^{(j-1)}\partial_3\partial_1 h^j
\big)\\
&-2\beta^j(J^{j})^{-1}
\big(\partial_3\partial_2 w^j
+w^{(j-1)}\partial_3\partial_2 h^j
\big)+L_4
\end{split}
\ee
with
\ben
L_4\sim \na\bar{\eta}^j\na w^j \vec{v}^j
+(\na\bar{\eta}^j)^2(\na^2\bar{\eta}^j) \na w^j
+ (\na\bar{\eta}^j)^2\na w^j\na h^j
+(\na\bar{\eta}^j)^2 (\na^2\bar{\eta}^j)w^j \na h^j
+\na w^j\bar{\eta}^j_t.
\enn
\begin{lemma}\label{l6} Suppose that \eqref{eee15} and the assumptions in Proposition \ref{p4} hold true. Then there exists a constant $C$ independent of $j$ and $t$ such that
\ben
\begin{split}
\|F_{4}\|_{L^2_tH^1}^2
\leq& C \|\{
w^{j},h^{j},\vec{v}^{j},q^{j}, \eta^{j}
\}\|^4
+C\|\{
w^{j},h^{j},\vec{v}^{j},q^{j}, \eta^{j}\}\|^{12}\\
&+C \|\{
w^{(j-1)},h^{(j-1)},\vec{v}^{\,(j-1)},q^{(j-1)}, \eta^{(j-1)}
\}\|^4\\
&+C\|\{
w^{(j-1)},h^{(j-1)},\vec{v}^{\,(j-1)},q^{(j-1)}, \eta^{(j-1)}\}\|^{12}
\end{split}
\enn
for $j\geq 2$ and $t>0$.
\end{lemma}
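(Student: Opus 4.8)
The plan is to follow the same pattern as in the proof of Lemma \ref{l2}: decompose $F_4$ according to \eqref{ee95}, estimate each group of terms in $L^2_tH^1$ separately, and collect. First I would record, from \eqref{eeea1}, that $\{(J^j)^{-2}[(\al^j)^2+(\beta^j)^2+1]-1\}\sim(\na\bar\eta^j)^2$, and from \eqref{aae18} that $\al^j,\beta^j\sim\na\bar\eta^j$, while $(J^j)^{-1}$ is bounded on $\Om\times(0,\infty)$ thanks to \eqref{eee15}. Thus the first three groups in \eqref{ee95} are, schematically,
\[(\na\bar\eta^j)^2\,\partial_3\big(\partial_3 w^j+w^{(j-1)}\partial_3 h^j\big),\qquad \na\bar\eta^j\big(\partial_3\partial_i w^j+w^{(j-1)}\partial_3\partial_i h^j\big),\ i=1,2.\]

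For these terms I would expand the products by the Leibniz rule, e.g. $\partial_3(w^{(j-1)}\partial_3 h^j)=\partial_3 w^{(j-1)}\partial_3 h^j+w^{(j-1)}\partial_3^2 h^j$, and estimate each resulting summand in $H^1$ using the Sobolev embeddings $H^1\hookrightarrow L^6$ and $H^1(\R^2)\hookrightarrow L^p$, the trace theorem, and Lemma \ref{pl7}, always placing the top-order derivative (up to three derivatives of $w^j$, $w^{(j-1)}$ or $h^j$, or up to four derivatives of $\bar\eta^j$) in $L^2$ and the remaining factors in $L^\infty$ or $L^4$. This yields bounds of the type $\|\eta^j\|_{L^\infty_tH^3(\R^2)}^{2k}\|w^j\|_{L^2_tH^3}^2$, $\|\eta^j\|_{L^\infty_tH^3(\R^2)}^{2k}\|w^{(j-1)}\|_{L^2_tH^3}^2\|h^j\|_{L^\infty_tH^2}^2$, $\|\eta^j\|_{L^\infty_tH^3(\R^2)}^{2k}\|\na^2\mathcal H(\eta^j)\|_{L^2_tH^2}^2\,(\cdots)$, and the like. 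Splitting mixed products by Young's inequality and noting that $\|\eta^j\|_{L^\infty_tH^3(\R^2)}$, $\|w^j\|_{L^2_tH^3}$, $\|h^j\|_{L^\infty_tH^2}$, $\|h^j\|_{L^2_tH^3}$, $\|\na^2\mathcal H(\eta^j)\|_{L^2_tH^2}$ are all controlled by $\|\{w^j,h^j,\vec v^j,q^j,\eta^j\}\|$ whereas $\|w^{(j-1)}\|_{L^2_tH^3}$ and $\|w^{(j-1)}\|_{L^\infty_tH^2}$ are controlled by $\|\{w^{(j-1)},h^{(j-1)},\vec v^{\,(j-1)},q^{(j-1)},\eta^{(j-1)}\}\|$, each such term is a monomial of even degree between $4$ and $12$ in one or the other of the two norms.

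The lower-order part $L_4$ is handled exactly as $\vec L$ was in Lemma \ref{l2}: the terms $(\na\bar\eta^j)^2(\na^2\bar\eta^j)\na w^j$, $(\na\bar\eta^j)^2\na w^j\na h^j$ and $(\na\bar\eta^j)^2(\na^2\bar\eta^j)w^j\na h^j$ are estimated in $L^2_tH^1$ by Sobolev embedding, the trace theorem and Lemma \ref{pl7}, while for $\na\bar\eta^j\na w^j\vec v^j$ and $\na w^j\bar\eta^j_t$ one additionally uses $\eta^j_t=v_3^j$ on $\G$, i.e. the bound \eqref{ee83}, $\|\bar\eta^j_t(t)\|_{H^m}\le C\|\vec v^j(t)\|_{H^m}$. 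Again every summand produces a monomial of even degree in $[4,12]$ in $\|\{w^j,h^j,\vec v^j,q^j,\eta^j\}\|$. Finally, collecting the estimates for the three leading groups and for $L_4$, and using $\|\cdot\|^{2k}\le C(\|\cdot\|^4+\|\cdot\|^{12})$ for $2\le k\le 6$, gives the asserted inequality.

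The main difficulty here is bookkeeping rather than any genuinely new estimate. One must distribute the derivatives in the terms involving the product $w^{(j-1)}\partial_3 h^j$ so that no factor ever carries more than three derivatives — which is exactly the regularity available for $w^{(j-1)}$ and $h^j$ — and one must carefully keep track of which iterate ($j$-th or $(j-1)$-th) each resulting norm belongs to, so that the right-hand side of the claimed inequality appears with both norms present. One also has to remember that the prefactors $(\na\bar\eta^j)^2$ and $\na\bar\eta^j$ allow the top-order derivative to land on $\bar\eta^j$, so that $\na^3\bar\eta^j$ and $\na^4\bar\eta^j$ occur; these are precisely what the $L^2_tH^2$-control of $\na^2\mathcal H(\eta^j)$ in the norm \eqref{ee00} is there to absorb.
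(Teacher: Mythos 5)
Your decomposition of $F_4$ following \eqref{ee95} into the three leading blocks plus $L_4$, and the plan to estimate each in $L^2_tH^1$ by Sobolev embedding, the trace theorem, Lemma \ref{pl7}, and \eqref{ee83}, then collecting even-degree monomials in the two iterate norms via Young's inequality, is precisely the paper's proof of Lemma \ref{l6}.

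One small inaccuracy in your commentary: you assert that $\na^4\bar\eta^j$ occurs and must be absorbed by the $L^2_tH^2$-control of $\na^2\mathcal{H}(\eta^j)$ carried in the norm \eqref{ee00}. That is not the case for $F_4$. The prefactors $(\na\bar\eta^j)^2$ and $\al^j(J^j)^{-1}\sim\na\bar\eta^j$ in the three leading blocks, and the factors $(\na\bar\eta^j)^2(\na^2\bar\eta^j)$ in $L_4$, carry at most $\na^2\bar\eta^j$ of the surface function, so the single extra derivative from the $H^1$-norm produces at most $\na^3\bar\eta^j$, which is already controlled in $L^\infty_tL^2$ by $\|\eta^j\|_{L^\infty_tH^3(\R^2)}$ through Lemma \ref{pl7} (compare \eqref{ee35}, where only $\|\na^3\bar\eta^j\|_{L^\infty_tL^2}$ appears). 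It is in the estimate of $\vec L$ in Lemma \ref{l2} — specifically via the term $(\na\bar\eta^j)^2(\na^3\bar\eta^j)\vec v^j$, which has no analogue in $L_4$ — that $\na^4\bar\eta^j$ arises and $\|\na^2\mathcal{H}(\eta^j)\|_{L^2_tH^2}$ is needed, cf.\ \eqref{ee23}. This overstatement does not break your argument: you were invoking extra control that in fact is never called upon for this particular nonlinear term, and the proof goes through as in the paper.
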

\begin{proof}
 It follows from the Sobolev embedding inequality, \eqref{eeea1} and Lemma \ref{pl7} that
\be\label{ee33}
\begin{split}
&\|\{(J^{j})^{-2}[(\al^j)^2+(\beta^j)^2+1]-1
\}\partial_3\big(\partial_3 w^{j}+w^{(j-1)}\partial_3 h^j\big)\|_{L^2_tH^1}^2\\
&\ \leq C\|\na\bar{\eta}^j\|_{L^\infty_tH^2}^4
\|\na^2 w^j\|_{L^2_tH^1}^2
+C\|\na\bar{\eta}^j\|_{L^\infty_tH^2(\R^2)}^4
\|\na( w^{(j-1)}\na h^j)\|_{L^2_tH^1}^2\\
&\ \leq C \|\eta^j\|_{L^\infty_tH^3}^4
(\|w^j\|_{L^2_tH^3}^2
+\|w^{(j-1)}\|_{L^\infty_tH^2}^2 \| h^{j}\|_{L^2_tH^3}^2
).
\end{split}
\ee
A similar argument leads to
\be\label{ee34}
\begin{split}
&\|-2\al^j(J^{j})^{-1}
\big(\partial_3\partial_1 w^j
+w^{(j-1)}\partial_3\partial_1 h^j
\big)
-2\beta^j(J^{j})^{-1}
\big(\partial_3\partial_2 w^j
+w^{(j-1)}\partial_3\partial_2 h^j
\big)\|_{L^2_tH^1}^2\\
&\ \leq C \|\eta^j\|_{L^\infty_tH^3(\R^2)}^2
(\|w^j\|_{L^2_tH^3}^2
+\|w^{(j-1)}\|_{L^\infty_tH^2}^2 \| h^{j}\|_{L^2_tH^3}^2
).
\end{split}
\ee
We next estimate the second term in $L_4$ by the Sobolev embedding inequality and Lemma \ref{pl7} as follows:
\be\label{ee35}
\begin{split}
\|(\na\bar{\eta}^j)^2(\na^2\bar{\eta}^j) \na w^j\|_{L^2_tH^1}^2
 \leq& \|\na\bar{\eta}^j\|_{L^\infty_tL^\infty}^2
\|\na^2\bar{\eta}^j\|_{L^\infty_tL^4}^4
\|\na w^j\|_{L^2_tL^\infty}^2\\
&+
\|\na\bar{\eta}^j\|_{L^\infty_tL^\infty}^4
\|\na^3\bar{\eta}^j\|_{L^\infty_tL^2}^2
\|\na w^j\|_{L^2_tL^\infty}^2\\
& +\|\na\bar{\eta}^j\|_{L^\infty_tL^\infty}^4
\|\na^2\bar{\eta}^j\|_{L^\infty_tL^4}^2
\|\na^2 w^j\|_{L^2_tL^4}^2\\
\leq &C \|\eta^j\|_{L^\infty_tH^3(\R^2)}^6
\| w^j\|_{L^2_tH^3}^2.
\end{split}
\ee
By a similar argument used in deriving \eqref{ee35} one can estimate the other terms in $L_4$ to conclude that
\be\label{ee36}
\begin{split}
\|L_4\|_{L^2_tH^1}^2
\leq C \|\{
w^{j},h^{j},\vec{v}^{j},q^{j}, \eta^{j}
\}\|^4
+C\|\{
w^{j},h^{j},\vec{v}^{j},q^{j}, \eta^{j}\}\|^{10}.
\end{split}
\ee
Collecting \eqref{ee33}, \eqref{ee34}, \eqref{ee36} and using \eqref{ee95} we derive the desired estimate. The proof is completed.

\end{proof}

\begin{lemma}\label{l7} Let \eqref{eee15} and the assumptions in Proposition \ref{p4} hold. Then there exists a constant $C$ independent of $j$ and $t$ such that
\ben
\begin{split}
\|F_{4t}\|_{L^2_t ({_0}H^1)^{'}}^2
\leq &C \|\{
w^{j},h^{j},\vec{v}^{j},q^{j}, \eta^{j}
\}\|^4
+C\|\{
w^{j},h^{j},\vec{v}^{j},q^{j}, \eta^{j}\}\|^{20}\\
&+C \|\{
w^{(j-1)},h^{(j-1)},\vec{v}^{\,(j-1)},q^{(j-1)}, \eta^{(j-1)}
\}\|^4\\
&+C\|\{
w^{(j-1)},h^{(j-1)},\vec{v}^{\,(j-1)},q^{(j-1)}, \eta^{(j-1)}\}\|^{20}
\end{split}
\enn
for $j\geq 2$ and $t>0$.
\end{lemma}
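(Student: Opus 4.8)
The plan is to mimic, almost verbatim, the argument used for $\vec F_t$ in the proof of Lemma \ref{l1}. Differentiating the identity \eqref{ee95} for $F_4=F_4(w^{(j-1)},w^j,h^j,\vec v^j,\bar\eta^j)$ in $t$ produces three groups of terms. \emph{Group (a):} the time derivative falls on one of the geometric coefficients $\{(J^j)^{-2}[(\al^j)^2+(\beta^j)^2+1]-1\}$, $\al^j(J^j)^{-1}$, $\beta^j(J^j)^{-1}$; by \eqref{eeea1} and the definitions of $\al^j,\beta^j,J^j$ each such time derivative is schematically $\na\bar\eta^j_t$ (plus lower-order terms carrying an extra $\bar\eta^j$), and since $\eta^j_t=v^j_3$ on $\G$ the bound \eqref{ee83} converts $\na\bar\eta^j_t$ into $\na\vec v^j$ in the same Sobolev norm; these factors multiply the third-order spatial quantities $\partial_3(\partial_3 w^j+w^{(j-1)}\partial_3 h^j)$ and $\partial_3\partial_i w^j+w^{(j-1)}\partial_3\partial_i h^j$, which lie in $L^2_tL^2$, so Hölder together with the Sobolev embeddings and Lemma \ref{pl7} close them exactly as \eqref{ee5} was closed in Lemma \ref{l1}. \emph{Group (b):} the time derivative falls on the high-order factors, producing $\partial_3^2 w^j_t$, $\partial_3\partial_i w^j_t$, $w^{(j-1)}_t\partial_3\partial_i h^j$ and $w^{(j-1)}\partial_3\partial_i h^j_t$; since $w^j_t,h^j_t,w^{(j-1)}_t$ only belong to $L^2_tH^1$ (they are part of the finite norms $|||w^j|||,|||h^j|||,|||w^{(j-1)}|||$), these objects carry two derivatives too many to be placed in $L^2$. \emph{Group (c):} the time derivative of $L_4$, which contains only lower-order quantities.

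For Group (b) --- the heart of the proof --- I would test the corresponding contribution to $F_{4t}$ against $\psi\in{_0}H^1$ and integrate by parts in $x_3$, as in \eqref{ee6}: one $\partial_3$ is transferred onto $\psi$ and onto the coefficient, which leaves an interior term controlled by $\|\na w^j_t\|_{L^2}$ (resp.\ $\|\na h^j_t\|_{L^2}$) paired with $L^\infty$-in-time norms of $\na\bar\eta^j,\na^2\bar\eta^j$ (Sobolev in $\Om$ and Lemma \ref{pl7}), plus a boundary term on $\G$ of the form $\int_\G(\text{coeff})\,\partial_3 w^j_t\,\psi\,dx$ and $\int_\G(\text{coeff})\,w^{(j-1)}\partial_3 h^j_t\,\psi\,dx$. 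Because $w^j_t,h^j_t\in L^2_tH^1$, the traces $\partial_3 w^j_t|_\G$ and $\partial_3 h^j_t|_\G$ lie in $H^{-\frac12}(\G)$ with $\|\partial_3 w^j_t\|_{L^2_tH^{-\frac12}(\G)}\le C\|w^j_t\|_{L^2_tH^1}$, while the factors $(\na\bar\eta^j)^2\psi$ and $(\na\bar\eta^j)^2 w^{(j-1)}\psi$ lie in $H^{\frac12}(\G)$ with norm $\le C\|\eta^j\|_{H^3(\R^2)}^2(1+\|w^{(j-1)}\|_{H^2})\|\psi\|_{H^1}$, using Lemma \ref{pl7}, the trace theorem and the $H^{\frac12}(\G)$ product estimate displayed just below \eqref{ee6}. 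Pairing the two and integrating in time yields the Group (b) bound. Group (a) and Group (c) are then handled by routine Hölder and Sobolev arguments together with \eqref{ee83} and Lemma \ref{pl7}, paralleling \eqref{ee16} in the proof of Lemma \ref{l1}.

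Finally I would collect all the estimates. Every term is majorized by a product of the combined norms $\|\{w^j,h^j,\vec v^j,q^j,\eta^j\}\|$ and $\|\{w^{(j-1)},h^{(j-1)},\vec v^{\,(j-1)},q^{(j-1)},\eta^{(j-1)}\}\|$ of homogeneity at least $4$, the highest-order nonlinear interactions --- the geometric coefficients against the third-order derivatives of $w^j$ and of the mixed quantity $w^{(j-1)}\partial_3 h^j$ after the integration-by-parts step --- contributing homogeneity at most $20$; absorbing the intermediate powers via $x^k\le x^4+x^{20}$ for $x\ge0$ produces the stated inequality, with $C$ independent of $j$ and $t$ because the constants in Lemma \ref{pl7}, the trace theorem and the Sobolev embeddings are $t$-independent and the uniform bound $\frac12<J^j<\frac32$ of \eqref{eee15} is in force. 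The main obstacle is exactly the Group (b) analysis: absorbing the two-derivative loss in $\partial_3^2 w^j_t$, $w^{(j-1)}\partial_3\partial_i h^j_t$, etc., through the duality pairing on ${_0}H^1$ --- in particular verifying that the boundary traces created by the integration by parts sit in $H^{-\frac12}(\G)$ and that the coefficient-times-test-function products sit in $H^{\frac12}(\G)$ --- which is the same device that made the estimate of $\vec F_t$ possible in Lemma \ref{l1}.
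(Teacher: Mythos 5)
Your outline correctly mirrors the paper's decomposition of $F_{4t}$ into a ``coefficient-derivative'' group, a ``high-order-derivative'' group treated by testing against $\psi\in{_0}H^1$ and integrating by parts in $x_3$, and a lower-order group $(L_4)_t$, and the Hölder/Sobolev bookkeeping of the first and third groups is fine. The gap is in your Group~(b), precisely where you single it out as ``the heart of the proof.'' After integrating by parts you face a boundary integral on $\G$ involving $(\partial_3 w^j+w^{(j-1)}\partial_3 h^j)_t$, and you propose to control it by the putative trace bound $\|\partial_3 w^j_t\|_{L^2_tH^{-\frac12}(\G)}\leq C\|w^j_t\|_{L^2_tH^1}$ (and similarly for $h^j_t$). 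This is false: the trace theorem only gives $w^j_t|_\G\in H^{\frac12}(\G)$, and there is no general $H^{-\frac12}(\G)$ trace for the \emph{normal} derivative of an $H^1(\Om)$ function. (The usual way to get a distributional normal trace requires additional control, e.g.\ on $\Delta w^j_t$, which is not available; the energy norm $|||w^j|||$ does not include any boundary norm of $\na w^j_t$, in contrast to $\|\{\cdot\}\|$ which carries $\|\na\vec v_t\|_{L^2_tH^{-\frac12}(\G)}$ for the fluid.) So the pairing you invoke does not close.

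The paper's proof circumvents exactly this obstacle by a structural observation you are missing: the combination $\partial_3 w^j+w^{(j-1)}\partial_3 h^j$ is precisely the one appearing in the Neumann boundary condition \eqref{ee2} at stage $j$, namely $\partial_3 w^j+w^{(j-1)}\partial_3 h^j=G_4(w^{(j-1)},h^{(j-1)},\bar\eta^{(j-1)})$ on $\G$. Differentiating in $t$, the boundary integral $I_4$ in \eqref{ee43} becomes a pairing of $G_{4t}(w^{(j-1)},h^{(j-1)},\bar\eta^{(j-1)})\in L^2_tH^{-\frac12}(\G)$ with $\{(J^j)^{-2}[(\al^j)^2+(\beta^j)^2+1]-1\}\psi\in H^{\frac12}(\G)$, and is then closed using Lemma \ref{l5} (applied at index $j-1$). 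This is also why $F_4$ is set up in \eqref{ee94} with $w^{(j-1)}\partial_3 h^j$ rather than $w^{j}\partial_3 h^j$, and it is the source of the $(j-1)$-indexed norms appearing on the right-hand side of the lemma: they enter through $G_{4t}(w^{(j-1)},h^{(j-1)},\bar\eta^{(j-1)})$. Splitting the boundary integral into separate pieces for $\partial_3 w^j_t$ and $\partial_3 h^j_t$, as you do, destroys this combination and loses the only mechanism by which the boundary term can be estimated.
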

\begin{proof}
 From he definition of $\al^j$, $\beta^j$ and $J^j$ in \eqref{aae18} we know that
 \ben
 \{(J^j)^{-2}[(\al^j)^2+(\beta^j)^2+1]-1\}_t\sim \na\bar{\eta}^j\na\bar{\eta}^j_t+(\na\bar{\eta}^j)^2
\na\bar{\eta}^j_t,
 \enn
  which, along with the Sobolev embedding inequality, \eqref{ee83} and Lemma \ref{pl7} indicates that
\be\label{ee42}
\begin{split}
&\big\|\{(J^{j})^{-2}
[(\al^j)^2+(\beta^j)^2+1]-1
\}_t\partial_3\big(\partial_3 w^{j}+w^{(j-1)}\partial_3 h^j\big)
\big\|_{L^2_tL^2}^2\\
&\ \ \leq  C(\|\na \bar{\eta}^j\|_{L^\infty_tL^\infty}^2
+\|\na \bar{\eta}^j\|_{L^\infty_tL^\infty}^4)
\|\na \bar{\eta}^j_t\|_{L^2_tL^\infty}^2
(\|\partial_3^2 w^j\|_{L^\infty_tL^2}^2
+\|\partial_3 (w^{(j-1)}\partial_3 h^j)\|_{L^\infty_tL^2}^2)
\\
&\ \ \leq C(\|\eta^j\|_{L^\infty_tH^3(\R^2)}^2
+\|\eta^j\|_{L^\infty_tH^3(\R^2)}^4)
\|\vec{v}^j\|_{L^2_tH^3}^2
(\|w^j\|_{L^\infty_tH^2}^2
+\|w^{(j-1)}\|_{L^\infty_tH^2}^2\|h^j\|_{L^\infty_tH^2}^2).
\end{split}
\ee
On the other hand, for any $\psi\in {_0}H^1$ one gets
\be\label{ee43}
\begin{split}
\int_{\Om}& \{(J^{j})^{-2}
[(\al^j)^2+(\beta^j)^2+1]-1
\}\partial_3\big(\partial_3 w^{j}+w^{(j-1)}\partial_3 h^j\big)_t \psi dxdy\\
&=\int_{\G}
\{(J^{j})^{-2}
[(\al^j)^2+(\beta^j)^2+1]-1
\}\big(\partial_3 w^{j}+w^{(j-1)}\partial_3 h^j\big)_t \psi dx\\
&\ \ -\int_\Om \{(J^{j})^{-2}
[(\al^j)^2+(\beta^j)^2+1]-1
\}\big(\partial_3 w^{j}+w^{(j-1)}\partial_3 h^j\big)_t \partial_3\psi dxdy\\
&\ \ -\int_\Om \partial_3\{(J^{j})^{-2}
[(\al^j)^2+(\beta^j)^2+1]-1
\}\big(\partial_3 w^{j}+w^{(j-1)}\partial_3 h^j\big)_t \psi dxdy\\
&:=I_4+I_5+I_6.
\end{split}
\ee
From the first boundary condition in \eqref{ee2} we know that $\partial_3 w^{j}+w^{(j-1)}\partial_3 h^{j}=G_4(w^{(j-1)},h^{(j-1)},\bar{\eta}^{(j-1)})$ on $\G\times(0,\infty)$. Thus it follows from \eqref{eeea1} and Lemma \ref{pl7} that
\ben
\begin{split}
I_4\leq &C \|G_{4t}(w^{(j-1)},h^{(j-1)},\bar{\eta}^{(j-1)})
\|_{H^{-\frac{1}{2}}(\G)}
\big\|\big\{(J^j)^{-2}
[(\al^j)^2+(\beta^j)^2+1]-1
\big\} \psi\big\|_{H^{\frac{1}{2}}(\G)}\\
\leq &C\|G_{4t}(w^{(j-1)},h^{(j-1)},\bar{\eta}^{(j-1)})
\|_{H^{-\frac{1}{2}}(\G)}
\|\na\bar{\eta}^j\|_{H^\frac{3}{2}(\G)}^2
\|\psi\|_{H^{\frac{1}{2}}(\G)}\\
\leq &C\|G_{4t}(w^{(j-1)},h^{(j-1)},\bar{\eta}^{(j-1)})
\|_{H^{-\frac{1}{2}}(\G)}
\|\eta^j\|_{H^3(\R^2)}^2
\|\psi\|_{H^1}.
\end{split}
\enn
The Sobolev embedding inequality, \eqref{eeea1} and Lemma \ref{pl7} lead to
\ben
\begin{split}
I_5
\leq & \|\na\bar{\eta}^j\|_{L^\infty}^2
(\|\na w^j_t\|_{L^2}+\| w^{(j-1)}_t\|_{L^4}\|\na h^j\|_{L^4}+\| w^{(j-1)}\|_{L^\infty}\|\na h^j_t\|_{L^2})\|\na \psi\|_{L^2}\\
\leq & C \|\eta^j\|_{H^3(\R^2)}^2
 (\|w^j_t\|_{H^1}+\| w^{(j-1)}_t\|_{H^1}\| h^j\|_{H^2}+\| w^{(j-1)}\|_{H^2}\|h^j_t\|_{H^1})
 \|\psi\|_{H^1}.
\end{split}
\enn
 Similarly, it follows from the Sobolev embedding inequality and Lemma \ref{pl7} that
\ben
\begin{split}
I_6\leq &
(\|\na\bar{\eta}^j\|_{L^\infty}+
\|\na\bar{\eta}^j\|_{L^\infty}^2)\|\na^2\bar{\eta}^j\|_{L^4}
(\|\na w^j_t\|_{L^2}+\| w^{(j-1)}_t\|_{L^4}\|\na h^j\|_{L^4}+\| w^{(j-1)}\|_{L^\infty}\|\na h^j_t\|_{L^2})\| \psi\|_{L^4}\\
\leq & C (\|\eta^j\|_{H^3(\R^2)}^2
+\|\eta^j\|_{H^3(\R^2)}^4)
 (\|w^j_t\|_{H^1}+\| w^{(j-1)}_t\|_{H^1}\| h^j\|_{H^2}+\| w^{(j-1)}\|_{H^2}\|h^j_t\|_{H^1})
 \|\psi\|_{H^1}.
\end{split}
\enn
Substituting the above estimate for $I_4$, $I_5$ and $I_6$ into \eqref{ee43} we arrive at
\ben
\begin{split}
&\big\|\big\{(J^j)^{-2}
[(\al^j)^2+(\beta^j)^2+1]-1
\big\}\partial_3\big(\partial_3 w^{j}+w^{(j-1)}\partial_3 h^j\big)_t
\big\|_{L^2_t({_0}H^1)^{'}}^2\\
&\ \ \leq C(\|\eta^j\|_{L^\infty_tH^3(\R^2)}^4
+\|\eta^j\|_{L^\infty_tH^3(\R^2)}^8)
 (\|w^j_t\|_{L^2_tH^1}+\| w^{(j-1)}_t\|_{L^2_tH^1}\| h^j\|_{L^\infty_tH^2}+\| w^{(j-1)}\|_{L^\infty_tH^2}\|h^j_t\|_{L^2_tH^1})^2\\
 &\ \ \ \ +C\|\eta^j\|_{L^\infty_tH^3(\R^2)}^4
\|G_{4t}(w^{(j-1)},h^{(j-1)},\bar{\eta}^{(j-1)})
\|_{L^2_tH^{-\frac{1}{2}}(\G)}^2,
\end{split}
\enn
which, along with \eqref{ee42} and Lemma \ref{l5} gives rise to
\be\label{ee44}
\begin{split}
&\big\|\big\{\big[(J^j)^{-2}
((\al^j)^2+(\beta^j)^2+1)-1
\big]\partial_3\big(\partial_3 w^{j}+w^{(j-1)}\partial_3 h^j\big)\big\}_t\big\|_{L^2_t({_0}H^1)^{'}}^2\\
&\ \ \leq
 C \|\{
w^{j},h^{j},\vec{v}^{j},q^{j}, \eta^{j}
\}\|^4
+C\|\{
w^{j},h^{j},\vec{v}^{j},q^{j}, \eta^{j}\}\|^{20}\\
&\ \ \ \ \ +C \|\{
w^{(j-1)},h^{(j-1)},\vec{v}^{\,(j-1)},q^{(j-1)}, \eta^{(j-1)}
\}\|^4\\
&\ \ \ \ \ +C\|\{
w^{(j-1)},h^{(j-1)},\vec{v}^{\,(j-1)},q^{(j-1)}, \eta^{(j-1)}\}\|^{20}.
\end{split}
\ee
By a similar argument used in deriving \eqref{ee44} one deduces that
\be\label{ee45}
\begin{split}
&\big\|\big[2\al^j(J^j)^{-1}
\big(\partial_3\partial_1 w^j
+w^{(j-1)}\partial_3\partial_1 h^j
\big)\big]_t+\big[2\beta^j(J^j)^{-1}
\big(\partial_3\partial_2 w^j
+w^{(j-1)}\partial_3\partial_2 h^j
\big)\big]_t\big\|_{L^2_t({_0}H^1)^{'}}^2\\
&\ \ \leq
 C \|\{
w^{j},h^{j},\vec{v}^{j},q^{j}, \eta^{j}
\}\|^4
+C\|\{
w^{j},h^{j},\vec{v}^{j},q^{j}, \eta^{j}\}\|^{20}\\
&\ \ \ \ \ +C \|\{
w^{(j-1)},h^{(j-1)},\vec{v}^{\,(j-1)},q^{(j-1)}, \eta^{(j-1)}
\}\|^4\\
&\ \ \ \ \ +C\|\{
w^{(j-1)},h^{(j-1)},\vec{v}^{\,(j-1)},q^{(j-1)}, \eta^{(j-1)}\}\|^{20}.
\end{split}
\ee

We proceed to estimating $\|(L_4)_t\|_{L^2_t(_{0}H^1)^{'}}^2$. For any $\psi\in {_0}H^1$ it follows from \eqref{ee83} and Lemma \ref{pl7} that
\ben
\begin{split}
\int_{\Om} [(\na &\bar{\eta}^j)^2 (\na^2\bar{\eta}^j) \na w^j]_t \psi dxdy\\
\leq & \|\na\bar{\eta}^j\|_{L^\infty}
\|\na\bar{\eta}^j_t\|_{L^2}
\|\na^2\bar{\eta}^j\|_{L^4}
\|\na w^j\|_{L^\infty}
\|\psi\|_{L^4}\\
&+\|\na\bar{\eta}^j\|_{L^\infty}^2
\|\na^2\bar{\eta}^j_t\|_{L^2}
\|\na w^j\|_{L^4}
\|\psi\|_{L^4}
+\|\na\bar{\eta}^j\|_{L^\infty}^2
\|\na^2\bar{\eta}^j\|_{L^4}
\|\na w^j_t\|_{L^2}
\|\psi\|_{L^4}\\
\leq& C \|\eta^j\|_{H^3(\R^2)}^2
\|\vec{v}^j\|_{H^2}
\|w^j\|_{H^3}
\|\psi\|_{H^1}
+C\|\eta^j\|_{H^3(\R^2)}^3
\|w^j_t\|_{H^1}
\|\psi\|_{H^1}.
\end{split}
\enn
Thus,
\be\label{ee46}
\begin{split}
\|[(\na \bar{\eta}^j)^2 (\na^2\bar{\eta}^j) \na w^j]_t\|_{L^2_t({_0}H^1)^{'}}^2
\leq &C\|\eta^j\|_{L^\infty_tH^3(\R^2)}^4
\|\vec{v}^j\|_{L^\infty_tH^2}^2
\|w^j\|_{L^2_tH^3}^2\\
&+C\|\eta^j\|_{L^\infty_tH^3(\R^2)}^6
\|w^j_t\|_{L^2_tH^1}^2.
\end{split}
\ee
By a similar argument used in deriving \eqref{ee46} one deduces that
\be\label{ee47}
\begin{split}
\|[\na \bar{\eta}^j  \na w^j\vec{v}^j]_t\|_{L^2_t({_0}H^1)^{'}}^2
\leq &C
\|\vec{v}^j\|_{L^\infty_tH^2}^4
\|w^j\|_{L^2_tH^3}^2
+C\|\eta^j\|_{L^\infty_tH^3(\R^2)}^2
\|w^j_t\|_{L^2_tH^1}^2\|\vec{v}^j\|_{L^\infty_tH^2}^2
\\
&+C\|\eta^j\|_{L^\infty_tH^3(\R^2)}^2
\|\vec{v}^j_t\|_{L^2_tH^1}^2
\|w^j\|_{L^\infty_tH^2}^2
\end{split}
\ee
and that
\be\label{ee48}
\begin{split}
\|[(\na& \bar{\eta}^j)^2  \na w^j \na h^j]_t\|_{L^2_t({_0}H^1)^{'}}^2
+\|[(\na \bar{\eta}^j)^2 (\na^2\bar{\eta}^j)  w^{j}\na h^j]_t\|_{L^2_t({_0}H^1)^{'}}^2\\
\leq &C(\|\eta^j\|_{L^\infty_tH^3(\R^2)}^2
+\|\eta^j\|_{L^\infty_tH^3(\R^2)}^4)
\|\vec{v}^j\|_{L^\infty_tH^2}^2
\|h^j\|_{L^\infty_tH^2}^2\|w^j\|_{L^2_tH^3}^2\\
&+C(\|\eta^j\|_{L^\infty_tH^3(\R^2)}^4
+\|\eta^j\|_{L^\infty_tH^3(\R^2)}^6)
(\|w^j_t\|_{L^2_tH^1}^2\|h^j\|_{L^\infty_tH^2}^2
+\|w^j\|_{L^\infty_tH^2}^2
\|h^j_t\|_{L^2_tH^1}^2
).
\end{split}
\ee
By \eqref{ee83} one gets
\be\label{ee49}
\begin{split}
\|[\na w^j\bar{\eta}^j_t]_t\|_{L^2_tL^2}^2
\leq \|w^j_t\|_{L^2_tH^1}^2\|\vec{v}^j\|_{L^\infty_tH^2}^2
+\|w^j\|_{L^\infty_tH^2}^2\|\vec{v}^j_t\|_{L^2_tH^1}^2.
\end{split}
\ee
Collecting \eqref{ee46}-\eqref{ee49} we conclude that
\be\label{ee50}
\begin{split}
\|(L_4)_t\|_{L^2_t({_0}H^1)^{'}}^2
\leq C\|\{
w^{j},h^{j},\vec{v}^{j},q^{j}, \eta^{j}
\}\|^4
+C\|\{
w^{j},h^{j},\vec{v}^{j},q^{j}, \eta^{j}\}\|^{10}.
\end{split}
\ee
Combining \eqref{ee44}, \eqref{ee45}, \eqref{ee50} and using \eqref{ee95} we derive the desired estimate. The proof is completed.

\end{proof}
 It remains to estimate $F_5(h^j,\vec{v}^{j},\bar{\eta}^j)$ and $F_{5t}(h^j,\vec{v}^{j},\bar{\eta}^j)$.
From \eqref{aae9} we know that
\be\label{ee51}
\begin{split}
F_5(h^j,\vec{v}^{j},\bar{\eta}^j)=&\big\{(J^j)^{-2}
[(\al^j)^2+(\beta^j)^2+1]-1
\big\}\partial_3^2 h^{j}\\
&-2\al^j(J^j)^{-1}
\partial_3\partial_1 h^j
-2\beta^j(J^j)^{-1}
\partial_3\partial_2 h^j
+L_5
\end{split}
\ee
with
\ben
L_5\sim \vec{v}^j \na\bar{\eta}^j \na h^j
+(\na\bar{\eta}^j)^2 (\na^2\bar{\eta}^j)\na h^j
+(\na\bar{\eta}^j)^2 (\na h^j)^2
+(\na h^j)\bar{\eta}^j_t.
\enn
\begin{lemma}\label{l8} Let \eqref{eee15} and the assumptions in Proposition \ref{p4} hold. Then there exists a constant $C$ independent of $j$ and $t$ such that
\ben
\begin{split}
\|F_5\|_{L^2_tH^1}^2
+\|F_{5t}\|_{L^2_t({^0}H^1)^{'}}^2
\leq
C \|\{
w^{j},h^{j},\vec{v}^{j},q^{j}, \eta^{j}
\}\|^4
+C\|\{
w^{j},h^{j},\vec{v}^{j},q^{j}, \eta^{j}\}\|^{8}
\end{split}
\enn
for $j\geq 2$ and $t>0$.
\end{lemma}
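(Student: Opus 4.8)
The plan is to follow verbatim the template already used for $\vec F$ and $F_4$ in Lemma~\ref{l2} and Lemma~\ref{l6} (for the $L^2_tH^1$ bound) and for $\vec F_t$ and $F_{4t}$ in Lemma~\ref{l1} and Lemma~\ref{l7} (for the dual-norm bound on the time derivative), since by \eqref{ee51} $F_5$ has exactly the same algebraic shape as those nonlinearities, merely with one fewer unknown involved. First I would bound $\|F_5\|_{L^2_tH^1}^2$. For the three principal terms in \eqref{ee51} one invokes \eqref{eeea1}, which gives $\{(J^j)^{-2}[(\al^j)^2+(\beta^j)^2+1]-1\}\sim(\na\bar\eta^j)^2$ together with $\al^j,\beta^j\sim\na\bar\eta^j$; then, exactly as in \eqref{ee17}--\eqref{ee18}, the Sobolev embedding inequality and Lemma~\ref{pl7} yield $\|\{(J^j)^{-2}[(\al^j)^2+(\beta^j)^2+1]-1\}\partial_3^2h^j\|_{L^2_tH^1}^2\le C\|\eta^j\|_{L^\infty_tH^3(\R^2)}^4\|h^j\|_{L^2_tH^3}^2$ and $\|\al^j(J^j)^{-1}\partial_3\partial_1h^j\|_{L^2_tH^1}^2+\|\beta^j(J^j)^{-1}\partial_3\partial_2h^j\|_{L^2_tH^1}^2\le C\|\eta^j\|_{L^\infty_tH^3(\R^2)}^2\|h^j\|_{L^2_tH^3}^2$. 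The four terms of $L_5$ are treated the same way: the products $\vec v^j\na\bar\eta^j\na h^j$, $(\na\bar\eta^j)^2(\na^2\bar\eta^j)\na h^j$ and $(\na\bar\eta^j)^2(\na h^j)^2$ are split into $L^\infty$, $L^4$, $L^6$ factors and converted to norms of $\eta^j$ via Lemma~\ref{pl7}, while for $(\na h^j)\bar\eta^j_t$ one uses \eqref{ee83} to replace $\bar\eta^j_t$ by $\vec v^j$. Every contribution is of degree between $4$ and $8$ in $\|\{w^j,h^j,\vec v^j,q^j,\eta^j\}\|$, and since $x^6\le x^4+x^8$ the asserted bound for $\|F_5\|_{L^2_tH^1}^2$ follows.

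Next I would estimate $\|F_{5t}\|_{L^2_t({^0}H^1)^{'}}^2$ by differentiating \eqref{ee51} in $t$. Terms in which $\partial_t$ falls on a coefficient, e.g. $\{(J^j)^{-2}[(\al^j)^2+(\beta^j)^2+1]-1\}_t\partial_3^2h^j\sim(\na\bar\eta^j\na\bar\eta^j_t+(\na\bar\eta^j)^2\na\bar\eta^j_t)\partial_3^2h^j$, are bounded directly in $L^2_tL^2\hookrightarrow L^2_t({^0}H^1)^{'}$ using \eqref{ee83} and $\|\partial_3^2h^j\|_{L^2_tL^4}\le C\|h^j\|_{L^2_tH^3}$, exactly as in \eqref{ee5}. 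Terms in which $\partial_t$ falls on the highest $h^j$-derivative, e.g. $\{(J^j)^{-2}[(\al^j)^2+(\beta^j)^2+1]-1\}\partial_3^2h^j_t$, are paired with a test function $\psi\in{^0}H^1$ and a single integration by parts in $y$ is performed, as in \eqref{ee6}; the resulting boundary term on $\G$ vanishes because $\psi=0$ there, and the one on $S_B$ vanishes because $\partial_3h^j=0$ on $S_B$ by \eqref{eee7} (hence $\partial_3h^j_t=0$ there as well), leaving only interior terms carrying $\partial_3h^j_t$, which are controlled by $\|h^j_t\|_{L^2_tH^1}\le|||h^j|||$. Finally $(L_5)_t$ is expanded by the Leibniz rule and each summand is estimated by the Sobolev embedding inequality, Lemma~\ref{pl7} and \eqref{ee83}, just as in \eqref{ee84}--\eqref{ee16}. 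Collecting all bounds and using \eqref{ee51} gives the claimed inequality for $\|F_{5t}\|_{L^2_t({^0}H^1)^{'}}^2$.

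The only step that calls for any care is the handling of $F_{5t}$: since $h^j_t$ lies only in $L^2_tH^1$, the term carrying $\partial_3^2h^j_t$ cannot be controlled pointwise and must instead be tested against ${^0}H^1$ — the natural space dual to the Dirichlet problem for $h$ — after which one integration by parts in the vertical variable transfers a derivative onto the test function, and one must verify that the boundary contributions on both $\G$ and $S_B$ disappear, thanks respectively to $\psi|_\G=0$ and $\partial_3h^j|_{S_B}=0$. Everything else is a routine repetition of the product-rule estimates already established in Lemmas~\ref{l2}, \ref{l1}, \ref{l6} and \ref{l7}, so no further obstacle is expected.
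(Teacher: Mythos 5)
Your proposal is correct and follows essentially the same route as the paper's proof: the paper's estimates \eqref{ee52}--\eqref{ee100} implement exactly your bound for $\|F_5\|_{L^2_tH^1}^2$, while \eqref{eeea2}--\eqref{ee61} carry out your treatment of $F_{5t}$, including the ${^0}H^1$-duality pairing and the single vertical integration by parts with vanishing boundary contributions. One small refinement worth noting (which the paper itself also glosses over with ``by a similar argument''): for the cross-derivative pieces $\al^j(J^j)^{-1}\partial_3\partial_1 h^j_t$ and $\beta^j(J^j)^{-1}\partial_3\partial_2 h^j_t$, the boundary integral over $S_B$ vanishes not because $\partial_3 h^j|_{S_B}=0$ but because $\al^j$ and $\beta^j$ carry the factor $(1+y/b)$, which is zero at $y=-b$ --- alternatively one may integrate by parts in the horizontal variable, which produces no boundary term at all.
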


\begin{proof}
 First, by a similar argument used in deriving \eqref{ee33} and \eqref{ee34} one deduces that
\be\label{ee52}
\begin{split}
&\|\big\{(J^j)^{-2}
[(\al^j)^2+(\beta^j)^2+1]-1
\big\}\partial_3^2 h^{j}\|_{L^2_tH^1}^2
+\|2\al^j(J^j)^{-1}
\partial_3\partial_1 h^j
+2\beta^j(J^j)^{-1}
\partial_3\partial_2 h^j\|_{L^2_tH^1}^2\\
&\ \leq C\|{\eta}^j\|_{L^\infty_tH^3(\R^2)}^4
\|h^j\|_{L^2_tH^3}^2+\|{\eta}^j\|_{L^\infty_tH^3(\R^2)}^2
\|h^j\|_{L^2_tH^3}^2.
\end{split}
\ee
Replacing the $w^j$ in \eqref{ee35} with $h^j$ one gets
\be\label{ee53}
\begin{split}
\|(\na\bar{\eta}^j)^2(\na^2\bar{\eta}^j) \na h^j\|_{L^2_tH^1}^2
\leq
C\|\eta^j\|_{L^\infty_tH^3(\R^2)}^6
\|h^j\|_{L^2_tH^3}^2.
\end{split}
\ee
It follows from the Sobolev embedding inequality and Lemma \ref{pl7} that
\be\label{ee55}
\begin{split}
\|\vec{v}^j\na\bar{\eta}^j \na h^j\|_{L^2_tH^1}^2
+\|(\na\bar{\eta}^j)^2 (\na h^j)^2\|_{L^2_tH^1}^2
\leq& C\|\eta^j\|_{L^\infty_tH^3(\R^2)}^2
\|\vec{v}^j\|_{L^\infty_tH^2}^2
\|h^j\|_{L^2_tH^3}^2\\
&+C\|\eta^j\|_{L^\infty_tH^3(\R^2)}^4
\|h^j\|_{L^\infty_tH^2}^2
\|h^j\|_{L^2_tH^3}^2.
\end{split}
\ee
The Sobolev embedding inequality and
 \eqref{ee83} entail that
\be\label{ee96}
\|(\na h^j)\bar{\eta}^j_t\|_{L^2_tH^1}^2
\leq C\|h^j\|_{L^2_tH^3}^2\|\vec{v}^j\|_{L^\infty_tH^2}^2.
\ee
 Collecting \eqref{ee52}-\eqref{ee96} and using \eqref{ee51} we obtain
 \be\label{ee100}
\begin{split}
\|F_5\|_{L^2_tH^1}^2
\leq
C \|\{
w^{j},h^{j},\vec{v}^{j},q^{j}, \eta^{j}
\}\|^4
+C\|\{
w^{j},h^{j},\vec{v}^{j},q^{j}, \eta^{j}\}\|^{8}.
\end{split}
\ee

We proceed to estimating $\|F_{5t}\|_{L^2_t({^0}H^1)^{'}}^2$.
A similar arguments used in deriving \eqref{ee42} leads to
\be\label{eeea2}
\begin{split}
&\big\|\big\{(J^j)^{-2}
\big[(\al^j)^2+(\beta^j)^2+1\big]-1
\big\}_t\partial_3^2 h^{j}
\big\|_{L^2_tL^2}^2\\
&\ \ \leq C(\|{\eta}^j\|_{L^\infty_tH^3(\R^2)}^2
+\|{\eta}^j\|_{L^\infty_tH^3(\R^2)}^4)
\|\vec{v}^j\|_{L^2_tH^3}^2
\|h^j\|_{L^\infty_tH^2}^2.
\end{split}
\ee
For any $\psi\in {^0}H^1$, integration by parts along with the Sobolev embedding inequality and Lemma \ref{pl7} leads to
\ben
\begin{split}
\int_{\Om}&\big\{(J^j)^{-2}
\big((\al^j)^2+(\beta^j)^2+1\big)-1
\big\}\partial_3^2 h^{j}_t\psi dxdy\\
=
&-\int_{\Om}\big\{(J^j)^{-2}
\big((\al^j)^2+(\beta^j)^2+1\big)-1
\big\}\partial_3 h^{j}_t\partial_3\psi dxdy\\
&-\int_{\Om}\partial_3\big\{(J^j)^{-2}
\big((\al^j)^2+(\beta^j)^2+1\big)-1
\big\}\partial_3 h^{j}_t\psi dxdy\\
\leq& \|\na\bar{\eta}^j\|_{L^\infty}^2
\|\na h^j_t\|_{L^2}
\|\na \psi\|_{L^2}
+(\|\na\bar{\eta}^j\|_{L^\infty}
+\|\na\bar{\eta}^j\|_{L^\infty}^2
)
\|\na^2\bar{\eta}^j\|_{L^4}
\|\na h^j_t\|_{L^2}
\| \psi\|_{L^4}\\
\leq& C(\|{\eta}^j\|_{H^3(\R^2)}^2
+\|{\eta}^j\|_{H^3(\R^2)}^3
)
\|h^j_t\|_{H^1}
\|\psi\|_{H^1}.
\end{split}
\enn
Thus,
\ben
\big\|\big\{(J^j)^{-2}
\big((\al^j)^2+(\beta^j)^2+1\big)-1
\big\}\partial_3^2 h^{j}_t
\big\|_{L^2_t({^0}H^1)^{'}}^2
\leq C(\|{\eta}^j\|_{L^\infty_tH^3(\R^2)}^4
+\|{\eta}^j\|_{L^\infty_tH^3(\R^2)}^6)
\|h^j_t\|_{L^2_tH^1}^2,
\enn
which, along with \eqref{eeea2} gives
\be\label{ee56}
\begin{split}
\big\|\big\{&\big[(J^j)^{-2}
\big((\al^j)^2+(\beta^j)^2+1\big)-1
\big]\partial_3^2 h^{j}\big\}_t
\big\|_{L^2_t({^0}H^1)^{'}}^2\\
\leq &C(\|{\eta}^j\|_{L^\infty_tH^3(\R^2)}^2
+\|{\eta}^j\|_{L^\infty_tH^3(\R^2)}^4)
\|\vec{v}^j\|_{L^2_tH^3}^2
\|h^j\|_{L^\infty_tH^2}^2\\
&+C(\|{\eta}^j\|_{L^\infty_tH^3(\R^2)}^4
+\|{\eta}^j\|_{L^\infty_tH^3(\R^2)}^6)
\|h^j_t\|_{L^2_tH^1}^2.
\end{split}
\ee
By a similar argument used in deriving \eqref{ee56} one deduces that
\be\label{ee57}
\begin{split}
\big\|\big[&2\al^j(J^j)^{-1}
\partial_3\partial_1 h^j
\big]_t\big\|_{L^2_t({^0}H^1)^{'}}^2
+\big\|\big[2\beta^j(J^j)^{-1}
\partial_3\partial_2 h^j\big]_t
\big\|_{L^2_t({^0}H^1)^{'}}^2\\
\leq &C\big(1+\|{\eta}^j\|_{L^\infty_tH^3(\R^2)}^2\big)
\|\vec{v}^j\|_{L^2_tH^3}^2
\|h^j\|_{L^\infty_tH^2}^2
+C\big(\|{\eta}^j\|_{L^\infty_tH^3(\R^2)}^2
+\|{\eta}^j\|_{L^\infty_tH^3(\R^2)}^4\big)
\|h^j_t\|_{L^2_tH^1}^2.
\end{split}
\ee
We next estimate each term in $(L_5)_t$. Replacing the $w^j$ in \eqref{ee46}, \eqref{ee47} and \eqref{ee49} with $h^j$, one deduces that
\be\label{ee58}
\begin{split}
\|[(\na \bar{\eta}^j)^2 (\na^2\bar{\eta}^j) \na h^j]_t\|_{L^2_t({^0}H^1)^{'}}^2
\leq &C\|\eta^j\|_{L^\infty_tH^3(\R^2)}^4
\|\vec{v}^j\|_{L^\infty_tH^2}^2
\|h^j\|_{L^2_tH^3}^2
+C\|\eta^j\|_{L^\infty_tH^3(\R^2)}^6
\|h^j_t\|_{L^2_tH^1}^2
\end{split}
\ee
and that
\be\label{ee59}
\begin{split}
\|[\vec{v}^j\na \bar{\eta}^j  \na h^j]_t\|_{L^2_t({^0}H^1)^{'}}^2
\leq &C
\|\vec{v}^j\|_{L^\infty_tH^2}^4
\|h^j\|_{L^2_tH^3}^2
+C\|\eta^j\|_{L^\infty_tH^3(\R^2)}^2
\|h^j_t\|_{L^2_tH^1}^2\|\vec{v}^j\|_{L^\infty_tH^2}^2
\\
&+C\|\eta^j\|_{L^\infty_tH^3(\R^2)}^2
\|\vec{v}^j_t\|_{L^2_tH^1}^2
\|h^j\|_{L^\infty_tH^2}^2
\end{split}
\ee
and that
\be\label{ee97}
\|[\na h^j\bar{\eta}^j_t]_t\|_{L^2_tL^2}^2
\leq \|h^j_t\|_{L^2_tH^1}^2 \|\vec{v}^j\|_{L^\infty_tH^2}^2
+\|h^j\|_{L^\infty_tH^2}^2\|\vec{v}^j_t\|_{L^2_tH^1}^2.
\ee
For any $\psi\in {^0}H^1$, it follows from
 Lemma \ref{pl7} and \eqref{ee83} that
 \ben
 \begin{split}
 \int_{\Om}[(\na \bar{\eta}^j)^2  (\na h^j)^2]_{t}\psi\,dxdy
 \leq &C\|\na\bar{\eta}^j\|_{L^\infty}
 \|\na\bar{\eta}^j_t\|_{L^4}
 \|\na h^j\|_{L^4}^2
 \|\psi\|_{L^4}\\
 &+C\|\na\bar{\eta}^j\|_{L^\infty}^2
 \|\na h^j\|_{L^4}
 \|\na h^j_t\|_{L^2}
 \|\psi\|_{L^4}\\
 \leq &C\|\eta^j\|_{H^3(\R^2)}
\|\vec{v}^j\|_{H^2}
\|h^j\|_{H^2}^2
\|\psi\|_{H^1}\\
&+C\|\eta^j\|_{H^3(\R^2)}^2
\|h^j\|_{H^2}
\|h^j_t\|_{H^1}
\|\psi\|_{H^1},
 \end{split}
 \enn
 which, implies that
\be\label{ee60}
\begin{split}
\|[(\na \bar{\eta}^j)^2  (\na h^j)^2]_{t}\|_{L^2_t({^0}H^1)^{'}}^2
\leq& C
\|\eta^j\|_{L^\infty_tH^3(\R^2)}^2
\|\vec{v}^j\|_{L^\infty_tH^2}^2
\|h^j\|_{L^2_tH^3}^4\\
&+C\|\eta^j\|_{L^\infty_tH^3(\R^2)}^4
\|h^j\|_{L^\infty_tH^2}^2
\|h^j_t\|_{L^2_tH^1}^2.
\end{split}
\ee
Collecting \eqref{ee58}-\eqref{ee60} we arrive at
\ben
\begin{split}
\|(L_5)_{t}\|_{L^2_t({^0}H^1)^{'}}^2
\leq
C \|\{
w^{j},h^{j},\vec{v}^{j},q^{j}, \eta^{j}
\}\|^4
+C\|\{
w^{j},h^{j},\vec{v}^{j},q^{j}, \eta^{j}\}\|^{8},
\end{split}
\enn
which, in conjunction with \eqref{ee56}, \eqref{ee57} and \eqref{ee51} gives rise to
\be\label{ee61}
\|F_{5t}\|_{L^2_t({^0}H^1)^{'}}^2
\leq
C \|\{
w^{j},h^{j},\vec{v}^{j},q^{j}, \eta^{j}
\}\|^4
+C\|\{
w^{j},h^{j},\vec{v}^{j},q^{j}, \eta^{j}\}\|^{8}.
\ee
Combining \eqref{ee100} and \eqref{ee61},
 we derive the desired estimate. The proof is completed.

\end{proof}
\subsection{Proof of Proposition \ref{p4}}\label{ss2}
First, applying Proposition \ref{p1} with $a(x_1,x_2,y,t)\equiv 0$ to system \eqref{eee4}-\eqref{eee5} one deduces that there is a constant $C_7$ independent of $t$ such that
\be\label{ae6}
\begin{split}
\|\{w^{1},h^{1},\vec{v}^{\,1},q^{1},
\eta^{1}\}\|^2
\leq &C_7 (\|w_0\|_{H^2}+\|h_0\|_{H^2}+\|\vec{v}_0\|_{H^2}
+\|\eta_0\|_{H^3(\R^2)})^2,\quad \forall t>0,\\
\|\{w^{2},h^{2},\vec{v}^{\,2},q^{2},
\eta^{2}\}\|^2
\leq &C_7 (\|w_0\|_{H^2}+\|h_0\|_{H^2}+\|\vec{v}_0\|_{H^2}
+\|\eta_0\|_{H^3(\R^2)})^2,\quad \forall t>0.
\end{split}
\ee
Then we assert that for all $j\geq 2$, system \eqref{ee1}-\eqref{eee7} admits a unique solution and that the following holds for all $j\geq 1$:
\be\label{ae5}
\begin{split}
\|\{w^{j},h^{j},\vec{v}^{\,j},q^{j},
\eta^{j}\}\|^{2}\leq C_7 (\|w_0\|_{H^2}+\|h_0\|_{H^2}+\|\vec{v}_0\|_{H^2}
+\|\eta_0\|_{H^3(\R^2)})^2,\quad \forall t>0,
\end{split}
\ee
provided \eqref{ae3} and \eqref{ae4}.
We next prove \eqref{ae5} by the argument of induction.
Assume that
\be\label{ae26}
\begin{split}
\|\{w^{k},h^{k},\vec{v}^{\,k},q^{k},
\eta^{k}\}\|^{2}\leq C_7 (\|w_0\|_{H^2}+\|h_0\|_{H^2}+\|\vec{v}_0\|_{H^2}
+\|\eta_0\|_{H^3(\R^2)})^2,\quad \forall t>0
\end{split}
\ee
holds true for all
$1\leq k\leq j$ with $j\geq 2$.
From the Sobolev embedding inequality and Lemma \ref{pl7} we know that there exists constants $C,\,C_5>0$ independent of $k$ and $t$ such that
\be\label{ae27}
\|\bar{\eta}^k+\partial_3 \bar{\eta}^k(1+y)\|_{L^\infty_tL^\infty}
\leq C\|\bar{\eta}^k\|_{L^\infty_tH^3}
\leq C_5 \|\eta^k\|_{L^\infty_tH^3(\R^2)}.
\ee
Assume that the initial data satisfy
\be\label{ae3}
\begin{split}
C_5\sqrt{C_7}(\|w_0\|_{H^2}+\|h_0\|_{H^2}
+\|\vec{v}_0\|_{H^2}
+\|\eta_0\|_{H^3(\R^2)})<\frac{1}{2},\\
C_1(C_2+1)C_7 (\|w_0\|_{H^2}+\|h_0\|_{H^2}
+\|\vec{v}_0\|_{H^2}
+\|\eta_0\|_{H^3(\R^2)})^2<\frac{1}{2},
\end{split}
\ee
where the constant $C_1$ and $C_2$ are defined in Proposition \ref{p1}.
Then it follows from \eqref{ae27}, \eqref{ae26} and the first inequality in \eqref{ae3} that
\ben
\|\bar{\eta}^k+\partial_3 \bar{\eta}^k(1+y)\|_{L^\infty_tL^\infty}
\leq C_5 \|\eta^k\|_{L^\infty_tH^3(\R^2)}
\leq C_5 \|\{w^{k},h^{k},\vec{v}^{\,k},q^{k},
\eta^{k}\}\|
 <\frac{1}{2},
\enn
which, along with the definition $J^k=1+\bar{\eta}^k+\partial_3 \bar{\eta}^k(1+y)$ indicates that
\be\label{ae23}
\frac{1}{2}<J^k<\frac{3}{2}\qquad \text{in}\ \   \Om\times (0,\infty)
\ee
holds for all $1\leq k\leq j$ with $j\geq 2$. Moreover, it follows from \eqref{ae26} and the second inequality in \eqref{ae3} that
\be\label{ae2}
C_1(C_2+1)|||w^{k}|||^2\leq C_1(C_2+1)\|\{w^{k},h^{k},\vec{v}^{\,k},q^{k},
\eta^{k}\}\|^{2}<\frac{1}{2}
\ee
for all $1\leq k\leq j$ with $j\geq 2$.
With the regularity \eqref{ae26} on $(w^{k},h^{k},\vec{v}^{\,k},q^{k},
\eta^{k})$, one easily deduces from the trace theorem that
\ben
\left\{
\begin{array}{lll}
G_1(\vec{v}^{\,k}(x_1,x_2,y,0),
\bar{\eta}^k(x_1,x_2,y,0))=G_1(\vec{v}_0,\bar{\eta}_0)\quad \text{on}\ \ \G,\\
 G_2(\vec{v}^{\,k}(x_1,x_2,y,0),
\bar{\eta}^k(x_1,x_2,y,0))=G_2(\vec{v}_0,\bar{\eta}_0)\quad \text{on}\ \ \G,\\
 G_4(w^k(x_1,x_2,y,0),h^k(x_1,x_2,y,0),\bar{\eta}^k(x_1,x_2,y,0))
=G_4(w_0,h_0,\bar{\eta}_0)\quad \text{on}\ \ \G,\\
\end{array}
\right.
 \enn
 holds for all $1\leq k\leq j$ with $j\geq 2$, which along with
 \eqref{ne55} indicates that the compatibility conditions \eqref{ne51} holds.

 Since the compatibility conditions \eqref{ne51} are fulfilled, with \eqref{ae23} and \eqref{ae2} in hand, we can apply Proposition \ref{p1} to system \eqref{ee1}-\eqref{eee7}, and use Lemma \ref{l2}- Lemma \ref{l8} to conclude that system \eqref{ee1}-\eqref{eee7} admits a unique solution $(w^{(j+1)},h^{(j+1)},\vec{v}^{\,(j+1)},q^{(j+1)},
\eta^{(j+1)})$ satisfying
\be\label{ae24}
\begin{split}
\|\{w&^{(j+1)},h^{(j+1)},\vec{v}^{\,(j+1)},q^{(j+1)},
\eta^{(j+1)}\}\|^2\\
\leq& C_6
\|\{w^{j},h^{j},\vec{v}^{j},q^{j},
\eta^{j}\}\|^4+C_6\|\{w^{j},h^{j},\vec{v}^{j},q^{j},
\eta^{j}\}\|^{20}\\
&+ C_6
\|\{w^{(j-1)},h^{(j-1)},\vec{v}^{\,(j-1)},q^{(j-1)},
\eta^{(j-1)}\}\|^4\\
&+C_6\|\{w^{(j-1)},h^{(j-1)},\vec{v}^{\,(j-1)},q^{(j-1)},
\eta^{(j-1)}\}\|^{20}
\end{split}
\ee
for all $t>0$, where the constant $C_6>0$ independent of $j$ and $t$. Assume further
\be\label{ae4}
\begin{split}
C_6&C_7 (\|w_0\|_{H^2}+\|h_0\|_{H^2}+\|\vec{v}_0\|_{H^2}
+\|\eta_0\|_{H^3(\R^2)})^2\\
&+C_6C_7^{9} (\|w_0\|_{H^2}+\|h_0\|_{H^2}+\|\vec{v}_0\|_{H^2}
+\|\eta_0\|_{H^3(\R^2)})^{18}
<\frac{1}{2}.
\end{split}
\ee
Then it follows from \eqref{ae24}, \eqref{ae26} and \eqref{ae4} that
\ben
\begin{split}
\|\{w^{(j+1)},h^{(j+1)},\vec{v}^{\,(j+1)},q^{(j+1)},
\eta^{(j+1)}\}\|^2
<&\frac{1}{2}
\|\{w^{j},h^{j},\vec{v}^{\,j},q^{j},
\eta^{j}\}\|^2\\
&+\frac{1}{2}
\|\{w^{(j-1)},h^{(j-1)},\vec{v}^{\,(j-1)},q^{(j-1)},
\eta^{(j-1)}\}\|^2\\
\leq &C_7 (\|w_0\|_{H^2}+\|h_0\|_{H^2}+\|\vec{v}_0\|_{H^2}
+\|\eta_0\|_{H^3(\R^2)})^2
\end{split}
\enn
which, implies that \eqref{ae26} holds true for $k=j+1$. We thus proved that \eqref{ae5} holds for all $j\geq 1$ by the argument of induction. \eqref{eee8} follows directly from \eqref{ae5} and we proceed to proving \eqref{eee14}. Combining \eqref{ae5} and the first inequality in \eqref{ae3} one deduces that \eqref{ae23} is true for all $k\geq 1$ and thus derives \eqref{eee14}.
 The proof is completed.

\endProof
\section{Proof of Theorem \ref{t2} and Theorem \ref{t1}}
\textbf{\emph{Proof of Theorem \ref{t2}}.}
 For $j\geq 3$, define
 \ben
 \begin{split}
 \de &w^{(j+1)}=w^{(j+1)}-w^j,
  \quad \de h^{(j+1)}=h^{(j+1)}-h^j,
   \quad \de \vec{v}^{\,(j+1)}=\vec{v}^{\,(j+1)}-\vec{v}^j,\\
    \de &q^{(j+1)}=q^{(j+1)}-q^j,
    \quad
 \de \eta^{(j+1)}=\eta^{(j+1)}-\eta^j.
 \end{split}
  \enn
  Then it follows from \eqref{ee1} that
\be\label{eee1}
\left\{
\begin{array}{lll}
\de w^{(j+1)}_t-\Delta \de w^{(j+1)}-\nabla\cdot(w^{j}\nabla \de h^{(j+1)})=\nabla\cdot(\de w^{j}\nabla  h^{j})+\de F_4
\quad\text{in}\ \  \Omega\times(0,\infty),\\
\de h^{(j+1)}_t-\Delta  \de h^{(j+1)}-\de w^{(j+1)}=\de F_5,\\
\de \vec{v}^{\,(j+1)}_t-\Delta \de\vec{v}^{\,(j+1)}+\nabla \de q^{(j+1)} +\de w^{(j+1)}\nabla \phi=\de \vec{F},\\
\na\cdot \de \vec{v}^{\,(j+1)}=0,\\
(\de w^{(j+1)},
\de h^{(j+1)},
\de \vec{v}^{\,(j+1)})(x_1,x_2,y,0)=(0,0,\mathbf{0}),\quad \de \eta^{(j+1)}(x_1,x_2,0)=0,
\end{array}
\right.
\ee
where
\ben
\begin{split}
\de F_4&:=
F_4(w^{(j-1)},w^j,h^j,\vec{v}^{j},\bar{\eta}^j)
-F_4(w^{(j-2)},w^{(j-1)},h^{(j-1)},
\vec{v}^{\,(j-1)},\bar{\eta}^{(j-1)}),\\
\de F_5&:=F_5(h^j,\vec{v}^{j},\bar{\eta}^j)
-F_5(h^{(j-1)},\vec{v}^{\,(j-1)},\bar{\eta}^{(j-1)}),\\
\de \vec{F}&:=\vec{F}(w^j,\vec{v}^{j},\na q^j,\bar{\eta}^j)
-\vec{F}(w^{(j-1)},\vec{v}^{\,(j-1)},\na q^{(j-1)},\bar{\eta}^{(j-1)}).
\end{split}
\enn
The boundary conditions on $\Gamma\times(0,\infty)$ follows from \eqref{ee2}:
\be\label{eee2}
\left\{
\begin{array}{lll}
\partial_3 \de w^{(j+1)}+w^{j}\partial_3 \de h^{(j+1)}=-\de w^{j}\partial_3 h^{j}+\de G_4,\qquad
 \de h^{(j+1)}=0,\\
\partial_3 \de v^{(j+1)}_1+\partial_1 \de v^{(j+1)}_3=\de G_1,
\qquad
\partial_3\de v^{(j+1)}_2+\partial_2 \de v^{(j+1)}_3=\de G_2,\\
\de \eta^{(j+1)}_t=\de v^{(j+1)}_3,\qquad
\de q^{(j+1)}-2\partial_3 \de v^{(j+1)}_3=\gamma\,\de\eta^{(j+1)}-\sigma\,\Delta_0\de\eta^{(j+1)}
-\de G_3,
\end{array}
\right.
\ee
where
\ben
\begin{split}
\de G_4&:=G_4(w^j,h^j,\bar{\eta}^j)
-G_4(w^{(j-1)},h^{(j-1)},\bar{\eta}^{(j-1)}),\qquad
\de G_1:=G_1(\vec{v}^{j},\bar{\eta}^j)
-G_1(\vec{v}^{\,(j-1)},\bar{\eta}^{(j-1)}),\\
\de G_2&:=G_2(\vec{v}^{j},\bar{\eta}^j)
-G_2(\vec{v}^{\,(j-1)},\bar{\eta}^{(j-1)}),\qquad
\de G_3:=G_3
(\vec{v}^{j},\bar{\eta}^j)-G_3
(\vec{v}^{\,(j-1)},\bar{\eta}^{(j-1)}).
\end{split}
\enn
The boundary conditions on $S_B\times(0,\infty)$ follows from \eqref{eee7}:
\be\label{eee3}
\de w^{(j+1)}=0,\quad\partial_3 \de h^{(j+1)}=0,
\quad \de\vec{v}^{\,(j+1)}=\mathbf{0}.
\ee
By a similar argument used in deriving \eqref{ae24}, that is applying Proposition \ref{p1} to system \eqref{eee1}-\eqref{eee3} and following the procedure in  Subsection \ref{ss1} to estimate the nonlinear terms on the right-hand side of each equation in \eqref{eee1}-\eqref{eee2}, one can deduce that
\be\label{ae1}
\begin{split}
\|\{\de& w^{(j+1)},\de h^{(j+1)},\de \vec{v}^{\,(j+1)},\de q^{(j+1)},
\de \eta^{(j+1)}\}\|^2\\
\leq& C_8(j)\|\{\de w^{j},\de h^{j},\de \vec{v}^{j},\de q^{j},
\de \eta^{j}\}\|^2\\
&+C_9(j)\|\{\de w^{(j-1)},\de h^{(j-1)},\de \vec{v}^{\,(j-1)},\de q^{(j-1)},
\de \eta^{(j-1)}\}\|^2
\end{split}
\ee
where
\ben
\begin{split}
C_8(j):=&
C\|\{w^j,h^j,\vec{v}^{\,j},q^j,\eta^j\}\|^2
+C\|\{w^j,h^j,\vec{v}^{\,j},q^j,\eta^j\}\|^{18}\\
&+C\|\{w^{(j-1)},h^{(j-1)},
\vec{v}^{\,(j-1)},q^{(j-1)},\eta^{(j-1)}\}\|^2
+C\|\{w^{(j-1)},h^{(j-1)},
\vec{v}^{\,(j-1)},q^{(j-1)},\eta^{(j-1)}\}\|^{18}
\end{split}
\enn
and
\ben
\begin{split}
C_9(j):=&
C\|\{w^{(j-1)},h^{(j-1)},
\vec{v}^{\,(j-1)},q^{(j-1)},\eta^{(j-1)}\}\|^2
+C\|\{w^{(j-1)},h^{(j-1)},
\vec{v}^{\,(j-1)},q^{(j-1)},\eta^{(j-1)}\}\|^{18}\\
&+C\|\{w^{(j-2)},h^{(j-2)},
\vec{v}^{\,(j-2)},q^{(j-2)},\eta^{(j-2)}\}\|^2
+C\|\{w^{(j-2)},h^{(j-2)},
\vec{v}^{\,(j-2)},q^{(j-2)},\eta^{(j-2)}\}\|^{18}
\end{split}
\enn
with some constant $C$ independent of $j$ and $t$.
 From Proposition \ref{p4} and \eqref{ae1} we conclude that there exists a constant $C_{10}$ independent of $j$ and $t$ such that
\be\label{ae13}
\begin{split}
\|\{\de& w^{(j+1)},\de h^{(j+1)},\de \vec{v}^{\,(j+1)},\de q^{(j+1)},
\de \eta^{(j+1)}\}\|^2\\
\leq& C_{10}(\|w_0\|_{H^2}+\|h_0\|_{H^2}
+\|\vec{v}_0\|_{H^2}+\|\eta_0\|_{H^3(\R^2)})^2
\times\|\{\de w^{j},\de h^{j},\de \vec{v}^{j},\de q^{j},
\de \eta^{j}\}\|^2\\
&+
C_{10}(\|w_0\|_{H^2}+\|h_0\|_{H^2}
+\|\vec{v}_0\|_{H^2}+\|\eta_0\|_{H^3(\R^2)})^{18}
\times\|\{\de w^{j},\de h^{j},\de \vec{v}^{j},\de q^{j},
\de \eta^{j}\}\|^2\\
&+C_{10}(\|w_0\|_{H^2}+\|h_0\|_{H^2}
+\|\vec{v}_0\|_{H^2}+\|\eta_0\|_{H^3(\R^2)})^{2}\\
&\ \ \ \times\|\{\de w^{(j-1)},\de h^{(j-1)},\de \vec{v}^{\,(j-1)},\de q^{(j-1)},
\de \eta^{(j-1)}\}\|^2\\
&+C_{10}(\|w_0\|_{H^2}+\|h_0\|_{H^2}
+\|\vec{v}_0\|_{H^2}+\|\eta_0\|_{H^3(\R^2)})^{18}\\
&\ \ \ \times\|\{\de w^{(j-1)},\de h^{(j-1)},\de \vec{v}^{\,(j-1)},\de q^{(j-1)},
\de \eta^{(j-1)}\}\|^2.
\end{split}
\ee
Assume
\be\label{ae14}
\begin{split}
C_{10}&(\|w_0\|_{H^2}+\|h_0\|_{H^2}
+\|\vec{v}_0\|_{H^2}+\|\eta_0\|_{H^3(\R^2)})^2\\
&+C_{10}(\|w_0\|_{H^2}+\|h_0\|_{H^2}
+\|\vec{v}_0\|_{H^2}+\|\eta_0\|_{H^3(\R^2)})^{18}
<\frac{1}{4}.
\end{split}
\ee
Then it follows from \eqref{ae13} and \eqref{ae14} that
\ben
\begin{split}
\|\{\de &w^{(j+1)},\de h^{(j+1)},\de \vec{v}^{\,(j+1)},\de q^{(j+1)},
\de \eta^{(j+1)}\}\|^2
+\frac{1}{2}\|\{\de w^{j},\de h^{j},\de \vec{v}^{j},\de q^{j},
\de \eta^{j}\}\|^2\\
< & \frac{3}{4}
\big(\|\{\de w^{j},\de h^{j},\de \vec{v}^{j},\de q^{j},
\de \eta^{j}\}\|^2
+\frac{1}{2}\|\{\de w^{(j-1)},\de h^{(j-1)},\de \vec{v}^{\,(j-1)},\de q^{(j-1)},
\de \eta^{(j-1)}\}\|^2
\big),
\end{split}
\enn
which, along with Proposition \ref{p4} indicates that
\be\label{ae15}
\begin{split}
\|\{\de& w^{(j+1)},\de h^{(j+1)},\de \vec{v}^{\,(j+1)},\de q^{(j+1)},
\de \eta^{(j+1)}\}\|^2\\
<&\Big(\frac{3}{4}\Big)^{j-2}
\big(
\|\{\de w^{3},\de h^{3},\de \vec{v}^{\,3},\de q^{3},
\de \eta^{3}\}\|^2
+\frac{1}{2}\|\{\de w^{2},\de h^{2},\de \vec{v}^{\,2},\de q^{2},
\de \eta^{2}\}\|^2
\big) \\
\leq & C\Big(\frac{3}{4}\Big)^{j-2}
(\|w_0\|_{H^2}+\|h_0\|_{H^2}
+\|\vec{v}_0\|_{H^2}+\|\eta_0\|_{H^3(\R^2)})^2
\end{split}
\ee
for $j\geq 3$ and $t>0$, where the constant $C>0$ is independent of $j$ and $t$.
From \eqref{ae15} we know that $\{(w^{j}, h^{j},\vec{v}^{j}, q^{j},
 \eta^{j})\}_{j\in \N_{+}}$ is a Cauchy sequence, thus there exists a unique limit $
(w, h,\vec{v},q,
 \eta)$
satisfying
 \ben
 \lim\limits_{j\to \infty}\|\{w^{j}-w, h^{j}-h,\vec{v}^{j}-\vec{v},q^{j}-q,
 \eta^{j}-\eta\}\|^2=0
 \enn
and
\be\label{ae16}
\|\{w,h,\vec{v},q,\eta\}\|^2
\leq C(\|w_0\|_{H^2}+\|h_0\|_{H^2}
+\|\vec{v}_0\|_{H^2}+\|\eta_0\|_{H^3(\R^2)})^2.
\ee
 Passing $j\to \infty$ in \eqref{ee1}-\eqref{eee7} we deduce that $(w, h,\vec{v},q, \eta)$ solves \eqref{e001}-\eqref{e003}. Moreover, from \eqref{eee14} we deduce that $J$, the Jacobian determinant of $d\theta$ satisfies
\be\label{ae25}
\frac{1}{2}<J<\frac{3}{2}\quad \text{in}\ \ \Om\times(0,\infty),
\ee
which, along with the transformation $\theta$ given in \eqref{aae1} indicates that the $(m,\ti{c},\vec{u},p)$ defined in \eqref{aae3}-\eqref{aae8}, along with $\eta$ solves the initial-boundary value problem \eqref{e01}-\eqref{e03}. Uniqueness and estimates \eqref{e07} follow directly from \eqref{ae16} and \eqref{ae25}. The proof is finished.

 \endProof

We next prove Theorem \ref{t1} by using Theorem \ref{t2} and reversing transformation \eqref{tr}.
\newline
\textbf{\emph{Proof of Theorem \ref{t1}.}} First, it follows from the Sobolev embedding inequality and Theorem \ref{t2} that
\be\label{e18}
\begin{split}
\sup_{t>0}\|\tilde{c}(t)\|_{L^\infty(\Om_t)}\leq& C \sup_{t>0}\|\tilde{c}(t)\|_{H^2(\Om_t)}\\
\leq &C (\|m_0\|_{H^2(\Om_0)}+\|\ti{c}_0\|_{H^2(\Om_0)}
+\|\vec{u}_0\|_{H^2(\Om_0)}
+\|\eta_0\|_{H^3(\R^2)}).
\end{split}
\ee
 From transformation \eqref{tr} we deduce that
\be\label{eeea3}
c(x_1,x_2,y,t)-\hat{c}=\hat{c}\big(\exp\{-\tilde{c}(x_1,x_2,y,t)\}-1\big)
=\hat{c}\sum_{k=1}^{\infty}\frac{[-\tilde{c}(x_1,x_2,y,t)]^k}{k!},
\ee
which, along with Theorem \ref{t2} and \eqref{e18} indicates that
\be\label{e19}
\begin{split}
\sup_{t>0}\|c(t)-\hat{c}\|_{L^2(\Om_t)}
\leq& \hat{c}\sup_{t>0}\|\tilde{c}(t)\|_{L^2(\Om_t)}
\sum_{k=1}^\infty \frac{\Big(\sup\limits_{t>0}\|\tilde{c}(t)\|_{L^\infty(\Om_t)}\Big)^{k-1}}{k!}\\
\leq &\hat{c}\sup_{t>0}\|\tilde{c}(t)\|_{L^2(\Om_t)}
\exp\Big\{\sup_{t>0}\|\tilde{c}(t)\|_{L^\infty(\Om_t)}\Big\}\\
\leq &C \big(\|m_0\|_{H^2(\Om_0)}+\|\ti{c}_0\|_{H^2(\Om_0)}
+\|\vec{u}_0\|_{H^2(\Om_0)}
+\|\eta_0\|_{H^3(\R^2)}\big)\\
&\ \times \exp\{C \big(\|m_0\|_{H^2(\Om_0)}+\|\ti{c}_0\|_{H^2(\Om_0)}
+\|\vec{u}_0\|_{H^2(\Om_0)}
+\|\eta_0\|_{H^3(\R^2)}\big)\}
.
\end{split}
\ee
Applying $\na$ to \eqref{eeea3} and using Theorem \ref{t2} and \eqref{e18} one gets
\be\label{e20}
\begin{split}
\sup_{t>0}\|\na c(t)\|_{L^2(\Om_t)}
\leq& \hat{c}\sup_{t>0}\|\na\tilde{c}(t)\|_{L^2(\Om_t)}
\exp\Big\{\sup_{t>0}\|\tilde{c}(t)\|_{L^\infty(\Om_t)}\Big\}\\
\leq &C \big(\|m_0\|_{H^2(\Om_0)}+\|\ti{c}_0\|_{H^2(\Om_0)}
+\|\vec{u}_0\|_{H^2(\Om_0)}
+\|\eta_0\|_{H^3(\R^2)}\big)\\
&\ \times \exp\{C \big(\|m_0\|_{H^2(\Om_0)}+\|\ti{c}_0\|_{H^2(\Om_0)}
+\|\vec{u}_0\|_{H^2(\Om_0)}
+\|\eta_0\|_{H^3(\R^2)}\big)\}.
\end{split}
\ee
By a similar argument in deriving \eqref{e20} we deduce that
\ben
\begin{split}
\sup_{t>0}\|\na^2 c(t)\|_{L^2(\Om_t)}
\leq& \hat{c}\big(\sup_{t>0}\|\na\tilde{c}(t)\|_{L^4(\Om_t)}^2
 +\sup_{t>0}\|\na^2\tilde{c}(t)\|_{L^2(\Om_t)}\big)
\exp\Big\{\sup_{t>0}\|\tilde{c}(t)\|_{L^\infty(\Om_t)}\Big\}\\
\leq &C \big[\sum_{k=1}^2\big(\|m_0\|_{H^2(\Om_0)}+\|\ti{c}_0\|_{H^2(\Om_0)}
+\|\vec{u}_0\|_{H^2(\Om_0)}
+\|\eta_0\|_{H^3(\R^2)}\big)^k\big]\\
&\ \ \times \exp\{C \big(\|m_0\|_{H^2(\Om_0)}+\|\ti{c}_0\|_{H^2(\Om_0)}
+\|\vec{u}_0\|_{H^2(\Om_0)}
+\|\eta_0\|_{H^3(\R^2)}\big)\},
\end{split}
\enn
which, in conjunction with \eqref{e19}-\eqref{e20} leads to
\be\label{e21}
\begin{split}
\sup_{t>0}\| c(t)-\hat{c}\|_{H^2(\Om_t)}
\leq &C \big[\sum_{k=1}^2\big(\|m_0\|_{H^2(\Om_0)}+\|\ti{c}_0\|_{H^2(\Om_0)}
+\|\vec{u}_0\|_{H^2(\Om_0)}
+\|\eta_0\|_{H^3(\R^2)}\big)^k\big]\\
&\ \ \times \exp\{C \big(\|m_0\|_{H^2(\Om_0)}+\|\ti{c}_0\|_{H^2(\Om_0)}
+\|\vec{u}_0\|_{H^2(\Om_0)}
+\|\eta_0\|_{H^3(\R^2)}\big)\}.
\end{split}
\ee
By a similar arguments used in obtaining \eqref{e21} one can easily deduce that
\ben
\begin{split}
\int_0^\infty \| c(t)-\hat{c}\|_{H^3(\Om_t)}^2\,dt
\leq & C \big[\sum_{k=1}^3\big(\|m_0\|_{H^2(\Om_0)}+\|\ti{c}_0\|_{H^2(\Om_0)}
+\|\vec{u}_0\|_{H^2(\Om_0)}
+\|\eta_0\|_{H^3(\R^2)}\big)^{2k}
\big]\\
&\ \ \times \exp\{C \big(\|m_0\|_{H^2(\Om_0)}+\|\ti{c}_0\|_{H^2(\Om_0)}
+\|\vec{u}_0\|_{H^2(\Om_0)}
+\|\eta_0\|_{H^3(\R^2)}\big)\},
\end{split}
\enn
 which, along with \eqref{e21} and Theorem \ref{t2} gives the desired estimates \eqref{e08}. The nonnegativity of $m$ follows from the maximum principle and $c>0$ follows from $c(x_1,x_2,y,t)=\hat{c}\exp\{-\tilde{c}(x_1,x_2,y,t)\}$. The proof is finished.

\endProof
\section{Appendix}
This section is devoted to the derivation of system \eqref{e001}-\eqref{e003}.
First, it follows from \eqref{aae1}-\eqref{aae3} that
\be\label{aae19}
\partial_{j} m=\xi_{kj}\partial_k w,\quad
\partial_{j} \tilde{c}=\xi_{kj}\partial_k h,\quad
\partial_{j}^2m= \xi_{kj}\partial_{k}(\xi_{lj}\partial_l w),\quad
\partial_{j}^2\tilde{c}= \xi_{kj}\partial_{k}(\xi_{lj}\partial_l h),
\ee
where the derivatives on the left-(or right-)hand side of each equality are with respect to the coordinates in $\Om_t$ (or $\Om$) and repeated indices are summed.
Using \eqref{aae2}-\eqref{aae18} we deduce from \eqref{aae19} that
\be\label{aae5}
\begin{split}
\na m=&(
\partial_1 w-J^{-1}\al\partial_3 w,\
\partial_2 w-J^{-1}\beta\partial_3 w,\
J^{-1}\partial_3 w
),\\
\na \tilde{c}=&(
\partial_1 h-J^{-1}\al\partial_3 h,\
\partial_2 h-J^{-1}\beta\partial_3 h,\
J^{-1}\partial_3 h
)
\end{split}
\ee
and that
\be\label{aae6}
\begin{split}
\De m=&\De w+(J^{-2}-1)\partial_3^2 w
+J^{-1}\partial_3(J^{-1})\partial_3w
-\partial_1(J^{-1}\al\partial_3 w)
-\partial_2(J^{-1}\beta\partial_3 w)\\
&-J^{-1}\al\partial_3\left(\partial_1 w-J^{-1}\al\partial_3w\right)
-J^{-1}\beta\partial_3\left(\partial_2 w-J^{-1}\beta\partial_3w\right),\\
\De \tilde{c}=&\De h+(J^{-2}-1)\partial_3^2 h
+J^{-1}\partial_3(J^{-1})\partial_3h
-\partial_1(J^{-1}\al\partial_3 h)
-\partial_2(J^{-1}\beta\partial_3 h)\\
&-J^{-1}\al\partial_3\left(\partial_1 h-J^{-1}\al\partial_3h\right)
-J^{-1}\beta\partial_3\left(\partial_2 h-J^{-1}\beta\partial_3h\right).
\end{split}
\ee
Noting that $\theta_3(x_1,x_2,y,t)=\bar{\eta}+y(1+\bar{\eta}/b)$ depends on $t$, one gets from \eqref{aae1} and \eqref{aae5} that
\be\label{aae7}
\begin{split}
w_t=&m_t+(\partial_3 m)\times (\partial_t\theta_3)
=m_t+(J^{-1}\partial_3w)\times(1+y/b)\bar{\eta}_t,\\
h_t=&\tilde{c}_t+(\partial_3 \tilde{c})\times (\partial_t\theta_3)
=\tilde{c}_t+(J^{-1}\partial_3h)\times(1+y/b)\bar{\eta}_t.
\end{split}
\ee
Substituting \eqref{aae5}-\eqref{aae7} and \eqref{aae8} into \eqref{e01}-\eqref{e03} we get
\be\label{aae9}
\left\{
\begin{array}{lll}
w_t-\Delta w-\nabla\cdot(w\nabla h)=F_4(w,h,\vec{v},\bar{\eta}),\quad (x_1,x_2,y,t)\in \Omega\times(0,\infty),\\
h_t-\Delta h-w=F_5(h,\vec{v},\bar{\eta}),\\
(w,h)(x_1,x_2,y,0)=(w_0,h_0)(x_1,x_2,y),\\
\partial_3 w+w\partial_3 h=G_4(w,h,\bar{\eta}),\quad h=0\quad\text{on}\ \ \Gamma\times(0,\infty),\\
w=0,\quad\partial_3 h=0 \quad \text{on}\ \ S_B\times(0,\infty),
\end{array}
\right.
\ee
where
\ben
\begin{split}
F_4=&[J^{-2}(\al^2+\beta^2+1)-1]\partial_3(\partial_3 w+w\partial_3h)
-2J^{-1}\al (\partial_3 \partial_1w+w\partial_3\partial_1 h)
-2J^{-1}\beta (\partial_3 \partial_2w+w\partial_3\partial_2 h)\\
&-J^{-1}v_1(\partial_1w-J^{-1}\al\partial_3 w)
-J^{-1}v_2(\partial_2w-J^{-1}\beta\partial_3 w)
-J^{-1}(J^{-1}\al v_1+J^{-1}\beta v_1+v_3)\partial_3w\\
&-\partial_1(J^{-1}\al)\partial_3w
-\partial_2(J^{-1}\beta)\partial_3w
+J^{-1}\al \partial_{3}(J^{-1}\al)\partial_3w
+J^{-1}\beta \partial_{3}(J^{-1}\beta)\partial_3w\\
&-J^{-1}\al(\partial_1w\partial_3 h+\partial_3 w\partial_1 h)
-J^{-1}\beta(\partial_2w\partial_3 h+\partial_3 w\partial_2 h)
-w\partial_{1}(J^{-1}\al)\partial_3 h\\
&-w\partial_{2}(J^{-1}\beta)\partial_3 h
+wJ^{-1}\al\partial_3(J^{-1}\al)\partial_3h
+wJ^{-1}\beta\partial_3(J^{-1}\beta)\partial_3h
+J^{-1}\partial_3(J^{-1})\partial_3w\\
&+(J^{-1}\partial_3w)\times(1+y/b)\bar{\eta}_t
\end{split}
\enn
and
\ben
\begin{split}
F_5=&[J^{-2}(\al^2+\beta^2+1)-1]\partial_3^2 h
-2J^{-1}\al \partial_3 \partial_1h
-2J^{-1}\beta \partial_3 \partial_2h\\
&-J^{-1}v_1(\partial_1h-J^{-1}\al\partial_3 h)
-J^{-1}v_2(\partial_2h-J^{-1}\beta\partial_3 h)
-J^{-1}(J^{-1}\al v_1+J^{-1}\beta v_1+v_3)\partial_3h\\
&-\partial_1(J^{-1}\al)\partial_3h
-\partial_2(J^{-1}\beta)\partial_3h
+J^{-1}\al \partial_{3}(J^{-1}\al)\partial_3h
+J^{-1}\beta \partial_{3}(J^{-1}\beta)\partial_3h\\
&-(\partial_1 h-J^{-1}\al\partial_3 h)^2
-(\partial_2 h-J^{-1}\beta\partial_3 h)^2
-J^{-2}(\partial_3 h)^2
+J^{-1}\partial_3(J^{-1})\partial_3h\\
&+(J^{-1}\partial_3h)\times(1+y/b)\bar{\eta}_t
\end{split}
\enn
and
\be\label{aae21}
G_4=J(\al^2+\beta^2+1)^{-1}[\partial_1 \eta (\partial_1w+w\partial_1 h)
+\partial_2 \eta (\partial_2w+w\partial_2 h)].
\ee
It follows from \eqref{aae3}-\eqref{aae4} that
\be\label{aae20}
\partial_{j} u_i=\xi_{kj}\partial_k (J^{-1}v_m\partial_m \theta_i ),\quad
\partial_{j}^2u_i= \xi_{kj}\partial_{k}[\xi_{lj}\partial_l ( J^{-1}v_m\partial_m\theta_i)],
\quad \partial_i p=\xi_{ki}\partial_k q,
\quad\partial_i \Phi=\xi_{ki}\partial_k \phi.
\ee
Thus
\be\label{aae11}
\begin{split}
[\vec{u}\cdot\na \vec{u}+\nabla p+m\nabla \Phi-\Delta\vec{u}]_i
=&
J^{-1}v_l\partial_l \theta_j
\xi_{kj}\partial_k (J^{-1}v_m \partial_m\theta_i )
+\xi_{ki}\partial_k q
+w \xi_{ki}\partial_k \phi\\
&-
 \xi_{kj}\partial_{k}[\xi_{lj}\partial_l ( J^{-1}v_m\partial_m\theta_i)],
\end{split}
\ee
where we have used $[\vec{u}\cdot\na \vec{u}+\nabla p+m\nabla \Phi-\Delta\vec{u}]_i$ to denote the $i$-th component of $[\vec{u}\cdot\na \vec{u}+\nabla p+m\nabla \Phi-\Delta\vec{u}]$.
 Differentiating \eqref{aae4} with respect to $t$ one derives
\ben
u_{it}+(\partial_3 u_i)(\partial_t \theta_3)=\partial_t[J^{-1}(\partial_j \theta_i)v_j]
=J^{-1}(\partial_j \theta_i)v_{jt}
+J^{-1}(\partial_j \theta_{it})v_j
-J^{-2}J_t(\partial_j \theta_i)v_j,
\enn
which, along with \eqref{aae1} and \eqref{aae18} leads to
\be\label{aae10}
\begin{split}
u_{it}=&J^{-1}(\partial_j \theta_i)v_{jt}
+J^{-1}(\partial_j \theta_{it})v_j
-J^{-2}[\bar{\eta}_t/b+\partial_3\bar{\eta}_t(1+y/b)]
(\partial_j \theta_i)v_j\\
&-\xi_{k3}\partial_k (J^{-1}v_m\partial_m \theta_i )(1+y/b)\bar{\eta}_t
\end{split}
\ee
with $\theta_{1t}=0$, $\theta_{2t}=0$ and $\theta_{3t}=(1+y/b)\bar{\eta}_t$.
Substituting \eqref{aae11}-\eqref{aae10} into the third equation of \eqref{e01} and multiplying the resulting equality by $J(\xi_{ij})_{3\times 3}$ we arrive at
\be\label{aae15}
\begin{split}
\vec{v}_t-\Delta\vec{v}+\nabla q +w\nabla \phi=\vec{F}(w,\vec{v},\na q,\bar{\eta}),
\end{split}
\ee
where $\vec{F}=(F_1,F_2,F_3)$ with
\ben
\begin{split}
F_1=&
[J^{-2}(\al^2+\beta^2+1)-1]\partial_3^2 v_1
-2J^{-1}\al \partial_3 \partial_1v_1
-2J^{-1}\beta \partial_3 \partial_2v_1\\
&+2J\partial_1(J^{-1})\partial_1v_1
+J\partial_1^2(J^{-1})v_1
+2J\partial_2(J^{-1})\partial_2v_1
+J\partial_2^2(J^{-1})v_1
+\partial_3[J^{-1}\partial_3(J^{-1})v_1]\\
&+\partial_3(J^{-2})\partial_3v_1
-J\partial_1[\al J^{-1} v_1\partial_3(J^{-1})]
-J\partial_1(\al J^{-2})\partial_3 v_1
-J\partial_2[\beta J^{-1} v_1\partial_3(J^{-1})]\\
&-J\partial_2(\beta J^{-2})\partial_3 v_1
-\al \partial_3[v_1\partial_1(J^{-1})]
-\al \partial_3(J^{-1})\partial_1v_1
-\beta \partial_3[v_1\partial_2(J^{-1})]
-\beta \partial_3(J^{-1})\partial_2v_1\\
&+\al \partial_3[J^{-1}\al v_1\partial_3(J^{-1})]
+\al \partial_3(J^{-2}\al)\partial_3v_1
+\beta \partial_3[J^{-1}\beta v_1\partial_3(J^{-1})]
+\beta \partial_3(J^{-2}\beta)\partial_3v_1\\
&-v_1\partial_1(J^{-1}v_1)-v_2\partial_2(J^{-1}v_1)
-v_3\partial_3(J^{-1}v_1)\\
&+J^{-1}v_1[\bar{\eta}_t/b+\partial_3 \bar{\eta}_t(1+y/b)]
+\partial_3(J^{-1}v_1)(1+y/b)\bar{\eta}_t\\
&+w\al \partial_3 \phi+w(1-J)\partial_1 \phi
+\al \partial_3 q+(1-J)\partial_1 q.
\end{split}
\enn
One can derive $F_2$ by replacing the terms in line 6 and line 8 of $F_1$ by
\ben
-v_1\partial_1(J^{-1}v_2)-v_2\partial_2(J^{-1}v_2)
-v_3\partial_3(J^{-1}v_2)
+w\beta \partial_3 \phi+w(1-J)\partial_2 \phi
+\beta \partial_3 q+(1-J)\partial_2 q
\enn
and replacing the $v_1$ in other terms of $F_1$ by $v_2$. For brevity, we shall not write out the explicit expression of $F_2$. The third component $F_3$ is as follows:
\ben
\begin{split}
F_3=&[J^{-2}(\al^2+\beta^2+1)-1]\partial_3^2 v_3
-2J^{-1}\al \partial_3 \partial_1v_3
-2J^{-1}\beta \partial_3 \partial_2v_3\\
&+J^{-1}\partial_3(J^{-1})\partial_3 v_3
-\partial_1(\al J^{-1})\partial_3v_3
-\partial_2(\beta J^{-1})\partial_3v_3\\
&+\al J^{-1}\partial_3(\al J^{-1})\partial_3v_3
+\beta J^{-1}\partial_3(\beta J^{-1})\partial_3v_3\\
&+v_1J^{-1}\partial_1v_3+v_2J^{-1}\partial_2v_3
+v_3J^{-1}\partial_3v_3
+v_1^2J^{-2}\partial_1 \al+v_1v_2J^{-2}(\partial_1 \beta
+\partial_2 \al)\\
&+v_2^2J^{-2}\partial_2 \beta
+v_1v_3J^{-2}\partial_3 \al
+v_2v_3J^{-2}\partial_2 \beta\\
&+J^{-1}(v_1J^{-1}\partial_3\al+v_2J^{-1}\partial_3\beta +\partial_3v_3)(1+y/b)\bar{\eta}_t
-J^{-1}v_1(1+y/b)\partial_1\bar{\eta}_t
-J^{-1}v_2(1+y/b)\partial_2\bar{\eta}_t\\
&+w\al\partial_1\phi +w\beta\partial_2\phi +w[1-J^{-1}(\al^2+\beta^2+1)]\partial_3\phi
+\al\partial_1q +\beta\partial_2q +[1-J^{-1}(\al^2+\beta^2+1)]\partial_3q.
\end{split}
\enn
Substitute \eqref{aae20} into the last boundary condition in \eqref{e02} we derive on $\G\times(0,\infty)$ that
\be\label{aae12}
q N_i-[\xi_{kj}\partial_k (J^{-1}v_m\partial_m \theta_i)+\xi_{ki}\partial_k (J^{-1}v_m\partial_m \theta_j)]N_j=\left\{
\gamma\eta-\sigma\na_0\cdot\left(
\frac{\na_0 \eta}{\sqrt{1+|\na_0\eta|^2}}
\right)
\right\}N_i,
\ee
where
\ben
\vec{N}:=\vec{n}\circ\theta=\frac{(-\partial_1\eta,-\partial_2\eta,1)}
{\sqrt{1+(\na_0\eta)^2
}}.
 \enn
Taking the inner product of \eqref{aae12} with $T_1:=(1,0,\partial_1\eta)$ one deduces that
\be\label{aae13}
\partial_3v_1+\partial_1 v_3=G_1(\vec{v},\bar{\eta})\quad \text{on}\ \ \G\times(0,\infty),
\ee
with
\ben
\begin{split}
G_1=&2[\partial_{1}(J^{-1}v_1)
-J^{-1}\partial_1\eta\partial_3(J^{-1}v_1)]\partial_1\eta
+[\partial_2(J^{-1}v_1)
-J^{-1}\partial_2\eta\partial_3(J^{-1}v_1)]\partial_2\eta\\
&+[\partial_1(J^{-1}v_2)
-J^{-1}\partial_1\eta\partial_3(J^{-1}v_2)]\partial_2\eta
+(1-J^{-2})\partial_3v_1\\
&-[J^{-1}v_1\partial_3(J^{-1})+\partial_1(J^{-1}v_1\partial_1\eta
+J^{-1}v_2\partial_2\eta)]
+[J^{-1}\partial_1\eta \partial_3 (v_1J^{-1}\partial_1\eta
+v_2J^{-1}\partial_2\eta+v_3)]\\
&+\![J^{-1}\partial_3(J^{-1}v_1)
\!+\!\partial_1(J^{-1} v_1\partial_1\eta
\!+\!J^{-1} v_2\partial_2\eta\!+\!v_3)
\!-\!J^{-1}\partial_1\eta\partial_3(J^{-1} v_1\partial_1\eta
\!+\!J^{-1} v_2\partial_2\eta\!+\!v_3
)](\partial_1\eta)^2\\
&+\![J^{-1}\partial_3(J^{-1}v_2)
\!+\!\partial_2(J^{-1} v_1\partial_1\eta
\!+\!J^{-1} v_2\partial_2\eta\!+\!v_3)
\!-\!J^{-1}\partial_2\eta\partial_3(J^{-1} v_1\partial_1\eta
\!+\!J^{-1} v_2\partial_2\eta\!+\!v_3
)]\partial_1\eta\partial_2\eta\\
&-2J^{-1}\partial_3(J^{-1} v_1\partial_1\eta
+J^{-1} v_2\partial_2\eta+v_3
)\partial_1\eta.
\end{split}
\enn
Taking the inner product of \eqref{aae12} with $T_{2}:=(0,1,\partial_2\eta)$ to have
\be\label{aae16}
\partial_3v_2+\partial_2v_3=G_2(\vec{v},\bar{\eta})\quad \text{on}\ \ \G\times(0,\infty),
\ee
where $G_2$ has an expression similar to that of $G_1$ and we shall not write out the explicit form of $G_2$ for brevity. Taking the inner product of \eqref{aae12} with $\vec{N}$ one gets
\be\label{aae17}
q-2\partial_3 v_3=\gamma\eta-\sigma\Delta_0\eta-G_3(\vec{v},\bar{\eta})\quad \text{on}\ \ \G\times(0,\infty),
\ee
where $G_3=\sigma\na_0\cdot\left(
\frac{\na_0 \eta}{\sqrt{1+|\na_0\eta|^2}}
\right)-\sigma\De_0\eta+\tilde{G}_3$ with $\tilde{G}_3$ defined in the following way:
\ben
\begin{split}
[1+&(\na_0\eta)^2
]\tilde{G}_3\\
=&
-2[\partial_{1}(J^{-1}v_1)
-J^{-1}\partial_1\eta\partial_3(J^{-1}v_1)](\partial_1\eta)^2
-2[\partial_{2}(J^{-1}v_2)
-J^{-1}\partial_2\eta\partial_3(J^{-1}v_2)](\partial_2\eta)^2
\\
&-2[\partial_2(J^{-1}v_1)
-J^{-1}\partial_2\eta\partial_3(J^{-1}v_1)
+\partial_1(J^{-1}v_2)
-J^{-1}\partial_1\eta\partial_3(J^{-1}v_2)
]
(\partial_2\eta)(\partial_1\eta)\\
&+[J^{-1}\partial_3(J^{-1}v_1)+\partial_1(J^{-1}v_1\partial_1\eta
+J^{-1}v_2\partial_2\eta+v_3)
]
(\partial_1\eta)
\\
&-[J^{-1}\partial_1\eta \partial_3 (v_1J^{-1}\partial_1\eta
+v_2J^{-1}\partial_2\eta+v_3)](\partial_1\eta)\\
&+[J^{-1}\partial_3(J^{-1}v_2)+\partial_2(J^{-1}v_1\partial_1\eta
+J^{-1}v_2\partial_2\eta+v_3)
]
(\partial_2\eta)
\\
&-[J^{-1}\partial_2\eta \partial_3 (v_1J^{-1}\partial_1\eta
+v_2J^{-1}\partial_2\eta+v_3)](\partial_2\eta)\\
&+[J^{-1}\partial_3(J^{-1}v_1)
+\partial_1(J^{-1} v_1\partial_1\eta
+J^{-1} v_2\partial_2\eta+v_3)
-J^{-1}\partial_1\eta\partial_3(J^{-1} v_1\partial_1\eta
+J^{-1} v_2\partial_2\eta+v_3
)](\partial_1\eta)\\
&+[J^{-1}\partial_3(J^{-1}v_2)
+\partial_2(J^{-1} v_1\partial_1\eta
+J^{-1} v_2\partial_2\eta+v_3)
-J^{-1}\partial_2\eta\partial_3(J^{-1} v_1\partial_1\eta
+J^{-1} v_2\partial_2\eta+v_3
)](\partial_2\eta)\\
&-2J^{-1}\partial_3(J^{-1} v_1\partial_1\eta
+J^{-1} v_2\partial_2\eta)
+2[1+(\na_0\eta)^2
-J^{-1}](\partial_3v_3)
.
\end{split}
\enn
Noting that $\al=\partial_1\eta$ and $\beta=\partial_2\eta$ on $\G\times(0,\infty)$, we deduce from \eqref{aae8} and the third boundary condition in \eqref{e02} that
\be\label{aae14}
\eta_t=v_3\quad \text{on}\ \ \G\times(0,\infty).
\ee
Moreover, by  rewriting \eqref{aae8} as
\be\label{aae22}
\begin{split}
&v_1(x_1,x_2,y,t)=J u_1(\theta(x_1,x_2,y,t),t),
\quad
v_2(x_1,x_2,y,t)=J u_2(\theta(x_1,x_2,y,t),t),\\
&v_3(x_1,x_2,y,t)=u_3(\theta(x_1,x_2,y,t),t)-\al u_1(\theta(x_1,x_2,y,t),t)
-\beta u_2(\theta(x_1,x_2,y,t),t)
\end{split}
\ee
and applying a direct computation to \eqref{aae22}, one can easily deduces from $\na\cdot\vec{u}=0$ that
\be\label{aae23}
\na\cdot\vec{v}=0\quad \text{in}\ \ \Om\times(0,\infty).
\ee
Collecting \eqref{aae9}, \eqref{aae15}, \eqref{aae13}-\eqref{aae17}, \eqref{aae14} and \eqref{aae23} we derive \eqref{e001}-\eqref{e003}.

\endProof

\section*{Acknowledgements}
This work is supported by National Natural Science Foundation of China (No. 11901139), China Postdoctoral Science Foundation
(No. 2019M651269, No. 2020T130151).

\setlength{\bibsep}{0.5ex}
\bibliography{rf}
\bibliographystyle{plain}

\end{document}